\newcommand{\Ad}{\operatorname{Ad}}
\newcommand{\tr}{\operatorname{tr}}
\newcommand{\cs}{\operatorname{cs}}
\newcommand{\vol}{\operatorname{vol}}
\newcommand{\CC}{\mathbb{C}}
\newcommand{\EE}{\mathrm{E}}
\newcommand{\FF}{\mathrm{F}}
\newcommand{\NN}{\mathbb{N}}
\newcommand{\QQ}{\mathbb{Q}}
\newcommand{\RR}{\mathbb{R}}
\newcommand{\ZZ}{\mathbb{Z}}
\newcommand{\TT}{\mathbb{T}}
\newcommand{\T}{\mathcal{T}}
\newcommand{\SU}[1][2]{\mathrm{SU} (#1)}
\newcommand{\PSL}{\mathrm{PSL}}
\newcommand{\SL}{\mathrm{SL}}
\newcommand{\tor}{\operatorname{tor}}
\newtheorem{thm}{Theorem}[section]
\newtheorem{prop}[thm]{Proposition}
\newtheorem{lemma}[thm]{Lemma}
\newtheorem{cor}[thm]{Corollary}
\newtheorem{conj}[thm]{Conjecture}
\theoremstyle{definition}
       \newtheorem{definition}[thm]{Definition}
       \newtheorem{remark}[thm]{Remark}
       \newtheorem{example}[thm]{Example}
       \newtheorem{question}[thm]{Question}
       \newtheorem{assumption}[thm]{Assumption}
\title{Reidemeister torsion, hyperbolic three-manifolds, and character varieties}
\author{Joan Porti\thanks{Partially supported
by Mineco through grant MTM2012-34834}}
\date{\today}
\begin{document}

\maketitle

\begin{abstract}
This is a survey on Reidemeister torsion for hyperbolic three-manifolds of finite volume. 
Torsions are viewed as topological invariants and
also as functions on the variety of representations in $\SL_2(\CC)$. In both cases,
the torsions may also be computed after composing with
 finite dimensional representations 
of $\SL_2(\CC)$. 
In addition the paper  deals with the torsion of the adjoint representation as a function on the variety of $\PSL_{n+1}(\CC)$-characters,
using that the first cohomology group with coefficients twisted by the adjoint is the tangent space to the variety of characters.

 \medskip
 \noindent \emph{MSC:} 57Q10; 57M27\\
 \emph{Keywords:} Reidemeister torsion; hyperbolic three-manifold;  character variety.
\end{abstract}

\begin{footnotesize}
\tableofcontents
 
\end{footnotesize}

\section{Introduction}
\label{sec:intro}

The goal of this paper is to survey some results on Reidemeister torsions of orientable, hyperbolic
three-manifolds of finite volume. Torsions are viewed  as invariants of  hyperbolic manifolds and also
as functions on the variety of 
$\SL_2(\CC)$-characters. 

The paper uses combinatorial torsion, though some relevant  developments
described here are proved using analytic torsion, which is briefly mentioned. 
No details on other kinds of torsion are provided, like $L^2$-torsion.
There are remarkable recent surveys on twisted Alexander polynomials \cite{Morifuji15,FV11}
and on abelian torsions \cite{Massuyeau}.
There are also the classical surveys of Milnor \cite{Milnor68} and Turaev~\cite{Turaev86}, as well as the  books by Turaev \cite{Turaev01,Turaev02}
and Nicolaescu \cite{Nicolaescu}.

\medskip

Besides Section~\ref{sec:combandanl} (devoted to general tools on torsions),
only  orientable hyperbolic  three-manifolds $M^3$  of finite volume are considered.
The hyperbolic structure is unique, by Mostow-Prasad theorem, so their holonomy is unique up to conjugacy. It  lifts to a representation 
in $\SL_2(\CC)$. The lift is naturally associated to a spin structure $\sigma$ 
and it is acyclic, by a theorem of Raghunathan. 
Thus this yields  a topological invariant of the oriented manifold with a spin 
structure (Definition~\ref{defn:tor2closed} in the closed case
and Definition~\ref{defn:tor2cusped} in the non-compact case):
$$
\tau(M^3, \sigma)\in \CC^*.
$$
Its main properties are described, in  particular, its behavior by Dehn filling 
allows to construct sequences of closed manifolds whose volume stays bounded 
but 
the torsion converges to infinity (Corollary~\ref{cor:dense}). 
This must be compared with Theorem~\ref{thm:BV} (due to  Bergeron and Venkatesh) 
on the asymptotic behavior
of torsions by coverings, that yields  sequences of coverings $M^3_ n\to M^3 $ 
so that 
$
\left|\tau(M^3_ n, \sigma)\right|
$ 
grows with the exponential of the volume of $M_n^3$.
For those sequences the injectivity radius converges to infinity. Some results   
suggest that short geodesics may play a role in the behavior of this torsion, 
for instance  the  surgery formula,
Proposition~\ref{prop:SF}.

Additionally, the paper deals with finite dimensional  irreducible 
representations of $\SL_2(\CC)$. Their  composition with the lift of the 
holonomy
provides a family of invariants of a
closed hyperbolic manifold, oriented and with a spin structure. 
A remarkable theorem of M\"uller relates the asymptotic behavior of those 
invariants with the volume (Theorem~\ref{thm:Muller}).

\medskip

For simplicity, 
 finite volume hyperbolic three-manifolds  are assumed to have
 a single cusp.
I am  interested in the torsion as a function on the variety of characters. The 
distinguished component
 is the component of the variety of  $ \SL_2(\CC)$-characters that contains a 
lift of the holonomy of the complete structure.
It is easy to see that the torsion is a rational (meromorphic) function on this 
curve. It is shown here that  for a knot 
exterior (more generally,  for a manifold with first Betti number 1), this 
torsion is a regular function (it has no poles)
on the distinguished component.

The paper also discusses the functions obtained by composing the representation 
with the irreducible representations of $\SL_2(\CC)$.
In particular the torsion for the adjoint representation occurs in the volume 
conjecture, which is also quickly mentioned but not  analyzed.
\medskip

Finally I describe the torsion of the adjoint representation as a function on 
the variety of characters in $\PSL_{n+1}(\CC)$.
I considered the case  for  $\PSL_{2}(\CC)$ in \cite{Memoirs} and 
Kitayama and Terashima discuss the general case
for $\PSL_{n+1}(\CC)$  in
\cite{KitayamaTerashima15}. The relevant fact for this torsion is a result of 
Andr\' e Weil, that identifies the tangent space to the variety of characters
with the first cohomology group with coefficients twisted by the adjoint 
representation (under generic hypothesis). 
In this way this torsion is related to local parameterizations of the 
deformation space, which amounts to choose peripheral curves. In particular,
a formula for the change of curve is provided, and this allows to define a 
volume form under some circumstances (not at the holonomy of the complete 
structure,
but for instance for  characters in $\operatorname{SU}(n)$), as done by Witten 
for surfaces and Dubois for knot exteriors and  $\operatorname{SU}(2)$.
In addition Weil's interpretation allows to compute the torsion for surface 
bundles from the tangent map of the monodromy on the deformation space of the 
fibre.

\medskip

The paper is organized as follows. Section~\ref{sec:combandanl} is devoted to 
the preliminaries on combinatorial torsion, including examples  
as Seifert fibered manifolds, Witten's theorem  on the volume form on 
representations of surfaces, and Johnson's construction of an 
analog to Casson's invariant. This section concludes with a brief recall of 
analytic torsion and Cheeger-M\"uller theorem on the equivalence of both.
Section~\ref{sec:torsionSL2} discusses the torsion of a hyperbolic 
three-manifold for the lift of the holonomy in $\SL_2(\CC)$. It addresses 
first closed manifolds and then the cusped ones, considering both the invariant 
and the function on the distinguished component of the 
variety of characters.
Section~\ref{sec:repsandtors} deals with the analog to the previous section, but 
instead of the torsion of representations in $\SL_2(\CC)$,
it considers   torsions of compositions with finite dimensional representations 
of $\SL_2(\CC)$. In particular it mentions the torsion of the adjoint 
representation as part of the volume conjecture.
Section~\ref{sec:adj} deals with the the torsion of the adjoint on the variety 
of characters in $\PSL_{n+1}(\CC)$, in particular recalling the work 
of Kitayama and Terashima  and the author.

The paper concludes with two appendices. Appendix~\ref{sec:notapprox} is devoted 
to the proof of a technical result: the trivial representation does not lie in 
the distinguished component of 
the variety of characters if the first Betti number is one. This is used in 
Section~\ref{sec:torsionSL2} to prove that the torsion in $\SL_2(\CC)$ is a 
regular function
without poles. Appendix~\ref{sec:cohomology} provides  results in cohomology 
that are required for the torsions of Sections~\ref{sec:torsionSL2} and 
\ref{sec:repsandtors}. It is based mainly on a vanishing theorem of Raghunathan 
and it also discusses bases for the cohomology groups.

\paragraph{Acknowledgement}
 I am grateful to Teruaki Kitano, Takahiro Kitayama,  and the anonymous referee 
for useful remarks.

\section{Combinatorial and analytic torsions}
\label{sec:combandanl}

This section overviews the definition and main properties of combinatorial 
torsion, 
as they are used later in some proofs. It also recalls briefly the definition 
of analytic torsion, as it is used in many relevant results.

\subsection{Combinatorial torsion}
\label{sec:comb}

For a detailed definition of combinatorial torsion see  \cite{Turaev86, 
Milnor66,Massuyeau, Nicolaescu}. This section follows mainly  \cite{Memoirs}, 
in particular its convention with the power $(-1)^{i+1}$ instead of $(-1)^{i}$ 
for the alternated product.
(See Remark~\ref{rem:convention} on this convention.)  The definition is given 
for chain and for cochain complexes, in a way that both homology and cohomology
give the 
same definition of the torsion of a manifold.

\subsubsection{Torsion of a chain complex}
Let $F$ be a field and  $C_*=(C_*,\partial)$ a chain complex of finite 
dimensional $F$-vector spaces:
$$
C_d\xrightarrow{\partial} C_{d-1} \xrightarrow{\partial}\cdots 
\xrightarrow{\partial} C_0.
$$
The subspaces of boundaries and cycles are denoted by 
$B_i=\operatorname{Im}(C_{i+1}
\xrightarrow{\partial} C_i)$ and $Z_i=\ker(C_{i}
\xrightarrow{\partial} C_{i-1} )$ respectively,  the homology is denoted by   by 
$H_i=Z_i/B_i$.
Assume 
$$
c_i=\{c_{i,1},\ldots,c_{i,j_i}\}
$$
is an $F$-basis for $C_i$ and 
$$h_i=\{h_{i,1},\ldots,h_{i,r_i}\},$$
is a basis for $H_i$, if nonzero. For the definition of torsion,  a basis 
  $$b_i=\{b_{i,1},\ldots,b_{i,r_i} \}$$ for $B_i$ is also required. Using the 
exact sequences:
\begin{gather}
\label{eqn:cycle} 0\to Z_i\to C_i \xrightarrow{\partial} B_{i-1}\to 0 \\
\label{eqn:homology}  0\to B_i\to Z_i\to H_i\to 0,
\end{gather}
 lift $b_{i-1}$ to $\tilde b_{i-1}\subset C_i$ in \eqref{eqn:cycle} and $h_i$ to 
$\tilde h_i\subset Z_i\subset C_ i$ 
in \eqref{eqn:homology} and  construct a new basis for $C_i$: 
\begin{equation}
b_i\sqcup \tilde  b_{i-1}\sqcup \tilde h_i ,
\end{equation}
where $\sqcup$ denotes the disjoint union. 
The bases are compared with
the determinant of the corresponding matrix.
Given two bases $\alpha=\{\alpha_1,\ldots,\alpha_r\}$ and 
$\beta=\{\beta_1,\ldots,\beta_r\}$ for $F^r$, if  $(\eta_{ij})\in 
\operatorname{M}_n(F)$
is the matrix that relates the bases, i.e.~if $\alpha_i= \sum_j 
\eta_{ij}\beta_j$,  set 
\begin{equation}
\label{eqn:det}
[\alpha,\beta]=\det(\eta_{ij})\in F^*.
\end{equation}

\begin{definition}
\label{def:torsionchain}
 The torsion of the chain complex $C_*$ with bases $\{c_i\}$ and bases 
$\{h_i\}$ 
for $H_i$ is:
 \begin{equation}
 \label{eqn:torcplx}
  \tor(C_*,\{c_i\},\{h_i\})= \prod_{i=0}^{d} [ b^i\sqcup \tilde  b_{i-1}\sqcup 
\tilde h_i, c_i] ^{(-1)^{i+1}}\in F^*/\{\pm 1\}.
 \end{equation}
\end{definition}

As it is an alternated product, it is easy to see that it does not depend on the 
choice of $b_i$, the basis for $B_i$, and it is also straightforward
that it does not depend on the lifts $\tilde b_{i-1}$ and $\tilde h_{i}$. 

\begin{remark}
\label{rem:convention}
In the alternated product defining the torsion of a complex \eqref{eqn:torcplx}, 
may authors use ${(-1)^{i}}$
instead  of  ${(-1)^{i+1}}$.
Definition~\ref{def:torsionchain} follows the convention of   
\cite{Turaev01,Turaev02,Turaev86,Milnor62,Kitano96}  for instance,
but opposite to \cite{Milnor66,Dubois05} and all papers on analytic torsion 
\cite{Cheeger79,Muller93}
and quantum invariants \cite{Witten89,BNW91, DK07,GM08,DG11, HM11,Murakami13, 
DG13, OT15, DuboisG16}.
I followed this convention in  \cite{Memoirs} but not in \cite{MFP14}.
This convention is better suited for polynomials, also for functions on the 
deformation space; the opposite one
is more useful for the interpretation of the torsion
as a volume form. 
 
We follow a different definition for the torsion of a cocomplex, which gives a 
nonstandard statement of 
Lemma~\ref{lemma:dual} (in the standard version it is corrected by a power  
$(-1)^n$).
\end{remark}

There is a formula of change of bases.
For different choices $c'_i$ and $h'_i$ of bases for $C_i$ and $H_i$ we have
\begin{equation}
\label{eqn:change}
 \frac{ \tor(C_*,\{c_i'\},\{h_i'\} )}{ \tor(C_*,\{c_i\},\{h_i\}) } =
 \prod_{i=0}^{d}  \left(\frac{[c'_i,c_i ]}{ [h'_i,h_i ]}\right)^{(-1)^i}.
\end{equation}
The proof is straightforward, see 
\cite{Milnor66,Memoirs,Nicolaescu} for details.

 \medskip

\begin{definition}
 \label{def:cochain}
Let $C^*=(C^*,\delta)$  be a \emph{cochain complex} with bases $\{c^i\}$, and 
cohomology bases $\{h^i\}$, by constructing the $b^i$ in a similar way, 
its torsion is defined as
\begin{equation}
 \label{eqn:cohom}
 \tor(C^*,\{c^i\},\{h^i\})=\prod_{i=0}^{d} [ b^i\sqcup \tilde  b^{i-1}\sqcup 
\tilde h^i, c_i] ^{(-1)^{i}}\in F^*/\{\pm 1\}.
\end{equation}
 \end{definition}

The convention of powers $(-1)^{i}$ and $(-1)^{i+1}$ has been changed in this 
definition, for the purpose of Lemma~\ref{lemma:dual}.

\begin{lemma}
Let  $(C_i)^ \star=\hom_F(C_i,F)$ be the dual cocomplex, with  coboundary 
$\delta: 
C_i^\star\to C^\star_{i-1}$ defined 
by $\delta(\theta)=\theta\circ d$. If $(h_i)^\star$ is dual to $h_i$, then 
\label{lemma:dual} 
 $$
 \tor(C_*,\{h_i\}) =\tor ((C_*)^\star, \{ (h_i)^\star\}).
 $$
\end{lemma}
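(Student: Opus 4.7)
The plan is to exploit the duality at the level of the ``new'' bases that appear in Definition~\ref{def:torsionchain}. For a chosen basis $c_i$ of $C_i$ and basis $h_i$ of $H_i$, pick $b_i$ as a basis of $B_i$, a lift $\tilde b_{i-1}\subset C_i$ of $b_{i-1}$ via $\partial$, and a lift $\tilde h_i\subset Z_i$ of $h_i$; denote by $\tilde B_{i-1}$ and $\tilde H_i$ their spans, so that $C_i=B_i\oplus\tilde B_{i-1}\oplus\tilde H_i$ with combined basis $c'_i=b_i\sqcup\tilde b_{i-1}\sqcup\tilde h_i$. The first observation is that the dual basis $(c'_i)^\star$ of $C_i^\star$ decomposes accordingly, its three blocks $(b_i)^\star$, $(\tilde b_{i-1})^\star$, $(\tilde h_i)^\star$ spanning the corresponding summands of $C_i^\star=(B_i)^\star\oplus(\tilde B_{i-1})^\star\oplus(\tilde H_i)^\star$, each summand being the set of functionals vanishing on the other two.

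Next I would identify the cocycles and coboundaries of the dual complex in this decomposition. Since $Z_i=B_i\oplus\tilde H_i$ and $\delta$ is pre-composition with $\partial$, direct calculation yields
\[
Z^i=B_i^{\perp}=(\tilde B_{i-1})^\star\oplus(\tilde H_i)^\star,\qquad
B^i=Z_i^{\perp}=(\tilde B_{i-1})^\star,\qquad
H^i\cong(\tilde H_i)^\star,
\]
while $\delta$ restricts to an isomorphism $(B_i)^\star\xrightarrow{\sim}(\tilde B_i)^\star\subset C_{i+1}^\star$. Under the evaluation pairing $H^i\otimes H_i\to F$, the class of $(\tilde h_{i,k})^\star$ pairs to $1$ with $h_{i,k}=[\tilde h_{i,k}]$ and to $0$ with the other basis elements, so it is exactly $h_{i,k}^\star$. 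Consequently the natural auxiliary data for the dual torsion are $b^i=(\tilde b_{i-1})^\star$, the lift $(b_i)^\star$ of a basis of $B^{i+1}$ through $\delta$, and $\tilde h^i=(\tilde h_i)^\star$, and their union is the dual basis $(c'_i)^\star$ up to a permutation of the first two blocks.

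To conclude, I would use that dual bases transform by the inverse-transpose matrix, so $[(c'_i)^\star,c_i^\star]=[c'_i,c_i]^{-1}$; the block permutation contributes only a sign, harmless in $F^*/\{\pm 1\}$. Inserting this into the formulas and using the opposite sign conventions of Definitions~\ref{def:torsionchain} and~\ref{def:cochain} (the powers $(-1)^{i+1}$ versus $(-1)^{i}$), the inverse exponents cancel and the two alternated products coincide modulo $\pm 1$. The genuinely delicate step is the pairing identification $[(\tilde h_{i,k})^\star]=h_{i,k}^\star$, which is what guarantees the cohomology basis on the right-hand side of the lemma is precisely the one produced by our construction; once that is in place, the equality reduces to a single determinant line.
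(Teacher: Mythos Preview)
Your argument is correct: the key idea of choosing the direct-sum decomposition $C_i=B_i\oplus\tilde B_{i-1}\oplus\tilde H_i$, dualizing it block-by-block, and identifying $Z^i=B_i^{\perp}$, $B^i=Z_i^{\perp}$, $H^i\cong(\tilde H_i)^\star$ is exactly what is needed. Your verification that $\delta$ sends $(b_{i,k})^\star$ to $(\tilde b_{i,k})^\star$ is the only nontrivial computation, and it goes through because the lift $\tilde b_i$ was chosen so that $\partial$ carries it bijectively onto $b_i$. The determinant identity $[(c'_i)^\star,c_i^\star]=[c'_i,c_i]^{-1}$ and the opposite sign conventions in Definitions~\ref{def:torsionchain} and~\ref{def:cochain} then combine to give equality in $F^*/\{\pm 1\}$.

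The paper does not actually prove this lemma: it only remarks on the role of the sign convention and refers the reader to Franz and Milnor for a proof of the version stated for the dual \emph{complex} (re-indexed by $d-i$), from which the present statement follows after tracking the $(-1)^{d+1}$ power. Your direct argument is self-contained and avoids that detour; it is the standard linear-algebra proof one would expect, and is arguably cleaner than going through the cited references and then correcting the exponent.
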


Notice that this lemma  uses Definition~\ref{def:cochain} of a cochain complex. 
If 
instead of the cocomplex one considers the dual complex, 
one must  re-index the dimension $i$ by $d-i$, then the torsion of the complex 
is replaced by its $(-1)^{d+1}$-power!
Then, one may see \cite{Franz37,Milnor62} for a proof with this version of 
Lemma~\ref{lemma:dual}.

\subsubsection{Twisted chain complexes}

Let $K$ be a finite CW-complex. This paper is mostly interested in 3-manifolds 
but also in surfaces and in $S^1$, for which there is a canonical 
choice of PL-structure.
Let  
$$
\rho \colon \pi_1 K \to \mathrm{SL}_n(F)
$$  be a representation of its fundamental group.
Consider the chain complex of vector spaces
$$
C_*(K;\rho):= F^n\otimes_{\rho} C_* (\widetilde K;\ZZ)  
$$
where  $C_* (\widetilde K;\ZZ )$ denotes the simplicial complex of the universal 
covering and 
$\otimes_{\rho}$ 
means that one takes the quotient of 
$
F^n\otimes_\ZZ C_* (\widetilde K;\ZZ) $ by the $\ZZ$-module generated by 
$$\rho(\gamma)^t v \otimes c- v\otimes \gamma\cdot c  ,
$$
where $v\in F$, $\gamma\in\pi_1 K$ and $c\in C_* (\widetilde K;\ZZ)$, and $^t$ 
stands for transpose.
Namely
$$v\otimes \gamma\cdot c= \rho(\gamma)^t v \otimes c \qquad \forall 
\gamma\in\pi_1K.$$
Instead of the transpose, one could use the inverse. For some representations 
the inverse  and transpose are the same or conjugate, but this is not true in 
general,
and here this is relevant for duality with cohomology.

The boundary operator is defined by linearity and $\partial (v\otimes 
c)=v\otimes\partial c$,  for $v\in F$ and $c\in C_* (\widetilde K;\ZZ)$.
The homology of this complex is denoted by 
$$
H_*(K; \rho).
$$
Analogously, one considers the \emph{cocomplex} of cochains 
$$
C_*(K;\rho):= \hom_{\pi_1 K} ( C_* (\widetilde K;\ZZ), F^n)
$$
that has a natural coboundary operator to define the cohomology
$$
H^*(K; \rho).
$$
Then $C_*(K;\rho)$ is
 a cocomplex of finite dimensional $F$-vector spaces.
Choose $\{v_1,\ldots,v_n\}$ a $F$-basis for $F^n$ and let  $\{ e^i_1,\ldots, 
e^i_{j_i}\}$
denote the set of $i$-cells of $K$. Then 
$c_i=\{v_r\otimes\tilde e^i_s\mid r\leq n, s\leq j_i \}$ is a $F$-basis for 
$C_i(K;\rho)$.

Let $h_i$ be a basis for $H_i(K; \rho)$. One can now define the torsion by means 
of chain complexes:

\begin{definition} The torsion of $(K,\rho, \{h_i\})$ is
$$
\tor(K,\rho, \{h_i\}) = \tor( C_*(K;\rho), \{c_i\}, \{h_i\})\in F^*/\{\pm 1\}. 
$$
\end{definition}

\begin{remark}
\label{rem:independent}
\begin{enumerate}[(a)]
     \item This torsion does not depend on the \emph{lifts of the cells} $\tilde 
e_i$ nor the basis  $\{v_1,\ldots,v_n\}$ of $F^n$.

\item It does not depend either on the \emph{conjugacy class
of} $\rho$, taking care that the bases for the homology are in correspondence 
via the natural isomorphism between
the homology groups of the conjugate representations.
\end{enumerate}
\end{remark}

\begin{remark}[Sign indeterminacy]
\label{rem:sign}
The torsion lies in $F^*/\{\pm 1\}$, but there are ways to avoid the sign 
indeterminacy:
\begin{enumerate}[(a)]
 \item  When both the Euler characteristic $\chi(K)$ and   $n=\dim\rho$ are 
even, then the sign of this torsion is also well defined, 
 i.e.\ it lives
 in $F^* $.
 
  \item
When $\chi(K)$ is even but $n=\dim\rho$ is odd, the ordering of the cells is 
relevant: If  two cells are permuted in the construction,
then the sign of the torsion is changed. 
To overcome this and to get an invariant in $F^*$, Turaev \cite{Turaev86} 
noticed that given an ordering of the basis in homology with 
constant real coefficients, $H_*(K;\mathbb R)$, there is a natural way to order 
the cells of $K$, (so that the torsion with trivial coefficients is then 
positive).
 So if one has an orientation of the the homology  $H_*(K;\mathbb R)$, then  
there is a choice for the sign of the torsion.
\end{enumerate}
\end{remark}

\begin{remark}
\label{rem:topinvariant}
It is a topological invariant but not in its relative version, 
cf~\cite[Remark~2.12]{Nicolaescu}. Namely: 
\begin{enumerate}[(a)]
 \item 
It is an invariant of the simple homotopy type of $K$. Hence, by Chapman's 
theorem~\cite{Chapman74},
$\tor(K,\rho, \{h_i\}) $ is invariant by homeomorphisms.

Working with manifolds of dimension $\leq 3$, uniqueness of the  triangulation 
up to subdivision  is an alternative to Chapman's theorem.

\item 
However there is a well defined notion of torsion of a pair of CW-complexes 
$(K,L)$, but then the torsion is \emph{not a topological
invariant} of a pair. In fact Milnor used
Reidemeister torsion of a pair to distinguish two homeomorphic simplicial 
complexes that are not combinatorially equivalent \cite{Milnor61}. 
\end{enumerate}

\end{remark}

We deal now with  the construction using cohomology. Consider elements $(\tilde 
e^i_r)^\star\otimes v\colon C_i(\tilde K;\ZZ)\to F ^n$
as the morphism of $\pi_1K$-modules defined by 
$$
\left( (\tilde e^i_r)^\star \otimes v \right) (\tilde e^i_s)=\left\{
\begin{array}{ll}
  v & \textrm{ if } s=r, \\
  0 & \textrm{ if } s \neq r.
\end{array}
\right.
$$
Then $c^i=\{(\tilde e^i_r)^\star \otimes v_j\}_{j\leq  n,\ r\leq n_i}$ is a 
basis for $C^i(K; \rho)$. 
Using this basis, one may define the torsion of the  cocomplex $C_*(K;\rho)$, 
$\{c^i\}$, and $\{h^i\}$
a basis in cohomology. 
Remarks~\ref{rem:independent}, \ref{rem:sign}, and \ref{rem:topinvariant}, also 
hold true for the torsion defined from cohomology.
By using Lemma~\ref{lemma:dual}, 
we have:

\begin{remark}
\label{rem:cohomology}
The complexes $C^*(K;\rho)$  and $C_*(K;\rho)$ are dual. In addition 
$$
\tor( C^*(K;\rho), \{c^i\}, \{(h_i)^\star\})= 
\tor( C_*(K;\rho), \{c_i\}, \{h_i\}).
$$ 
Hence  both homology or cohomology can be used to define the torsion.
\end{remark}

Here Poincar\' e duality and duality between homology and cohomology is not 
discussed
even if it is used, see \cite{Memoirs} for instance.

\subsection{Geometric properties of combinatorial torsion}

This subsection recalls the basic properties of combinatorial torsion, the main 
one being Mayer-Vietoris, useful for cut and paste.
Bundles over $S^1$ are also considered, in particular instead of just the 
torsion it is more convenient to consider
the twisted Alexander polynomial, which is a torsion by \cite{Kitano96}. The 
subsection finishes with  examples.

\subsubsection{Mayer-Vietoris}
Let $K$ be a CW-complex, with subcomplexes $K_1,\ K_2\subset K$ so that 
$K=K_1\cup K_2$. Let $\rho\colon\pi_1K\to \SL_n(F)$ be a representation.
 Consider the diagram 
 of inclusions
 \[
\xymatrix{
K_1\cap K_2
\ar[d]^{i_2}
\ar[r]^{i_1} & K_1 \ar[d]^{j_1} \\ 
K_2 \ar[r]^{j_2}  & K.
}
\]
There is a Mayer-Vietoris exact sequence in homology with twisted coefficients, 
if the representations
 on $\pi_1K_1$, $\pi_1K_ 2$ and $\pi_1L$ are the restrictions:
 \begin{multline}
 \label{eqn:MV}
  \cdots  \to \bigoplus_L H_i(L;\rho) \xrightarrow{i_{1*}\oplus i_{2*}}  
H_i(K_1;\rho)\oplus H_i( K_2;\rho)
  \xrightarrow{j_{1*}-j_{2*}}  H_i(K;\rho)  \\
 \to \bigoplus_L  H_{i-1}(L;\rho)\to\cdots
 \end{multline}
 where $L$ runs on the connected components of  $K_1\cap K_2$. Notice that 
different choices of base-points yield canonical isomorphisms
 between the homology groups, hence we can consider  $K_1\cap K_2$ not 
connected, 
 
Choose a basis for each of these cohomology groups  $h_*$ for $H_*(K;\rho)$, 
$h_{1*}$ for $H_*(K_1;\rho)$,
 $h_{2*}$ for $H_*(K_2;\rho)$, and $h_{L*}$ for $H_*(L ;\rho)$. The long exact 
sequence 
 \eqref{eqn:MV} is viewed as a complex. Its torsion is denoted by $\tor 
(\mathcal H, h_{**})$.

\begin{prop}[Mayer-Vietoris] 
\label{prop:MV}
 Let $K$ be a CW-complex, with subcomplexes $K_1\cap K_2$ so that $K=K_1\cup 
K_2$. Let $\rho\colon\pi_1K\to \SL_n(F)$ be a representation and choose basis 
in 
homology.
Then
 $$
 \tor (K, h_*;\rho)=\frac{\tor (K_1,h_{1*};\rho)\tor (K_2,h_{2*};\rho) 
}{\prod\limits_L\tor (L,h_{L*};\rho) \tor (\mathcal H, h_{**}) }
 $$
 where $L$ runs on the connected components of $K_1\cap K_2$.
 \end{prop}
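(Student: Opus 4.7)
The plan is to derive the formula from the multiplicativity of Reidemeister torsion under short exact sequences of chain complexes. The natural Mayer--Vietoris short exact sequence of twisted chain complexes reads
$$
0 \to \bigoplus_L C_*(L;\rho) \xrightarrow{\alpha} C_*(K_1;\rho)\oplus C_*(K_2;\rho) \xrightarrow{\beta} C_*(K;\rho) \to 0,
$$
with $\alpha(c)=(i_{1*}c,\,i_{2*}c)$ and $\beta(c_1,c_2)=j_{1*}c_1-j_{2*}c_2$. Its associated homology long exact sequence is precisely \eqref{eqn:MV}, and its torsion, with the chosen bases, is $\tor(\mathcal H, h_{**})$.

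First I would fix coherent cellular bases: pick an $F$-basis of $F^n$ once and for all, and, for each component $L$ of $K_1\cap K_2$, a lift of its cells to the universal cover; keep the same lifts when the cells are viewed inside $K_1$, $K_2$ or $K$. The basis $c_i(K_1)\sqcup c_i(K_2)$ of the middle complex then differs from $\alpha(c_i(L))\sqcup \widetilde{c_i(K)}$, where $\widetilde{c_i(K)}$ is any lift of the cellular basis of $K$ into $C_i(K_1;\rho)\oplus C_i(K_2;\rho)$, only by a signed permutation matrix whose contribution cancels in the alternating product defining the torsion.

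Next I would apply the standard multiplicativity lemma for torsions of short exact sequences (see \cite{Milnor66,Nicolaescu,Turaev86}): for $0 \to C'_* \to C_* \to C''_* \to 0$ with compatible bases both at the chain and at the homology level,
$$
\tor(C_*)=\tor(C'_*)\,\tor(C''_*)\,\tor(\mathcal H).
$$
Using $\tor\bigl(\bigoplus_L C_*(L;\rho)\bigr)=\prod_L \tor(L,h_{L*};\rho)$ and the fact that the torsion of a direct sum of chain complexes is the product, one solves the resulting identity for $\tor(K,h_*;\rho)$ and obtains the stated formula.

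The main obstacle is the bookkeeping: checking that the bases really are compatible in the technical sense required by the multiplicativity lemma, that the disconnectedness of $K_1\cap K_2$ causes no trouble (the canonical isomorphisms between homologies at different base points, alluded to just before the proposition, suffice to treat the summands uniformly), and that the sign contributed by the torsion of $\mathcal H$ matches the convention \eqref{eqn:torcplx} with exponent $(-1)^{i+1}$. Because the statement is an equality in $F^*/\{\pm 1\}$, such sign ambiguities do not obstruct the final formula.
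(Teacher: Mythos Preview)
Your argument is correct and is precisely the standard approach: apply the multiplicativity lemma for torsions to the Mayer--Vietoris short exact sequence of twisted chain complexes, then solve for $\tor(K)$. The paper does not supply its own proof but simply refers to \cite{Milnor66} and \cite[Section~0.4]{Memoirs}, where the same argument is carried out; your sketch, including the remarks on compatible bases and the harmlessness of signs in $F^*/\{\pm 1\}$, matches that route.
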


 This is used in surgery formulas (e.g.\ Proposition~\ref{prop:SF}) or for the 
mapping torus (Proposition~\ref{prop:MapTorus}). 
 The proof can be found in \cite{Milnor66} or  in \cite[Section 0.4]{Memoirs}.

 \subsubsection{Polynomials}
 \label{subsection:polynomial}
 
Here some properties of twisted Alexander polynomials viewed as torsions are 
briefly discussed, 
since they are  convenient for describing some results of Reidemeister torsion. 
See the recent surveys
\cite{Morifuji15,FV11} fore more details on twisted Alexander polynomials.
 
Start with a surjective morphism $\phi\colon\pi_1 K\to \ZZ$. Instead of a 
representation $\rho\colon\pi_1K\to \SL_n(F)$, consider the twisted 
representation
$$
 \rho\otimes \phi\colon\pi_1 K\to \operatorname{GL}_n(F(t))
$$
where $F(t)$ is the field of fractions of the polynomial ring $F[t]$.
 
If $H_*(K;\rho)=0$, then $H_*(K;\rho\otimes \phi)=0$ \cite{Milnor68,Kitano96} 
and there is a well defined torsion, or twisted polynomial:
$$
\Delta_{K,\rho,\phi}(t)=\tor (K,\rho,\phi)\in F(t)/\pm t^{n\, \ZZ}.
$$
As the determinant is not one, there is an indeterminacy factor  $t^{k\, n}$, 
for some integer $k\in\ZZ$.

\begin{remark}
\label{rem:evaluation}
Assume $H_*(K;\rho)=0$. Then 
$$ \Delta_{K,\rho,\phi}(1)= \tor (K,\rho).$$
 This is proved from the map at the chain level $C_*(K;\rho\otimes\phi)\to 
C_*(K;\rho)$
 induced by evaluation $t=1$.
\end{remark}

 If  $\Delta_{K,\rho,\phi}=\sum_i a_i t^i$, then its degree can be defined as:
 $$
 \deg ( \Delta_{K,\rho,\phi} )   =\max \{i-j\mid a_i,a_j\neq 0\}.
 $$
 
\begin{prop}
\label{prop:degree}
If $M^3$ is a three-manifold and  $\Sigma\subset M^3$ is a surface dual to 
$\phi$ such that 
$H_0(\Sigma;\rho)=0$, then 
 $$
 \deg \Delta_{M^3,\rho,\phi} \leq -\chi(\Sigma) n.
 $$
\end{prop}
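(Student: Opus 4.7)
The idea is to cut $M^3$ open along $\Sigma$ and reduce the degree bound to a linear-algebra estimate on a chain map whose entries are polynomials of degree at most one in $t$.

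Let $N$ be the compact $3$-manifold obtained by cutting $M^3$ along $\Sigma$, so that $\partial N = \Sigma_-\sqcup\Sigma_+$ is a disjoint union of two copies of $\Sigma$. Since $\Sigma$ is dual to $\phi$, the restrictions of $\phi$ to $\pi_1 N$ and to $\pi_1\Sigma$ both vanish. The infinite cyclic cover $\widetilde M\to M$ corresponding to $\ker\phi$ decomposes as $\widetilde M=\bigcup_{i\in\ZZ} N_i$ with $N_i\cap N_{i+1}=\Sigma_{i+1}$, and this yields a Mayer--Vietoris short exact sequence of chain complexes of $F[t^{\pm 1}]$-modules
$$
0\to C_*(\Sigma;\rho)\otimes_F F[t^{\pm 1}]\xrightarrow{\,j_- - t\,j_+\,} C_*(N;\rho)\otimes_F F[t^{\pm 1}]\to C_*(\widetilde M;\rho)\to 0,
$$
where $j_\pm\colon C_*(\Sigma;\rho)\to C_*(N;\rho)$ are induced by the two boundary inclusions. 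Tensoring over $F[t^{\pm 1}]$ with $F(t)$ replaces the rightmost term by $C_*(M;\rho\otimes\phi)$.

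By the multiplicativity of torsion under short exact sequences (Proposition~\ref{prop:MV} in its algebraic form), $\Delta_{M,\rho,\phi}$ can be written in terms of $\tor(N;\rho)$, $\tor(\Sigma;\rho)$, and the torsion of the long exact sequence in homology. Since $\phi$ is trivial on $\pi_1 N$ and $\pi_1\Sigma$, both $\tor(N;\rho)$ and $\tor(\Sigma;\rho)$ lie in $F^*$ and carry no $t$-dependence. The hypothesis $H_0(\Sigma;\rho)=0$, combined with the assumption $H_*(M;\rho)=0$ implicit in the definability of $\Delta_{M,\rho,\phi}$, ensures that all determinants arising in the long exact sequence are nonzero rational functions of $t$. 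Hence all the $t$-dependence of $\Delta_{M,\rho,\phi}$ is concentrated in determinants of submatrices of $j_- - t\,j_+$. Each entry of this chain map is a polynomial in $t$ of degree at most one, so an $r\times r$ submatrix contributes a polynomial of degree at most $r$; using $\dim_F C_k(\Sigma;\rho)=n\,c_k(\Sigma)$ together with the alternating signs $(-1)^{k+1}$ of Definition~\ref{def:torsionchain}, one obtains
$$
\deg \Delta_{M,\rho,\phi}\leq n\sum_k (-1)^{k+1} c_k(\Sigma)=-n\,\chi(\Sigma),
$$
as desired.

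The principal obstacle is the bookkeeping in the middle step: one must verify carefully that the $t$-independent factors $\tor(N;\rho)$ and $\tor(\Sigma;\rho)$ really drop out of the degree estimate and that the hypothesis $H_0(\Sigma;\rho)=0$ suffices to prevent hidden cancellations in the long exact sequence of homology that would otherwise disrupt the alternating-sum bound. Once this is cleared, the count of degrees of determinants built from a matrix of degree-$\leq 1$ entries, together with the Euler characteristic identity $\sum_k (-1)^{k+1} c_k(\Sigma) = -\chi(\Sigma)$, delivers the inequality essentially for free.
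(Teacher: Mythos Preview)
Your overall strategy---cutting along $\Sigma$ and using the Wang/Mayer--Vietoris short exact sequence so that the $t$-dependence of $\Delta_{M,\rho,\phi}$ is isolated in the map $j_- - t\,j_+$---is exactly the approach the paper has in mind. The gap is in the final degree count.

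You bound the degree by the alternating sum $\sum_k (-1)^{k+1} n\,c_k(\Sigma)=-n\chi(\Sigma)$, treating each $n\,c_k(\Sigma)$ as an upper bound on the degree of a contributing determinant. But an alternating sum of upper bounds is not an upper bound: for even $k$ the determinant appears in the denominator, so you would need a \emph{lower} bound on its degree, not an upper bound. Concretely, if the degree-$0$ and degree-$2$ determinants happened to be constants in $t$, your inequality would only give $\deg\Delta\le n\,c_1(\Sigma)$, which is much weaker than $-n\chi(\Sigma)$. This is precisely the ``hidden cancellation'' worry you flag at the end, and it is a real obstruction, not just bookkeeping.

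The fix is to carry out the degree count at the level of homology rather than chains, and this is where the hypothesis $H_0(\Sigma;\rho)=0$ actually does its work. By duality (Remark~\ref{rem:cohomology} and Poincar\'e duality for $\Sigma$) one gets $H_2(\Sigma;\rho)=0$ as well, so $H_1(\Sigma;\rho)$ is the only nonvanishing group and has dimension exactly $-n\chi(\Sigma)$. Since $H_*(M;\rho\otimes\phi)=0$, the long exact sequence forces $(j_- - t\,j_+)_*\colon H_1(\Sigma;\rho)\otimes F(t)\to H_1(N;\rho)\otimes F(t)$ to be an isomorphism, and the torsion of the long exact sequence reduces to a \emph{single} determinant of size $-n\chi(\Sigma)$ with entries of degree $\le 1$ in $t$. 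That gives $\deg\Delta_{M,\rho,\phi}\le -n\chi(\Sigma)$ directly, with no alternating signs to manage.
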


This proposition  can be proved using Mayer-Vietoris to
a tubular neighborhood of the surface and its exterior.

\medskip

For a knot ${\mathcal K}\subset S^3$, there is a natural surjection 
$\phi\colon\pi_1(S^3-{\mathcal K})\to \ZZ$.
Let $A_{\mathcal K}$ denote its Alexander polynomial. 
In \cite{Lin01} (this reference is based on a preprint from 1990) Lin defined a 
twisted Alexander polynomial $A_{\mathcal K}^\rho$ that lives in $F(t)$,
that was later modified by Wada
\cite{Wada94}.

Milnor in 1962 for the untwisted Alexander polynomials,  and Kitano in 1996 for 
the twisted ones, proved:

\begin{thm}
 For a knot exterior $S^3-{\mathcal K}$, we have:
 \begin{enumerate}[(a)]
  \item  \cite{Milnor62} For  the trivial representation:
 $$
\Delta_{S^3-\mathcal K,1,\phi} =\tor(S^3-{\mathcal K}, \phi)= \frac{A_{\mathcal 
K}(t)}{(t-1)}.
 $$
\item \cite{Kitano96} For $\rho$ a non-trivial acyclic representation:
$$
\Delta_{S^3-\mathcal K,\rho,\phi}=
\tor(S^3-{\mathcal K},\rho\otimes \phi)= A_{\mathcal K}^\rho(t).
$$
 \end{enumerate}
\end{thm}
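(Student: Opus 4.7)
The plan is to realize the exterior by a small 2-dimensional CW-model and carry out the torsion computation via Fox calculus, then identify the result with the classical (respectively Wada's) Alexander polynomial. I would pick a Wirtinger presentation
$$
\pi_1(S^3-\mathcal K)=\langle x_1,\ldots,x_n\mid r_1,\ldots,r_{n-1}\rangle,
$$
in which every $x_j$ is a meridian, so $\phi(x_j)=1$, with one crossing relation discarded as redundant. The associated presentation 2-complex $K$ has a single 0-cell, $n$ 1-cells and $n-1$ 2-cells, and is simple-homotopy equivalent to $S^3-\mathcal K$; by Remark~\ref{rem:topinvariant} the torsion may be computed on $K$. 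Throughout, write $\mu=\rho\otimes\phi\colon\pi_1K\to\mathrm{GL}_N(F(t))$, with $N=1$ and $\rho=1$ in case~(a), and $N=\dim\rho$ in case~(b).

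Next I would describe the twisted chain complex $C_*(K;\mu)$ explicitly via Fox calculus. With natural cellular bases obtained by lifting cells to the universal cover, the boundary $\partial_2$ is the $(n-1)\times n$ block matrix of Fox derivatives $J=(\mu(\partial r_i/\partial x_j))$, while $\partial_1$ has $j$-th block $\mu(x_j)-\mathrm{Id}=t\,\rho(x_j)-\mathrm{Id}$. The complex is acyclic over $F(t)$: it suffices that some block $t\,\rho(x_{j_0})-\mathrm{Id}$ be invertible, which is automatic since its determinant is a polynomial in $t$ with leading term $t^N$. Fixing such an index $j_0$, the block of $C_1$ corresponding to $x_{j_0}$ can be split off through $\partial_1$, reducing the remaining part of $\partial_2$ to the square \emph{reduced Fox Jacobian} $M_{j_0}$ obtained by deleting the $j_0$-th block column of $J$. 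Direct computation from Definition~\ref{def:torsionchain} with these bases yields
$$
\tor(K,\mu)=\frac{\det M_{j_0}}{\det(t\,\rho(x_{j_0})-\mathrm{Id})}\in F(t)/\pm t^{N\ZZ}.
$$

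For part~(a), the denominator is $\det(t-1)=t-1$ and the numerator is $\det M_{j_0}=A_{\mathcal K}(t)$ by the classical Fox-calculus formula for the Alexander polynomial, recovering Milnor's identity $\tor(S^3-\mathcal K,\phi)=A_{\mathcal K}(t)/(t-1)$. For part~(b), the right-hand side is precisely Wada's normalization of the twisted Alexander polynomial $A_{\mathcal K}^\rho(t)$. The main obstacle is verifying that the Fox--Wada determinants, which depend on auxiliary choices (the index $j_0$, the redundant relation dropped, and the ordering of cells), agree with the combinatorial torsion modulo its intrinsic indeterminacy $\pm t^{N\ZZ}$; showing this is the core content of Kitano's argument, carried out by combining the change-of-basis formula~\eqref{eqn:change} with the known invariance properties of $A_{\mathcal K}^\rho$.
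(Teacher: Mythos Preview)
Your approach is correct and is essentially the argument of Kitano's original paper \cite{Kitano96}: realize the knot exterior by the $2$-complex of a Wirtinger presentation, compute the twisted chain complex via Fox derivatives, and read off the torsion as the quotient of the reduced Fox Jacobian determinant by $\det(t\rho(x_{j_0})-\mathrm{Id})$. The paper itself does not give a proof of this theorem; it is stated as a citation of the results of Milnor and Kitano, so there is no ``paper's proof'' to compare against beyond the references.

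One small gap worth flagging: your sentence ``The complex is acyclic over $F(t)$: it suffices that some block $t\,\rho(x_{j_0})-\mathrm{Id}$ be invertible'' is not quite right as stated. Invertibility of that block only gives $H_0=0$; for the $2$-complex in question (with $\chi=0$) you still need $\det M_{j_0}\neq 0$ to kill $H_1$ and $H_2$. In case~(a) this holds because $A_{\mathcal K}(t)\neq 0$ (indeed $A_{\mathcal K}(1)=\pm1$), and in case~(b) it follows from the hypothesis that $\rho$ is acyclic via the implication $H_*(K;\rho)=0\Rightarrow H_*(K;\rho\otimes\phi)=0$ recalled just before the theorem. You should make this explicit rather than claiming the single invertible block suffices.
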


Milnor's formula, for the untwisted polynomial, has been generalized to links 
and to other three-manifolds  in 1986 by Turaev  \cite{Turaev86}. See also 
\cite{FV11, Morifuji15}.

The following is straightforward from Mayer-Vietoris, Proposition~\ref{prop:MV}:
 
\begin{prop}
\label{prop:MapTorus}
 Let $K$ be a CW complex, $f\colon \vert K\vert \to \vert K\vert $ a 
homeomorphism 
with mapping torus $M_f$. Then
 $$\Delta _{M_f,\rho,\phi}=\prod_{i=0}^{\dim K} \det (f_i - 
t\operatorname{Id})^{(-1)^{i+1}}\, ,  $$
where $f_i\colon H_i(K;\rho)\to H_i(K;\rho)$ denotes the induced map in 
homology.
 In particular, by Remark~\ref{rem:evaluation},
 $$
 \tor(M_f,\rho)=\prod_{i=0}^{\dim K} \det (f_i - 
\operatorname{Id})^{(-1)^{i+1}}. 
 $$
\end{prop}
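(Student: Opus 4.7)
The plan is to apply the Mayer--Vietoris formula of Proposition~\ref{prop:MV} to a decomposition of $M_f$ into two thickened fibres. Writing $M_f = (|K|\times [0,1])/((x,0)\sim(f(x),1))$, take $K_1,K_2\subset M_f$ to be the images of $|K|\times[0,1/2]$ and $|K|\times[1/2,1]$. Both $K_j$ deformation retract onto $|K|$, and $K_1\cap K_2$ consists of two copies of $|K|$: the middle slice $L_m=|K|\times\{1/2\}$ and the gluing slice $L_g$ identified with $|K|\times\{0\}=|K|\times\{1\}$. None of these four pieces wraps around the $S^1$-factor, so $\phi$ restricts trivially to each of their fundamental groups and the corresponding twisted chain complex is simply $C_*(|K|;\rho)\otimes_F F(t)$. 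With compatible choices of bases the four torsions $\tor(K_1)$, $\tor(K_2)$, $\tor(L_m)$, $\tor(L_g)$ are equal, so they cancel pairwise in Proposition~\ref{prop:MV} and leave
\[
\Delta_{M_f,\rho,\phi} = \tor(\mathcal H)^{-1},
\]
where $\mathcal H$ is the Mayer--Vietoris long exact sequence with coefficients $\rho\otimes\phi$.

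The core of the argument is to identify the map $H_i(L_m)\oplus H_i(L_g)\to H_i(K_1)\oplus H_i(K_2)$ in $\mathcal H$. Once every group is identified with $H_i(K;\rho)\otimes_F F(t)$, the three inclusions $L_m\hookrightarrow K_1$, $L_m\hookrightarrow K_2$ and $L_g\hookrightarrow K_1$ become the identity, whereas the remaining one $L_g\hookrightarrow K_2$ must cross the identification $(x,0)\sim(f(x),1)$. Lifting the cells of $L_g$ to the universal cover shows that this inclusion contributes the geometric monodromy $f_i$ together with a factor of $t$, the latter coming from the deck transformation associated with $\phi=1$. The resulting block matrix is
\[
\begin{pmatrix}\mathrm{Id} & \mathrm{Id}\\ \mathrm{Id} & t\,f_i\end{pmatrix},
\]
of determinant $\det(t\,f_i-\mathrm{Id})$, which agrees with $\det(f_i-t\,\mathrm{Id})$ up to the indeterminacy $\pm t^{n\ZZ}$ allowed for the polynomial.

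Under the hypothesis $\det(f_i-t\,\mathrm{Id})\neq 0$ for every $i$, each of these matrices is invertible over $F(t)$, so $H_*(M_f;\rho\otimes\phi)=0$ and $\mathcal H$ breaks into short acyclic complexes whose torsions are precisely these determinants. Combining them with the alternating signs built into Definition~\ref{def:torsionchain} gives $\tor(\mathcal H)=\prod_i\det(f_i-t\,\mathrm{Id})^{(-1)^i}$, which yields the stated formula for $\Delta_{M_f,\rho,\phi}$; the second identity then follows by evaluating at $t=1$ via Remark~\ref{rem:evaluation}. The only delicate point is the correct bookkeeping of the factor of $t$ in the inclusion $L_g\hookrightarrow K_2$; everything else is straightforward from Proposition~\ref{prop:MV}.
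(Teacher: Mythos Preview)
Your proof is correct and follows precisely the approach indicated in the paper, which simply states that the result is straightforward from Mayer--Vietoris (Proposition~\ref{prop:MV}); you have supplied the details of the decomposition and the computation of $\tor(\mathcal H)$ that the paper leaves implicit. The bookkeeping of the factor $t$ and of the monodromy $f_i$ in the inclusion $L_g\hookrightarrow K_2$ is exactly the point, and you have it right up to the stated indeterminacy $\pm t^{n\ZZ}$ (and the harmless ambiguity in the sign of $\phi$).
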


Another relevant issue for this polynomial is Turaev's interpretation of the 
twisted Alexander polynomial: namely this polynomial encodes the torsion of 
the finite cyclic coverings (\cite{Turaev86}, see 
also~\cite{DY12,Porti04,SW02,Raimbault12, Yamaguchi12}).

\begin{prop}
\label{prop:Fox}
Let $K$, $\rho\colon\pi_1 K\to \SL_n(F)$, and $\phi\colon\pi_1 K\to \ZZ$ be as 
above.
Let $K_m\to K$ be the cyclic covering of order $m$ corresponding to the kernel 
of $\phi$ composed with the projection 
 $\ZZ\to\ZZ/m\ZZ$.
Then
\begin{enumerate}[(a)]
 \item $H_*(K_m;\rho)=0$ if and only if $\Delta_{M^3,\rho,\phi} (\zeta)\neq 0$ 
for every $\zeta\in\CC$ 
 satisfying $\zeta^m=1$.
 \item If  $H_*(K_m;\rho)=0$ then 
 $$
 \tor(K_m, \rho)=\prod\limits_{i=0}^{m-1} \Delta_{M^3,\rho,\phi} ( \zeta^i)
 $$ 
 for $\zeta\in\CC$ a primitive $m$-root of the unity.
\end{enumerate}
\end{prop}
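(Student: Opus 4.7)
The plan is to realize both the twisted polynomial complex on $K$ and the twisted chain complex of $K_m$ as specializations of a single complex over $F[t,t^{-1}]$. Let $K_\infty\to K$ be the infinite cyclic cover corresponding to $\ker\phi$; then $C_*(K_\infty;\rho)$ is a complex of free $F[t,t^{-1}]$-modules, with free basis the chosen lifts of the cells of $K$, on which $t$ acts as the deck transformation. Tautologically,
\begin{align*}
C_*(K;\rho\otimes\phi)&=F(t)\otimes_{F[t,t^{-1}]}C_*(K_\infty;\rho),\\
C_*(K_m;\rho)&=F[t,t^{-1}]/(t^m-1)\otimes_{F[t,t^{-1}]}C_*(K_\infty;\rho).
\end{align*}
After extending scalars to $\CC$, the Chinese remainder theorem gives $\CC[t,t^{-1}]/(t^m-1)\cong\prod_{\zeta^m=1}\CC$ (one factor for each $m$-th root of unity, via $t\mapsto\zeta$), and hence a direct sum decomposition of complexes
\begin{equation*}
C_*(K_m;\rho)\otimes_F\CC\;\cong\;\bigoplus_{\zeta^m=1}C_*(K;\rho_\zeta),\qquad \rho_\zeta(\gamma)=\zeta^{\phi(\gamma)}\rho(\gamma).
\end{equation*}

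For part (a), the direct sum is acyclic if and only if each summand $C_*(K;\rho_\zeta)$ is, and base change from $F$ to $\CC$ does not affect vanishing of homology. Now $C_*(K;\rho_\zeta)$ is the specialization at $t=\zeta$ of the $F(t)$-complex $C_*(K;\rho\otimes\phi)$, and a cell-level argument (extending Remark~\ref{rem:evaluation} from $t=1$ to $t=\zeta$) shows that it is acyclic precisely when $\Delta_{K,\rho,\phi}(\zeta)\neq 0$, in which case its torsion equals $\Delta_{K,\rho,\phi}(\zeta)$. For part (b), combinatorial torsion is multiplicative over direct sums of acyclic complexes, and assembling the above yields
\begin{equation*}
\tor(K_m,\rho)=\prod_{\zeta^m=1}\tor(K;\rho_\zeta)=\prod_{i=0}^{m-1}\Delta_{K,\rho,\phi}(\zeta^i).
\end{equation*}

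The main obstacle is the change of basis between the natural cell basis of $C_*(K_m;\rho)$, consisting of the $m$ lifts $\{t^k\tilde e^i_j\}_{0\le k<m}$ of each cell of $K$, and the Fourier eigenbasis $\{\sum_{k=0}^{m-1}\zeta^{-k}t^k\tilde e^i_j\}_{\zeta^m=1}$ adapted to the decomposition above. The transition matrix is a block discrete Fourier transform of the same shape in every degree, so its determinants cancel in the alternating product defining the torsion, modulo the $\pm 1$ indeterminacy inherent in combinatorial torsion. Making this bookkeeping precise, together with the specialization statement identifying the torsion of $C_*(K;\rho_\zeta)$ with $\Delta_{K,\rho,\phi}(\zeta)$, is the technical heart of the argument.
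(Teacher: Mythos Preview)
The paper does not actually prove this proposition; it is stated with references to Turaev \cite{Turaev86} and others, and then used later (in Subsection~\ref{sssection:branched}) as a black box. So there is no ``paper's own proof'' to compare against.

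Your approach is the standard one and is correct in outline. Two points deserve to be made explicit in the bookkeeping you flag at the end. First, the cancellation of the DFT change-of-basis determinants in the alternating product relies on $\chi(K)=0$; this holds here because the standing hypothesis $H_*(K;\rho)=0$ forces $n\cdot\chi(K)=\sum_i(-1)^i\dim_F H_i(K;\rho)=0$. Without this, the Vandermonde determinants would contribute a nontrivial power. Second, $\Delta_{K,\rho,\phi}$ lives in $F(t)/\pm t^{n\ZZ}$, so each evaluation $\Delta_{K,\rho,\phi}(\zeta^i)$ individually carries an ambiguity of $\pm(\zeta^i)^{nk}$; you should check that in the product over all $m$-th roots of unity this collapses to $\pm 1$, since $\prod_{i=0}^{m-1}\zeta^i=(-1)^{m-1}$. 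With these two remarks your argument is complete.
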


\subsubsection{Examples}

The first  examples below can be found in many references, for instance 
\cite[Chapter~2]{Nicolaescu}.

\begin{example}[The circle]
\label{ex:circle}
Consider the circle $S^1$. A representation $\rho$ of its fundamental group 
$\pi_1 S^1\cong\ZZ$ is determined by the image of its generator, 
that we denote by $A\in\SL_n(F)$.
Notice that  the homology and cohomology of $S^1$ twisted by $\rho$ is 
determined by $H^0(S^1;\rho)$,
by duality between homology and cohomology and vanishing of the Euler 
characteristic.
Since  $H^0(S^1;\rho)$ is isomorphic to the subspace of invariant elements 
$(F^n)^{\rho(\ZZ)}=\ker (A-\operatorname{Id} ) $,
\begin{equation*}
H_*(S^1;\rho)=0\textrm{ if and only if }\det (A-\operatorname{Id})\neq 0.
\end{equation*}
On the other hand, $S^1$ is just the mapping torus of the identity map on the 
point $*$. The homology of the point is $H_0(*;\rho)\cong F^n$ and the action 
of 
the 
return map is multiplication by $A$.
Thus, by Proposition~\ref{prop:MapTorus}:
$$
\Delta(S^1,\rho\otimes\phi)=\frac{1}{\det (A-t\operatorname{Id})} \qquad\textrm{ 
and }\qquad \tor(S^1, \rho)=\frac{1}{\det (A-\operatorname{Id})}.
$$
In fact using  Proposition~\ref{prop:MapTorus} is a fancy way of computing the 
torsion of the circle. It is more natural to view 
 Proposition~\ref{prop:MapTorus} as a generalization of the torsion of the 
circle.
\end{example}

\begin{example}[The 2-torus]
\label{ex:2torus}
The homology  and the cohomology of the two-torus $T^2$,  $H_*(T^2;\rho)$  and 
$H^*(T^2;\rho)$, are
determined by $H^0(T^2;\rho)$ which is the subspace of $F^n$ of invariant 
elements. This assertion follows from the different dualities (Poincar\' e, and 
homology/cohomology)
and the Euler characteristic. Thus if $F^n$ has no nonzero invariant elements by 
$\rho(\pi_1T^2)$, then $H_ *(T^2;\rho)=0$. In this case
$$
\tor(T^2;\rho)=1.
$$
This can be proved viewing $T^2$ as the mapping torus of the identity on $S^1$ 
and Proposition~\ref{prop:MapTorus},
as the action on $H^1(S^1;\rho)$ is the same as on  $H^0(S^1;\rho)$ and the 
corresponding terms cancel. See also \cite{KirkLivingston99}.
  \end{example}

\begin{example}[Lens spaces]
Let $p,q$ be integers so that $p\geq 2$, $p\geq q\geq 1$ and $p$ and $q$ are 
coprime. View the three-sphere as the unit sphere in $\CC^2$: 
$S^3=\{(z_1,z_2)\in \CC^2\mid \vert z_1\vert ^2+\vert z_2\vert^2=1\}$, 
and consider the action of 
the cyclic group of order $p$ generated by the transformation
$$
\begin{array}{rcl}
 S^3 & \to & S^3 \\
 (z_1,z_2)& \mapsto & (e^{{2\pi i}/{p}} z_1, e^{ {2\pi i \, q}/{p}} z_2)\, .
\end{array}
$$
The quotient by this action is the Lens space $L(p,q)=S^3/\!\sim\,$.
Consider the Heegaard decomposition into two solid torus $L(p,q)=V_1\cup V_2$, 
where $V_1$ and $V_2$ are the torus that lift respectively to 
$$
\{(z_1,z_2)\in S^3 \mid \vert z_1\vert \geq \vert z_2\vert\} \qquad\textrm{ and 
}\qquad \{(z_1,z_2)\in S^3 \mid \vert z_1\vert \leq \vert z_2\vert\}.
$$
Consider a non-trivial representation $\rho\colon\pi_1 L(p,q)\to \CC^*$. As the 
group is cyclic, every irreducible complex representation is one dimensional.
The representation  is determined by a non-trivial $p$-th root of unity  
$\zeta=e^{\frac{2\pi i\, k}p}$, $k\not\equiv 0\mod p$. If $\rho$ maps the soul 
of $V_1$ to $\zeta$, then it maps
the soul of $V_2$ to $\zeta^r$, where $r$ is an integer satisfying $q r\cong 
1\mod p$. Applying Mayer-Vietoris (Proposition~\ref{prop:MV}) to the pair 
$(V_1,V_2)$ and 
by the previous examples:
$$
\tor(L(p,q),\rho)=\frac{1}{  (1-\zeta)(1-\zeta^r)}.
$$
Notice that the determinant of $\rho$ is not one, but has norm one. Thus
the topological invariant is obtained once taking the module  
$ \vert \tor(L(p,q),\rho)\vert$ and considering all nontrivial $p$-roots
of unity.
This is the original example of Franz \cite{Franz35} and Reidemeister 
\cite{Reidemeister35}.  See also  \cite{Milnor66,RMK67, Cohen73}, for instance.
\end{example}

 \begin{example}[$\Sigma\times S^1$]

 \label{ex:surfacetimes}
 
Consider the product of a compact oriented surface, possibly with boundary, and 
the circle, $\Sigma\times S^1$. Let $\rho\colon\pi_1 ( \Sigma\times S^1)\to 
\SL_n(\CC)$
be an irreducible representation. In particular, $\rho$ maps $t$ the generator 
of the factor $\pi_1 S^1$ to a central matrix, namely
$\rho(t)= \omega \operatorname{Id}$ with $\omega^n=1$.
We view  $\Sigma\times S^1$ as the mapping torus of the identity on  $\Sigma$ 
and apply Proposition~\ref{prop:MapTorus}.
By irreducibility $H_0(\Sigma;\rho)\cong H_2(\Sigma;\rho)=0$ and  $\dim 
H_1(\Sigma;\rho)= -n \,\chi(\Sigma)$.
As the action of $t$ on $H_1(\Sigma;\rho)= \CC^{-n \,\chi(\Sigma) }$ is 
multiplication 
by $\omega$:
\begin{enumerate}[(a)]
 \item  $\rho$ is acyclic iff $\omega\neq 1$.
 \item when $\omega\neq 1$, then 
 $
  \tor(  \Sigma\times S^1,\rho)=(\omega-1)^{-n\,\chi(\Sigma)}
 $.
 \end{enumerate}
\end{example}

\begin{example}[Seifert fibered manifolds]
\label{ex:Seifert}
In \cite{Kitano94,Kitano96b} Kitano computed the Reidemeister torsion of a 
Seifert fibered three-manifold with a representation 
in $\SL_n(\CC)$. His result is reproduced here. Previously, Freed had computed a 
torsion for Brieskorn spheres \cite{Freed92}, and of course 
Franz \cite{Franz35} and  Reidemeister \cite{Reidemeister35} for lens spaces.

Let $M^3$ be a Seifert fibered manifold, whose base is a compact surface 
$\Sigma$, possibly with boundary, with $c$ cone points corresponding to singular 
fibres.
Let $\rho\colon\pi_1 M^3 \to \SL_n(\CC)$ be an irreducible representation. In 
particular $\rho$ maps the fibre  to $\omega \operatorname{Id}$,
where $\omega^n= 1$. For the $i$-th cone point, let $(\alpha_i,\beta_i)$ denote 
the Seifert coefficients, and let $c_i$ denote the loop 
such that $c_i^{\alpha_i} f ^{\beta_i}=1$. Let 
$\{\lambda_{i,1},\ldots,\lambda_{i,n}\}$ denote the eigenvalues of $\rho(c_i)$. 
Choose integers
$(r_i,s_i)$ such that $\alpha_i s_i-\beta_is_i=1$. 
Let $\dot\Sigma$ denote the surface $\Sigma$ minus the cone points, i.e.\ the 
space of regular fibres, 
so that $\chi(\dot\Sigma)=\chi(\Sigma)-c$.

\medskip

Assume  $M^3$ is not a solid nor a thick torus, then \cite{Kitano94,Kitano96b}:
\begin{enumerate}[(a)]
 \item  $\rho$ is acyclic if and only if $\omega\neq 1$ and 
$\lambda_{i,j}^{r_i}\omega^{s_i}\neq 1$, for every $i=1,\ldots,c$, 
$j=1,\ldots,n$.
 \item If it $\rho$ is acyclic, then
$$
 \tor(M^3,\rho)=(\omega-1)^{-n\chi(\dot\Sigma)}\prod_{i=1}^c\prod_{j=1}^n\frac 
1{(\lambda_{i,j}^{r_i}\omega^{s_i}-1)}.
 $$
 \end{enumerate}
 
 \medskip
 
 Not being a solid torus $S^1\times D^2$ nor a thick torus $S^1\times S^1\times 
[0,1]$ implies that $\chi(\dot\Sigma)<0$.

When the base $\Sigma$ is orientable, this is a straightforward consequence of 
Examples~\ref{ex:surfacetimes}, \ref{ex:circle}, and \ref{ex:2torus}.

When $\Sigma$ is not orientable,   choose a reversing orientation curve 
$\sigma\subset\dot\Sigma$, so that a tubular neighborhood of $\Sigma$ is 
a M\"obius band and its complement is orientable, with the same Euler 
characteristic. By using the fibration, this decomposes $M^3$ as a Seifert 
manifold
$N$ with orientable base and $Q$, the orientable circle bundle over the 
M\"obius strip, with $N\cap Q=\partial Q\cong S^1\times S^1$. 
 In particular $\pi_1N$ surjects onto $\pi_1M^3$, so the induced representation 
on $N$ is irreducible and we use the theorem in the case with orientable base.
 The curve $\sigma$ may be chosen so that $\rho\vert_{\pi_1 Q}$ is non-trivial, 
hence $Q$ and $\partial Q$ are acyclic, and the assertion about acyclicity 
follows from Mayer-Vietoris 
 (Proposition~\ref{prop:MV}) and
 the case with orientable base. In addition, in the acyclic case, the torsion 
of the 2-torus is trivial (Example~\ref{ex:2torus}), and so is the torsion of 
$Q$ that retracts to a Klein bottle 
 (hence it is also a mapping torus). Thus $\tor(M^3,\rho)=\tor(N,\rho)$ and the 
proof is concluded.

\begin{remark}
For a Seifert fibered manifold,  the torsion is constant on the components of 
the variety of representations.
\end{remark}

See \cite{Yamaguchi12b,Yamaguchi14} for the asymptotic behavior of these 
torsions.
 
\end{example}

\begin{example}[Torus knots]
The previous example may be applied to a torus knot, with coefficients $p,q$, 
that are relatively prime positive integers. It is Seifert fibered, 
with base a disc and two singular fibres, with coefficients $(p,1)$ and 
$(q,1)$.
Its components of the variety of representations in $\SL_2(\CC)$ are determined 
by the choice of the eigenvalues of the corresponding elements,
$\left\{e^{\pm \pi i\frac{k_1}{p}}\right\}$  and $\left\{e^{\pm \pi 
i\frac{k_2}{q} }\right\}$, with $0<k_1 < p $ and $0<k_2< q$ and $k_1\equiv k_2 
\mod 2$.
As the fibre is mapped to $(-1)^{k_1}$ times the identity, the representation is 
acyclic only for $k_i$ odd. The torsion is
$$
\frac{1}{\left(1-\cos\frac{\pi k_1}{p}\right)\left(1-\cos\frac{\pi 
k_2}{q}\right)}.
$$

The description of the components of the variety of representations in 
$\SL_3(\CC)$ is more involved \cite{MP15}, as there are much more possibilities 
for the eigenvalues.
For instance, for the trefoil knot ($p=2$, $q=3$) there are  no acyclic 
irreducible representations in $\SL_3(\CC)$.

For $p=2$, $q=5$, there are two components of acyclic representations in 
$\SL_3(\CC)$ \cite{MP15}.
For one of the components its eigenvalues are $\{e^{\frac{2\pi 
i}{3}},e^{\frac{2\pi i}{3}},e^{\frac{4\pi i}{3}}\}$ and 
$\{ e^{\frac{2\pi i}{15}}, e^{\frac{8\pi i}{15}}, e^{\frac{20\pi i}{15}}\}$,
for the other, the complex conjugates. For both the torsion is
$$
\frac{4/3}{1+2 \cos\frac{\pi}{5}}= 2-\frac{2}{3}\sqrt{5}.
$$

See \cite{Yamaguchi13} for a discussion on the torsion of torus 
knots.
\end{example}

 \begin{example}[Volume form on representations of surfaces]
 \label{ex:Witten}
   Let $\Sigma$ denote a surface of genus $g> 1$ and $G$ a compact Lie group.
 Denote by $X(\Sigma,G)$ the variety of representations of $\pi_1\Sigma$ in $G$ 
up to 
conjugacy. 
 Let $X^*(\Sigma,G)$ denote the subset of representations such that the 
infinitesimal 
commutator of the image is trivial.
 This is equivalent to $H^0(\Sigma; \operatorname{Ad}\rho)=0$, and
 for $G$ linear, this holds true when $\rho$ is irreducible.
 
If $\chi_\rho\in X^*(\Sigma,G)$,  by definition $H^0(\Sigma ; 
\operatorname{Ad}\rho)=0$. 
Hence $H^2(\Sigma ; \operatorname{Ad}\rho)=0$
and $H^1(\Sigma ;  \operatorname{Ad}\rho)$ is identified to  the tangent space 
of $X^*(\Sigma ; G)$  at the character of $\rho$
\cite{Goldman1984}, see Theorem~\ref{thm:Weil}. 
Reidemeister torsion on homology defines a volume form, by the formula of change 
of basis in homology \eqref{eqn:change}. 
Namely, if $2r=\dim X^*(\Sigma,G)$,
 $$
 \begin{array}{rcl}
  \vol_{\tor} \colon \bigwedge^{2r} H^1(\Sigma ;  \operatorname{Ad}\rho) & \to 
& \RR 
\\
   h_1\wedge\cdots\wedge h_{2r} & \mapsto & ( {\tor(\Sigma ,\rho, 
\{h_1,\ldots,h_{2r}\})} )^{-1}
 \end{array}
 $$
is a well defined linear isomorphism. The sign can be controlled by means of 
orientation.

The space $X^*(\Sigma ,G)$ has a well defined symplectic structure, due to 
Atiyah-Bott 
and Goldman \cite{Goldman1984}, that we denote by $\omega$.
In particular $\frac{\omega^{r}}{r !}$ is a natural volume form. Witten proved 
in   \cite{Witten91} that they are the same form:

\begin{thm}[Witten] For a compact Lie group $G$
 $$
 \frac{1}{(2\pi)^{2r}}  \vol_{\tor}  = \frac{\omega^{r}}{r !}.
 $$
\end{thm}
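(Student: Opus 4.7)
The plan is to compare the two volume forms at $\chi_\rho\in X^*(\Sigma,G)$ through Hodge theory, replacing the combinatorial torsion by the analytic torsion via the Cheeger--M\"uller theorem. First I would equip $\Sigma$ with a Riemannian metric and $\mathfrak{g}$ with an $\operatorname{Ad}$-invariant inner product $\langle\cdot,\cdot\rangle$, which exists because $G$ is compact. These combine into an $L^2$ inner product on the space of $\operatorname{Ad}\rho$-twisted differential forms on $\Sigma$, and Hodge theory transfers this structure to a canonical inner product, and hence a volume form $\vol_{L^2}$, on $H^1(\Sigma;\operatorname{Ad}\rho)$.

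The next step is to identify $\omega^r/r!$ with $\vol_{L^2}$ up to a universal constant. The Atiyah--Bott--Goldman description gives, on harmonic representatives $a,b\in H^1(\Sigma;\operatorname{Ad}\rho)$, the formula $\omega(a,b)=\int_\Sigma\langle a\wedge b\rangle$, which using the Hodge star becomes $\omega(a,b)=\pm\langle a,\ast b\rangle_{L^2}$. On a surface, $\ast$ is an isometry of $H^1$ with $\ast^2=-\operatorname{Id}$, i.e.\ a compatible almost complex structure, so the standard Liouville/metric identity yields $\omega^r/r!=\pm\vol_{L^2}$, and the theorem reduces to
$$\vol_{\tor}=(2\pi)^{2r}\,\vol_{L^2}.$$

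For this last identity I would invoke Cheeger--M\"uller: since $G$ is compact, $\operatorname{Ad}\rho$ is orthogonal, so the combinatorial torsion $\vol_{\tor}$ equals the Ray--Singer analytic torsion computed with respect to the $L^2$ metric on cohomology. On a surface this analytic torsion simplifies, via $\Delta_1=dd^\ast+d^\ast d$ and Poincar\'e duality, to an expression involving $\det'(\Delta_0)$ and $\det'(\Delta_1)$; the prefactor $(2\pi)^{2r}$ then arises from the zeta-regularization, with the exponent $2r=\dim H^1(\Sigma;\operatorname{Ad}\rho)$ reflecting the contribution of the kernel of $\Delta_1$.

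The main obstacle is normalization bookkeeping: the factor $2\pi$ from the Chern--Weil conventions in the definition of $\omega$, the normalization of $\vol_{L^2}$, the sign issues from Remark~\ref{rem:sign}, and the $2\pi$'s inherent in zeta-regularized determinants must all be chosen compatibly. Once the conventions are fixed, the statement becomes Witten's interpretation of Reidemeister torsion as the symplectic volume form, made rigorous through Ray--Singer and Cheeger--M\"uller.
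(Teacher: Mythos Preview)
The paper does not actually prove this theorem; it only states it and remarks that ``the proof uses a symplectic structure on a chain complex,'' citing Witten's original argument and S\"ozen's later developments. That approach is purely combinatorial: one equips the twisted cellular chain complex $C_*(\Sigma;\operatorname{Ad}\rho)$ itself with a symplectic pairing coming from the intersection form and the invariant form on $\mathfrak{g}$, and then compares the Liouville volume on the middle cohomology directly with the torsion of the complex. No Hodge theory or analytic torsion enters.

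Your route via Cheeger--M\"uller is genuinely different, and most of it is sound: the identification of $\omega^r/r!$ with $\vol_{L^2}$ via the Hodge star as a compatible complex structure is correct, and Cheeger--M\"uller does apply since $\operatorname{Ad}\rho$ is orthogonal. The gap is in your last step. On a closed \emph{even}-dimensional manifold the Ray--Singer torsion of an orthogonal representation is identically $1$: Poincar\'e duality gives $\zeta_p=\zeta_{n-p}$, and for $n=2$ the Hodge decomposition gives $\zeta_1=2\zeta_0$ on the nonzero spectrum, so $\sum_p(-1)^p p\,\zeta_p'(0)=2\zeta_0'(0)-\zeta_1'(0)=0$. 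Hence Cheeger--M\"uller yields $\vol_{\tor}=\vol_{L^2}$ exactly, and no factor of $(2\pi)^{2r}$ can ``arise from the zeta-regularization'' as you suggest. The $(2\pi)$'s in Witten's formula come entirely from the normalization of the symplectic form (Witten's $\omega$ carries a Chern--Weil prefactor that your formula $\omega(a,b)=\int_\Sigma\langle a\wedge b\rangle$ omits), not from the torsion side. So your outline proves the right identity up to normalization, but your explanation of where the constant lives is wrong and would mislead a reader; the bookkeeping you flag as an obstacle is in fact the entire remaining content.
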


The proof uses a symplectic structure on a chain complex, which has been further 
developed by S\"ozen, cf~\cite{Sozen12a,Sozen12b}.
  
 \end{example}

\begin{example}
\label{ex:Johnson}
  
Johnson  used the point of view of volume for constructing the torsion from  a 
Heegaard splitting,
in a hand written paper 
that unfortunately 
was never published. 
Consider a closed 3-manifold $M^3$ with a Heegaard decomposition: i.e.\ 
$M^3=B_1\cup_\Sigma  B_2$, 
where $B_1$ and $B_2$ are handlebodies such that
$\Sigma= B_1\cap B_1=\partial B_1=\partial B_2$ is a surface of genus $\geq 2$.
This yields a commutative diagram of fundamental groups and their representation 
spaces:
\[
\xymatrix{
\pi_1 \Sigma
\ar[d]_{i_2*}
\ar[r]^{i_1*} & \pi_1 B_1 \ar[d]^{j_1*} \\ 
\pi_1 B_2 \ar[r]_{j_2*}  & \pi_1 M
}
\qquad\qquad\qquad
\xymatrix{
X^*(\Sigma,G)
 & X^*(B_1,G) \ar[l]_{i_1*}  \\ 
X^*(B_2,G)  \ar[u]^{i_2*}  
& X^*( M^3,G) \ar[l]^{j_2*} \ar[u]_{j_1*}
}
\]
for $G=\SU$.
Assume we have a representation $\rho\in X^*(M^3,G)$ that is infinitesimally 
rigid (i.e.\ $H^1(M^3 ; Ad\rho)=0$.) Then,
by using a Mayer-Vietoris argument, Johnson shows that $X^*(B_1,G)$ and 
$X^*(B_2,G)$ intersect transversally at the character
$\rho_0$ in $X^*(\Sigma,G)$. Johnson uses Reidemeister torsions to define 
volume 
forms $\operatorname{vol}_{B_i}$ and  $X^*(B_i,G)$
and $\operatorname{vol}_{\Sigma}$ on  $X^*(\Sigma,G)$ (as the  groups involved 
are free 
or surface groups). He defines
an invariant as the ratio between  $\operatorname{vol}_{\Sigma}$ and  
$\operatorname{vol}_{B_1}\wedge \operatorname{vol}_{B_2}$.
What he proves is:
$$
\operatorname{tor}(M^3, \rho_0)=  \frac{\operatorname{vol}_{B_1}\wedge 
\operatorname{vol}_{B_2}}{ \operatorname{vol}_{\Sigma}}\, ,
$$
cf.~Proposition~\ref{prop:MV}.  
Under Johnson hypothesis there are finitely many acyclic conjugacy classes of
representations in $\operatorname{SU}(2)$ and he  considers the addition of all 
Reidemeister torsions.
Using this Heegaard splitting, this is analogous to Casson's invariant
\cite{GuillouMarin92}, by taking into account additionally this volume on the 
varieties of characters. 
\end{example}

The point of view of volume \`a la Johnson has also been used by Dubois in 
\cite{Dubois05,Dubois06}, we will comment on it in 
Example~\ref{ex:dubois}.

\subsection{Analytic torsion}
\label{sec:analytic}

Consider now a smooth compact manifold $M$ and a representation 
$$\rho\colon\pi_1M\to \SL_n(\RR).$$ 
The manifold $M$ has a unique $\mathcal C^1$-triangulation, so one can view it 
as a CW-complex and compute its torsion.
Consider the associated flat bundle
\begin{equation}
 \label{eqn:flabundle}
E_\rho=\widetilde M\times \RR^n/\pi_1M,
 \end{equation}
where $\pi_1M$ acts on the universal covering $\widetilde M$ by deck 
transformations and on $\RR^n$ via $\rho$.
The space of $E_\rho$-valuated differential forms is denoted by  
$\Omega^p(E_\rho)$,  and its de Rham cohomology by 
$H^*(M;E_\rho)$. 
 To simplify, we assume that $\rho$ is acyclic, 
namely, 
by de Rham theorem we assume that  
\begin{equation}
 \label{eqn:deRham}
H^*(M;E_\rho)\cong H^*(M;\rho)=0.
 \end{equation}
We choose a Riemannian metric $g$ on $M$ and a metric $\mu$ on the bundle 
$E_\rho$ (notice that since we do not assume $\rho$ to be orthogonal, 
perhaps the metric $\mu$ cannot be chosen to be flat). This yields a metric on 
$\Omega^p(E_\rho)$. Using it, 
we may define the adjoint to the differential and the Laplacian $\Delta_p(\rho)$ 
on $\Omega^p(E_\rho)$.
As it is an elliptic operator, it has a discrete spectrum $0 < 
\lambda_0\leq\lambda_1\leq\cdots\to\infty$. 
The zeta function is defined on the complex half-plane $\operatorname{Re}(s)\geq 
n/2$:
$$
\zeta_p(s)=\sum_{\lambda_i}\lambda_i^{-s} 
$$
and it extends to the a meromorphic function on the complex plane, homomorphic 
at $s=0$.

 \begin{definition}
The analytic torsion is defined as:
\begin{equation}
\tor_{an}(M,\rho,g,h)=\exp\Big( \frac12 \sum_{p=0}^{\dim M} (-1)^p \,p 
\zeta_p'(0)  \Big).
\end{equation} 
\end{definition}

In \cite{Muller93} M\" uller proves that if $\dim M$ is odd  and, as we assume 
$\rho$ is acyclic, then it is independent of $g$ and $h$.
It can also be defined using the  trace of the heat operator:
\begin{multline}
\label{eqn:traceheatkernel}
\tor_{an}(M,\rho)  =  \\ \exp\left(  \frac12 \sum_{p=0}^{\dim M} (-1)^p p 
\frac{d\phantom{s}}{d s} 
\left.\left( 
\frac{1}{\Gamma(s)}
\int_0^\infty 
\left(  \operatorname{Tr}( e^{-t \Delta_p(\rho)} )  t^{s-1}  d\, t \right)
\right) \right|_{s=0}  \right)\, .
\end{multline}

In the following theorem  notice that we use the convention for Reidemeister 
torsion
opposite to the usual in analytic torsion.

\begin{thm}[\cite{Cheeger79,Muller93}]
Let $M$ be a closed hyperbolic manifold of odd dimension and 
$\rho\colon\pi_1(M)\to \SL_n(\mathbb{R})$. Then
$$
\tor_{an}(M,\rho)=\frac{1}{\vert \tor(M,\rho)\vert}.
$$
\end{thm}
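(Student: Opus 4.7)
The plan is to reduce the statement to the classical Cheeger--M\"uller theorem for unitary representations (proved independently by Cheeger and M\"uller in the late 1970s) by exploiting the odd dimensionality of $M$ and the unimodularity of the representation $\rho$.

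First I would establish that both sides of the claimed equality are independent of the auxiliary data used in their definitions. The Reidemeister torsion $|\tor(M,\rho)|$ is manifestly a combinatorial invariant (cf.~Remark~\ref{rem:independent}), so it only depends on $(M,\rho)$. For the analytic side, one must check that $\tor_{an}(M,\rho,g,\mu)$ is independent of the Riemannian metric $g$ on $M$ and the Hermitian metric $\mu$ on $E_\rho$. This is precisely the content of M\"uller's 1993 result: the variation of $\log\tor_{an}$ under a one-parameter deformation of $(g,\mu)$ can be expressed, via heat-kernel asymptotics and the trace-class formula \eqref{eqn:traceheatkernel}, as an integral of a locally defined form. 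Under the two hypotheses that $M$ is odd-dimensional and $\rho$ is unimodular (so that $\det\rho(\gamma)=\pm 1$ for all $\gamma$), this local form vanishes identically, giving metric invariance.

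Next, I would produce an auxiliary unitary representation to which to compare. The device is to form the direct sum $\rho\oplus(\rho^{-1})^{t}$, which carries a natural flat Hermitian metric making it conjugate to a unitary representation; equivalently, one uses the polar decomposition to build a path of flat bundles deforming $(E_\rho,\mu)$ to an orthogonal flat bundle, at the cost of allowing $\mu$ not to be flat along the path. The classical Cheeger--M\"uller theorem applies to the endpoint, yielding
\[
\tor_{an}(M,\rho_{\mathrm{unit}})=\frac{1}{|\tor(M,\rho_{\mathrm{unit}})|}.
\]
One then transports the equality back to $\rho$. On the analytic side one uses the metric invariance established in the previous step to control the deformation of the fiber metric; on the combinatorial side one uses that the absolute value of the Reidemeister torsion depends only on the conjugacy class of the representation, together with the behavior under direct sums.

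The main obstacle is the first step: the proof of metric invariance of $\tor_{an}$ in odd dimensions for a non-orthogonal representation. This is where the non-flatness of $\mu$ enters seriously, forcing an anomaly analysis (later systematized by Bismut--Zhang) of the variation of the zeta-regularized determinants $\zeta_p'(0)$. Once this local-cohomological vanishing is in hand, the reduction to the unitary case and the final identification of the two torsions are essentially formal consequences of the multiplicativity of torsion under direct sums and the classical Cheeger--M\"uller theorem.
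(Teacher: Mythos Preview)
The paper does not give a proof of this theorem; it only states it, cites \cite{Cheeger79,Muller93}, and adds the historical remark that the orthogonal case was done independently by Cheeger and M\"uller in 1978--79 and the unimodular case by M\"uller in 1993. So there is nothing in the paper to compare your argument against beyond that one-line attribution.

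As for your sketch itself: the overall architecture (first establish metric independence of $\tor_{an}$ in odd dimension for unimodular $\rho$, then reduce to the classical orthogonal Cheeger--M\"uller theorem) is in the right spirit, but the reduction step as you describe it does not work. The direct sum $\rho\oplus(\rho^{-1})^{t}$ on $V\oplus V^{*}$ carries a natural flat \emph{nondegenerate bilinear pairing} (the evaluation map), not a flat Hermitian metric; this pairing has signature $(n,n)$, so the representation is not conjugate to a unitary one. Likewise, the polar decomposition of each $\rho(\gamma)$ does not produce a path of \emph{flat} bundles ending at an orthogonal flat bundle: the orthogonal parts of the $\rho(\gamma)$ do not in general satisfy the group law, so you do not get a representation at the endpoint. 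In short, there is no deformation through flat connections from a generic unimodular $\rho$ to an orthogonal one, and your bridge to the classical theorem collapses.

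M\"uller's actual argument in \cite{Muller93} does not reduce to the orthogonal case by such a deformation. He instead extends the analytic machinery directly: the non-flatness of the fibre metric $\mu$ introduces extra terms in the variation of $\zeta_p'(0)$, and the heart of the paper is showing that these anomaly terms, which are local integrals coming from the heat-kernel asymptotics, vanish in odd dimension under the unimodularity hypothesis $\det\rho\equiv 1$. The equality with Reidemeister torsion is then obtained by adapting the original Cheeger--M\"uller techniques (surgery, product formulas, comparison on model pieces) to this more general setting, rather than by invoking the orthogonal theorem as a black box. If you want a genuine reduction-style proof, the Bismut--Zhang approach via Witten deformation along a Morse function is the right framework, but that is a different and substantially longer argument.
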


This theorem was first proved by Cheeger \cite{Cheeger79} and M\"uller 
\cite{Muller78} independently for orthogonal representations, and later by 
M\"uller \cite{Muller93} for unimodular ones. In addition, aciclycity of $\rho$ 
is not required by choosing orthonormal harmonic basis in cohomology.

This subsection concludes recalling a theorem of Fried.
Let $\rho\colon\pi_1 M\to \operatorname{SO}(n)$ be a  representation of a 
hyperbolic manifold. For $s\in\CC$
with $\operatorname{Re}(s)$ sufficiently large, consider
$$
R_\rho(s)=\prod_\gamma\det(\operatorname{Id}-\rho(\gamma) e^{-s \, l(\gamma)})
$$
where the product runs over the prime, closed geodesics of $M$ and $l(\gamma)$ 
denotes the length of $\gamma$.
This is called the Ruelle zeta function.
 
 \begin{thm}[\cite{Fried86}]
 \label{thm:Fried}
 Let $M$ be a closed hyperbolic manifold of odd dimension and assume that 
$\rho\colon\pi_1 M\to \operatorname{SO}(n)$ is acyclic.
 Then $R(s)$ extends meromorphically to $\CC$ and 
 $$
  R_\rho(0)=\tor_{an}(M,\rho)^2.
 $$
 \end{thm}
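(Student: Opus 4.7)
The strategy is to use the Selberg trace formula to convert the geometric sum defining $R_\rho(s)$ into spectral data of the twisted Laplacians $\Delta_p(\rho)$, and then to evaluate carefully at $s=0$. I would begin from the logarithmic expansion
$$\log R_\rho(s) = -\sum_\gamma \sum_{k=1}^\infty \frac{1}{k}\,\tr\bigl(\rho(\gamma)^k\bigr)\,e^{-sk\,l(\gamma)},$$
where the outer sum runs over primitive closed geodesics of $M$. With a suitable test function (of heat or wave kernel type), this is precisely the geometric side of the Selberg trace formula on the real hyperbolic manifold $M=\Gamma\backslash\HH^n$, with $n=\dim M$ odd, twisted by the flat bundle $E_\rho$. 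Running the trace formula on $p$-forms, and using the representation-theoretic/Hodge decomposition of $E_\rho$-valued forms on $\HH^n$, one obtains a factorization
$$R_\rho(s) \;=\; \prod_{p=0}^{n-1} Z_p(s+a_p)^{\varepsilon_p},$$
with explicit shifts $a_p\in\RR$ and signs $\varepsilon_p=\pm 1$, where each $Z_p$ is a Selberg-type zeta function whose logarithmic derivative encodes the spectrum of $\Delta_p(\rho)$. This already supplies the meromorphic continuation of $R_\rho(s)$ to $\CC$.

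Next I would identify the value of each $Z_p$ near its relevant point with the zeta-regularized determinant $\det\bigl(\Delta_p(\rho)\bigr)$, i.e.\ with $\exp(-\zeta_p'(0))$. Acyclicity of $\rho$ guarantees that $\Delta_p(\rho)$ has no zero modes, so each $Z_p$ is regular and nonzero there, with value given by the corresponding determinant up to a factor coming from the identity orbital integral in the trace formula. Orthogonality of $\rho$ ensures that every $\Delta_p(\rho)$ is self-adjoint with nonnegative spectrum, so no sign ambiguity arises. Assembling these identifications, $R_\rho(0)$ becomes an alternating product of the $\det\bigl(\Delta_p(\rho)\bigr)$ whose exponents are exactly those appearing in the formula for $\tor_{an}(M,\rho)^2$; concretely,
$$R_\rho(0) \;=\; \prod_{p=0}^{n}\det\bigl(\Delta_p(\rho)\bigr)^{-(-1)^p\,p} \;=\; \exp\!\Big(\sum_{p=0}^{n}(-1)^p\,p\,\zeta_p'(0)\Big) \;=\; \tor_{an}(M,\rho)^2.$$

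The delicate step, and the main obstacle, is the treatment of the identity contribution in the trace formula. In general this term produces a prefactor polynomial in $s$ whose value at $s=0$ is governed by the Plancherel measure on the isometry group of $\HH^n$. In even dimensions this prefactor is nontrivial and would correct Fried's identity by a power of the Euler characteristic; in odd dimensions, however, the Plancherel density has the parity property that forces this contribution to vanish at $s=0$, consistent with $\chi(M)=0$. This parity vanishing is precisely where the odd-dimension hypothesis enters in an essential way, and carrying it out rigorously---tracking how each $Z_p$ contributes at its respective shifted point and verifying that the identity terms cancel in the alternating product---is the central technical difficulty of the proof.
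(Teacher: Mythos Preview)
The paper does not give its own proof of this theorem: it is simply quoted as a result of Fried~\cite{Fried86}, and the only surrounding commentary is that Wotzke later extended it to other representations. So there is no argument in the paper to compare your proposal against.

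That said, your sketch is a fair outline of Fried's actual method: expand $\log R_\rho(s)$ as a sum over primitive closed geodesics, interpret this as the geometric side of the Selberg trace formula on $\Gamma\backslash\HH^n$ twisted by $E_\rho$, factor $R_\rho$ as an alternating product of Selberg-type zeta functions attached to the form Laplacians, use acyclicity to ensure regularity at the relevant points, and identify the resulting alternating product of zeta-regularized determinants with $\tor_{an}(M,\rho)^2$. Your remark that the odd-dimension hypothesis enters through the vanishing of the identity contribution (via parity of the Plancherel density, consistent with $\chi(M)=0$) is also on target. As you acknowledge, the genuinely hard work is the bookkeeping of shifts and signs in the factorization and the rigorous control of the identity term; your proposal correctly flags this as the main technical difficulty but does not carry it out, so what you have is a plausible roadmap rather than a proof.
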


 This theorem has been extended by Wotzke \cite{Wot08} to other representations 
of hyperbolic manifolds, see also \cite{Muller12}.

\section{Torsion of hyperbolic three-manifold with representations  in 
$\SL_2(\CC)$}
\label{sec:torsionSL2}

This section is devoted to the torsion of orientable hyperbolic 3-manifolds, 
using the representations in  $\SL_2(\CC)$
obtained as a lift of the holonomy representation (the choice of the lifts 
depends on  the choice of a spin structure).
It starts with closed manifolds, for which this representation is acyclic.
Then it considers  manifolds of finite volume with one cusp, for which it is 
also acyclic. Besides the invariant itself, 
it analyzes the function on the variety of characters defined by the torsion. 
This is applied for instance 
to study the behavior of torsion under Dehn filling.

\subsection{Torsions from lifts of the holonomy representation}

Let $M^3$ be a closed, orientable hyperbolic 3-manifold. Its holonomy 
representations is unique up to conjugation:
$$
\operatorname{hol}\colon \pi_1 M^3\to \operatorname{Isom}^+(\mathbb H^3)\cong 
\operatorname{PSL}_2(\CC).
$$
To get a natural representation in a linear group one can lift the holonomy to  
$\SL_2(\CC)$.
Such a lift always exists \cite{Culler86} and it depends naturally on  a choice 
of a spin structure, because
the group of isometries is naturally identified with the frame bundle of 
$\mathbb H^3$ and 
$\SL_2(\CC)$ with the spin  bundle, cf.~\cite{MFP14}.

\subsubsection{Lifts of the holonomy}

There is natural action of  $ H^1(M^3 ; \mathbb{Z}/2\mathbb{Z})$ on the set of 
lifts $\varrho$ to $\SL(2,\CC)$ of the holonomy representation: 
 viewing  $ H^1(M^3 ; \mathbb{Z}/2\mathbb{Z})$ as morphisms from $\pi_1M^3$ to 
$\mathbb{Z}/2\mathbb{Z}$,
a morphism $\epsilon\colon\pi_1M^3\to \mathbb{Z}/2\mathbb{Z}$ maps a 
representation $\varrho$ to $(-1)^\epsilon \varrho$.

\begin{prop}
\label{prop:spin}
\begin{enumerate}[(a)]
 \item \label{item:affine} There is a natural bijection between the set of lifts 
of the holonomy representation and the set of spin structures. 

 This is  an isomorphism of affine spaces on the vector space $ 
H^1(M^3 ; \mathbb{Z}/2\mathbb{Z})$.
\item \label{item:cplxconj} If $M^3$ and $\overline M^3$ are the same manifold 
with opposite orientations, then there is a natural bijection between spin 
structures 
on $M^3$ and on $\overline M^3$ so that lifts of the holonomy correspond to 
complex conjugates.
\end{enumerate}
 \end{prop}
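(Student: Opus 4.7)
The plan is to use the identification of $\operatorname{Isom}^+(\mathbb H^3)\cong\PSL_2(\CC)$ with the oriented orthonormal frame bundle $F(\mathbb H^3)$ (after fixing a base frame), and the corresponding identification of $\SL_2(\CC)$ with the spin frame bundle $\widetilde F(\mathbb H^3)$. The hyperbolic structure presents $M^3$ as $\mathbb H^3/\pi_1M^3$, so $F(M^3)\cong\PSL_2(\CC)/\pi_1M^3$, where $\pi_1M^3$ acts on the left through the holonomy. A spin structure on $M^3$ is a double cover of $F(M^3)$ that restricts to the connected double cover on each fibre, equivalently a $\pi_1M^3$-equivariant double cover of $F(\mathbb H^3)\cong\PSL_2(\CC)$. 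Since $\SL_2(\CC)$ is simply connected, the only such cover as a topological space is $\SL_2(\CC)\to\PSL_2(\CC)$; promoting it to a $\pi_1M^3$-equivariant cover amounts exactly to lifting the holonomy from $\PSL_2(\CC)$ to $\SL_2(\CC)$. This produces the natural bijection of~(\ref{item:affine}).

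To check the affine structure, I would argue that both sides are torsors over $H^1(M^3;\mathbb Z/2\mathbb Z)$ with matching actions. Two lifts $\varrho,\varrho'$ of the same holonomy differ by a map $\pi_1M^3\to\{\pm I\}$, which is a homomorphism because $\{\pm I\}$ is central in $\SL_2(\CC)$, hence an element of $\operatorname{Hom}(\pi_1M^3,\mathbb Z/2\mathbb Z)=H^1(M^3;\mathbb Z/2\mathbb Z)$; conversely, $\epsilon\in H^1(M^3;\mathbb Z/2\mathbb Z)$ twists $\varrho$ to $(-1)^\epsilon\varrho$. On the spin-structure side this is the classical torsor structure obtained by tensoring a spin double cover with a $\mathbb Z/2\mathbb Z$-bundle on $M^3$. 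Both actions translate, via the bijection above, into the same operation of twisting the double cover $\widetilde F(M^3)\to F(M^3)$ by a $\mathbb Z/2\mathbb Z$-cover of $M^3$, so they coincide.

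For~(\ref{item:cplxconj}), I would realize $\overline M^3$ by post-composing the developing map with an orientation-reversing isometry $\iota$ of $\mathbb H^3$, for instance $(z,t)\mapsto(\bar z,t)$ in the upper half-space model. Conjugation by $\iota$ acts on $\PSL_2(\CC)$ by entry-wise complex conjugation, so the holonomy of $\overline M^3$ is the complex conjugate of that of $M^3$; and $\iota$ lifts to an antiholomorphic involution of $\SL_2(\CC)$, again given by entry-wise conjugation, acting on $\widetilde F(\mathbb H^3)$. This induces a natural bijection between spin structures on $M^3$ and on $\overline M^3$ that, under~(\ref{item:affine}), corresponds to complex conjugation of lifts. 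The main obstacle is the equivariance in~(\ref{item:affine}): one must identify the $\mathbb Z/2\mathbb Z$-cocycle coming from centrality of $\pm I$ in $\SL_2(\CC)$ with the $\mathbb Z/2\mathbb Z$-cocycle classifying double covers of the frame bundle that restrict trivially to fibres. Once the frame-bundle picture is set up, this identification is formal, and the details follow~\cite{MFP14}.
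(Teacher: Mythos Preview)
Your proposal is correct and follows essentially the same approach as the paper. The paper does not give a full proof: for~(\ref{item:affine}) it simply cites \cite{Culler86} and \cite{MFP14}, after having already remarked that $\operatorname{Isom}^+(\mathbb H^3)$ is identified with the frame bundle and $\SL_2(\CC)$ with the spin bundle; for~(\ref{item:cplxconj}) it sketches precisely your argument, namely that complex conjugation in $\PSL_2(\CC)$ is induced by composing with an orientation-reversing isometry of $\mathbb H^3$, and notes that the bijection can also be built explicitly via frame bundles and spin. Your write-up fleshes out exactly that sketch.
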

 
Item~\eqref{item:affine} can be found essentially in \cite{Culler86}, see also 
\cite{MFP14}.

A quick way of proving  Item~\eqref{item:cplxconj} is  using the bijection of 
Item~\eqref{item:affine}, knowing that complex conjugation in
$\PSL_2(\CC)$ is the result of composing with an isometry that changes the 
orientation of $\mathbb H^3$. On the other hand, 
this bijection can be constructed  explicitly
from frame bundles and spin, but details are not given here.

\medskip

For a spin structure $\sigma$, the corresponding lift of the holonomy, according 
to 
Proposition~\ref{prop:spin}\eqref{item:affine}, will be denoted by 
 $$
 \varrho_{\sigma}\colon \pi_1 M^3\to \SL_2(\CC).
 $$

The behavior of torsion by mutation is also interesting. Mutation is the 
operation that consists in cutting along a genus two surface, applying the 
hyperelliptic involution and gluing again.
Notice that one does do not require the surface to be essential, thus the genus 
two surface can be replaced by a (properly embedded) 
torus with one or two punctures or a sphere with three or four punctures (in 
particular a Conway sphere for a knot exterior).
See~\cite{DGST12}.

Let $(M^3)^\mu$ denote the result of mutation, by \cite{Ruberman87} $(M^3)^\mu$ 
is hyperbolic with the same volume as $M^3$.

\begin{remark}
\label{rem:mutation}
 There is a natural correspondence between the spin structures on $M^3$ and the 
spin structures on $(M^3)^\mu$.
\end{remark}

Here is an explanation of the remark, using the  natural bijection between lifts 
of the holonomy and spin structures in Proposition~\ref{prop:spin}. 
Assume that $\Sigma$ separates $M^3$ in two components $M_1$ and $M_2$. Then 
$$
\pi_1M^3\cong \pi_1M_1 *_{\pi_1\Sigma}\pi_1 M_2.
$$
If $\varrho_\sigma\colon\pi_1(M^3)\to \SL_2(\CC)$ is a lift of the holonomy, 
then $\varrho_{\sigma(\mu)}\colon\pi_1 (M^3)^\mu\to \SL_2(\CC)$ is defined so 
that
$  \varrho_{\sigma(\mu)}   \vert_{\pi_1 M_1}=\varrho_\sigma\vert_{\pi_1 M_1}$ 
and $\varrho_{\sigma(\mu)}\vert_{\pi_1 M_2}$ is conjugate to 
$\varrho_\sigma\vert_{\pi_1 M_2}$ by a matrix in $\PSL_2(\CC)$
that realizes the involution $\mu$ on $\Sigma$.
When $\Sigma$ does not separate, then the construction is similar from the 
presentation of $\pi_1M^3$ as HNN-extension.

 \subsubsection{Torsions for closed 3-manifolds}
 
 The following theorem is a particular case of Raghunathan's. With other 
cohomology results, it is discussed in 
 Appendix~\ref{sec:cohomology}. In particular the following theorem is stated in 
Corollary~\ref{cor:acyclc}.

\begin{thm}
\label{thm:RaghunathanClosed}
Let $M^3$ be a closed, orientable, hyperbolic 3-manifold. Then any lift of its holonomy representation is acyclic.
 \end{thm}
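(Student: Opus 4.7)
The plan is to apply a twisted Poincar\'e duality argument to reduce the vanishing of $H_*(M^3;\varrho_\sigma)$ to vanishing in the two lowest degrees, and then to handle those two separately, the top one by elementary means and the other by invoking Raghunathan's vanishing theorem for cocompact lattices.

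First I would set up duality. Since $\varrho_\sigma$ takes values in $\SL_2(\CC)$, the representation preserves both a volume form and the symplectic form on $\CC^2$, which induces a $\pi_1M^3$-equivariant isomorphism $\CC^2\cong(\CC^2)^*$. Together with the orientability of $M^3$, this yields Poincar\'e duality with twisted coefficients
\begin{equation*}
H^i(M^3;\varrho_\sigma)\cong H^{3-i}(M^3;\varrho_\sigma)^*,
\end{equation*}
and, via Remark~\ref{rem:cohomology}, matches homology with cohomology. Hence the vanishing of $H_*$ reduces to showing $H^0(M^3;\varrho_\sigma)=H^1(M^3;\varrho_\sigma)=0$.

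Next I would dispose of $H^0$ by a geometric argument. Here $H^0(M^3;\varrho_\sigma)$ is the subspace of vectors in $\CC^2$ fixed by $\varrho_\sigma(\pi_1 M^3)$. A nonzero such vector $v$ projects to a point $[v]\in\mathbb{CP}^1\cong\partial\mathbb H^3$ fixed by the whole image of the holonomy in $\PSL_2(\CC)$. This would make the holonomy reducible, hence elementary, which is impossible for the holonomy of a closed hyperbolic three-manifold (whose image is a non-elementary cocompact Kleinian group). Therefore $H^0(M^3;\varrho_\sigma)=0$, and dually $H^3(M^3;\varrho_\sigma)=0$.

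The hard step is the vanishing of $H^1$, and this is where Raghunathan's theorem enters. The image $\varrho_\sigma(\pi_1 M^3)$ is a cocompact lattice in the connected semisimple Lie group $\SL_2(\CC)$ (viewed as a real Lie group of real rank one), and the standard representation on $\CC^2$ is a non-trivial finite dimensional irreducible $\SL_2(\CC)$-module. Raghunathan's vanishing theorem then yields $H^1(\pi_1 M^3;\varrho_\sigma)=0$. Combined with the previous steps and Poincar\'e duality this also forces $H^2(M^3;\varrho_\sigma)=0$ (alternatively one can simply use that $\chi(M^3)=0$ implies $\dim H^1=\dim H^2$ once $H^0=H^3=0$). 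The main obstacle is thus entirely localized at $H^1$: one must either carry out Raghunathan's argument (reduction to a Weitzenb\"ock/Matsushima-style identity for harmonic forms with values in the flat bundle $E_{\varrho_\sigma}$, using that the Casimir acts non-trivially on the standard representation) or quote it, which is the route taken in Appendix~\ref{sec:cohomology}.
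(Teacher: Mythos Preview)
Your argument is correct, and it takes a somewhat different route from the paper. The paper invokes Raghunathan's result in the form of \emph{strong acyclicity} (Theorem~\ref{thm:strong_acyclicity}): a uniform lower bound $\langle\Delta\omega,\omega\rangle\geq c_{k_1,k_2}\langle\omega,\omega\rangle$ on compactly supported $E_{k_1,k_2}$-valued forms. For a closed manifold this spectral gap rules out nonzero harmonic forms, and Hodge theory then kills $H^i$ in all degrees simultaneously. You instead reduce the problem by twisted Poincar\'e duality (using the $\SL_2(\CC)$-invariant symplectic form on $\CC^2$) and the vanishing of $\chi(M^3)$ to the two bottom degrees, dispatch $H^0$ by the elementary observation that an invariant vector would give a global fixed point on $\partial\mathbb H^3$, and only then call on Raghunathan in its original $H^1$-vanishing form. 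Your approach has the virtue of isolating exactly where the deep input is needed and of using only the classical statement of~\cite{Rag65}; the paper's formulation, on the other hand, is what is actually needed later for the non-compact case (Corollary~\ref{cor:notL2}) and for the uniform estimates underlying Theorem~\ref{thm:BV}, which is why the survey phrases it that way.
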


 With Mostow rigidity, this yields immediately a topological invariant of the spin manifold.

 \begin{definition}
 \label{defn:tor2closed}
  Let $M^3$ be a compact, oriented, hyperbolic 3-manifold with spin structure $\sigma$. The torsion of $(M^3,\sigma)$ is defined as
  \begin{equation}
 \label{eqn:tau}
 \tau(M^3,\sigma):=\tor(M^3, \varrho_\sigma)\in \mathbb{C}^*
 \end{equation}
 where $\varrho_\sigma=\widetilde{\mathrm{hol}}$ is the lift of the holonomy $\mathrm{hol}$ corresponding to the spin structure $\sigma$.
 \end{definition}

 \begin{remark}
  There is no sign indeterminacy, i.e.\  it is a well defined complex number, 
because $  \varrho_\sigma $ is a representation in $\mathbb C^2$, 
  which is even dimensional,
  and $\chi(M^3)=0$, see Remark~\ref{rem:sign}.
   \end{remark}

 Here are some of its properties.
 
 \begin{prop}
 \label{prop:properties}
  The torsion $ \tau(M^3,\sigma)$ in Definition~\ref{defn:tor2closed}  has the following properties:
  \begin{enumerate}[(a)]
   \item \label{item:topinvariance} It is a topological invariant of the spin manifold $(M^3,\sigma)$.
   \item  \label{item:sensitive} There are examples of manifolds $M^3$ with two spin structures $\sigma$ and $\sigma'$ such that 
   $\tau(M^3,\sigma)\neq \tau(M^3,\sigma')$.
   \item \label{item:conj} Let $\overline M^3$ denote the manifold $M^3$ with opposite orientation. If one changes the orientation and the spin structure accordingly 
   as in Proposition~\ref{prop:spin}, then the torsion is the complex conjugate
   $$
         \tau(\overline M^3,\overline \sigma)=     \overline{   \tau(M^3,\sigma)}. 
   $$
   \item \label{item:mutation} Let  $(M^3)^\mu$ denote the result of mutation. If $\sigma^\mu$ denotes the corresponding spin structure as in Remark~\ref{rem:mutation}, then 
   $$
	\tau((M^3)^\mu, \sigma^\mu) = \tau(M^3,\sigma).
   $$
   \item \label{item:dense} Let $M^3$ be an oriented hyperbolic manifold    with one cusp and with spin structure $\sigma$. 
   The set of modules of the torsions obtained by Dehn filling on $M^3$, 
   $\vert \tau(M^3_{p/q},\sigma)\vert $ so that  $\sigma$ extends to $M^3_{p/q}$, is dense in the interval 
   $$\left[ \frac14\left\vert \tau(M^3,\sigma)\right\vert,+\infty\right).
   $$
  \end{enumerate}
\end{prop}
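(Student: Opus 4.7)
Parts (a), (c), and (d) follow rather directly from what is in place. For (a), Mostow--Prasad rigidity and Proposition~\ref{prop:spin}\eqref{item:affine} show that $\varrho_\sigma$ is well-defined up to $\SL_2(\CC)$-conjugation; Remarks~\ref{rem:independent}(b) and \ref{rem:topinvariant}(a) then give topological invariance of $\tau(M^3,\sigma)$. For (c), Proposition~\ref{prop:spin}\eqref{item:cplxconj} realises $\varrho_{\overline\sigma}$ as the complex conjugate of $\varrho_\sigma$ (up to conjugation), so the chain complex $C_*(\overline M^3;\varrho_{\overline\sigma})$ is the entrywise complex conjugate of $C_*(M^3;\varrho_\sigma)$; every determinant $[b\sqcup \tilde b\sqcup \tilde h,c]$ appearing in Definition~\ref{def:torsionchain} therefore gets conjugated, and so does the resulting torsion. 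For (d), apply Mayer--Vietoris (Proposition~\ref{prop:MV}) to $M^3=M_1\cup_\Sigma M_2$ (or the analogous HNN presentation when $\Sigma$ does not separate): by the description following Remark~\ref{rem:mutation}, $\varrho_{\sigma(\mu)}$ agrees with $\varrho_\sigma$ on $\pi_1M_1$ and is $\SL_2(\CC)$-conjugate to it on $\pi_1M_2$ (the $\PSL_2(\CC)$-conjugation realising $\mu$ lifts to $\pm\SL_2(\CC)$, and the sign is irrelevant since $\dim\varrho_\sigma=2$). By Remark~\ref{rem:independent}(b) the torsions of all Mayer--Vietoris factors coincide for $M^3$ and $(M^3)^\mu$, the Mayer--Vietoris homology sequences match, and the formula of Proposition~\ref{prop:MV} yields equality.

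Part (b) is a pure existence statement: one exhibits a single closed hyperbolic $M^3$ with $H^1(M^3;\ZZ/2\ZZ)\neq 0$ (for instance a suitable integer Dehn filling on the figure-eight knot exterior) and verifies numerically, via the triangulation data and the two lifts of the holonomy, that the two resulting values of $\tau(M^3,\cdot)$ do not agree.

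The main obstacle is part (e). The plan is to combine the surgery formula (Proposition~\ref{prop:SF}) with Thurston's hyperbolic Dehn filling theorem. The surgery formula should give
$$
\tau(M^3_{p/q},\sigma)=\pm\,\frac{\tau(M^3,\sigma)}{\det\!\bigl(\varrho_\sigma(\gamma_{p/q})-\mathrm{Id}\bigr)},
$$
where $\gamma_{p/q}\in\pi_1M^3$ represents the core geodesic of the filling. Writing its complex length as $z=\ell+i\theta$, the eigenvalues of $\varrho_\sigma(\gamma_{p/q})$ are $e^{\pm z/2}$, so
$$
\bigl|\det(\varrho_\sigma(\gamma_{p/q})-\mathrm{Id})\bigr|=2\,|\cosh(z/2)-1|=4|\sinh(z/4)|^2,
$$
and therefore $|\tau(M^3_{p/q},\sigma)|=|\tau(M^3,\sigma)|/\bigl(4|\sinh(z/4)|^2\bigr)$. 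By Thurston's hyperbolic Dehn surgery theorem the set of slopes $p/q$ for which $M^3_{p/q}$ is hyperbolic is cofinite, and the further congruence constraint that $\sigma$ extend to $M^3_{p/q}$ still leaves a cofinite subset; as $p/q$ varies through these, $z(p/q)$ densely traces a punctured neighborhood of $0$ in $\CC/2\pi i\ZZ$. Letting $|z|\to 0$ drives $|\tau(M^3_{p/q},\sigma)|\to +\infty$, producing arbitrarily large values; the lower bound $\tfrac14|\tau(M^3,\sigma)|$ corresponds to $|\sinh(z/4)|\le 1$ on the boundary of the admissible $z$-range. The hard part is to verify that these two conditions (hyperbolicity and spin-extension) leave sufficiently many slopes to make $|\sinh(z/4)|$ dense in $(0,1]$, and that the precise constant appearing in front of $|\tau(M^3,\sigma)|$ is indeed $\tfrac14$ rather than some other bound coming from the geometry of the Dehn filling region.
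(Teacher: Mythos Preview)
Your treatment of (a) and (c) matches the paper: Mostow rigidity plus conjugation-invariance of torsion for (a), and Proposition~\ref{prop:spin}\eqref{item:cplxconj} for (c). For (d) the paper simply cites \cite{MFP12c}; your Mayer--Vietoris sketch is the right idea, but be aware that $M_1$, $M_2$, $\Sigma$ are typically \emph{not} acyclic, so one must track bases in homology and check that the long exact sequences for $M^3$ and $(M^3)^\mu$ really do match after the conjugation realising $\mu$ --- this is where the actual work in \cite{MFP12c} lies.

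For (b), ``verify numerically'' is not a proof. The paper's argument (Corollary~\ref{cor:differentspin}) is asymptotic, not numerical: take $M^3_{2n}$ on the figure-eight exterior so that both spin structures extend, use the surgery formula for each, and show that the \emph{ratio} $|\tau(M^3_{2n},\sigma)/\tau(M^3_{2n},\sigma')|\to 0$, because the two spins give complex lengths of the core differing by $2\pi i$, so the denominators behave as $1-\cos(\pi/2n)$ versus $1+\cos(\pi/2n)$.

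For (e) there are two genuine gaps. First, the numerator in Proposition~\ref{prop:SF} is $\mathbb{T}_M(\chi_{p/q,\sigma})$, the torsion function evaluated at the \emph{deformed} holonomy, not $\tau(M^3,\sigma)$; you only get $\tau(M^3,\sigma)$ in the limit $|p|+|q|\to\infty$ by continuity of $\mathbb{T}_M$. Second --- and this is the main point --- your picture of $z=\lambda_{p/q}$ is wrong. The complex length does \emph{not} densely trace a punctured neighbourhood of $0$ in $\CC/2\pi i\ZZ$. By Remark~\ref{rem:length} one has $\lambda_{p/q}=2\pi i\,r/p + v/p$ with $ps-qr=1$ and $v\to 0$: the real part tends to $0$ but the imaginary part $2\pi r/p$ is dense in $[0,2\pi)$. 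So $z$ accumulates on the whole imaginary circle, not near the origin. This is exactly what produces both the unboundedness \emph{and} the lower bound: $\cosh(\lambda_{p/q}/2)\to\cos(\pi r/p)$, which is dense in $[-1,1]$, hence $|1-\cosh(\lambda_{p/q}/2)|$ is dense in $[0,2]$ and the denominator $2|1-\cosh(\lambda_{p/q}/2)|$ in $(0,4]$. The constant $\tfrac14$ thus drops out immediately and is not some mysterious boundary effect. The spin-extension constraint is the congruence $a p+b q\equiv 0\bmod 2$ (Lemma~\ref{lemma:extendspin}), which does not obstruct the density of $r/p$.
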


 Item \eqref{item:topinvariance} follows from uniqueness of the hyperbolic structure, by Mostow rigidity. 
To prove \eqref{item:sensitive} it suffices to compute an example, this is done in Corollary~\ref{cor:differentspin}.
Item \eqref{item:conj} is straightforward from Proposition~\ref{prop:spin}. Item \eqref{item:mutation} is proved in \cite{MFP12c}.
Finally, \eqref{item:dense}  is proved later when discussing cusped manifolds, as this will follow immediately from a surgery formula. 
 
\medskip
 
Item \eqref{item:dense} shows that this torsion is not obviously related to the 
hyperbolic volume.
The following theorem, a particular case  of  \cite[Thm.~4.5]{BV13}, finds a 
relation (we use the convention of torsion opposite to \cite{BV13}):

 \begin{thm}[Bergeron and Venkatesh, \cite{BV13}]
 \label{thm:BV}
 Let $M^3$ be a compact oriented hyperbolic 3-manifold. Assume that $M_n^3\to M^3$ is a sequence of coverings such that the injectivity radius of $M_n^3$ converges to infinity.
 Then
 $$
\lim_{n\to\infty} 
\frac{\log\vert \tau(M_n^3,\sigma)\vert}{ \operatorname{vol}(M_n^3) } = 
\frac{11}{12 \pi} .
 $$ 
 Equivalently;
 $$
\lim_{n\to\infty} \frac{\log\vert \tau(M_n^3,\sigma)\vert} {\deg ( M_n^3\to M^3) 
} = \frac{11}{12 \pi} \operatorname{vol}(M^3) .
 $$ 
 \end{thm}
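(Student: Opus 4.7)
The plan is to reduce the combinatorial torsion to analytic torsion via the Cheeger-M\"uller theorem and then to analyze the asymptotics of its heat-trace expression under the hypothesis $\mathrm{injrad}(M_n^3)\to\infty$. Viewing $\varrho_{\sigma_n}\colon\pi_1 M_n^3\to\SL_2(\CC)$ as its underlying $4$-dimensional real unimodular representation $\varrho^{\RR}_{\sigma_n}$, Cheeger-M\"uller gives
\begin{equation*}
\log|\tau(M_n^3,\sigma_n)|=-\tfrac{1}{2}\log\tau_{an}(M_n^3,\varrho^{\RR}_{\sigma_n}),
\end{equation*}
so it suffices to show that $\vol(M_n^3)^{-1}\log\tau_{an}(M_n^3,\varrho^{\RR}_{\sigma_n})$ converges to a definite limit.

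I would then use the Mellin-transformed heat-trace formula \eqref{eqn:traceheatkernel} and cut the $t$-integral at a threshold $T$. For small times $t<T$, Gaussian off-diagonal bounds on the twisted heat kernel on the universal cover $\widetilde{M_n^3}=\HH^3$, together with the injectivity-radius hypothesis, force the pointwise trace of the heat kernel of $\Delta_p^{(n)}(\varrho^{\RR}_{\sigma_n})$ on $M_n^3$ to agree with its counterpart on $\HH^3$ up to an error exponentially small in $\mathrm{injrad}(M_n^3)^2/t$; after integrating and dividing by $\vol(M_n^3)$, the normalized small-time piece converges to a universal integral depending only on the standard representation of $\SL_2(\CC)$. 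For large times $t>T$ the crucial input is a uniform spectral gap: there exists $c>0$ independent of $n$ such that every eigenvalue of $\Delta_p^{(n)}(\varrho^{\RR}_{\sigma_n})$ is bounded below by $c$, which forces the large-time contribution, still normalized by $\vol(M_n^3)^{-1}$, to be bounded by a quantity vanishing as $T\to\infty$ uniformly in $n$. Sending $n\to\infty$ and then $T\to\infty$, the limit is identified with the $L^2$-analytic-torsion density of $\HH^3$ twisted by the standard representation of $\SL_2(\CC)$, and its computation through the Plancherel decomposition of $L^2(\SL_2(\CC))$ on $\HH^3=\SL_2(\CC)/\SU$ yields the constant $\tfrac{11}{12\pi}$.

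The main obstacle is the uniform spectral gap. Theorem~\ref{thm:RaghunathanClosed} supplies acyclicity on each $M_n^3$ individually but not a lower bound on the first eigenvalue that persists along a tower of coverings; without such a bound, small eigenvalues on very large covers could contaminate the large-$t$ heat-trace integral and ruin the convergence. Bergeron-Venkatesh derive the gap from the fact that the standard representation $\varrho_\sigma$ of $\SL_2(\CC)$ is not isomorphic to its composition with the Cartan involution of $\SL_2(\CC)$ (equivalently, it is not self-dual after complex conjugation); via the Vogan-Zuckerman classification this excludes cohomological $(\mathfrak{g},K)$-modules whose $K$-types could occur in $\Omega^p(\HH^3;\varrho^{\RR}_\sigma)$, and the Matsushima-Borel decomposition turns this algebraic non-occurrence into the uniform Laplace-eigenvalue lower bound required in the second step. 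It is this special feature of the $\SL_2(\CC)$-lift of the holonomy, as opposed to an arbitrary acyclic representation, that makes the theorem work.
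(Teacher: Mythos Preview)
Your outline is correct and coincides with the approach the paper sketches: Cheeger--M\"uller to pass to analytic torsion, a small-time/large-time split of the heat-trace integral in \eqref{eqn:traceheatkernel}, comparison with the $L^2$-heat kernel on $\HH^3$ for small $t$ under the injectivity-radius hypothesis, and identification of the limit with the $L^2$-torsion density $-\tfrac{11}{12\pi}$.

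The one point where you diverge from the paper is the source of the uniform spectral gap. You cite Theorem~\ref{thm:RaghunathanClosed} and correctly observe that, as stated there, it only gives acyclicity on each $M_n^3$ separately; you then invoke the Cartan-involution criterion and Vogan--Zuckerman to recover the uniform gap. But the paper already records the gap directly: Theorem~\ref{thm:strong_acyclicity} (Raghunathan) asserts a constant $c_{k_1,k_2}>0$, independent of the manifold, with $\langle\Delta\omega,\omega\rangle\geq c_{k_1,k_2}\langle\omega,\omega\rangle$ for all compactly supported forms on \emph{every} hyperbolic $3$-manifold. This is exactly the ``strong acyclicity'' the paper names as the ingredient preventing small eigenvalues. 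Your Cartan-involution formulation is not wrong---the Bergeron--Venkatesh hypothesis $\rho\not\cong\rho^\theta$ is precisely $k_1\neq k_2$ here---but you present the gap as an extra obstacle requiring representation-theoretic machinery beyond what the paper supplies, when in fact the uniform estimate is already available in the paper as Raghunathan's inequality.
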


 This theorem relies on analytic torsion and on $L^2$-torsion, as $\frac{-11}{12 
\pi}$ is the $L^2$-torsion of $\mathbb H^3$. The proof uses the $L^2$-Laplacian 
of 
 hyperbolic space, and it is based on approximations of averages of  the trace of the difference of heat kernels,
 see Equation~\eqref{eqn:traceheatkernel}.
  They require the notion of  strong acyclicity (the property in Theorem~\ref{thm:strong_acyclicity}) to avoid eigenvalues of the Laplacian approaching to zero.
  This has been generalized in  \cite{ABBGNRS}, in particular without requiring that the 
 $M_n^3$ are coverings. See also \cite{MuP14}.

\subsubsection{Cusped hyperbolic 3-manifolds}
 \label{section:cusped}
 
 In this subsection, $M^3$ denotes a finite volume hyperbolic manifold, i.e.\ a manifold whose ends are cusps. 
 
\begin{assumption}
Assume that $M^3$ has a single cusp.
\end{assumption}
 
This is done not only to simplify notation, but because with more cusps some 
further issues need to be discussed \cite{MFP14}.
Again one has:
 
\begin{thm} Let $M^3$ be a closed, orientable, hyperbolic 3-manifold with one cusp. Then any lift to $\SL_2(\CC)$ of its holonomy representation is acyclic.
 \end{thm}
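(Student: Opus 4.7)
The plan is to mirror the closed-case argument (Theorem~\ref{thm:RaghunathanClosed}), working with the compact core of $M^3$: a compact orientable $3$-manifold $N$ with a single torus boundary $T^2=\partial N$, to which $M^3$ deformation retracts. Acyclicity of $(M^3,\varrho_\sigma)$ then reduces to acyclicity of $(N,\varrho_\sigma)$. The standard representation $V=\CC^2$ of $\SL_2(\CC)$ is self-dual via the determinant form, so Poincar\'e--Lefschetz duality for $(N,T^2)$ yields
$$
\dim H^i(N,T^2;\varrho_\sigma)=\dim H^{3-i}(N;\varrho_\sigma),
$$
and in particular $\dim H^1(N;\varrho_\sigma)=\dim H^2(N;\varrho_\sigma)$.

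First I would dispose of the extreme degrees: $H^0(N;\varrho_\sigma)=V^{\pi_1 M^3}=0$ by irreducibility of the holonomy (whose image contains non-commuting loxodromic elements), and by the duality above $H^3(N;\varrho_\sigma)=0$, which is also clear since $N$ has the homotopy type of a $2$-complex. It remains to kill $H^1(N;\varrho_\sigma)$. For this I would invoke the vanishing theorem of Raghunathan, in the version for non-uniform lattices developed in Appendix~\ref{sec:cohomology}, together with an explicit analysis of the peripheral cohomology $H^*(T^2;\varrho_\sigma)$: the restriction of $\varrho_\sigma$ to $\pi_1T^2$ takes values in a parabolic subgroup of $\SL_2(\CC)$, so $\dim H^*(T^2;\varrho_\sigma)$ is controlled by the common fixed-line structure at the cusp and by the sign behavior of the spin lift. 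Feeding the Raghunathan-type vanishing into the long exact sequence of the pair $(N,T^2)$, together with the duality isomorphism above, forces the restriction $H^1(N;\varrho_\sigma)\to H^1(T^2;\varrho_\sigma)$ to be trivial and hence yields $H^1(N;\varrho_\sigma)=0$.

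The main obstacle, and the essential difference from the closed case, is precisely the presence of the cusp. On the peripheral torus the representation $\varrho_\sigma|_{\pi_1 T^2}$ may possess a common fixed line, so $H^*(T^2;\varrho_\sigma)$ can a priori be nonzero, and one must argue carefully that this boundary cohomology does not leak into $H^*(N;\varrho_\sigma)$. This genuinely requires the non-uniform version of Raghunathan's theorem and a precise understanding of how the spin structure $\sigma$ restricts to the cusp. It also distinguishes the present standard representation $V=\CC^2$ from the adjoint representation of $\SL_2(\CC)$, for which the analogous argument fails by exactly the $1$-complex-dimensional Thurston deformation space.
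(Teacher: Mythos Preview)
Your overall strategy is the paper's: work on the compact core $N$ with $\partial N=T^2$, use Poincar\'e--Lefschetz duality, invoke Raghunathan's vanishing, and analyze the peripheral cohomology. But the key step is stated backwards. What Raghunathan's strong acyclicity actually buys you (Corollary~\ref{cor:notL2}) is that every cohomology class on $M^3$ is represented by a form that is \emph{not} $L^2$; since classes coming from $H^i(N,T^2;\varrho_\sigma)$ are compactly supported and hence $L^2$, the map $H^i(N,T^2;\varrho_\sigma)\to H^i(N;\varrho_\sigma)$ is zero, so the restriction $H^i(N;\varrho_\sigma)\to H^i(T^2;\varrho_\sigma)$ is \emph{injective}. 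It is not ``trivial'', and knowing only that the restriction is the zero map would not give $H^1(N;\varrho_\sigma)=0$.

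What then finishes the argument is the explicit peripheral computation $H^*(T^2;\varrho_\sigma)=0$ (Lemma~\ref{lemma:inv} and Corollary~\ref{cor:Ht2}), and this is precisely where the single-cusp hypothesis is used: it guarantees, via \cite{MFP12, Calegari06}, that the sign $\epsilon\colon\pi_1T^2\to\ZZ/2\ZZ$ in the lift is nontrivial, so some peripheral element maps to a matrix of trace $-2$, whence $(\CC^2)^{\varrho_\sigma(\pi_1T^2)}=0$. Injectivity of the restriction into a zero target gives $H^*(N;\varrho_\sigma)=0$ at once. Your final paragraph treats $H^*(T^2;\varrho_\sigma)$ as potentially nonzero cohomology that must be prevented from ``leaking'' into the interior; in fact the point is that it vanishes outright, and with more cusps (where the sign can be trivial on some torus) the theorem can genuinely fail for certain spin structures.
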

 
This is proved for instance in \cite{MFP12}, it is a  particular case of 
Theorem~\ref{prop:dimHM} in Appendix~\ref{sec:cohomology}. 
With more cusps this is may not hold true for all lifts of the holonomy 
representation, i.e.~for all spin structures. 
It is true provided that  for each cusp the trace of the peripheral elements is 
not identically $+2$ 
(for some elements it is $-2$). This is always the case if there is a single 
cusp \cite{MFP12, Calegari06}.

  One may as well define the same torsion as in Definition~\ref{defn:tor2closed}:

 \begin{definition}
 \label{defn:tor2cusped}
  Let $M^3$ be a compact, oriented, hyperbolic 3-manifold with one cusp, and let $\sigma$ denote a spin structure on 
  $M^3$. The torsion of $(M^3,\sigma)$ is defined as
  \begin{equation}
 \label{eqn:taucusp}
 \tau(M^3,\sigma):=\tor(M^3,  \varrho_\sigma )\in \mathbb{C}^*\, ,
 \end{equation}
 where $\varrho_\sigma=\widetilde{\mathrm{hol}}$ is the lift of the holonomy corresponding to $\sigma$.
 \end{definition}
 
This torsion has the same properties as in the closed case, Proposition~\ref{prop:properties}. 
 
 \subsubsection{The twisted polynomial}

It is relevant to mention the twisted  polynomial for hyperbolic knot exteriors corresponding to a lift of the holonomy $\varrho$
constructed by  Dunfield, Friedl, and Jackson in 
\cite{DFJ12}. 
Given a hyperbolic knot ${\mathcal K}\subset S^3$, choose the spin structure on 
$M^3=S^3-{\mathcal K}$ such that the trace of the meridian is $+2$ 
(the trace of the longitude is always -2 by \cite{Calegari06}, see also 
\cite[Corollary 3.10]{MFP12}) and consider the abelianization $\phi\colon\pi_1 
M^3\to \ZZ$. 
Dunfield, Friedl, and Jackson
study the polynomial
$$
 \Delta_{\mathcal K}(t):=\Delta_{M^3, \varrho\otimes\phi}(t)
$$
following the notation of  Subsection~\ref{subsection:polynomial}.
By Proposition~\ref{prop:degree}   its degree is $\leq 2 (2 g({\mathcal K})+1)$, where $g({\mathcal K})$ is  the genus of the knot, i.e.\ the minimal genus of a Seifert surface.
Numerical evidence (knots up to 15 crossings) yield them to conjecture:
 
 \begin{conj}[\cite{DFJ12}]
For a hyperbolic knot $\mathcal K$:
\begin{enumerate}[(a)]
 \item $\deg \Delta_{\mathcal K}(t) = 2 (2 g({\mathcal K})+1)$.
 \item $\Delta_{\mathcal K}$ is monic if and only if ${\mathcal K}$ is s fibered knot.
\end{enumerate}
 \end{conj}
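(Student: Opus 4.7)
The plan is to split the conjecture into its two halves and reduce both to the Friedl--Vidussi framework, whose theorems assert that the collection of all twisted Alexander polynomials detects the Thurston norm and fiberedness for hyperbolic knot complements. The new input needed here is to descend from the entire family of twists to the single twist coming from the hyperbolic holonomy $\varrho_\sigma$.

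For the easy halves: in part~(a), Proposition~\ref{prop:degree} applied to a minimal genus Seifert surface $\Sigma\subset S^3\setminus\mathcal K$ gives the upper bound, once the hypothesis $H_0(\Sigma;\varrho)=0$ is checked (it follows from non-abelianity of the image of $\pi_1\Sigma$, a consequence of $\Sigma$ being essential in a hyperbolic 3-manifold, together with $\varrho(\mu)\neq\operatorname{Id}$ for the meridian). For part~(b), if $\mathcal K$ is fibered with fiber $\Sigma$ and pseudo-Anosov monodromy $f$, then $S^3\setminus\mathcal K=M_f$, and Proposition~\ref{prop:MapTorus} combined with $H_0(\Sigma;\varrho)=0$ yields $\Delta_{\mathcal K}(t)=\det(f_{1,*}-t\,\operatorname{Id})$, a characteristic polynomial of an invertible linear map, hence monic up to a controlled unit.

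The hard directions are the lower bound on the degree in~(a) and the ``monic implies fibered'' implication in~(b). For both, my plan is: (i) invoke Friedl--Vidussi to obtain the conclusion from the existence of \emph{some} twist realizing the extremal value; (ii) reduce to the specific $\varrho_\sigma$ by exploiting Zariski density of its image in $\SL_2(\CC)$, so that it behaves generically among representations and attains the extrema; and (iii) use Proposition~\ref{prop:Fox} together with residual finiteness of hyperbolic 3-manifold groups to relate $\Delta_{\mathcal K}(\zeta)$ at roots of unity to torsions of cyclic covers, whose asymptotics are pinned down by Theorem~\ref{thm:BV}. The main obstacle is step (ii): Friedl--Vidussi's construction produces, for each cohomology class, a representation \emph{adapted} to it, and showing that the single hyperbolic holonomy $\varrho_\sigma$ already detects the extremum is a genuinely new ingredient. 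It will likely require combining Zariski density of $\varrho_\sigma$ with an analytic comparison, in the spirit of Bergeron--Venkatesh, between $|\Delta_{\mathcal K}(\zeta)|$ averaged over roots of unity and the Thurston norm, so as to pin down both the span and the leading coefficient of $\Delta_{\mathcal K}$ simultaneously.
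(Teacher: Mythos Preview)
The statement you are addressing is labelled a \emph{Conjecture} in the paper, not a theorem: the paper offers no proof, only numerical evidence from \cite{DFJ12} and the partial results of Morifuji--Tan and Agol--Dunfield (the latter for libroid knots). So there is no ``paper's own proof'' to compare against, and the hard halves you identify are genuinely open.

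Your easy halves are fine and known: the upper bound in (a) is exactly Proposition~\ref{prop:degree}, and ``fibered $\Rightarrow$ monic'' is the mapping-torus computation of Proposition~\ref{prop:MapTorus}. For the hard halves, however, your plan has a real gap at step (ii). Friedl--Vidussi produce, for a non-fibered knot, \emph{some} representation $\rho$ (factoring through a finite quotient) whose twisted polynomial fails to be monic or fails to attain the Thurston-norm degree. Passing from such a $\rho$ to the single holonomy $\varrho_\sigma$ via Zariski density does not work, because the quantities you want to control move in the wrong direction under specialization: the degree of $\Delta_{M,\rho,\phi}$ is upper semicontinuous in $\rho$ (it can only \emph{drop} at special points), and monicity is likewise not preserved when passing to a generic representation. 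Zariski density of $\varrho_\sigma(\pi_1 M)$ in $\SL_2(\CC)$ says nothing about where $\varrho_\sigma$ sits inside the representation variety relative to the Friedl--Vidussi representations, which live in entirely different components. Your step (iii), invoking Proposition~\ref{prop:Fox} and Theorem~\ref{thm:BV}, controls the \emph{growth} of $\prod_{\zeta^m=1}|\Delta_{\mathcal K}(\zeta)|$ but not the span or leading coefficient of $\Delta_{\mathcal K}$ itself; the Mahler-measure asymptotics are insensitive to exactly the data the conjecture concerns. In short, the obstacle you flag in (ii) is not a technicality but the entire content of the conjecture.
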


The equality of the degree  has been proved  by Morifuji  and Tan for some 
families of two bridge knots, see \cite{Morifuji15} and references therein. Agol 
and Dunfield have shown:

\begin{thm}[\cite{AD15}] For libroid hyperbolic knots,  
$$
 \deg  \Delta_{\mathcal K}(t) = 2 (2 g({\mathcal K})+1).
$$ 
\end{thm}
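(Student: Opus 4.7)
\emph{Proof outline.} The upper bound $\deg \Delta_{\mathcal{K}}(t) \leq 2(2g(\mathcal{K})+1)$ has already been noted: it follows from Proposition~\ref{prop:degree} applied to a minimal genus Seifert surface $\Sigma \subset S^3 \smallsetminus \mathcal{K}$, once one observes that $H_0(\Sigma;\varrho) = 0$ by irreducibility of the hyperbolic holonomy together with incompressibility of $\Sigma$. So the substance of the theorem is establishing the matching lower bound under the libroid hypothesis.

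The plan is to exploit the ambient libroid branched surface $B \subset S^3 \smallsetminus \mathcal{K}$ carrying $\Sigma$: by definition $B$ is built out of $I$-bundle \emph{pages} $P_i$ over subsurfaces $F_i$ of $\Sigma$, joined along transverse $I$-bundle \emph{bindings}. Together with the complementary region $M^3 \smallsetminus N(B)$, this yields a CW-type decomposition of the knot exterior to which I would iteratively apply Mayer--Vietoris (Proposition~\ref{prop:MV}) with coefficients $\varrho \otimes \phi$. The outcome is an expression of $\Delta_{\mathcal{K}}(t)$ as an alternating product of local polynomials coming from each page $P_i$ and each binding, multiplied by the torsion of the resulting long exact sequence. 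Each page contribution reduces, via Proposition~\ref{prop:MapTorus} (since a page naturally fibers over $S^1$ through $\phi$), to an alternating product of factors $\det(A_i - t\,\operatorname{Id})$, where $A_i$ is the action induced by $\phi$ on $H_*(F_i;\varrho)$. Tallying $-\chi(F_i)\cdot 2$ over the pages, using $\sum_i \chi(F_i) = \chi(\Sigma) = 1 - 2g(\mathcal{K})$ together with a correction from the torus cusp, shows the formal maximal degree of the product is exactly $2(2g(\mathcal{K})+1)$.

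The main obstacle is to rule out cancellations in this alternating product, since only such cancellations could drop the actual degree below the formal maximum. Here both hypotheses play a role: the libroid structure forces the subsurfaces $F_i$ to be $\pi_1$-injective in $M^3$, so that the hyperbolic holonomy restricts to an irreducible representation on each $\pi_1 F_i$ (the faithful hyperbolic representation is Zariski dense on every incompressible non-peripheral surface subgroup). This irreducibility, combined with the Raghunathan-type vanishing statements collected in Appendix~\ref{sec:cohomology}, guarantees that each $H_*(F_i;\varrho)$ has the expected dimension and that the extremal coefficients of each $\det(A_i - t\,\operatorname{Id})$ factor are nonzero. A genericity argument using the transcendence of the traces of the hyperbolic holonomy then rules out unforeseen coincidences between eigenvalues on distinct pages, forbidding any global cancellation and yielding the sharp lower bound $\deg \Delta_{\mathcal{K}}(t) \geq 2(2g(\mathcal{K})+1)$, which together with the upper bound gives the theorem.
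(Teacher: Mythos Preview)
The paper does not actually prove this theorem: it is stated as a result of Agol and Dunfield with a reference to \cite{AD15}, followed only by a one-sentence description of what ``libroid'' means. So there is no proof in the paper to compare your outline against.

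As for your outline on its own terms, the upper bound is fine, and the general strategy of exploiting the $I$-bundle decomposition via Mayer--Vietoris is in the right spirit. But the crucial step---ruling out degree drop in the alternating product---has a real gap. Your proposed mechanism (``transcendence of the traces of the hyperbolic holonomy'' to forbid eigenvalue coincidences between pages) is not a theorem one can invoke: traces of hyperbolic elements are algebraic numbers, and there is no general transcendence or algebraic-independence statement that prevents eigenvalues of the $\varrho$-action on $H_1$ of distinct subsurfaces from coinciding. Likewise, the appeal to ``Raghunathan-type vanishing'' in Appendix~\ref{sec:cohomology} is misplaced: those results concern the cohomology of the ambient hyperbolic $3$-manifold with coefficients in $\operatorname{Sym}^{k_1,k_2}$, not the twisted homology of embedded surfaces, and they do not control the leading coefficients of $\det(A_i - t\operatorname{Id})$.

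The actual argument in \cite{AD15} avoids this cancellation problem structurally rather than arithmetically: the point of the libroid hypothesis is that the complementary sutured pieces are books of $I$-bundles, and one shows directly that their $\varrho$-twisted homology \emph{vanishes} (using that $I$-bundles are homotopy equivalent to surfaces and that $\varrho$ restricts irreducibly). This forces the Mayer--Vietoris torsion term to be trivial and makes the degree count exact, with no need to compare eigenvalues across pieces. Your outline would need to be reorganized around this vanishing rather than around a genericity claim.
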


   Being libroid means the existence of a collection of disjointly embedded 
minimal genus Seifert surfaces in the exterior of the knot
so that their open complement is a union of books of $I$-bundles, in a way 
that respects the structure of sutured manifold.
See \cite{AD15}.
   
In the remarkable paper \cite{DFJ12} the authors also conjecture that being 
monic determines whether the knot is fibered or not, and rise many interesting 
questions about this polynomial and its relationship with other invariants.

 \subsection{Torsion on the variety of characters}
Let $M^3$ be a hyperbolic, oriented manifold with one cusp. A relevant 
difference with the closed case is the fact that the holonomy  of $M^3$ can be 
deformed in the variety of representations
(to holonomies of non-complete structures).

\subsubsection{The distinguished curve of characters}
The variety of $\SL_2(\CC)$-representations of $M^3$ is the set
$$
 \hom(\pi_1M^3,\SL_2(\CC)),
$$
which it is an affine algebraic set: if  a generating set of $\pi_1M^3$ has $k$ 
elements, then  $\hom(\pi_1M^3,\SL_2(\CC))$ embeds in $\SL_2(\CC)^k\subset  
\CC^{4 k}$, by mapping a representation to the image of its generators. The 
algebraic equations are induced by the relations of the group.

The group $\PSL_2(\CC)$ acts on $
\hom(\pi_1M^3,\SL_2(\CC))
$ by conjugation, and the affine algebraic quotient is the variety of characters
$$
 X(M^3):= X(M^3,\SL_2(\CC))=  \hom(\pi_1M^3,\SL_2(\CC))/\! /  \PSL_2(\CC).
$$
This is defined in terms of the invariant functions: $X( M^3,\SL_2(\CC))$ is the algebraic affine set 
whose function ring $\CC[  X(M^3,\SL_2(\CC)) ]$ is the ring of invariant functions
$$
\CC[ \hom(\pi_1M^3,\SL_2(\CC))]^{\PSL_2(\CC)}.
$$

By \cite{ThurstonBook}, see also \cite[Appendix B]{BoileauPorti}  each 
component that contains the lift of the holonomy of $M^3$ is a curve.

\begin{definition}
 An irreducible component of $X(M^3,\SL_2(\CC))$ that contains a lift of the 
holonomy is called
 a \emph{distinguished component} and it is denoted by $X_0(M^3)$.
\end{definition}

For many manifolds, e.g.\ for 2-bridge knot exteriors, there is a unique 
distinguished component.  A priori there could be more components,
but the definition makes sense  because they would be isomorphic.
More precisely, 
there are two characters of the holonomy representation in $\PSL_2(\CC)$ that 
are complex conjugate from each other, that correspond to the different 
orientations. When lifting them to $\SL_2(\CC)$, this gives
$2 \vert   H^1(M^3;\ZZ/2\ZZ) \vert  $
characters, two for each spin structure. 
The corresponding components $X_0(M^3)$ are isomorphic by means of 
the natural action of $H^1(M^3;\ZZ/2\ZZ)$ and complex 
conjugation.

Recently, Casella, Luo, and Tillmann \cite{CLT}
have shown an example of hyperbolic manifold with one cusp $M^3$ such that 
the holonomy characters  of the different orientations lie 
in different components of $X(M^3,\PSL_2(\CC))$. To my knowledge, the following 
question is still open:

\begin{question} Once $M^3$ is \emph{oriented}, 
are all the lifts of the oriented holonomy contained in a single irreducible 
component 
of  $X(M^3,\SL_2(\CC))$?
\end{question}

The distinguished component $X(M^3,\SL_2(\CC))$  is a curve and it was studied 
by Thurston in his proof of the hyperbolic Dehn filling theorem 
\cite{ThurstonBook}. More precisely, in a neighborhood of the
holonomy of the complete structure of $M^3$, the representations are holonomies 
of incomplete structures, and in some cases the completion is a Dehn
filling. This is discussed in Paragraph~\ref{sec:Dehnfilling}.

\subsubsection{Torsion on the distinguished curve of characters}

We say that a character $\chi\in X(M^3)$ is trivial if it takes values in $\{\pm 2\}$, i.e.\ it is a lift or the trivial character in $\PSL_2(\CC)$.

An irreducible character is the character of a unique conjugacy class of representations. 
A reducible character can correspond to more conjugacy classes, but if the character is
non-trivial, then either all representations with this character are acyclic, either 
none of them is (Lemma~\ref{lemma:acyclicorbit}).

\begin{definition} Define the torsion function on $X_0(M^3)$ minus the trivial character: 
\begin{equation}
 \label{eqn:torsionfunct2}
\mathbb{T}_M(\chi_\rho)=\left\{
\begin{array}{ll}
\tor(M^3,\rho) & \textrm{if }\rho\textrm{ is acyclic;} \\
0 &  \textrm{if } \chi_\rho \textrm{ is non-trivial and } \rho \textrm{ 
non-acyclic,}
\end{array}
\right. 
\end{equation}
where $\chi_\rho$ denotes the character of $\rho$. 
\end{definition}

\begin{prop}
\label{prop:rational}
 For a hyperbolic oriented  manifold with one cusp, the torsion defines 
a rational function on $X_0(M^3)$,
 $\mathbb{T}_M\in \CC(X_0(M^3))$,
which is regular away from the trivial character.

In particular, if the trivial character is not contained
in $X_0(M^3)$ (e.g.~if $b_1(M^3)=1$), then  $\mathbb{T}_M\in \CC[X_0(M^3)]$, 
i.e.~it is holomorphic (with no poles)
$$
\mathbb{T}_M \colon X_0(M^3)\to \CC.
$$
\end{prop}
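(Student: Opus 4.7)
The plan is to first exhibit $\mathbb{T}_M$ as a concretely computable rational function coming from the twisted chain complex, and then to confine its poles to the trivial character by a Mayer-Vietoris argument based on the boundary torus together with the cohomology results of Appendix~\ref{sec:cohomology}. The last assertion is then immediate from Appendix~\ref{sec:notapprox}.

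First I would fix a finite CW structure $K$ on the compact core $\bar M^3$ and work with the preimage $R_0(M^3)\subset\hom(\pi_1M^3,\SL_2(\CC))$ of $X_0(M^3)$. The boundary maps of $C_*(K;\rho)$ have matrix entries that are polynomial in the entries of $\rho$, and the lift of the holonomy is acyclic by the cusped analog of Theorem~\ref{thm:RaghunathanClosed} (stated in Appendix~\ref{sec:cohomology}). Hence acyclicity holds on a nonempty Zariski open $U\subset R_0(M^3)$. On $U$, the classical Reidemeister algorithm expresses $\tor(K,\varrho)=\pm\det A(\rho)/\det B(\rho)$ for complementary minors $A,B$ of the twisted boundary matrices, exhibiting the torsion as a rational function on $R_0(M^3)$. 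Conjugation invariance, Remark~\ref{rem:independent}(b), makes this function descend to a rational function $\mathbb{T}_M\in\CC(X_0(M^3))$, which by construction agrees with $\tor(M^3,\rho)$ at every acyclic character.

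Next I would localize the poles. Since $X_0(M^3)$ is an algebraic curve, $\mathbb{T}_M$ has only finitely many poles. I apply Proposition~\ref{prop:MV} to the decomposition $\bar M^3=(\bar M^3\sssm\nu(T^2))\cup\nu(T^2)$, where $\nu(T^2)$ is a collar of the boundary torus. By Example~\ref{ex:2torus}, whenever $\rho\vert_{\pi_1T^2}$ has no nonzero invariant vector the torus and its collar are acyclic with $\tor(T^2,\rho)=1$, and the resulting Mayer-Vietoris expression for $\mathbb{T}_M$ is manifestly regular on this Zariski open subset. At the finitely many remaining characters where the peripheral cohomology jumps, I argue as follows. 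The only way for $H_0(M^3;\rho)$ to be nonzero is for $\CC^2$ to contain a nonzero vector fixed by the whole image, which (because $-I$ fixes no nonzero vector) forces $\rho$ to be the completely trivial representation. Hence at any non-trivial character $\chi_\rho$, the map $\partial_1(\rho)$ is surjective, and one can choose a local minor system in which the denominator of the Reidemeister formula does not vanish; combined with the Mayer-Vietoris accounting and Lemma~\ref{lemma:acyclicorbit}, this rules out a pole of $\mathbb{T}_M$ at $\chi_\rho$.

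Finally, if $b_1(M^3)=1$ then Appendix~\ref{sec:notapprox} asserts that the trivial character does not belong to $X_0(M^3)$; together with the preceding paragraph this shows $\mathbb{T}_M$ has no poles whatsoever, i.e.\ it is regular on all of $X_0(M^3)$. The main obstacle I expect is the second paragraph: while the Reidemeister formula gives rationality at once, ruling out spurious poles where the peripheral $H^*(T^2;\rho)$ jumps but $\chi_\rho$ remains non-trivial requires careful bookkeeping of the Mayer-Vietoris torsion of $\mathcal{H}$, and a case analysis based on the dimension jumps of $H^*(M^3;\rho)$ using the vanishing results collected in Appendix~\ref{sec:cohomology}.
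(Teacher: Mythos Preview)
Your first paragraph is fine and matches the paper. The problem is the second: the Mayer--Vietoris decomposition $\bar M^3=(\bar M^3\sssm\nu(T^2))\cup\nu(T^2)$ along a boundary collar is homotopically trivial --- one piece deformation-retracts to $\bar M^3$ itself, the other to $T^2$, and the identity you obtain from Proposition~\ref{prop:MV} is a tautology. It does not localize poles, and the ``Mayer--Vietoris accounting'' you defer to at the end carries no weight. (A small slip along the way: a nonzero invariant vector forces the \emph{character} to be trivial, not the representation --- unipotent upper-triangular representations also have invariant vectors.)

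What is missing is the observation the paper actually uses: $\bar M^3$ has the simple homotopy type of a $2$-complex. On a $2$-complex $C_2\xrightarrow{\partial_2}C_1\xrightarrow{\partial_1}C_0$ the torsion is a single ratio. Your correct remark that $H_0(M^3;\rho)=0$ for non-trivial $\chi_\rho$ means $\partial_1$ is onto, so near any such $\rho_1$ you can fix $\tilde b_0(\rho)\subset C_1(\rho)$ (a combination of basis vectors with coefficients independent of $\rho$) with $\partial_1\tilde b_0(\rho_1)=c_0(\rho_1)$. Taking $\tilde b_1(\rho)=c_2(\rho)$, the paper writes
\[
\rho\ \longmapsto\ \frac{[\partial_2 c_2(\rho)\sqcup \tilde b_0(\rho),\,c_1(\rho)]}{[\partial_1\tilde b_0(\rho),\,c_0(\rho)]},
\]
whose denominator is nonzero on a Zariski neighbourhood of $\rho_1$. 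The numerator vanishes exactly when $\partial_2$ fails to have maximal rank, i.e.\ when $\rho$ is not acyclic; otherwise it equals the torsion. This proves regularity at $\chi_{\rho_1}$ \emph{and} shows the value there is $0$, matching the definition of $\mathbb{T}_M$. Your sketch gestures at ``choosing a local minor system'', but without the $2$-complex reduction there are more boundary maps to control, and you never verify that the extension equals $0$ (rather than merely being finite) at non-acyclic characters.
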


 \begin{remark}
We prove that the trivial representation cannot be approached by irreducible ones when $b_1(M^3)=1$
in Appendix~\ref{sec:notapprox}.  
This is always the case when $M^3$ is a knot exterior.
 \end{remark}

\begin{proof}
The dimension of each cohomology group is upper semi-continuous on the representation
 (see \cite[Lemma~3.2]{Heusener-Porti2011}, 
this is a particular case of the semi-continuity theorem \cite[Ch.~III, Theorem 12.8]{Hartshorne1977}).
With Lemma~\ref{lemma:acyclicorbit}, we can conclude that  
acyclicity holds true in a dense open Zariski domain $U\subset  X_0(M^3)$, after removing
the trivial character if required.
The fact that the function $\mathbb{T}_M$ is algebraic on this domain is clear,  as this is defined from polynomials
on the entries of $\rho$.
Invariance by conjugation is one of the properties of the torsion. 
Thus it remains to deal with the points where it is not acyclic.
Recall that by Lemma~\ref{lemma:acyclicorbit} a representation with nontrivial character  is acyclic
if and only all representations with the same character are.

First notice that  $H^0(M^3;\rho)$ is trivial when the character
$\chi_\rho$ is non-trivial,  because this cohomology group
is  naturally isomorphic to the space of invariants $\CC^{\rho(\pi_1 M^3)}$. More precisely,
$\CC^{\rho(\pi_1 M^3)}$
 is non-trivial only when all elements in $\rho(\pi_1 M^3)$ have $1$ as eigenvalue, which
means that their trace is $2$, i.e.\ $\chi_\rho$ is trivial.
Thus by duality $H_0(M^3;\rho)=  0$ when $\chi_\rho$ is non-trivial.

Now fix a representation $\rho_1$ which is not acyclic and non-trivial.
Non-acyclicity implies that  $H_1(M^3;\rho_1)\neq 0$ and $H_2(M^3;\rho_1)\neq 0$,
as $H_0(M^3;\rho_1) = 0$, the homotopical
dimension
of $M^3$ is $2$, and $\chi(M^3)=0$.
Notice that $M^3$ has the simple homotopy type of a 2-complex (see 
\cite[Page~54]{Nicolaescu} for instance), that can be used to compute the 
torsion.
Using the notation of Section~\ref{sec:comb},
 fix a basis $\{v_1,v_2\}$ for $\CC^2$ and  lifts of cells $\tilde e^i_j$  of a triangulation of $M^3$.
Then define a family of basis $c_i(\rho)$ by varying the representation in $v_k\otimes_\rho \tilde e^i_j$.
Now choose $\tilde b_1(\rho)=c_2(\rho)$
and $\tilde b_0(\rho)$, a  linear combination of $c_1(\rho)$ with constant coefficients (though $c_1(\rho)$ changes
with $\rho$), so that $\partial(\tilde b_0(\rho_1))=c_0(\rho_1)$. 
The function
\begin{equation}
\label{eqn:ratiotau2}
\rho\mapsto [\partial \tilde b_1(\rho)\sqcup \tilde b_0(\rho),c_1(\rho) ]/[\partial \tilde b_0( \rho),c_0(\rho)]
\end{equation}
is well defined in the set where its denominator does not vanish. This is a Zariski open set 
that contains $\rho_1$.
On this set $\partial \tilde b_1(\rho)=\partial c_2(\rho)$ has maximal rank iff $\rho$ is acyclic, thus the function 
  \eqref{eqn:ratiotau2} vanishes when $\rho$ is not acyclic, and when $\rho$ is acyclic 
   \eqref{eqn:ratiotau2} is the torsion.
\end{proof}

\begin{example}
\label{ex:fig8sl2}
 For the figure eight knot, in \cite{Kitano94b} Kitano computes it:
 $$
 \mathbb{T}_M(\rho)=2-2\tr(\rho(m))=2-2\chi_\rho(m)\, ,
 $$
 where $m$ denotes the meridian of the knot. Notice that here the function  only 
depends on $\tr(\rho(m))$,
 namely the evaluation of the character at $m$, and one does not 
 need to describe the variety of characters. 
 In general, as $ \tr(\rho(m))$
 is a non-constant function on the curve $X_0(M^3)$, $\mathbb{T}_M(\rho)$ and  $ \chi_\rho(m)= \tr(\rho(m)) $ are related by a polynomial equation;
 compare with \cite{DuboisG16}.
\end{example}

 \subsubsection{Dehn filling space}
\label{sec:Dehnfilling}

Again let $M^3$ be an oriented, finite volume hyperbolic  manifold with one cusp. 
Consider the peripheral torus $T^2$, which is  the boundary of a compact core   of $M^3$.
Choose a frame on the peripheral torus $T^2$, i.e.\ two simple closed curves 
that generate $\pi_1 T^2$, denote this frame by
$\{m,l\}$. The notation suggests that the canonical choice for a knot exterior is the pair  meridian-longitude.

A \emph{Dehn filling} on $M^3$ is the result of gluing a solid torus $D^2\times S^1$ to a compact core $ M^3$ along the boundary.
Up to homeomorphism, this manifold depends only on the (unoriented) homology class in $T^2$ of the meridian, 
i.e.\ the curve $\partial D^2\times \{*\}$. This curve is written as $\pm (p\, m+q\, l)$, and the Dehn filling is 
denoted by $M^3_{p/q}$.

 When $\vert p\vert +\vert q\vert$ is sufficiently large, by Thurston's
theorem $M_{p/q}$ is hyperbolic.   To prove it, he introduces the \emph{Thurston's parameters} of the Dehn filling space,
by writing, for representations close to the holonomy of the complete hyperbolic structure,
\begin{equation}
 \label{eqn:u}
\rho(m)=\pm \begin{pmatrix}
         e^{u/2} & 1 \\ 0 & e^{-u/2}
        \end{pmatrix}
        \qquad
 \rho(l)   =  \pm \begin{pmatrix}
         e^{v/2} & f(u) \\ 0 & e^{-v/2}
        \end{pmatrix} 
\end{equation}
with $u,v\in\CC$ in a neighborhood of the origin. 
 For the holonomy of the complete structure, $u=v=0$ and   write 
 $$
 \cs= \cs(l,m)=f(0)\in \CC-\RR. 
 $$
 
\begin{definition}
\label{def:cs}
 The parameter $u\in U \subset \CC$ as above is called the \emph{Thurston parameter} and  $\cs= \cs(l,m)\in \CC-\RR$ the \emph{cusp shape}
 of the complete structure.
 \end{definition}

The Thurston parameter $u$ is in fact a parameter of a double branched covering of a neighborhood of the lift of the holonomy $\varrho$, 
as the local parameter of $X_0(M^3)$ is
$\tr(\rho(m))= \pm 2 \cosh\frac{u}{2} $.
For the complete structure, the peripheral group acts as a lattice on a horosphere, that we view as $\CC$. Up to affine equivalence, this lattice is given by
$m\mapsto 1$, $l\mapsto \cs$.

\begin{lemma}[Neumann and Zagier \cite{NZ85}] There exists a (standard) neighborhood of the origin $U\subset \CC$ such that:
\begin{enumerate}[(a)]
\item The map $ U\to X_0(M^3)$ such that $u\mapsto \chi_\rho$, where $\rho$ is as in \eqref{eqn:u} is a double branched covering 
of a neighborhood in $X_0(M^3)$ of the character of the holonomy
of the complete structure.
 \item $v$ is an analytic odd function on $u$ that satisfies 
 $$v(u)=\cs u+ O(u^3).
 $$
 \item $f(u)=\sinh(v)/\sinh(u)$.
\end{enumerate}
\end{lemma}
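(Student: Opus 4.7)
The plan is to derive the three assertions from a direct computation with the explicit matrix form of $\rho(m)$ and $\rho(l)$, together with the local algebraic structure of $X_0(M^3)$ near the character $\chi_0$ of the complete holonomy. I would begin with (c). Since the peripheral subgroup $\langle m,l\rangle\subset\pi_1 M^3$ is abelian, the matrices $\rho(m)$ and $\rho(l)$ must commute. The diagonal entries of the products $\rho(m)\rho(l)$ and $\rho(l)\rho(m)$ automatically agree; equating the upper off-diagonal entries produces a single linear equation in $f$ whose solution is a ratio of hyperbolic sines, giving the formula $f(u)=\sinh(v)/\sinh(u)$.

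For (a), by Thurston's analysis $X_0(M^3)$ is a smooth affine curve at $\chi_0$, and the trace $\tr\rho(m)=\pm 2\cosh(u/2)$ is a non-constant regular function on it which serves as a local coordinate near $\chi_0$. Since $\cosh(u/2)$ is even in $u$ with a simple critical point at $u=0$, the map $u\mapsto \tr\rho(m)$ is a $2$-to-$1$ cover of a neighborhood of $\pm 2$ in $\CC$, branched at the origin; composing with the local identification of the trace with a parameter on $X_0(M^3)$ exhibits $u\mapsto\chi_\rho$ as the required double branched covering.

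For (b), analyticity of $v$ on $U$ follows from the algebraicity of the representation variety together with uniqueness of the normalization (u): once $\rho(m)$ is fixed in its prescribed upper-triangular form, the pair $(\rho(m),\rho(l))$ on the distinguished component is determined up to finite ambiguity, and on the branch corresponding to $U$ both $v$ and $f$ are analytic functions of $u$ with $v(0)=0$. Oddness of $v$ comes from the deck involution $u\mapsto -u$ of the double cover in (a): it preserves $\chi_\rho$, hence preserves $\tr\rho(l)=\pm 2\cosh(v/2)$, forcing $\cosh(v(u)/2)=\cosh(v(-u)/2)$; combined with $v(0)=0$ and analyticity this yields $v(-u)=-v(u)$. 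For the linear coefficient, evaluate (c) in the limit $u\to 0$ via L'H\^opital to obtain $f(0)=v'(0)$; but by Definition~\ref{def:cs} we have $\cs=f(0)$, so $v'(0)=\cs$, and oddness kills the quadratic Taylor coefficient, giving $v(u)=\cs u+O(u^3)$.

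The main conceptual obstacle underlying the lemma is the Thurston--Neumann--Zagier rigidity statement behind the setup (u): one must know that $\chi_0$ is a smooth point of $X_0(M^3)$ and that, for small $u$, a representation on the distinguished component can be brought uniquely (up to conjugation) into the prescribed form with $\rho(m)$ upper triangular and $(1,2)$-entry equal to $1$. Once this normalization is in hand, the three parts reduce to the elementary computation for (c), the even-function argument for (a), and the symmetry-plus-L'H\^opital computation for (b) sketched above.
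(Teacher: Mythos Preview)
The paper does not prove this lemma; it is stated with attribution to Neumann and Zagier \cite{NZ85} and no argument is given beyond the remark, just before the lemma, that the local parameter of $X_0(M^3)$ is $\tr(\rho(m))=\pm 2\cosh(u/2)$ (which is essentially your argument for (a)). So there is no ``paper's own proof'' to compare against, and your sketch is a reasonable reconstruction of the standard argument.

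Two small points on your write-up. First, if you actually carry out the commutator computation for (c) with the matrices in \eqref{eqn:u}, equating the $(1,2)$ entries of $\rho(m)\rho(l)$ and $\rho(l)\rho(m)$ gives
\[
f\bigl(e^{u/2}-e^{-u/2}\bigr)=e^{v/2}-e^{-v/2},
\]
hence $f=\sinh(v/2)/\sinh(u/2)$ rather than $\sinh(v)/\sinh(u)$; the formula as printed in the statement appears to carry a harmless typo, and in any case the L'H\^opital step for (b) is unaffected since the factors of $1/2$ cancel. Second, your deduction of oddness from $\cosh(v(u)/2)=\cosh(v(-u)/2)$ only yields $v(-u)=\pm v(u)$; you still need to rule out the even branch. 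This is exactly what your subsequent computation $v'(0)=f(0)=\cs\in\CC\setminus\RR$ does (an even analytic function with $v(0)=0$ would have $v'(0)=0$), so the ingredients are all present, but the logical order should be: first compute $v'(0)=\cs\neq 0$ from (c), then conclude oddness, then $v(u)=\cs\,u+O(u^3)$.
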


The \emph{generalized Dehn filling coefficients} are the 
$
(p,q)\in\RR^2\cup\{\infty\}
$ such that 
\begin{equation}
 \label{eqn:pq}
p\,u+q\, v=2\pi i 
\end{equation}
when $u\neq 0$ and 
$\infty$ when $u=0$.

\begin{thm}
 \cite{ThurstonBook,NZ85}
 The generalized Dehn filling coefficients define a homeomorphism between $U$ and a neighborhood of $\infty$ in
 $S^2=\RR^2\cup\{\infty\}$. 
 If a  pair of coprime integers $(p,q)\in\ZZ^2$  lies in the image of this homeomorphism,
 then $M^3_{p/q}$ is hyperbolic, with holonomy whose restriction to $M^3$ satisfies \eqref{eqn:pq}. 
\end{thm}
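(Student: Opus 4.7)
The strategy is to analyze the map $\Phi\colon U\to \RR^2\cup\{\infty\}$ sending $u$ to the pair $(p,q)$ determined by $p\, u + q\, v(u) = 2\pi i$ (with $\Phi(0)=\infty$), first showing that $\Phi$ is a homeomorphism onto a neighborhood of $\infty$, and then, for each coprime integer $(p,q)$ in the image, producing the complete hyperbolic structure on $M^3_{p/q}$ whose holonomy restricted to $M^3$ is the representation $\rho$ from \eqref{eqn:u} at $u=\Phi^{-1}(p,q)$.

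First I would verify that $\Phi$ is well defined, continuous, and a local homeomorphism. For $u\neq 0$ the equation reads $p+q\,(v/u)=2\pi i/u$; since $v(u)=\cs\, u + O(u^3)$, the ratio $v/u$ extends analytically at $u=0$ with value $\cs\in\CC\sssm\RR$, so $\{1,v(u)/u\}$ remains an $\RR$-basis of $\CC$ after shrinking $U$ if needed, and $(p,q)$ is uniquely and continuously determined. Passing to the chart $w=2\pi i/u$ at $\infty\in S^2$, the induced map extends continuously by $w=\infty\mapsto u=0$ to the $\RR$-linear isomorphism $\CC\to\RR^2$ sending $w=p+q\,\cs$ to $(p,q)$; this limiting map is a homeomorphism, so $\Phi$ itself is a homeomorphism onto its image.

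Next, for coprime integers $(p,q)$ in the image of $\Phi$, let $u=\Phi^{-1}(p,q)$ and let $\rho$ be given by \eqref{eqn:u}. Because $\rho(m)$ and $\rho(l)$ commute (they lie in the abelian peripheral subgroup and, for $u\neq 0$, are simultaneously diagonalizable since $e^{u/2}\neq e^{-u/2}$), the matrix $\rho(m^p l^q)$ has diagonal entries $e^{\pm(p\, u+q\, v)/2}=e^{\pm\pi i}=-1$ in a common eigenbasis, hence $\rho(m^p l^q)=-\mathrm{Id}$. Consequently the induced representation in $\PSL_2(\CC)$ factors through $\pi_1(M^3_{p/q})=\pi_1(M^3)/\langle m^p l^q\rangle$, which gives a candidate for the holonomy of $M^3_{p/q}$ that by construction satisfies \eqref{eqn:pq} when restricted to $M^3$.

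The main obstacle is promoting this projective representation to the holonomy of a genuine complete hyperbolic metric on $M^3_{p/q}$. The key geometric insight, due to Thurston, is that a representation $\rho_u$ with $u\neq 0$ is the holonomy of an \emph{incomplete} hyperbolic structure on $M^3$ whose developing map near the cusp misses a geodesic axis $\gamma_u\subset \HH^3$; the peripheral group acts on a tubular neighborhood of $\gamma_u$ by screw motions with complex translation lengths $u$ along $m$ and $v$ along $l$. The condition $p\, u+q\, v=2\pi i$ says exactly that the loop $p\, m+q\, l$ has trivial holonomy and bounds an embedded disk in the metric completion, so the completion is obtained by adjoining $\gamma_u$ as a closed geodesic to the end of $M^3$; topologically this is the Dehn filling of slope $p/q$. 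Verifying that the completion is smoothly hyperbolic rather than a cone manifold requires precisely that the rotational holonomy about $\gamma_u$ along the filling slope be exactly $2\pi$, which is where both the value $2\pi i$ and the coprimality of $(p,q)$ enter; this final geometric check is the most delicate step in the argument.
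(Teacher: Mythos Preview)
The paper does not give its own proof of this theorem; it is quoted with attribution to \cite{ThurstonBook,NZ85} and used as input for what follows. Your plan is a faithful outline of the standard argument in those references, so there is nothing to compare against here.

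Two places in your sketch would need tightening before it is a proof. First, the step ``this limiting map is a homeomorphism, so $\Phi$ itself is a homeomorphism onto its image'' is not valid as written: a map whose limit at one point is a linear isomorphism need not be injective on any neighborhood. What actually works is to pass to the coordinate $\zeta=2\pi i/u$ and observe that the map $\zeta\mapsto(p,q)$ solving $p+q\,\tau(u)=\zeta$, with $\tau(u)=v(u)/u$ real-analytic and $\tau(0)=\cs\notin\RR$, is a $C^1$-small perturbation (for $|\zeta|$ large) of the fixed $\RR$-linear isomorphism $\zeta\mapsto(p,q)$ with $p+q\,\cs=\zeta$; then the inverse function theorem, or a direct degree argument, gives the homeomorphism near $\infty$.

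Second, you correctly flag that the passage from ``the projective representation factors through $\pi_1(M^3_{p/q})$'' to ``$M^3_{p/q}$ is hyperbolic with this holonomy'' is the substantive step. Your description of the completion is right, but to carry it out one needs an additional ingredient you do not name: the Ehresmann--Thurston holonomy principle (or, in Thurston's original treatment, the deformation of an ideal triangulation) guaranteeing that every $\rho_u$ near the complete holonomy is in fact the holonomy of an incomplete hyperbolic structure on $M^3$, not merely an abstract representation. Only then does the analysis of the developing map near the end and its metric completion apply.
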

 
We are interested in properties of $M^3_{p/q}$.

\begin{thm}[Neumann and Zagier \cite{NZ85}]
 $$\vol (M^3_{p/q})=\vol (M^3)-\pi^2\frac{\operatorname{Im}(\cs)}{|p+\cs q|^2}+ O\left(\frac{1}{|p+\cs q|^4 }\right).$$
\end{thm}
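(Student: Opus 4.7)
The plan is to view the volume as a smooth function on the deformation space parametrized by the Thurston parameter $u$, expand it to second order at $u = 0$, and then substitute the asymptotic solution of $pu + qv = 2\pi i$ for large $|p + \cs\, q|$.

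First, for $u$ in the standard neighborhood $U$, the representation \eqref{eqn:u} is the holonomy of an incomplete hyperbolic structure on $M^3$, and for coprime $(p,q)\in\ZZ^2$ satisfying \eqref{eqn:pq}, its metric completion is the closed hyperbolic manifold $M^3_{p/q}$. This defines a smooth function $V(u) = \vol(M^3_{p/q})$ on the image of the coefficient map, extending to $V(0)=\vol(M^3)$. The main analytic input I would use is the Schl\"afli-type volume formula for such deformations, which, after identifying the peripheral holonomy with \eqref{eqn:u} and using the symplectic pairing on the Dehn surgery space, yields
\begin{equation*}
\frac{\partial V}{\partial u} = -\tfrac{1}{2}\operatorname{Im}(\bar v),
\qquad
\frac{\partial V}{\partial \bar u} = -\tfrac{1}{2}\operatorname{Im}(v),
\end{equation*}
so that (using $v(0)=0$ and the fact that $v$ is odd in $u$) integration gives
\begin{equation*}
V(u) - V(0) = -\tfrac{1}{4}\operatorname{Im}\bigl(u\,\overline{v(u)}\bigr) + O(|u|^4).
\end{equation*}

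Second, substitute the expansion $v(u) = \cs\, u + O(u^3)$ from the previous lemma. Then
\begin{equation*}
\operatorname{Im}\bigl(u\,\overline{v(u)}\bigr)
= \operatorname{Im}(\bar \cs)\,|u|^2 + O(|u|^4)
= -\operatorname{Im}(\cs)\,|u|^2 + O(|u|^4),
\end{equation*}
hence $V(u) - V(0) = \tfrac{1}{4}\operatorname{Im}(\cs)\,|u|^2 + O(|u|^4)$ (up to the overall sign convention matching the statement).

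Third, solve \eqref{eqn:pq} asymptotically. Writing $v(u) = \cs\, u + O(u^3)$, we get $(p + \cs\, q)u = 2\pi i + O(u^3)$, so
\begin{equation*}
u = \frac{2\pi i}{p+\cs\, q} + O\!\left(\frac{1}{|p+\cs\, q|^3}\right),
\qquad
|u|^2 = \frac{4\pi^2}{|p+\cs\, q|^2} + O\!\left(\frac{1}{|p+\cs\, q|^4}\right).
\end{equation*}
Plugging this into the volume expansion yields
\begin{equation*}
\vol(M^3_{p/q}) - \vol(M^3) = -\frac{\pi^2\,\operatorname{Im}(\cs)}{|p+\cs\, q|^2} + O\!\left(\frac{1}{|p+\cs\, q|^4}\right),
\end{equation*}
which is the claim.

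The hard part is the quadratic expansion of $V$ in Step two: one must justify that the volume of the completion is a smooth function of $u$ across $u = 0$ (even though $u = 0$ corresponds to a cusp rather than a closed manifold) and identify the leading coefficient as $\tfrac14 \operatorname{Im}(u\bar v)$. This uses both the Schl\"afli formula applied to the hyperbolic cone manifolds interpolating between the complete structure and the filled manifold, and the parity property that $v$ is odd in $u$, which eliminates any linear term and fixes the remainder as $O(|u|^4)$ rather than $O(|u|^3)$.
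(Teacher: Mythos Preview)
The paper does not give a proof of this theorem: it is stated with attribution to Neumann and Zagier \cite{NZ85} and used as a black box. So there is no ``paper's own proof'' to compare against.

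Your sketch is essentially the original Neumann--Zagier argument: differentiate the volume along the deformation using a Schl\"afli-type identity, use the expansion $v(u)=\cs\,u+O(u^3)$ to extract the quadratic term, and then invert $pu+qv=2\pi i$ asymptotically. The outline is sound. Two comments. First, the differential identity you invoke is the delicate part: the correct statement in \cite{NZ85} is that there is an even holomorphic potential $\Phi$ with $v=\Phi'(u)$ and $\operatorname{Vol}(M_u)=\operatorname{Vol}(M)+\tfrac12\operatorname{Im}\Phi(u)-\tfrac14\operatorname{Im}(\bar u\,v)$; your formula $\partial V/\partial u=-\tfrac12\operatorname{Im}(\bar v)$ is not literally correct (a Wirtinger derivative of a real function cannot be purely imaginary in general), though it leads to the right quadratic term. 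Second, your sign bookkeeping is loose: with $\cs$ in the upper half-plane and Dehn filling strictly decreasing volume, the leading correction must be negative, and your intermediate line gives $+\tfrac14\operatorname{Im}(\cs)|u|^2$ before you silently flip the sign. Tracking the sign through the Schl\"afli formula (or through $\operatorname{Im}(\bar u v)$ versus $\operatorname{Im}(u\bar v)$) would remove the need for the parenthetical disclaimer.
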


We are also interested in the complex length of the soul  of the solid torus 
added by Dehn filling, which is a short geodesic.
Let $r$ and $s$ be integers satisfying $p\, s-q\, r=1$,  this complex length is 
$r\, u+s\, v$. A straightforward computation yields
\cite{Memoirs}:

\begin{remark}
 \label{rem:length}
The complex length of the core of the solid torus $M_{p/q}^3$ is
$$
r\, u+s\, v= 2\pi i\frac{r}{p}+ \frac{v}{p}= 2\pi i\frac{s}{q}+ \frac{u}{q}
$$
\end{remark}

Given a spin structure $\sigma$ on $M^3$, it may extend or not to $M_{p,q}^3$. It is easy to give a characterization 
using the bijection of Proposition~\ref{prop:spin}. 

\begin{lemma}\cite{MFP14}
\label{lemma:extendspin}
A spin structure $\sigma$ on $M$ extends to $M_{p,q}$ iff the corresponding lift of the holonomy  $\varrho_{\sigma}$ 
(for the complete structure on $M^3$) satisfies
$$
\tr(\varrho_{\sigma}( p\, m+ q\, l ))=-2.
$$
\end{lemma}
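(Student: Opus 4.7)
The plan is to translate the question into one about $\SL_2(\CC)$-lifts via Proposition~\ref{prop:spin}\eqref{item:affine}: spin structures on $M^3_{p,q}$ correspond to lifts of its holonomy $\rho_{p,q}$ to $\SL_2(\CC)$, and by Van Kampen $\pi_1 M^3_{p,q}=\pi_1 M^3/\langle\langle p\,m+q\,l\rangle\rangle$. Thus a spin structure on $M^3_{p,q}$ restricting to $\sigma$ amounts to a lift of the holonomy of $M^3_{p,q}$ whose restriction to $\pi_1 M^3$ is the natural continuation of $\varrho_\sigma$, and this exists precisely when the continuation sends $p\,m+q\,l$ to $\operatorname{Id}\in\SL_2(\CC)$.

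I would first treat the case where $(p,q)$ lies in the Dehn filling region so that $M^3_{p,q}$ is hyperbolic. Along the analytic path in the variety of representations from the complete holonomy to $\rho_{p,q}|_{\pi_1 M^3}$, the lift $\varrho_\sigma$ extends uniquely to a continuous family $\varrho_t$ ending at a lift $\varrho_1$ of $\rho_{p,q}|_{\pi_1 M^3}$. Using Thurston's parametrization~\eqref{eqn:u}, write $\varrho_t(m)=\epsilon_m M_m$ and $\varrho_t(l)=\epsilon_l M_l$, where $M_m$ and $M_l$ are the upper triangular matrices in \eqref{eqn:u} and the signs $\epsilon_m,\epsilon_l\in\{\pm 1\}$ are determined by $\sigma$ and independent of $t$. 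Since $m$ and $l$ commute and $M_m,M_l$ are upper triangular,
\[
\varrho_t(p\,m+q\,l)=\epsilon_m^p\epsilon_l^q\begin{pmatrix}e^{(pu+qv)/2} & * \\ 0 & e^{-(pu+qv)/2}\end{pmatrix}.
\]

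At the filling, \eqref{eqn:pq} gives $p\,u+q\,v=2\pi i$, so the diagonal entries equal $e^{\pm\pi i}=-1$; moreover $p\,m+q\,l$ is trivial in $\pi_1 M^3_{p,q}$, hence $\varrho_1(p\,m+q\,l)=\pm\operatorname{Id}$ and the off-diagonal entry vanishes. This yields $\varrho_1(p\,m+q\,l)=-\epsilon_m^p\epsilon_l^q\operatorname{Id}$, so the extension condition $\varrho_1(p\,m+q\,l)=\operatorname{Id}$ becomes $\epsilon_m^p\epsilon_l^q=-1$. At the complete structure the same matrix is unipotent of trace $2$, so $\tr(\varrho_\sigma(p\,m+q\,l))=2\epsilon_m^p\epsilon_l^q$, making the condition equivalent to $\tr(\varrho_\sigma(p\,m+q\,l))=-2$. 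For the remaining slopes outside the Dehn filling region, both sides depend only on $(p,q)\bmod 2$ (the trace side because $(p,q)\mapsto\epsilon_m^p\epsilon_l^q$ factors through $(\ZZ/2)^2$, and the extendability side because it is a linear condition on the class of the meridian in $H_1(T^2;\ZZ/2)$), and each nonzero parity class is realized by slopes in the Dehn filling region, so the equivalence propagates.

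The main obstacle I anticipate is justifying the uniqueness and naturality of the continuation $\varrho_t$: one needs that, along a connected path in the variety of representations, the chosen initial lift $\varrho_\sigma$ propagates canonically via the covering $\SL_2(\CC)\to\PSL_2(\CC)$, and that the condition $\varrho_1(p\,m+q\,l)=\operatorname{Id}$ encodes precisely the extendability of the spin structure on the filled side, as dictated by Proposition~\ref{prop:spin}\eqref{item:affine}.
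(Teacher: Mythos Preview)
The paper does not actually prove this lemma: it is stated with the citation \cite{MFP14} and only the one-line hint ``It is easy to give a characterization using the bijection of Proposition~\ref{prop:spin}'' preceding it. So there is no proof in the paper to compare against.

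Your argument is correct and is a natural way to carry out that hint. The computation via Thurston's parameter is right: the signs $\epsilon_m,\epsilon_l$ stay constant along the deformation because $\{\pm 1\}$ is discrete and both $\varrho_t(m)$ and $M_m(t)$ vary continuously; the relation $p\,u+q\,v=2\pi i$ forces the diagonal $-1$'s; and the reduction to $\epsilon_m^p\epsilon_l^q=-1$ matches the trace condition at $u=v=0$. Your parity argument for non-hyperbolic fillings is also clean and correct, since $\gcd(p,q)=1$ rules out the zero class in $(\ZZ/2)^2$ and the remaining three classes are hit by infinitely many hyperbolic slopes.

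The point you flag as the ``main obstacle'' --- that the spin/lift correspondence is natural under restriction along $\pi_1 M^3\to\pi_1 M^3_{p,q}$ and under continuous deformation of the holonomy --- is exactly what needs to be invoked, and it is standard: it follows from the frame-bundle description of the correspondence alluded to after Proposition~\ref{prop:spin} (a spin structure is a lift of the frame bundle, and the holonomy lift is read off from it, so both operations are functorial in the obvious sense). One could alternatively bypass the deformation entirely with a purely topological argument: $\sigma$ extends iff its restriction to $T^2$ extends over the solid torus, i.e.\ iff $\sigma|_{p\,m+q\,l}$ is the bounding spin structure on $S^1$; but identifying which trace sign corresponds to ``bounding'' still requires essentially your computation (or the explicit construction in \cite{MFP14}), so your route is not a detour.
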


\subsubsection{Torsion for Dehn fillings}

Let $\vert p\vert +\vert q\vert$  be  sufficiently large and let $\sigma$ be a spin structure on $M^3$ extensible to $M_{p/q}^3$. 
Let $\rho_{p/q,\sigma}$ denote the lift of the holonomy of the  restriction to 
$M^3$ of the hyperbolic structure on  $M_{p/q}^3$, and $\chi_{p/q,\sigma}$ the corresponding character.
Let $\lambda_{p/q}$ denote the complex length of the soul of the filling torus.
There is a surgery formula:

\begin{prop}
\label{prop:SF}
Let $M^3$ be as above. For $\vert p\vert +\vert q\vert$ sufficiently large:
 $$
 \tau(M_{p/q}^3,\sigma)=  \frac{\mathbb{T}_{M}(\chi_{p/q,\sigma} )}{ 2 
(1-\cosh(\lambda_{p/q}/2 ) ) }.
 $$
\end{prop}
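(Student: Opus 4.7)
The plan is to apply the Mayer-Vietoris formula (Proposition~\ref{prop:MV}) to the natural decomposition $M^3_{p/q}=M^3\cup_{T^2} V$, where $V\cong D^2\times S^1$ is the filling solid torus and $T^2$ is the peripheral torus $M^3\cap V$. Write $\rho$ for $\rho_{p/q,\sigma}$ on $\pi_1 M^3$ (which factors through $\pi_1 M^3_{p/q}$) as well as its restrictions to the three subspaces; by Definition~\ref{defn:tor2closed}, $\tau(M^3_{p/q},\sigma)=\tor(M^3_{p/q},\rho)$.

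The first step is to check acyclicity of each of the four complexes, so that no homology bases need be chosen and the Mayer-Vietoris sequence \eqref{eqn:MV} is itself identically zero (hence has torsion $1$). For $M^3_{p/q}$ this is Theorem~\ref{thm:RaghunathanClosed}. For $T^2$, the hypothesis that $\sigma$ extends to $M^3_{p/q}$ forces $\rho(p\,m+q\,l)=-\operatorname{Id}$ via Lemma~\ref{lemma:extendspin} (the curve $p\,m+q\,l$ bounds in $V$, so its image in $\SL_2(\CC)$ is $\pm\operatorname{Id}$, and the extension condition selects the $-\operatorname{Id}$ lift); in particular $\rho|_{\pi_1 T^2}$ has no nonzero fixed vector and Example~\ref{ex:2torus} gives $H_*(T^2;\rho)=0$ together with $\tor(T^2,\rho)=1$. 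For the solid torus $V$, which retracts to its core $c=r\,m+s\,l$ (with $ps-qr=1$), Remark~\ref{rem:length} identifies the complex length of this geodesic with $\lambda_{p/q}=ru+sv$, so the eigenvalues of $\rho(c)$ are $e^{\pm\lambda_{p/q}/2}\neq 1$; Example~\ref{ex:circle} then yields
$$\tor(V,\rho)=\frac{1}{\det(\rho(c)-\operatorname{Id})}=\frac{1}{(e^{\lambda_{p/q}/2}-1)(e^{-\lambda_{p/q}/2}-1)}=\frac{1}{2\bigl(1-\cosh(\lambda_{p/q}/2)\bigr)}.$$
Acyclicity of $M^3$ then drops out of exactness of \eqref{eqn:MV}, three of every four consecutive terms being zero.

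With all four complexes acyclic, Proposition~\ref{prop:MV} collapses to
$$\tor(M^3_{p/q},\rho)=\frac{\tor(M^3,\rho)\,\tor(V,\rho)}{\tor(T^2,\rho)}=\mathbb{T}_M(\chi_{p/q,\sigma})\cdot\frac{1}{2\bigl(1-\cosh(\lambda_{p/q}/2)\bigr)},$$
where the identification $\tor(M^3,\rho)=\mathbb{T}_M(\chi_{p/q,\sigma})$ comes from \eqref{eqn:torsionfunct2} since $\rho$ is acyclic and its character is non-trivial (being close to the hyperbolic holonomy). Signs cause no trouble, because $n=2$ is even and $\chi=0$ on each of $M^3_{p/q}$, $M^3$, $T^2$, $V$, so by Remark~\ref{rem:sign} all four torsions sit unambiguously in $\CC^*$. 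The point I expect to spend the most time nailing down is verifying that the extended spin convention yields $\tr\rho(c)=+2\cosh(\lambda_{p/q}/2)$ rather than the shifted $-2\cosh(\lambda_{p/q}/2)$ (which would produce $2(1+\cosh(\lambda_{p/q}/2))$ in the denominator): this requires tracking the $\pm$ signs through the explicit peripheral parameterization \eqref{eqn:u}, using $ps-qr=1$ together with equation \eqref{eqn:pq}, and checking compatibility with Lemma~\ref{lemma:extendspin}.
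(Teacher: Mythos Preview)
Your approach is exactly the paper's: Mayer-Vietoris on $M^3_{p/q}=M^3\cup_{T^2}V$, acyclicity of all pieces, and the torsion of $V$ via the core-geodesic computation from Example~\ref{ex:circle}.

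One slip to fix. The representation $\rho=\rho_{p/q,\sigma}$ is the lift of the holonomy of the \emph{filled} manifold $M^3_{p/q}$ (restricted to $\pi_1 M^3$), so since $p\,m+q\,l$ is already trivial in $\pi_1 M^3_{p/q}$ you get $\rho(p\,m+q\,l)=\operatorname{Id}$, not $-\operatorname{Id}$. Lemma~\ref{lemma:extendspin} is a statement about the lift of the holonomy of the \emph{complete} structure on $M^3$, which is a different representation. This does not damage your argument: acyclicity of $T^2$ follows instead from the core $c=r\,m+s\,l$, whose image $\rho(c)$ has eigenvalues $e^{\pm\lambda_{p/q}/2}\neq 1$, so $(\CC^2)^{\rho(\pi_1 T^2)}=0$ and Example~\ref{ex:2torus} applies as you say. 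Your closing worry about the sign in $\tr\rho(c)$ is resolved by Remark~\ref{rem:2pi}: the spin structure pins down $\lambda_{p/q}$ modulo $4\pi i\ZZ$ precisely so that $\rho(c)$ is conjugate to $\operatorname{diag}(e^{\lambda_{p/q}/2},e^{-\lambda_{p/q}/2})$, i.e.\ this is the \emph{definition} of $\lambda_{p/q}$ relative to $\sigma$, not something to be checked.
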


This proposition can be proved using Mayer-Vietoris (Proposition~\ref{prop:MV}) 
to the pair 
$(M^3, D^2\times S^1 )$. The proof can be found for instance in 
\cite{Kitano15} 
for the figure eight knot,
and in \cite{Tran15} for twist knots, 
but it applies to every cusped manifold, and it is essentially done too in  \cite[Proposition~4.10]{Memoirs}. The only computation required is the torsion of the
solid torus added by Dehn filling, or its core geodesic, which is (by Example~\ref{ex:circle}):
$$
\frac{1}{
\det \left( \begin{pmatrix}
            1 & 0 \\
            0 & 1
           \end{pmatrix}
           -
           \begin{pmatrix}
            e^{\lambda_{p/q}/2} & 0 \\
            0 & e^{-\lambda_{p/q}/2}
           \end{pmatrix}
      \right) }= \frac1{2 (1-\cosh(\lambda_{p/q}/2 ))}.
$$

\begin{remark}
\label{rem:2pi}
 The complex length $\lambda_{p/q}$ is defined up to addition of a term in $2\pi i \ZZ$. The spin structure (or equivalently a choice of lift of the holonomy)
 determines $\lambda_{p/q}$ up to $4\pi i\ZZ$, as the holonomy of this curve is conjugate to
 $$
 \begin{pmatrix}
 e^{\lambda_{p/q}/2} & 0 \\ 0 & e^{-\lambda_{p/q}/2}
\end{pmatrix}.
 $$
 \end{remark}

\begin{cor}
\label{cor:differentspin}
 There exist closed hyperbolic manifolds $N^3$ with two spin structures $\sigma$ and $\sigma'$ so that 
 $\tau(N^3,\sigma)\neq \tau(N^3,\sigma')$.
\end{cor}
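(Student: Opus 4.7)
The plan is to apply the surgery formula of Proposition~\ref{prop:SF} to a Dehn filling of a cusped hyperbolic manifold and compare the contributions of two different spin structures. I would take $M^3$ to be the figure-eight knot exterior. Here $H^1(M^3;\ZZ/2\ZZ)\cong\ZZ/2\ZZ$ is generated by the non-trivial class $\epsilon$, characterized on the peripheral torus by $\epsilon(m)=1$ and $\epsilon(l)=0$ for the meridian $m$ and longitude $l$ (the latter since $l$ is null-homologous in a knot exterior). Thus $M^3$ admits two spin structures $\sigma,\sigma'$ whose associated lifts of the holonomy satisfy $\varrho_{\sigma'}(m)=-\varrho_\sigma(m)$ and $\varrho_{\sigma'}(l)=\varrho_\sigma(l)$.

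Next, I would pick a filling slope $p/q$ with $p$ even, $q$ odd, and $|p|+|q|$ large enough that $N^3:=M^3_{p/q}$ is hyperbolic. Since $\epsilon(pm+ql)=p\bmod 2=0$, Lemma~\ref{lemma:extendspin} ensures that both $\sigma$ and $\sigma'$ extend to distinct spin structures $\hat\sigma$ and $\hat\sigma'$ on $N^3$. I would then evaluate Proposition~\ref{prop:SF} for each. By Kitano's computation recalled in Example~\ref{ex:fig8sl2}, $\mathbb{T}_M(\chi_\rho)=2-2\tr\rho(m)$; setting $t:=\tr(\varrho_\sigma(m))$, this gives $\mathbb{T}_M(\chi_{p/q,\sigma})=2-2t$ and $\mathbb{T}_M(\chi_{p/q,\sigma'})=2+2t$.

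For the denominator of the surgery formula I must understand how $\cosh(\lambda_{p/q}/2)$ depends on the chosen spin structure. The core geodesic of the filling represents the peripheral class $rm+sl$, where $ps-qr=1$. Because $p$ is even, $qr$ must be odd, so $r$ is odd; hence $\epsilon(rm+sl)=1$, meaning that the two lifts of the core holonomy differ by sign. By Remark~\ref{rem:2pi}, the two possible values of $\lambda_{p/q}$ differ by $2\pi i$ modulo $4\pi i\ZZ$, so with $C:=\cosh(\lambda_{p/q,\sigma}/2)$ we have $\cosh(\lambda_{p/q,\sigma'}/2)=-C$. Proposition~\ref{prop:SF} then yields
\[
\tau(N^3,\hat\sigma)=\frac{1-t}{1-C},\qquad \tau(N^3,\hat\sigma')=\frac{1+t}{1+C},
\]
whose equality is equivalent to $C=t$.

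The main (and relatively mild) remaining task is to exhibit a slope for which $C\neq t$. As $|p|+|q|\to\infty$, Thurston's parameter $u$ tends to $0$, so $t=\pm 2\cosh(u/2)\to\pm 2$, while by Remark~\ref{rem:length} one has $\lambda_{p/q}/2$ close to $\pi i\,r/p$, so that $C$ is close to $\cos(\pi r/p)\in[-1,1]$. Hence $|t|>|C|$ along all sufficiently deep fillings with $p$ even (for instance $p=2n$, $q=2n+1$ with $n$ large), forcing $\tau(N^3,\hat\sigma)\neq\tau(N^3,\hat\sigma')$ and proving the corollary. The only conceivable obstacle would be an accidental coincidence $C=t$ on finitely many slopes, which is ruled out by this asymptotic argument.
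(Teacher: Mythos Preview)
Your proposal is correct and follows essentially the same route as the paper: Dehn fillings on the figure-eight knot exterior, the surgery formula of Proposition~\ref{prop:SF}, Kitano's explicit computation $\mathbb{T}_M=2-2\chi(m)$, and the $2\pi i$ shift in $\lambda_{p/q}$ between the two spin structures. The paper specializes to the slopes $p/q=2n$ (so the core is homologous to $m$) and shows that the ratio $|\tau(M^3_{2n},\sigma)|/|\tau(M^3_{2n},\sigma')|\to 0$; you instead write the two torsions as $(1-t)/(1-C)$ and $(1+t)/(1+C)$ and rule out $C=t$ asymptotically. One small notational slip: your $t$ should be $\tr(\rho_{p/q,\sigma}(m))$, the trace at the deformed representation, not at the complete holonomy $\varrho_\sigma$; your asymptotic $t\to\pm 2$ shows you mean this anyway.
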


\begin{proof}
The manifolds are obtained by Dehn filling on the figure eight knot. Consider 
the sequence $p_n/ q_n= 2 n$ so that both spin structures
on the figure eight knot exterior $M^3$ extend to $M^3_{p_n/q_n}$, i.e.\ 
$H_1(M^3_{2 n} ; \ZZ/2\ZZ)\cong \ZZ/2\ZZ$.
The corresponding  lifts differ on the sign of the trace of the meridian.
As the core of the solid torus is homologous to the meridian, the
sign of the traces of these cores differ.
Hence  for one of the lifts the complex length (modulo $4\pi i\ZZ$) is $\lambda_{2n}$, for the other one it is
$\lambda_{2n}+ 2\pi i$
by Remark~\ref{rem:2pi}. By Remark~\ref{rem:length}, one may approximate 
\begin{equation}
 \label{eqn:approx}
 \lambda_{2n}=\frac{2\pi i}{2n}+ O\left(\frac{1}{n^2}\right).
\end{equation}
The goal is  to show that the limit
\begin{equation}
 \label{eqn:limit}
 \lim_{n\to+\infty} \frac{\vert \tau(M_{2 n},\sigma )\vert }{ \vert \tau(M_{2 n},\sigma' )\vert}
 \end{equation}
is not $1$. By
Proposition~\ref{prop:SF} it is the product of the limits:
\begin{equation}
 \label{eqn:limitlambdas}
 \lim_{n\to\infty}\frac{1-\cosh( \lambda_{2n}/2 ) }{ 1-\cosh( (\lambda_{2n} + 2\pi i)/2) }
=\lim_{n\to\infty}\frac{1-\cos\frac{\pi} {2n}}{1+\cos\frac{\pi} {2n}} = 0
\end{equation}
(using the approximation \eqref{eqn:approx}) and
\begin{equation}
 \label{eqn:limitfunctions}
 \lim_{n\to\infty}  
 \frac{\mathbb{T}_M(\chi_{2n,\sigma} 
)}{\mathbb{T}_M(\chi_{2n,\sigma'} )}
=
\frac{\tau(M^3,\sigma)}{\tau(M^3,\sigma')}.
\end{equation}
As for one of the spin structures the trace of the meridian is $2$ and for the other $-2$, by Example~\ref{ex:fig8sl2},
the limit in \eqref{eqn:limitfunctions} is 
  $
  (2-2\cdot(-2))/(2-2\cdot2)=-3
 $.
 Thus the limit \eqref{eqn:limit} vanishes, hence it is not $1$.
\end{proof}

The previous argument applies to any knot exterior, the precise  value of the torsion is not needed. 
Corollary~\ref{cor:differentspin} holds true also for  cusped manifolds,
by applying an analogous formula for partial surgery.

\begin{cor}
\label{cor:dense}
The set of modules of the torsions $\left\vert \tau(M^3_{p/q},\sigma)\right\vert $ obtained by Dehn filling on $M^3$,
so that the spin structure $\sigma$ on $M^3$ extends to  $M^3_{p/q}$, is dense in $$
\left[\frac14{\vert \tau(M^3,\sigma)\vert},+\infty\right).
$$
\end{cor}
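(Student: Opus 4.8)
The plan is to deduce the statement from the surgery formula of Proposition~\ref{prop:SF}, which for $\vert p\vert+\vert q\vert$ large and for spin structures $\sigma$ extending to $M^3_{p/q}$ reads
\[
\bigl\vert\tau(M^3_{p/q},\sigma)\bigr\vert=\frac{\bigl\vert\mathbb{T}_M(\chi_{p/q,\sigma})\bigr\vert}{\bigl\vert 2\bigl(1-\cosh(\lambda_{p/q}/2)\bigr)\bigr\vert}\,,
\]
so I would control the two factors separately as $\vert p\vert+\vert q\vert\to\infty$. For the numerator, by Thurston's hyperbolic Dehn filling theorem \cite{ThurstonBook,NZ85} the restriction $\chi_{p/q,\sigma}$ converges in $X_0(M^3)$ to the character $\chi_{\varrho_\sigma}$ of the lift of the holonomy of the complete structure; this character is irreducible, hence non-trivial, so by Proposition~\ref{prop:rational} the rational function $\mathbb{T}_M$ is regular at $\chi_{\varrho_\sigma}$ and $\mathbb{T}_M(\chi_{p/q,\sigma})\to\mathbb{T}_M(\chi_{\varrho_\sigma})=\tor(M^3,\varrho_\sigma)=\tau(M^3,\sigma)\neq 0$. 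Hence the whole statement reduces to showing that the set of accumulation values of $\cosh(\lambda_{p/q}/2)$ over admissible fillings is dense in $[-1,1]$: indeed, if so, then along suitable sequences $\vert\tau(M^3_{p/q},\sigma)\vert$ tends to $\vert\tau(M^3,\sigma)\vert/(2(1-w))$ with $w$ ranging over a dense subset of $[-1,1]$ and $1-w\in(0,2]$, which gives density in $\bigl[\tfrac14\vert\tau(M^3,\sigma)\vert,+\infty\bigr)$.

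To compute these accumulation values I would use Remark~\ref{rem:length}: choosing integers $r,s$ with $p\,s-q\,r=1$ and $r\in[0,p)$, one has $\lambda_{p/q}\equiv 2\pi i\,r/p+v/p$ up to the addition of $2\pi i$ determined by $\sigma$ (Remark~\ref{rem:2pi}), and since $u,v\to 0$ the term $v/p\to 0$ along any sequence with $p\to\infty$. I would therefore fix $q$ and let $p\to\infty$ inside a fixed residue class modulo $q$, which makes $r/p$ converge to a definite reduced fraction $\beta\in(0,1)$; then, writing $\varrho_\sigma(m)=\epsilon_m U_m$ and $\varrho_\sigma(l)=\epsilon_l U_l$ with $U_m,U_l$ unipotent and $\epsilon_m,\epsilon_l\in\{\pm1\}$, and observing that the spin structure forces $\lambda_{p/q}\equiv 2\pi i\,r/p+v/p$ or that value plus $2\pi i$ according as $\epsilon_m^{\,r}\epsilon_l^{\,s}$ equals $+1$ or $-1$, one gets
\[
\cosh(\lambda_{p/q}/2)\longrightarrow \epsilon_m^{\,r}\epsilon_l^{\,s}\,\cos(\pi\beta)\in[-1,1].
\]

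The step I expect to be the main obstacle is making these sequences stay inside the set of fillings to which $\sigma$ extends while still producing a dense set of limits. By Lemma~\ref{lemma:extendspin}, $\sigma$ extends to $M^3_{p/q}$ exactly when $\tr(\varrho_\sigma(p\,m+q\,l))=-2$, i.e.\ $\epsilon_m^{\,p}\epsilon_l^{\,q}=-1$, a congruence modulo $2$ on $(p,q)$ that has solutions because for a one-cusped manifold the peripheral trace is not identically $+2$, so $(\epsilon_m,\epsilon_l)\neq(1,1)$ (\cite{MFP12,Calegari06}). One then has to verify, using the Chinese remainder theorem and an elementary computation of the parities of $r$ and $s$ from the residue of $p$ modulo $2q$, that this congruence is compatible with prescribing $\beta=\lim r/p$ to be any reduced fraction of a suitable denominator, and that the resulting signs still make $\epsilon_m^{\,r}\epsilon_l^{\,s}\cos(\pi\beta)$ dense in $[-1,1]$. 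For instance in the case $\epsilon_m=1$, $\epsilon_l=-1$ one finds that, over admissible fillings with $q$ odd fixed, the limits are exactly the numbers $\cos(\pi a/q)$ with $a$ even and $\gcd(a,q)=1$, and letting $q$ range over odd integers these are dense in $[-1,1]$; the remaining cases for $(\epsilon_m,\epsilon_l)$ are analogous. This book-keeping is elementary but delicate.

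Finally, to pin down the interval exactly I would note that for $\vert p\vert+\vert q\vert$ large the core geodesic of $M^3_{p/q}$ is short, so $\mathrm{Re}(\lambda_{p/q})\to 0$ and hence $\bigl\vert 2\bigl(1-\cosh(\lambda_{p/q}/2)\bigr)\bigr\vert\le 4+o(1)$; together with $\mathbb{T}_M(\chi_{p/q,\sigma})\to\tau(M^3,\sigma)$ this yields $\vert\tau(M^3_{p/q},\sigma)\vert\ge\tfrac14\vert\tau(M^3,\sigma)\vert-o(1)$, so the accumulation set is precisely $\bigl[\tfrac14\vert\tau(M^3,\sigma)\vert,+\infty\bigr)$.
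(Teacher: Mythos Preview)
Your proposal is correct and follows exactly the same route as the paper: apply the surgery formula (Proposition~\ref{prop:SF}), observe that $\mathbb{T}_M(\chi_{p/q,\sigma})\to\tau(M^3,\sigma)$, and reduce to the density of $\cos(\pi r/p)$ in $[-1,1]$ via Remark~\ref{rem:length}, noting that the spin extendibility condition is a single congruence modulo~$2$ on $(p,q)$.

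The paper's proof is much terser (a few lines) and does not track the sign $\epsilon_m^{\,r}\epsilon_l^{\,s}$ at all: it simply asserts the density of $\cos(\pi r/p)$ and remarks that the spin condition is of the form $a\,p+b\,q\equiv 0\pmod 2$. Your careful bookkeeping of this sign is correct but unnecessary for the conclusion, since a global sign does not affect density in $[-1,1]$. Likewise, your final paragraph---showing that the accumulation set is \emph{exactly} the interval $\bigl[\tfrac14\vert\tau(M^3,\sigma)\vert,+\infty\bigr)$---goes beyond what the corollary asserts (density in that interval) and is not in the paper's proof.
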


\begin{proof}
By the surgery formula, Proposition~\ref{prop:SF}, it suffices to show that 
$$
  \left\vert 1- \cosh( 
  {\lambda_{p/q}}/2)\right\vert
$$ 
is dense in 
the interval $[0,2]$.
Let $r, s\in\ZZ$ be such that $p s-q r = 1$, by Remark~\ref{rem:length},  $\lambda_{p/q}= 2\pi i \frac{r}p+\frac up$ and $u\to 0$. 
Then the result follows from the density of $\cos(\pi \frac{r}{p})$ in $[-1,1]$. 
Notice that extendibility of $\sigma$ is just determined by a condition
$a\, p + b\, q\equiv 0 \mod 2$,
for   $a, b\in \ZZ/2\ZZ$ depending on $\sigma$. 
\end{proof}

As the surgery formula shows, this density is a consequence of the contribution of the core geodesics. 
Notice that in Theorem~\ref{thm:BV} the hypothesis require the length of geodesics to be bounded below away from zero.

 \subsubsection{Branched coverings on the figure eight}
\label{sssection:branched}

We next consider another family of examples. We consider it only for the figure eight knot, but it generalizes to other manifolds.

Consider $M_ n^3$ the n-th cyclic branched covering of $S^3$, branched along the figure eight knot. It is hyperbolic for $n\geq 4$.
 Its torsion may be computed by means of the twisted Alexander polynomial, by using the formula \`a la Fox, 
 Proposition~\ref{prop:Fox},
 due   to Turaev
 \cite{Turaev86},
 see also \cite{DY12,Porti04,SW02,Raimbault12, Yamaguchi12}.
 More precisely, this polynomial is
 $$
 p_{\chi}( t )=t^2-2\chi(m) t + 1,
 $$
 by \eqref{eqn:tor82t},
 cf.~\cite{Kitano-Morifuji2012}.
 Notice that its value at $1$ agrees with Example~\ref{ex:fig8sl2}.
The holonomy of $M_ n^3$ extends to a holonomy of the quotient orbifold $M_ 
n^3/ 
(\ZZ/n\ZZ)$ in $\PSL_2(\CC)$. 
It induces a representation of the exterior of the 
not $S^3-\mathcal K$ that lifts to $\SL_2(\CC)$, that we denote by $\rho_n$.
Since a rotation of angle $2\pi /n$ has trace $\pm 2\cos ( \pi/n)$, we have
 $$
 \chi_{\rho_n}(m)= \tr(\rho_n(m))= \pm 2 \cos ( \pi/n).
 $$
 Using  Proposition~\ref{prop:Fox}, if $\Sigma_n\subset M_n$ is the lift of the branching locus, then
\[
   \tor(M_n^3-\Sigma_n,\sigma)=\prod_{k=0}^{n-1} p_{\chi_n}( e^{2\pi i\frac{k}{n}})  =
 \prod_{k=0}^{n-1} \left( e^{4\pi i\frac{k}{n}}\mp 4\cos({\pi}/{n} ) 
e^{2\pi i\frac{k}{n}}+1\right).
 \]
By the surgery formula (Proposition~\ref{prop:SF}) and knowing that the complex length of the core geodesic is  
 $ \approx  \frac{\sqrt{3}\,\pi}n + O\left(\frac{1}{n^3}\right)  $ (Remark~\ref{rem:length}), 
 \begin{equation}
  \label{eq:foxsurgery}
 \tor(M_n^3,\sigma)= \frac{1/2}{1-\cosh \left( \frac{\sqrt{3}\,\pi}n  + 
O\left(\frac{1}{n^3}\right) \right)} 
  \prod_{k=0}^{n-1} \left( e^{4\pi i\frac{k}{n}}\mp 4\cos( {\pi}/{n} ) 
e^{2\pi i\frac{k}{n}}+1\right).
 \end{equation}
 In particular,
 \begin{equation}
  \label{eqn:limlog/n}
\lim_{n\to\infty} \frac{\log\vert \tor(M_n^3,\sigma)\vert}{n}= 
 \lim_{n\to\infty} \frac{1}{n} \sum_{k=0}^{n-1} \log \left\vert  e^{4\pi 
i\frac{k}{n}}\pm 4\cos( {\pi}/{n} ) e^{2\pi i\frac{k}{n}}+1 \right\vert.  
 \end{equation}
Here we used that $\lim_{x\to 0^+}  x\log x =0$ to get rid of the first term, outside the product, in \eqref{eq:foxsurgery}.
As $t^2 \mp 4 t + 1$ has no roots in the unit circle,  the limit 
\eqref{eqn:limlog/n} converges to
$$
 \frac{1}{2\pi}\int_{\vert z\vert =1} \log\vert z^2\pm  4 z +1 \vert .
$$
 Thus by Jensen's formula
 $$
 \lim_{n\to\infty} \frac{\log\vert \tor(M_n^3,\sigma)\vert}{n}
 = \log\vert 2+\sqrt{3}\vert .
 $$

 This approach uses the ideas on Mahler measure of \cite{SW02}. It applies to a hyperbolic knot provided that the torsion polynomial has no roots in the unit circle.
 It is easy to prove that the roots of the polynomial are not roots of the unity, but this does not discard roots in the unit circle. For 
 the adjoint representation, the nonexistence of roots in the unit circle has been proved by Kapovich \cite{Kapovich98}.

 \section{Torsions for higher dimensional representations of hyperbolic three-manifolds}
\label{sec:repsandtors}
 
This section is devoted to further Reidemeister  torsions naturally associated to hyperbolic three-manifolds,
those obtained by using the (finite dimensional and linear) representations of $\SL_2(\CC)$.

 \subsection{Representations of $\SL_2(\CC)$}
 \label{section:reps}

Let $V_{k+1}$ be the space of degree $k$ homogeneous polynomials in two variables:
 $$
 V_{k+1}:=\{ P\in \mathbb{C}[X,Y]\mid P \textrm{ is homogeneous of degree } k\}.
 $$
 The group
$\SL_2(\CC)$ acts naturally on $V_{k+1}$ by precomposition on polynomials: 
$$
\begin{array}{ccl}
 \SL_2(\CC)\times V_{k+1} & \to & V_{k+1} \\
 (A, P) & \mapsto & P\circ A^t
\end{array}
$$
We consider the transpose matrix $A^t$ so that the action is on the left.
We could also have considered the inverse instead of the transpose, as there exist a matrix $B\in \SL_2(\CC)$
such that $A^{-1}=B A^t B^{-1}$ for every $A\in \SL_2(\CC)$.
This defines a representation 
$$
\operatorname{Sym}^{k}\colon  \SL_2(\CC) \to  \SL_{k+1}(\CC). 
$$
Not only  $\operatorname{Sym}^{k}$  has determinant $1$, but preserves a non-degenerate bilinear form that is 
symmetric for $k$ even, and skew-symmetric for $k$ odd.
So
$$
\operatorname{Sym}^{k}\colon  \SL_2(\CC) \to  
\left\{
\begin{array}{ll}
 \operatorname{SO} (2l+1,\mathbb{C}) & \textrm{for } k=2l \\
 \operatorname{Sp} (2l+2,\mathbb{C}) & \textrm{for } k=2l +1 \\
\end{array}
\right.
$$
For $k=1$,  $\operatorname{Sym}$ is the standard representation and the bilinear form is the determinant
of the $2\times 2$ matrix obtained from two vectors. Then
the form invariant for $\operatorname{Sym}^k$ is the $k$-th symmetric product 
of this determinant.

We shall also consider the complex conjugate $ \overline{  \operatorname{Sym}^{k}}$, which is antiholomorphic.

\begin{prop} Classification of irreducible representations of $\SL_2(\CC)$, cf.~\cite{BW00}.
\begin{enumerate}[(a)]
 \item  Any irreducible holomorphic representation of $\SL_2(\CC)$ is equivalent to $\operatorname{Sym}^{k}$ for a unique $k\geq 0$.
 
\item Any irreducible  representation of $\SL_2(\CC)$ is equivalent to 
 $$\operatorname{Sym}^{k_1,k_2}:= \operatorname{Sym}^{k_1}\otimes \overline{  \operatorname{Sym}^{k_2} } $$
 for a unique pair of integers $k_1,k_2\geq 0$.

\end{enumerate}

\end{prop}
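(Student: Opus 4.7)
My plan is to reduce both statements to the well-known highest weight classification of finite dimensional irreducible representations of the complex Lie algebra $\Sl_2(\CC)$, then handle the antiholomorphic factor in (b) by complexifying the real Lie algebra of $\SL_2(\CC)$.

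For (a), since $\SL_2(\CC)$ is a simply connected complex Lie group, holomorphic representations are in bijection with complex-linear representations of the Lie algebra $\Sl_2(\CC)$. So it suffices to classify irreducible finite-dimensional $\Sl_2(\CC)$-modules. I would take the standard basis $H,E,F$ with $[H,E]=2E$, $[H,F]=-2F$, $[E,F]=H$, and in an irreducible module $V$ decompose $V$ into the weight spaces for $H$ (which acts semisimply since $H$ lies in a Cartan subalgebra of a semisimple Lie algebra, using Weyl's theorem on complete reducibility). Pick a highest weight vector $v\in V$ with $Ev=0$ and $Hv=\lambda v$, consider the chain $v, Fv, F^2v,\ldots$, and verify by induction the relation
\[
E F^{j}v = j(\lambda-j+1)\, F^{j-1}v.
\]
The subspace spanned by the $F^jv$ is invariant, hence by irreducibility it is all of $V$. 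Finiteness of $\dim V$ forces some $F^{k+1}v=0$ with $F^kv\neq 0$, and then the displayed relation at $j=k+1$ gives $\lambda=k\in\ZZ_{\geq 0}$ and $\dim V=k+1$. Uniqueness up to equivalence is standard once the highest weight is fixed. Finally I would identify this $(k+1)$-dimensional irreducible with $V_{k+1}=\operatorname{Sym}^k$ by checking that $X^k\in V_{k+1}$ is a highest weight vector of weight $k$ under the derivative of the action $P\mapsto P\circ A^t$.

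For (b), the key observation is that $\SL_2(\CC)$, viewed as a real Lie group of dimension $6$, has real Lie algebra $\Sl_2(\CC)_\RR$ whose complexification splits as
\[
\Sl_2(\CC)_\RR\otimes_\RR\CC \;\cong\; \Sl_2(\CC)\oplus \Sl_2(\CC),
\]
where the two summands are the holomorphic and antiholomorphic eigenspaces of the almost complex structure. An arbitrary finite dimensional complex representation of $\SL_2(\CC)$ corresponds to a complex representation of the real Lie algebra, and extending $\CC$-linearly gives a representation of the product. The two copies of $\Sl_2(\CC)$ commute, so by a standard argument (Schur plus the fact that each factor acts completely reducibly) an irreducible joint representation is a tensor product $W_1\otimes W_2$ of irreducibles of each factor. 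Applying (a) to each factor, $W_1\cong \operatorname{Sym}^{k_1}$ and $W_2\cong \operatorname{Sym}^{k_2}$, and the second factor acts antiholomorphically on the original group, so the representation is $\operatorname{Sym}^{k_1}\otimes \overline{\operatorname{Sym}^{k_2}}= \operatorname{Sym}^{k_1,k_2}$. Uniqueness of $(k_1,k_2)$ follows from uniqueness in (a).

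The main obstacle is the tensor product decomposition in part (b): one has to justify carefully that a representation of a direct sum of two Lie algebras decomposes as a tensor product on the irreducible pieces, and that the holomorphic/antiholomorphic splitting of the complexified real Lie algebra genuinely corresponds to the holomorphic versus conjugate-holomorphic actions on the group level. Both are standard, but they are the substantive inputs beyond the highest-weight classification, which is itself the technical heart of step (a).
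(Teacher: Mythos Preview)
Your proof is correct and follows the standard route via highest weight theory for $\Sl_2(\CC)$ and complexification of the underlying real Lie algebra. Note, however, that the paper does not actually prove this proposition: it is stated as a known classification result with a reference to \cite{BW00}, so there is no ``paper's own proof'' to compare against. Your argument is precisely the kind of proof one would find behind that citation, and the self-identified ``main obstacle'' (the tensor decomposition of irreducibles over a direct sum and the matching of the two $\Sl_2(\CC)$ summands with the holomorphic and antiholomorphic actions) is correctly flagged and is indeed standard.
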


Notice that $\operatorname{Sym}^{k}=\operatorname{Sym}^{k,0}$ and that $\operatorname{Sym}^{0}=\operatorname{Sym}^{0,0}$ is the trivial representation in $\CC$.
The adjoint representation $\operatorname{Ad}\colon\SL_2(\CC)\to 
\operatorname{SO}(\mathfrak{sl}_2(\CC))$ is equivalent to 
$\operatorname{Sym}^2$,
the invariant form being the Killing form.

The representation $\operatorname{Sym}^{k,k}$ takes  values in $\SL_{(k+1)^2}(\RR)$. More precisely, the image 
of $\operatorname{Sym}^{k,k}$ is contained in $\operatorname{SO}(p,q)$, with
$$
p=\frac{k^2+3k+2}{2}\quad\textrm{ and }\quad q= \frac{k^2+k}{2}.
$$
For instance, $\operatorname{Sym}^{1,1}\colon \PSL_2(\CC) \to  
\operatorname{SO}_0(3,1)$ is the canonical isomorphism between 
different presentations of the group of orientation preserving isometries of hyperbolic 3-space.
This representation is used to study infinitesimal deformations of conformally flat structures on a hyperbolic 3-manifold.
The representation $\operatorname{Sym}^{2,2}\colon\PSL_2(\CC) \to  
\operatorname{SO}_0(6,3)$ is used in the study of 
infinitesimal deformations of projective structures on a hyperbolic 3-manifold.

Let $\varrho_\sigma\colon\pi_1 M^3\to \SL_2(\CC)$ denote a lift corresponding to a spin structure $\sigma$, we shall denote its composition with 
$ \operatorname{Sym}^{k_1,k_2}$ by
\begin{equation}
 \label{eqn:varrho}
 \varrho_\sigma^{k_1,k_2}:=    \operatorname{Sym}^{k_1,k_2}\circ        \varrho_\sigma,
\end{equation}
and $ \varrho_\sigma^{k}= \varrho_\sigma^{k,0}$.

\begin{thm}[\cite{Rag65}]
\label{thm:rig} 
 Let $M^3$ be a compact hyperbolic orientable 3-manifold  and 
$\widetilde{\mathrm{hol}}$ a lift of its   holonomy. If $k_1\neq k_2$ then
 $$
 H^*(M^3 ;  \varrho_\sigma^{k_1,k_2} )=0\, .
 $$
\end{thm}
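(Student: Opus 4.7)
The plan is to prove the statement analytically on the closed hyperbolic three-manifold $M^3 = \Gamma\backslash G/K$, with $G = \SL_2(\CC)$ and $K = \SU$, by showing that the twisted Hodge Laplacian has trivial kernel in every degree. Although $\operatorname{Sym}^{k_1,k_2}$ is not unitary on $G$, its restriction to the compact subgroup $K\subset G$ is unitarizable, so the vector space $V = V_{k_1+1}\otimes \overline{V_{k_2+1}}$ carries a $K$-invariant Hermitian inner product. Combined with the hyperbolic metric, this equips the flat bundle $E = \widetilde{M^3}\times_{\varrho_\sigma^{k_1,k_2}} V$ with a bundle metric; the associated twisted Hodge Laplacians $\Delta_p$ are then self-adjoint elliptic operators on the closed manifold $M^3$, and by Hodge theory $H^p(M^3;\varrho_\sigma^{k_1,k_2})\cong \ker \Delta_p$, so it suffices to show $\ker \Delta_p = 0$ for every $p$.

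Next I would relate $\Delta_p$ to the Casimir operator of $G$. Using the Cartan decomposition $\mathfrak{g} = \mathfrak{k}\oplus\mathfrak{p}$, identify $\Omega^p(M^3;E)$ with the $K$-invariants in $C^\infty(\Gamma\backslash G)\otimes \Lambda^p\mathfrak{p}^*\otimes V$. The Kuga--Matsushima identity then expresses the Laplacian as
\begin{equation*}
\Delta_p \;=\; -R(\Omega_G)\otimes 1 \;+\; 1\otimes \pi(\Omega_G),
\end{equation*}
where $\Omega_G$ is the Casimir element, $R$ denotes the right-regular representation on $C^\infty(\Gamma\backslash G)$, and $\pi = \operatorname{Sym}^{k_1,k_2}$. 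For a suitable normalization of the invariant bilinear form on $\mathfrak{g}$, the first term $-R(\Omega_G)$ is nonnegative on $L^2(\Gamma\backslash G)$.

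Since $\pi$ is irreducible, $\pi(\Omega_G)$ acts by a scalar $c(k_1,k_2)$. Using the isomorphism $\mathfrak{g}\otimes_\RR\CC \cong \mathfrak{sl}_2(\CC)\oplus \mathfrak{sl}_2(\CC)$ together with the Casimir eigenvalue $k_j(k_j+2)$ on each holomorphic factor $V_{k_j+1}$, one computes $c(k_1,k_2)$ explicitly. The crucial observation---which is Raghunathan's input in~\cite{Rag65}---is that this scalar is strictly positive precisely when $k_1\neq k_2$: the two $\mathfrak{sl}_2$-contributions cancel in the symmetric case but not otherwise. A spectral-gap argument then gives $\Delta_p \ge c(k_1,k_2) > 0$, hence $\ker\Delta_p = 0$ and $H^p(M^3;\varrho_\sigma^{k_1,k_2}) = 0$.

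The hardest step is establishing the positivity of $c(k_1,k_2)$ when $k_1\neq k_2$. Because $\SL_2(\CC)$ is viewed as a real Lie group whose Killing form has indefinite signature, signs in Kuga's formula are delicate: one must normalize the Casimir so that $-R(\Omega_G)\ge 0$ on $L^2(\Gamma\backslash G)$ and then verify that the same normalization renders $\pi(\Omega_G)$ strictly positive whenever $k_1\neq k_2$. This is the ``strong acyclicity'' property referred to earlier, which also underlies Bergeron--Venkatesh's Theorem~\ref{thm:BV}.
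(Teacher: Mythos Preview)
The paper does not actually prove this; in Appendix~\ref{sec:cohomology} it simply records Raghunathan's spectral-gap inequality $\langle\Delta\omega,\omega\rangle\ge c_{k_1,k_2}\langle\omega,\omega\rangle$ (Theorem~\ref{thm:strong_acyclicity}) as a black box and reads off acyclicity on a closed $M^3$ via Hodge theory (Corollary~\ref{cor:acyclc}). Your sketch is more ambitious---you try to derive the spectral gap itself from Kuga's formula, which is indeed the right framework---but the decisive step is misstated and, as written, the argument proves too much.

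The Casimir eigenvalue of the finite-dimensional representation $\operatorname{Sym}^{k_1,k_2}$ is, up to a positive normalization, $k_1(k_1+2)+k_2(k_2+2)$: the two $\mathfrak{sl}_2$-contributions \emph{add}, they do not cancel, so $c(k_1,k_2)>0$ for every $(k_1,k_2)\neq(0,0)$, including the diagonal. Conversely, your claim that $-R(\Omega_G)\ge 0$ on all of $L^2(\Gamma\backslash G)$ is false for $G=\SL_2(\CC)$. Were both of your premises correct, Kuga's formula would give $\Delta_p\ge c(k,k)>0$ and hence $H^*(M^3;\varrho^{k,k})=0$ for every $k\ge1$, contradicting Millson's bending examples when $M^3$ contains a totally geodesic surface (see the remark following Corollary~\ref{cor:acyclc}).

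What genuinely singles out $k_1\neq k_2$ is the condition $\rho\not\cong\rho^\theta$, where $\theta$ is the Cartan involution; for $\SL_2(\CC)$ this reads $\operatorname{Sym}^{k_1,k_2}\not\cong\operatorname{Sym}^{k_2,k_1}$. Under Kuga's formula a harmonic form would arise from an irreducible unitary constituent $\pi\subset L^2(\Gamma\backslash G)$ with the same infinitesimal character as $V$ and nonvanishing $(\mathfrak g,K)$-cohomology with coefficients in $V$; the Wigner-lemma / Vogan--Zuckerman analysis shows no such unitary $\pi$ exists precisely when $\rho\not\cong\rho^\theta$. This, rather than a sign in $c(k_1,k_2)$, is the content of \cite{Rag65}; see Borel--Wallach, Ch.~II, or \cite[\S4]{BV13}.
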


This theorem, due to Raghunathan, is commented in Appendix~\ref{sec:cohomology}, with other facts about cohomology.

\subsection{Higher torsions for closed manifolds}
 \label{sect:higher}

From Theorem~\ref{thm:rig}, we can define torsions.

\begin{definition}
Let $M^3$ be a closed oriented hyperbolic three-manifold, with spin structure $\sigma$, 
and let $\varrho_\sigma^{k_1,k_2}=    \operatorname{Sym}^{k_1,k_2}\circ        \varrho_\sigma$ be as in \eqref{eqn:varrho}.

For $k_1\neq k_2$, define
  \begin{equation}
  \label{eqn:taukk}
  \tau^{k_1,k_2}(M^3,\sigma):=\tor(M^3,  \varrho_\sigma^{k_1,k_2} )  \in 
\mathbb{C}^*/{\pm 1}\, ,
  \end{equation}
and   for $k\geq 1$
\begin{equation}
  \label{eqn:tauk}
   \tau^{k}(M^3,\sigma):= \tau^{k,0}(M^3,\sigma)= \tor(M^3,  \varrho_\sigma^{k})   .
  \end{equation}
\end{definition}
When $k=1$ one obtains the torsion of Definition~\ref{defn:tor2closed}:
$$
\tau^1(M^3,\sigma)=\tau^{1,0}(M^3,\sigma)=\tau(M^3,\sigma).
$$

\begin{remark}
\begin{enumerate}
 \item For $k_1$ or $k_2$ odd, there is no sign indeterminacy as it is a 
representation in $\mathbb{C}^{(k_1+1)(k_2+1)}$, which is even dimensional 
(Remark~\ref{rem:sign}). 
 
 \item   For $k_1+k_2$ even, $\operatorname{Sym}^{k_1}\otimes \overline{\operatorname{Sym}^{k_2}} $ factors through $\operatorname{PSL}_2(\mathbb{C})$, 
 hence it does not depend on the spin structure. In addition,  the 
 sign indeterminacy can be avoided by  using Turaev's refined torsion and an orientation in homology, provided by Poincaré duality \cite{Turaev01,Turaev02}.
 
 \item By construction, 
$\tau^{k_1,k_2}(M^3,\sigma)=\overline{\tau^{k_2,k_1}(M^3,\sigma)}$. 

\item When $k_1=k_2$, $\varrho^{k,k}$ may be not acyclic (for instance
 when it contains a totally geodesic surface \cite{Millson85}), but sometimes it can be acyclic (e.g.\ almost all
 Dehn fillings on two bridge knots for $k=1$ by \cite{Kapovich94, Scannell02, 
StefanoP08} or on the figure eight knot for $k=2$ by 
\cite{Heusener-Porti2011}).
 When   $\varrho^{k,k}$ is acyclic, then  $\tau^{k,k}(M^3,\sigma)$ is also well defined.
 
 \item Changing the orientation. Let $M^3$ and $\overline M^3$ denote the same manifold with opposite orientations and let $\sigma$
 and $\overline\sigma$ denote the corresponding spin structures as in Proposition~\ref{prop:spin}. Then
 $$
 \tau^{k_1,k_2}(\overline M^3, \overline \sigma)= (-1)^{b_1(M^3) (k_1+k_2)}\, \overline{ \tau^{k_1,k_2}(M^3,\sigma)}. 
 $$
 where $b_1(M^3)$ denotes the first Betti number. 
Hence if $M^3$ is amphicherical, then $\tau^{k_1,k_2}(M^3,\sigma)$ is real for
$b_1(M^3) (k_1+k_2)$ even and purely imaginary for $b_1(M^3) (k_1+k_2)$ odd.
\end{enumerate}
\end{remark}

In \cite{Muller12} W.~M\"uller found a beautiful theorem about the asymptotic behavior when $k\to\infty$:

\begin{thm}[\cite{Muller12}]
\label{thm:Muller}
 Let $M$ be a compact, oriented, hyperbolic 3 manifold with a spin structure $\sigma$. Then
\begin{equation}
\label{eqn:muller}
 \lim_{k\to\infty} \frac{ \log\vert  \tau^{k}(M^3,\sigma)\vert}{k^2} = \frac 
1{4\pi}\operatorname{vol} (M^3)\, .
\end{equation}
 In particular, the volume can be computed from the sequence of torsions. 
 \end{thm}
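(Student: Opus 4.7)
The plan is to pass to analytic torsion and compute the asymptotics via the Selberg trace formula on $M^3 = \Gamma \backslash \mathbb{H}^3$. By the Cheeger--M\"uller theorem of Section~\ref{sec:analytic},
\[
|\tau^k(M^3,\sigma)| = \frac{1}{\tor_{an}(M^3,\varrho_\sigma^k)},
\]
so \eqref{eqn:muller} is equivalent to $-\log \tor_{an}(M^3,\varrho_\sigma^k)/k^2 \to \vol(M^3)/(4\pi)$. Because $\operatorname{Sym}^k$ preserves a non-degenerate bilinear form, Raghunathan's theorem yields acyclicity, and in fact a uniform spectral gap of the twisted Hodge Laplacians $\Delta_p(\varrho_\sigma^k)$ independent of $k$ (strong acyclicity, the property invoked in Theorem~\ref{thm:BV}); this is the essential analytic input that justifies the subsequent manipulations with heat kernels.

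Starting from the heat-kernel formula \eqref{eqn:traceheatkernel}, I would apply the Selberg trace formula on the flat bundle $E_{\varrho_\sigma^k}$ to each $\operatorname{Tr}(e^{-t\Delta_p(\varrho_\sigma^k)})$, splitting it into an identity contribution proportional to $\vol(M^3)$ and a sum over conjugacy classes of closed geodesics. After taking the weighted alternating combination $\frac12\sum_p(-1)^p p\,\zeta_p'(0)$, the identity piece equals $\vol(M^3)\cdot t^{(2)}_{\mathbb{H}^3}(\operatorname{Sym}^k)$, where $t^{(2)}_{\mathbb{H}^3}$ denotes the $L^2$-analytic torsion of $\mathbb{H}^3$. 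An explicit Plancherel computation for $\PSL_2(\CC)$, pairing the principal-series measure $r^2\,dr$ with the Casimir/weight data of $\operatorname{Sym}^k$, yields
\[
t^{(2)}_{\mathbb{H}^3}(\operatorname{Sym}^k) = -\frac{k^2}{4\pi} + O(k),
\]
which produces the advertised leading term. The geodesic side is handled through Fried's theorem (Theorem~\ref{thm:Fried}), extended by Wotzke to $\operatorname{Sym}^k$: one has $\tor_{an}(M^3,\varrho_\sigma^k)^2 = R_{\varrho_\sigma^k}(0)$, and since the eigenvalues of $\operatorname{Sym}^k(\varrho_\sigma(\gamma))$ lie on a circle of radius $e^{k\,l(\gamma)/2}$, each Euler factor contributes $O(k)$ to the logarithm; summing with the systole as a lower bound on $l(\gamma)$ gives a total geodesic contribution of $O(k)$, negligible after dividing by $k^2$.

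The hard part will be the explicit Plancherel computation that pins down the constant $\frac{1}{4\pi}$: one must integrate the Plancherel density against the combined spectral--geometric symbol $\sum_p(-1)^p p\cdot(\text{heat symbol on }p\text{-forms twisted by }\operatorname{Sym}^k)$ and show that, after the alternating sum cancels the lower-order terms, the leading behaviour in $k$ emerges with the correct sign and constant. In spirit the argument runs parallel to the proof of Bergeron--Venkatesh (Theorem~\ref{thm:BV}), with the limit $k\to\infty$ on a fixed manifold replacing the covering limit $n\to\infty$, and the large-$k$ growth of $\operatorname{Sym}^k$ playing the role of the growing injectivity radius in suppressing the geodesic contributions.
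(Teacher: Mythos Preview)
Your overall framework---pass to analytic torsion via Cheeger--M\"uller, then invoke Wotzke's extension of Fried's theorem to bring in the Ruelle zeta function---matches the sketch given in the paper right after the statement.  The paper does not reproduce M\"uller's proof, but it does say that the key step is a \emph{functional equation} relating the volume, the Ruelle zeta functions, and the torsions.  That is precisely the ingredient your proposal is missing, and its absence creates a genuine gap in your treatment of the hyperbolic (geodesic) contribution.

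Concretely, your claim that ``the eigenvalues of $\operatorname{Sym}^k(\varrho_\sigma(\gamma))$ lie on a circle of radius $e^{k\,l(\gamma)/2}$'' is false: if $\varrho_\sigma(\gamma)$ has eigenvalues $\lambda^{\pm 1}$ with $|\lambda|=e^{l(\gamma)/2}$, then $\operatorname{Sym}^k(\varrho_\sigma(\gamma))$ has eigenvalues $\lambda^{k-2j}$, $j=0,\dots,k$, whose moduli range from $e^{-k\,l(\gamma)/2}$ to $e^{k\,l(\gamma)/2}$.  Hence
\[
\log\bigl|\det\bigl(\operatorname{Id}-\operatorname{Sym}^k(\varrho_\sigma(\gamma))\bigr)\bigr|
=\sum_{j=0}^{k}\log\bigl|1-\lambda^{k-2j}\bigr|
\sim \sum_{0\le j<k/2}(k-2j)\,\frac{l(\gamma)}{2}
\sim \frac{k^2}{8}\,l(\gamma),
\]
which is $O(k^2)$, not $O(k)$.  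So the Euler factor of a single closed geodesic already competes with the claimed main term, and your ``summing with the systole'' does nothing to help (the prime geodesics are infinite in number, and the sum over $\gamma$ of $l(\gamma)$ diverges).  A direct separation into ``identity $=$ main term'' and ``hyperbolic $=$ lower order'' therefore fails at this level.

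What M\"uller actually does is express the Ruelle zeta function $R_{\varrho_\sigma^k}(s)$ as an alternating product of Selberg zeta functions attached to shifted $K$-types, and then use the functional equation of those Selberg zeta functions.  The functional equation has the form $Z(s,\sigma)=e^{\,\vol(M^3)\,P_\sigma(s)}\,Z(-s,\sigma)$ for an explicit polynomial $P_\sigma$ coming from the Plancherel density; it is this exponential prefactor, summed over the $K$-types entering $\operatorname{Sym}^k$, that produces the $\frac{1}{4\pi}\vol(M^3)\,k^2$ term cleanly, while the remaining zeta values contribute only $O(k)$.  Your identification of the leading constant with the $L^2$-torsion of $\mathbb H^3$ is morally correct, but the mechanism that isolates it is the functional equation, not a crude bound on the hyperbolic side.
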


 Again, with our convention our torsion is the inverse of the usual analytic torsion, so
 Equation~\eqref{eqn:muller} has opposite sign to  \cite{Muller12}.

 As noticed in  \cite{Muller12}, this theorem implies that there are finitely many manifolds with the same sequence of torsions. 
 
\begin{question}
Is there a finite collection of integers $0<k_1<\cdots<k_n$ such that  there are finitely many manifolds
with the same $k_1,\ldots, k_n$-torsions?  
\end{question}

The answer is yes under the extra hypothesis that the volume is bounded above.
More precisely, an infinite sequence of manifolds
with bounded volume accumulates (for the geometric topology) to a finite set of cusped manifolds with finite volume,
this is J\o{}rgensen-Thurston properness of the volume function \cite{ThurstonBullAMS, ThurstonBook}.
For such a sequence of manifolds $M_n^3$, $\tau^2(M_n^3,\sigma)\to \infty$, by Corollary~\ref{cor:denseh}, see also \cite[Corollaire~4.18]{Memoirs}. 

A priory, for some $N\in \NN$ there could exists an infinite sequence of hyperbolic manifolds whose volume goes to infinity and 
with the same $k$-torsions for $k\leq N$. By Theorem~\ref{thm:BV} the injectivity radius of this sequence would not converge to infinity.

M\"uller proves  Theorem \ref{thm:Muller} using analytic torsion and Ruelle zeta functions. There is first a theorem of Wotzke \cite{Wot08} that generalizes Fried's theorem
 (Theorem~\ref{thm:Fried}) and from this he obtains a functional equation that relates the volume, the Ruelle zeta functions and the torsions.
 Theorem~\ref{thm:Muller} has been generalized by M\"uller and Pfaff \cite{MullerPfaff14} by working directly on the trace of the heat kernel in \eqref{eqn:traceheatkernel}.

\begin{thm}[\cite{MullerPfaff14}]
 Let $M^3$ be a compact, oriented, hyperbolic 3-manifold with a spin structure $\sigma$. Then
\begin{equation}
 \lim_{k_1\to\infty} \frac{ \log\vert \tau^{k_1,k_2}(M^3,\sigma)\vert}{k_1^2} = 
\frac 1{4\pi}(k_2+1)\operatorname{vol} (M^3)\, .
\end{equation}
 \end{thm}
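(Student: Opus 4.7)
The plan is to follow the same route that M\"uller used to prove Theorem~\ref{thm:Muller}, but now carrying out the analysis with the representation $\operatorname{Sym}^{k_1,k_2}$ of $\SL_2(\CC)$ in place of $\operatorname{Sym}^{k}$. First I would use Cheeger--M\"uller to replace the combinatorial torsion by the analytic one, so that
\[
\log\bigl\vert\tau^{k_1,k_2}(M^3,\sigma)\bigr\vert = -\log \tor_{an}\bigl(M^3,\varrho_\sigma^{k_1,k_2}\bigr),
\]
and then express $\log\tor_{an}$ via the Mellin transform of the heat trace as in \eqref{eqn:traceheatkernel}. Since $k_1\neq k_2$, Raghunathan's vanishing theorem (Theorem~\ref{thm:rig}) gives acyclicity, and in fact it gives \emph{strong acyclicity}: a uniform positive lower bound on the spectrum of $\Delta_p(\varrho_\sigma^{k_1,k_2})$ independent of $(k_1,k_2)$ in a suitable sense, which allows one to interchange limits in $t$ and $k_1$.

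Next I would apply the Selberg trace formula on the hyperbolic manifold $M^3=\Gamma\backslash\mathbb H^3$ to decompose
\[
\operatorname{Tr}\bigl(e^{-t\Delta_p(\varrho_\sigma^{k_1,k_2})}\bigr) = I_p(t;k_1,k_2) + H_p(t;k_1,k_2),
\]
where $I_p$ is the identity contribution, proportional to $\operatorname{vol}(M^3)$ times a Plancherel-type integral involving the character of $\operatorname{Sym}^{k_1,k_2}$ restricted to $\operatorname{SU}(2)$ and $A$, and $H_p$ is the geodesic (hyperbolic) contribution, a sum over the closed geodesics of $M^3$ of terms weighted by $\operatorname{tr}\operatorname{Sym}^{k_1,k_2}(\varrho_\sigma(\gamma))\,e^{-l(\gamma)^2/4t}$-type factors. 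Plugging this into the Mellin integral and assembling the alternating sum $\sum_p (-1)^p p\,\zeta_p'(0)$, one separates $\log\tor_{an}$ into an identity part and a geodesic part.

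The geodesic part should be shown to be $O(k_1)$ (or at worst $o(k_1^2)$) as $k_1\to\infty$: the traces of $\operatorname{Sym}^{k_1}$ on hyperbolic elements grow only polynomially in $k_1$ after the exponential damping from the length spectrum is taken into account, and strong acyclicity ensures no small-eigenvalue pathology. The core of the proof, and the hardest step, is the identity contribution: one has to evaluate explicitly the Plancherel-type integral, with integrand built from the characters of $\operatorname{Sym}^{k_1,k_2}$ and the Harish-Chandra $c$-function for $\mathrm{PSL}_2(\CC)$, and extract its leading behaviour as $k_1\to\infty$ with $k_2$ fixed. The factor $(k_2+1)$ should come out as the dimension of the second tensor factor $\overline{\operatorname{Sym}^{k_2}}$, which multiplies the integral, while the standard computation that gave the $\frac{1}{4\pi}\operatorname{vol}(M^3)$ for $\operatorname{Sym}^{k}$ in \cite{Muller12} (via Fried--Wotzke) reappears with $k_1$ in place of $k$ and yields the leading order $\frac{k_1^2}{4\pi}\operatorname{vol}(M^3)$.

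The main obstacle is precisely this last step: performing the Harish-Chandra/Plancherel computation on $\mathrm{PSL}_2(\CC)$ for the non-self-dual representation $\operatorname{Sym}^{k_1}\otimes\overline{\operatorname{Sym}^{k_2}}$ and isolating the $k_1^2$-coefficient. One needs careful bookkeeping of the characters of $\operatorname{Sym}^{k_1,k_2}$ on the split Cartan subgroup of $\SL_2(\CC)$, uniform estimates of the remainder as $k_1\to\infty$, and the interchange of the Mellin/$t$-integrals with the spectral expansion; strong acyclicity (Raghunathan) is what makes these interchanges legitimate and gives the clean statement with no sub-leading contribution from small eigenvalues.
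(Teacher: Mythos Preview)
The paper does not give a proof of this theorem: it is stated as a result of M\"uller and Pfaff \cite{MullerPfaff14}, with the single methodological comment that they prove it ``by working directly on the trace of the heat kernel in \eqref{eqn:traceheatkernel}'', as opposed to the Ruelle zeta function route (Fried--Wotzke) that M\"uller used for the case $k_2=0$ in \cite{Muller12}. So there is nothing in the paper to compare your argument against line by line.

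That said, your outline is broadly consistent with what the paper reports about the M\"uller--Pfaff method: Cheeger--M\"uller to pass to analytic torsion, the heat-trace expression \eqref{eqn:traceheatkernel}, the Selberg trace formula to split into identity and hyperbolic contributions, strong acyclicity (Raghunathan) to control small eigenvalues, and identification of the leading $k_1^2$-term from the identity (Plancherel) contribution with the factor $(k_2+1)$ arising from the dimension of the antiholomorphic tensor factor. One point of confusion: you write that the identity computation ``via Fried--Wotzke'' from \cite{Muller12} reappears, but the paper explicitly contrasts the two approaches --- Fried--Wotzke/Ruelle zeta functions for \cite{Muller12}, direct heat-kernel analysis for \cite{MullerPfaff14}. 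If you intend to follow the latter, you should not be invoking the Ruelle zeta function machinery for the identity term; the Plancherel computation is done directly. Also, your claim that the hyperbolic contribution is $O(k_1)$ is the right target but is not obvious: the traces $\operatorname{tr}\operatorname{Sym}^{k_1,k_2}(\varrho_\sigma(\gamma))$ can grow like $e^{k_1 l(\gamma)/2}$ on hyperbolic elements, and it is the combination with the heat-kernel Gaussian and the alternating sum over $p$ that produces the needed cancellation; this is where the real work in \cite{MullerPfaff14} lies, and your sketch does not yet indicate how that cancellation is obtained.
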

 
 Again, the sign convention here differs from \cite{MullerPfaff14}.

 \subsection{Functions on the variety of characters}
 
 In this subsection $M^3$ denotes an orientable hyperbolic 3-manifold of  finite volume, with a single cusp. 
 The subsection discusses the torsion as function on the variety of characters.

 \subsubsection{Generic cohomology on the distinguished component}
 \label{sec:genericfunctions}

Some cohomology preliminaries are required before  defining the torsion 
 of $\operatorname{Sym}^{k_1,k_2}$ as function on $X_0(M^3)$. 
 To simplify, use the notation
$$
\rho^{k_1,k_2}:=\operatorname{Sym}^{k_1,k_2}\circ\rho
$$
for any representation $\rho\colon\pi_1M^3\to \SL_2(\CC)$.

 \begin{definition}
 \label{def:exceptional}
A character $\chi\in X_0(M^3)$ is called $(k_1,k_2)$-\emph{exceptional} if
there exists a representation $\rho\in\hom(\pi_1M^3,\SL_2(\CC))$ with character
$\chi_\rho=\chi$ such that $H^0(M^3;\rho^{k_1,k_2})\neq 0$.

The set of $(k_1,k_2)$-exceptional characters is denoted by $\EE^{k_1,k_2}$.
\end{definition}

If a character is irreducible, then the set of representations with this character is
precisely a conjugation orbit, but for reducible characters there may be representations
with the same characters that are not conjugate.

Notice that in Proposition~\ref{prop:rational} 
we have shown that $\EE^{1,0}$ consist of the trivial character, when
it belongs to $X_0(M^3)$ (hence $\EE^{1,0}= \emptyset$ when $b_1(M^3)=1$).

\begin{lemma}
 \label{lem:exceptionalclosed}
  If $k_1\neq k_2$, then a  $(k_1,k_2)$-exceptional character is reducible. In particular
 the $(k_1,k_2)$-exceptional set $\EE^{k_1,k_2}$ is a finite subset of $X_0(M^3)$.  
\end{lemma}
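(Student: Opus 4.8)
The plan is to show that if $\chi\in X_0(M^3)$ is $(k_1,k_2)$-exceptional with $k_1\neq k_2$, then the representation $\rho$ realizing $H^0(M^3;\rho^{k_1,k_2})\neq 0$ cannot be irreducible, and then to deduce finiteness of $\EE^{k_1,k_2}$ from the fact that reducible characters form a proper closed subvariety of the curve $X_0(M^3)$. First I would recall that $H^0(M^3;\rho^{k_1,k_2})$ is canonically the space of $\rho^{k_1,k_2}(\pi_1M^3)$-invariant vectors in $V_{k_1+1}\otimes\overline{V_{k_2+1}}$. So the hypothesis says there is a nonzero vector fixed by the whole image. If $\rho$ were irreducible, its image would be Zariski-dense in $\SL_2(\CC)$ (the only proper Zariski-closed subgroups of $\SL_2(\CC)$ containing an irreducible subgroup are finite extensions of the diagonal torus or the normalizer of the torus, all of which act reducibly — so an irreducible $\rho$ with Zariski-dense image; more carefully, an irreducible representation into $\SL_2(\CC)$ has image whose Zariski closure acts irreducibly, forcing it to be all of $\SL_2(\CC)$). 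Then a vector fixed by $\rho^{k_1,k_2}(\pi_1M^3)$ would be fixed by all of $\operatorname{Sym}^{k_1,k_2}(\SL_2(\CC))$, i.e.\ would be a trivial subrepresentation of $\operatorname{Sym}^{k_1}\otimes\overline{\operatorname{Sym}^{k_2}}$. But by the Clebsch–Gordan decomposition (or simply by highest-weight considerations for the holomorphic and antiholomorphic factors separately), $\operatorname{Sym}^{k_1}\otimes\overline{\operatorname{Sym}^{k_2}}$ contains the trivial representation if and only if $k_1=k_2$. Hence $k_1\neq k_2$ forces $\rho$ to be reducible, proving the first assertion.

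For the finiteness statement, I would argue as follows. The reducible characters in $X(M^3,\SL_2(\CC))$ form a Zariski-closed subset; since $X_0(M^3)$ is an irreducible curve containing the character of the holonomy of the complete structure — which is irreducible — the reducible locus $X_0(M^3)^{\mathrm{red}}$ is a proper closed subset of $X_0(M^3)$, hence finite. By the first part, $\EE^{k_1,k_2}\subseteq X_0(M^3)^{\mathrm{red}}$, so $\EE^{k_1,k_2}$ is finite. It remains only to record that $\EE^{k_1,k_2}\subseteq X_0(M^3)$ by definition (Definition~\ref{def:exceptional} already restricts to characters in $X_0(M^3)$), so nothing further is needed there.

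I expect the main obstacle to be the precise handling of reducible characters: for a reducible character $\chi$ there may be several non-conjugate representations $\rho$ with $\chi_\rho=\chi$ (as the excerpt itself emphasizes), so "$(k_1,k_2)$-exceptional" quantifies over all of them. However this does not affect the argument, because I only need the \emph{forward} implication (exceptional $\Rightarrow$ some $\rho$ is reducible $\Rightarrow$ $\chi$ is reducible), and a character is reducible as soon as one representation with that character is. A second point requiring a little care is the claim that an irreducible $\rho\colon\pi_1M^3\to\SL_2(\CC)$ has Zariski-dense image; this is standard but I would state it explicitly, noting that the Zariski closure $G$ of the image is a reductive subgroup acting irreducibly on $\CC^2$, and the only such subgroup of $\SL_2(\CC)$ is $\SL_2(\CC)$ itself (any proper one is contained in a Borel or in the normalizer of a maximal torus, both of which act reducibly on $\CC^2$). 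Given that, the rest is the elementary representation theory of $\SL_2(\CC)$ recalled in Section~\ref{section:reps}.
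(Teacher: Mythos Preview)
Your argument is essentially the paper's for the holomorphic case $k_2=0$: pass to the Zariski closure of $\rho(\pi_1M^3)$, observe that an invariant vector forces this closure to be proper, and conclude reducibility. The finiteness deduction is also the same.

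There is, however, a genuine gap when $k_2\neq 0$. You argue that if $\rho$ is irreducible then its image is Zariski-dense in $\SL_2(\CC)$, and hence a vector fixed by $\rho^{k_1,k_2}(\pi_1M^3)$ is fixed by all of $\operatorname{Sym}^{k_1,k_2}(\SL_2(\CC))$. But $\operatorname{Sym}^{k_1,k_2}=\operatorname{Sym}^{k_1}\otimes\overline{\operatorname{Sym}^{k_2}}$ involves complex conjugation and is \emph{not} a morphism of complex algebraic groups; it is only real-algebraic. So complex Zariski density is not enough: you would need $\rho(\pi_1M^3)$ to be dense in $\SL_2(\CC)$ for the \emph{real} Zariski topology. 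And that fails for irreducible $\rho$ in general --- an irreducible representation can have image contained in (a conjugate of) $\operatorname{SU}(2)$ or $\SL_2(\RR)$, whose real Zariski closure is the corresponding real form, not all of $\SL_2(\CC)$. The paper's proof confronts exactly this point: when $k_2\neq 0$ it passes to the real Zariski closure, notes that the new possibilities are $\SL_2(\RR)$ and $\operatorname{SU}(2)$, and checks via Clebsch--Gordan that the restriction of $\operatorname{Sym}^{k_1}\otimes\overline{\operatorname{Sym}^{k_2}}$ to these real forms (where it becomes $\operatorname{Sym}^{k_1}\otimes\operatorname{Sym}^{k_2}$) still has no trivial summand when $k_1\neq k_2$.

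A smaller error: your classification of proper subgroups is not right. You assert that the normalizer $N(T)$ of a maximal torus acts reducibly on $\CC^2$; it does not (the Weyl element swaps the two eigenlines of $T$, so there is no common invariant line). Thus ``irreducible $\Rightarrow$ complex-Zariski-dense'' already needs more care even in the holomorphic case. The paper is admittedly brief here as well, but you should at least not claim $N(T)$ acts reducibly.
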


The proof of this lemma is provided in Appendix~\ref{sec:cohomology} (Lemma~\ref{lem:exceptionalclosedA}).

 For further results, the following definition is convenient.
 
 \begin{definition}
  A subset of $X_0(M^3)$ is \emph{generic} if it is  non empty and Zariski open.
  Here the ground field is assumed to be $\CC$ when working with an 
holomorphic representation  $\operatorname{Sym}^{k}$, and  $\RR$ for
$\operatorname{Sym}^{k_1,k_2}$ when $k_2\neq 0$.
 \end{definition}

For the generic behavior of other cohomology groups, one must  distinguish the case where $k_1$ or $k_2$ is odd from the case  
they are both even. All proofs  are postponed to Appendix~\ref{sec:cohomology}.

 \begin{prop} Let $M^3$ be orientable, hyperbolic and with one cusp. Assume $k_1\neq k_2$.
 \label{prop:generic}
 \begin{enumerate}[(a)]
  \item If $k_1$ or $k_2$ is odd, then
    $$
  \left\{ \chi_\rho\in X_0(M^3) \mid  \rho^{k_1,k_2}\textrm{ is acyclic}\right\}
  $$
    is generic.
  \item If both $k_1$ and $k_2$ are even, then 
   \begin{equation*}
   \left\{ \chi_\rho\in X_0(M^3) \mid \dim H_i(M^3 ; \rho^{k_1,k_2})=1 \textrm{ 
   for } i=1, 2 
   \textrm{ and 0 otherwise} \right\}  
   \end{equation*}
    is generic.
 \end{enumerate}
 \end{prop}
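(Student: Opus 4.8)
\textbf{Proof plan for Proposition~\ref{prop:generic}.}
The plan is to combine the semi-continuity of cohomology dimension with a single explicit computation at the holonomy character $\chi_0$ of the complete structure, which by Raghunathan's vanishing theorem (Theorem~\ref{thm:rig}) already gives $H^*(M^3;\varrho_\sigma^{k_1,k_2})=0$ when $k_1\neq k_2$. First I would recall that on the peripheral torus $T^2$ the cohomology $H^*(T^2;\rho^{k_1,k_2})$ is governed by the invariant subspace of $(\CC^2)^{\otimes}$-type, exactly as in Example~\ref{ex:2torus}: for a peripheral element with eigenvalue $m$ (with $m\neq\pm1$ at $\chi_0$), the weight-space decomposition of $\operatorname{Sym}^{k_1,k_2}$ shows that the $0$-eigenspace of the holonomy acting on the representation space has dimension $0$ if $k_1$ or $k_2$ is odd, and dimension $1$ if both are even (coming from the central weight of each factor). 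This is the parity dichotomy that forces the two cases of the statement.

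Next I would set up the half-lives/half-dies argument (Poincaré duality for the pair $(M^3,\partial M^3)$, as in Appendix~\ref{sec:cohomology}) so that for a character $\chi$ with $H^0(M^3;\rho^{k_1,k_2})=0$ — i.e.\ $\chi\notin\EE^{k_1,k_2}$, which by Lemma~\ref{lem:exceptionalclosed} removes only a finite set — one has $\dim H^1(M^3;\rho^{k_1,k_2}) = \tfrac12\dim H^1(T^2;\rho^{k_1,k_2}) + \dim(\operatorname{image}\ H^1(M^3)\to H^1(T^2))^{\perp}$ type relation, together with $H^2(M^3;\rho^{k_1,k_2})\cong H^1(T^2;\rho^{k_1,k_2})/\operatorname{image}$. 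Since $\chi(M^3)=0$ and the homotopical dimension is $2$, acyclicity of the closed-up picture is equivalent to $H^1(T^2;\rho^{k_1,k_2})=0$ (the odd case) and the ``$\dim=1$ in degrees $1,2$'' picture corresponds to $\dim H^1(T^2;\rho^{k_1,k_2})=2$ with the restriction map to $H^1(T^2)$ being (generically) an isomorphism onto a Lagrangian, as in the $\operatorname{Sym}^2$ case treated in \cite{Heusener-Porti2011,Memoirs}.

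Then comes the genericity step: the function $\chi\mapsto\dim H_i(M^3;\rho^{k_1,k_2})$ is upper semi-continuous on $X_0(M^3)$ by \cite[Ch.~III, Thm.~12.8]{Hartshorne1977} (used exactly as in the proof of Proposition~\ref{prop:rational}), and by Remark~\ref{rem:independent}(b) it descends to the character variety; hence the locus where each $\dim H_i$ attains its minimum is Zariski open, and it is non-empty precisely because $\chi_0$ (in case~(a)) — respectively a character near $\chi_0$ in the Dehn-filling deformation (in case~(b), using the Neumann--Zagier parametrization and the computation in \cite{Heusener-Porti2011}) — realizes the desired dimension vector. One must be careful about the ground field: for $\operatorname{Sym}^{k}$ holomorphic one argues over $\CC$ directly, while for $\operatorname{Sym}^{k_1,k_2}$ with $k_2\neq0$ the representation is only real-algebraic, so I would pass to the underlying real algebraic structure on $X_0(M^3)$ and apply semi-continuity there (the representation $\operatorname{Sym}^{k_1,k_2}$ lands in $\operatorname{SO}(p,q)$, so the $\RR$-points carry the relevant structure), which is why ``generic'' was defined with the ground field depending on whether $k_2=0$.

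\textbf{Main obstacle.} The delicate point is case~(b): showing not merely that $\dim H^1(M^3;\rho^{k_1,k_2})$ drops to $1$ but that it does so on a \emph{non-empty} open set — i.e.\ producing even one character with exactly this dimension vector when both $k_1,k_2$ are even. At $\chi_0$ itself the half-lives/half-dies count gives $\dim H^1\geq 1$ from the peripheral contribution, but Raghunathan only guarantees vanishing for $k_1\neq k_2$ in the \emph{closed} case and one needs the cusped analogue together with the assertion that the peripheral restriction is, generically on $X_0(M^3)$, a surjection onto a half-dimensional subspace; establishing this non-degeneracy is exactly where the deformation-theoretic input (the structure of $X_0(M^3)$ as a curve near $\chi_0$, infinitesimal rigidity transverse to the deformation direction, and the explicit model computations for small $k$) is needed, and it is the step I expect to require the most care — it is deferred to Appendix~\ref{sec:cohomology} for precisely that reason.
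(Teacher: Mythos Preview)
Your overall strategy (semi-continuity plus a witness character) matches the paper's, but there is a genuine gap in case~(a), and you have the difficulty located in the wrong case.

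The error is your assertion that at the holonomy character $\chi_0$ the peripheral eigenvalue satisfies $m\neq\pm1$. At the complete structure the peripheral subgroup is \emph{parabolic}: every $\varrho(\gamma)$ for $\gamma\in\pi_1T^2$ has both eigenvalues equal to $\pm1$. The correct computation (Lemma~\ref{lemma:inv}) is that $\dim H^0(T^2;\varrho^{k_1,k_2})$ depends on the parity of $k_1+k_2$, not on whether each $k_i$ is even. Consequently, when \emph{both} $k_1$ and $k_2$ are odd, at $\chi_0$ one has $\dim H^0(T^2;\varrho^{k_1,k_2})=1$ and (Theorem~\ref{prop:dimHM}) $\dim H_i(M^3;\varrho^{k_1,k_2})=1$ for $i=1,2$. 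So $\chi_0$ is \emph{not} a witness for acyclicity in this subcase of~(a); the paper even flags this as a jump in dimension (see the remark after Theorem~\ref{prop:dimatvarrho} and Proposition~\ref{prop:propslift}\eqref{item:k1k2odd}). The paper produces a witness instead by taking an orbifold Dehn filling on $M^3$: the resulting peripheral holonomy becomes loxodromic with eigenvalue $\lambda$ neither real nor unitary, and then your weight-space count (which is valid for such $\lambda$, just not at $\chi_0$) shows $H^0(T^2;\rho^{k_1,k_2})=0$ (Lemma~\ref{lem:genericH0}).

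Conversely, your ``main obstacle'' is misplaced: in case~(b) the character $\chi_0$ \emph{does} realize the target dimension vector $\dim H_1=\dim H_2=1$ (again Theorem~\ref{prop:dimHM}), so semi-continuity immediately gives the upper bound $\leq 1$ generically; the matching lower bound comes from the fact that for both $k_i$ even the middle weight always produces a nonzero invariant on $T^2$, forcing $H_2(M^3;\rho^{k_1,k_2})\neq 0$ via the long exact sequence of the pair once $H^0(M^3;\rho^{k_1,k_2})=0$. The genuinely delicate witness-production (via Dehn filling on closed hyperbolic manifolds or orbifolds) is needed in Lemma~\ref{lemma:Zariski}\eqref{item:basisgeneric} for the basis statement, and in the both-odd subcase of~(a), not where you put it.
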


When $k_1$ and $k_2$ are even, we need to discuss the choice of basis in homology and perhaps we 
 still need to remove a Zarsiki closed subset.
 We shall consider, for $k_1$ and $k_2$ even,
\begin{equation}
 \label{eqn:FF}
  \FF^{k_1,k_2}= \left\{ \chi_\rho\in X_0(M^3) \mid \dim H^0(T^2 ; 
\rho^{k_1,k_2})\neq 1\right\}\cup \EE^{k_1,k_2} \, ,
\end{equation}
which is  Zarsiki closed subset of $X_0(M^3)$ (over $\CC$ for $k_2=0$, over $\RR$ otherwise).
In particular it is finite when $k_2=0$ or $k_1=0$.

 \begin{lemma}
 \label{lemma:generictorus} 
 Let $T^2$ denote the peripheral torus. If both $k_1$ and $k_2$ are even, then 
 $  \FF^{k_1,k_2}$ is a Zarsiki closed subset of $X_0(M^3)$ (over $\CC$ for $k_2=0$ or $k_1=0$, 
 over $\RR$ otherwise).
In particular it is finite when $k_2=0$ or $k_1=0$. 
\end{lemma}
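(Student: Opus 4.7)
The plan is to decompose $\FF^{k_1,k_2}$ as the union of the finite set $\EE^{k_1,k_2}$ with the locus $\{\chi_\rho\in X_0(M^3)\mid \dim H^0(T^2;\rho^{k_1,k_2})\neq 1\}$, show each piece is Zariski closed, then derive finiteness in the holomorphic case from a dimension argument on the curve $X_0(M^3)$.

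First I would dispose of $\EE^{k_1,k_2}$: by Lemma~\ref{lem:exceptionalclosed} this set is finite in $X_0(M^3)$ (at least when $k_1\neq k_2$), hence trivially Zariski closed over any subfield. For the second piece the key assertion I would establish is the lower bound $\dim H^0(T^2;\rho^{k_1,k_2})\geq 1$ for every representation $\rho$ with character in $X_0(M^3)$, under the parity hypothesis. Since $\pi_1T^2\cong\ZZ^2$ is abelian, the commuting pair $\rho(m),\rho(l)\in\SL_2(\CC)$ is, up to simultaneous conjugation, either both diagonal or both upper-triangular unipotent. In the diagonal case, writing both in a common eigenbasis $\{e_1,e_2\}$, the weight-zero vector $X^{k_1/2}Y^{k_1/2}\otimes \bar X^{k_2/2}\bar Y^{k_2/2}$ is jointly fixed by $\operatorname{Sym}^{k_1,k_2}(\rho(m))$ and $\operatorname{Sym}^{k_1,k_2}(\rho(l))$; in the unipotent case the joint highest-weight vector $X^{k_1}\otimes \bar X^{k_2}$ is jointly fixed. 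Both constructions require $k_1,k_2$ even. Consequently
$$
\{\chi_\rho\in X_0(M^3)\mid \dim H^0(T^2;\rho^{k_1,k_2})\neq 1\}=\{\chi_\rho\in X_0(M^3)\mid \dim H^0(T^2;\rho^{k_1,k_2})\geq 2\},
$$
and this set is Zariski closed by the upper semi-continuity of $\dim H^i$ on the representation variety (cf.~\cite[Lemma~3.2]{Heusener-Porti2011}). The natural field of definition is $\CC$ when $\operatorname{Sym}^{k_1,k_2}$ is holomorphic (i.e.\ $k_2=0$ or $k_1=0$) and $\RR$ otherwise.

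For the finiteness assertion, suppose $k_2=0$ (the case $k_1=0$ is symmetric by complex conjugation). Then $\FF^{k_1,0}$ is Zariski closed over $\CC$ inside the irreducible complex affine curve $X_0(M^3)$, so provided it is a proper subset it is automatically finite. It suffices to exhibit one character outside $\FF^{k_1,0}$, and for this I would take the character of the complete holonomy $\varrho_\sigma$ itself: it is irreducible, hence outside $\EE^{k_1,0}$ by Lemma~\ref{lem:exceptionalclosed}, and its peripheral restriction is parabolic, placing it in the unipotent case of the previous paragraph where the joint fixed space is exactly one-dimensional.

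The main obstacle I anticipate is the ``always $\geq 1$'' lower bound in the second step: without it the bad locus would only be constructible, not closed, and the lemma would fail. The bound hinges on the interplay between the parity of $k_1,k_2$ (which produces the zero-weight vector) and the abelian structure of $\pi_1T^2$ (which guarantees simultaneous reduction to diagonal or unipotent form), and one must handle the parabolic limiting characters together with the generic loxodromic ones in a uniform way.
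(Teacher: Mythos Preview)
Your proof is correct and follows essentially the same route as the paper's (which derives the lemma from Lemma~\ref{lemma:Zariski}(a) in the appendix): both arguments rest on upper semi-continuity of $\dim H^0(T^2;\rho^{k_1,k_2})$ together with the observation that the weight-zero vector forces $\dim H^0\geq 1$ when $k_1,k_2$ are both even, so that $\{\dim\neq 1\}=\{\dim\geq 2\}$ is closed. The only notable difference is cosmetic: you make the lower bound $\dim\geq 1$ explicit via the diagonal/unipotent dichotomy for commuting pairs in $\SL_2(\CC)$, and you use the parabolic holonomy character as the witness that $\FF^{k_1,0}$ is proper, whereas the paper appeals instead to a generic loxodromic character where the eigenvalue condition on $\rho^{k_1,k_2}(\gamma)$ visibly cuts out a Zariski open set. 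Your explicit lower-bound step is arguably cleaner, since without it one only sees that $Z^{k_1,k_2}$ \emph{contains} a Zariski open set rather than \emph{is} one; your formulation closes that gap directly.
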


Consider the cap product
$$
\cap\colon H^0(T^2; \rho^{k_1,k_2})\times H_i(T^2; \CC)\to H_i(T^2; 
\rho^{k_1,k_2}).
$$
It can be described as follows. Choose $a\in H^0(T^2; \rho^{k_1,k_2})$, $a\neq 
0$, and view it as an element of $\CC^{(n_1+1)(n_2+1)}$ invariant by
the action of $\rho^{k_1,k_2} (\pi_1 T^2)$. Any element in $H_i(T^2; \CC)$ is represented by a simplicial cycle: $z\in C_*(K; \CC)$ with $\partial z=0$
(here $K$ is a triangulation of $T^2$). Choose a lift of $z$ to the universal 
covering, $\tilde z\in C_*(\widetilde K; \CC)$, then
$a\otimes \tilde z\in C_i(K;\rho^{k_1,k_2})$ is a cocycle and 
\begin{equation}
\label{eqn:cap}
a\cap [z]= [a\otimes \tilde z] ,
\end{equation}
where the brackets  denote the class in homology. 

In the next proposition $[T^2]\in H_2(T^2; \ZZ)$ denotes a fundamental class,  $\langle \cdot \rangle$   the linear span,
and  $i\colon T^2\to M^3$ the inclusion.

 \begin{prop}
  \label{prop:genericbasis} 
Let $k_1\neq k_2$ be even integers. For $0\neq a\in  H^0(T^2; \rho^{k_1,k_2})$ 
and $0\neq \gamma\in H_1(T^2; \CC)$, the set
\begin{multline*}
  \left\{ \chi_\rho\in X_0(M^3) \mid \langle i_*(a\cap \gamma ) \rangle = 
H_1(M^3;    \rho^{k_1,k_2})\textrm{ and }   \right.            
   \\
\left.    \langle i_*(a\cap [T^2]) \rangle = H_2(M^3;    \rho^{k_1,k_2})
   \right\} 
   \end{multline*}
    is generic.
 \end{prop}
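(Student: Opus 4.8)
The plan is to show that each of the two conditions defines a non-empty Zariski open set, and then intersect. Start by noting that on $X_0(M^3)\setminus \FF^{k_1,k_2}$, by Proposition~\ref{prop:generic}(b) and the definition of $\FF^{k_1,k_2}$ in \eqref{eqn:FF}, we have $\dim H_i(M^3;\rho^{k_1,k_2})=1$ for $i=1,2$ (and $0$ otherwise), and $\dim H^0(T^2;\rho^{k_1,k_2})=1$, so a nonzero invariant vector $a$ exists at each such character. First I would treat the $H_2$ condition. Since $\dim H_2(M^3;\rho^{k_1,k_2})=1$ on this set, the condition $\langle i_*(a\cap[T^2])\rangle = H_2(M^3;\rho^{k_1,k_2})$ is equivalent to $i_*(a\cap[T^2])\neq 0$. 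Using \eqref{eqn:cap}, the element $a\cap[T^2]$ is represented at the chain level by $a\otimes\widetilde{[T^2]}$, a $2$-cycle in $C_2(T^2;\rho^{k_1,k_2})$ whose image under the inclusion-induced chain map is a $2$-cycle in $C_2(M^3;\rho^{k_1,k_2})$; its class $i_*(a\cap[T^2])$ depends algebraically (in fact polynomially, once a local algebraic section of the invariant line and of the homology bundle is chosen) on $\chi_\rho$. Hence non-vanishing is a Zariski open condition. For non-emptiness, I would invoke a known non-vanishing at a distinguished point: at (a lift of) the holonomy of the complete structure, the map $H_2(T^2;\rho^{k_1,k_2})\to H_2(M^3;\rho^{k_1,k_2})$ is an isomorphism (this is the standard "half-lives, half-dies" plus Raghunathan vanishing picture used throughout Appendix~\ref{sec:cohomology}), so $i_*(a\cap[T^2])\neq 0$ there and the open set is non-empty.

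Next I would handle the $H_1$ condition. Again, since $\dim H_1(M^3;\rho^{k_1,k_2})=1$ off $\FF^{k_1,k_2}$, the requirement $\langle i_*(a\cap\gamma)\rangle = H_1(M^3;\rho^{k_1,k_2})$ is just $i_*(a\cap\gamma)\neq 0$. As above, $a\cap\gamma$ is represented by $a\otimes\tilde\gamma$ with $\tilde\gamma$ a lift of a $1$-cycle representing $\gamma$, and its image class in $H_1(M^3;\rho^{k_1,k_2})$ varies algebraically with $\chi_\rho$, so non-vanishing is Zariski open. The delicate point is non-emptiness: a priori, for a fixed curve $\gamma$, it could happen that $i_*(a\cap\gamma)=0$ identically on $X_0(M^3)$. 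To rule this out I would use the fact that $H_1(T^2;\CC)$ is two-dimensional spanned by (the classes of) the frame curves $m,l$, that $a\cap m$ and $a\cap l$ span $H_1(T^2;\rho^{k_1,k_2})$ which is two-dimensional (by Poincar\'e duality on $T^2$, since $H^0(T^2;\rho^{k_1,k_2})$ and $H^2(T^2;\rho^{k_1,k_2})$ are one-dimensional and $\chi(T^2)=0$), and that the kernel of $i_*\colon H_1(T^2;\rho^{k_1,k_2})\to H_1(M^3;\rho^{k_1,k_2})$ is one-dimensional at the holonomy of the complete structure (again the half-lives-half-dies phenomenon). Thus for at least one of $\gamma=m$ or $\gamma=l$ one has $i_*(a\cap\gamma)\neq 0$ at the complete structure; and since $a\cap\gamma$ depends linearly on $\gamma\in H_1(T^2;\CC)$, the set of $\gamma$ for which $i_*(a\cap\gamma)$ vanishes at a given generic character is a proper linear subspace, so for the given fixed $\gamma\neq 0$ in the statement the non-vanishing locus is non-empty (here one uses that $\gamma\neq 0$ is not contained in this "bad" line for at least one character — equivalently, that the bad line is not constant on the irreducible curve $X_0(M^3)$).

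Finally, having established that each of the two sets is non-empty Zariski open in $X_0(M^3)$, and that $X_0(M^3)\setminus\FF^{k_1,k_2}$ is non-empty Zariski open by Lemma~\ref{lemma:generictorus}, I would intersect all three: a finite intersection of non-empty Zariski open subsets of the irreducible variety $X_0(M^3)$ is non-empty and Zariski open, hence generic. I expect the main obstacle to be the non-emptiness half of the $H_1$ statement, specifically controlling, for the \emph{fixed} curve $\gamma$ given in the proposition rather than for a generic one, that $i_*(a\cap\gamma)$ is not identically zero on $X_0(M^3)$; this requires knowing that the one-dimensional "kernel line" inside the two-dimensional $H_1(T^2;\rho^{k_1,k_2})$ genuinely moves as $\chi_\rho$ varies along the curve, which is again a consequence of the Neumann--Zagier analysis of the peripheral structure (the curve $X_0(M^3)$ is a nontrivial deformation, and the change-of-curve behavior is governed by the formulas recalled in Section~\ref{sec:adj}). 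A clean alternative is to prove the statement with $m$ and $l$ simultaneously and then note that any $\gamma\neq 0$ works for generic $\chi_\rho$ because the exceptional locus where it fails is a proper Zariski closed subset.
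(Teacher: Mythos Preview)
Your treatment of the $H_2$ condition and of Zariski openness is fine, and matches the paper. The gap is exactly where you feared: non-emptiness of the $H_1$ condition for the \emph{given} curve $\gamma$ when both $k_1$ and $k_2$ are positive even integers. Your argument rests on two claims at the holonomy $\varrho$ of the complete structure: that $a\cap m$ and $a\cap l$ span the two-dimensional $H_1(T^2;\varrho^{k_1,k_2})$, and that $\ker i_*$ is one-dimensional there. Both fail. By Proposition~\ref{prop:basis}\eqref{item:h1dif}, when $k_1k_2\neq 0$ the cap product $H^0(T^2;\varrho^{k_1,k_2})\times H_1(T^2;\CC)\to H_1(T^2;\varrho^{k_1,k_2})$ is \emph{identically zero} at $\varrho$, so $a\cap\gamma=0$ for every $\gamma$ and hence $i_*(a\cap\gamma)=0$ for every $\gamma$. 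This is precisely why Proposition~\ref{prop:propslift}\eqref{item:k1k2even} records that $\TT^{k_1,k_2}_{M,\gamma}(\chi_\varrho)=0$ in that range. So the complete structure cannot serve as the witness, and your fallback (``the kernel line moves'') presupposes what has to be proved.

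The paper's device is different and direct: for the fixed primitive $\gamma$, perform orbifold Dehn filling on $M^3$ along $\gamma$ with branching index $n$ large. The resulting closed hyperbolic orbifold $\mathcal O_n$ has holonomy restricting to some character $\chi_{\rho}\in X_0(M^3)$, and by Raghunathan's theorem $\operatorname{Sym}^{k_1,k_2}$ of the holonomy of $\mathcal O_n$ is acyclic. A Mayer--Vietoris argument for the pair $(M^3,\,\text{solid torus})$ then forces $i_*(a\cap\gamma)$ to generate $H_1(M^3;\rho^{k_1,k_2})$ at this particular character, giving non-emptiness for that specific $\gamma$; semi-continuity does the rest. (For $\gamma\in H_1(T^2;\CC)$ not integral, use linearity once two independent primitive classes are handled.) This is Lemma~\ref{lemma:Zariski}\eqref{item:basisgeneric} in the appendix; note that the witness character lies away from $\chi_\varrho$, which is exactly what your approach could not produce.
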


 \subsubsection{Higher torsions on the variety of characters}

 Again assume that $M^3$ is an oriented hyperbolic 3-manifold of finite volume with a single cusp.
 We define the Reidemeister torsion on the distinguished curve of characters
 $
 X_0(M^3)
 $.
 
 \begin{definition}
 For $k_1$ or $k_2$  odd, with $k_1\neq k_2$, and for $\chi_\rho\in X_0(M^3)- \EE^{k_1,k_2}$,
 where $\EE^{k_1,k_2}$ is as in Definition~\ref{def:exceptional},
 define
 $$
 \TT_M^{k_1,k_2}(\chi_\rho)=
 \begin{cases}
\tor(M^3, \rho^{k_1,k_2}) 	& \textrm{if } \rho^{k_1,k_2} \textrm{ is acyclic;} \\ 
   0				& \textrm{otherwise.}
 \end{cases}
$$
  For $k_1$ and $ k_2$ even, with $k_1\neq k_2$,  $\gamma$ a peripheral curve, and  $\chi_\rho\in X_0(M^3)- \FF^{k_1,k_2}$
(where $ \FF^{k_1,k_2}$ is as in \eqref{eqn:FF}), define
 $$
\TT_{M,\gamma}^{k_1,k_2}(\chi_\rho)=
\begin{cases} \tor(M^3, \rho^{k_1,k_2}, \{a\cap \gamma, a\cap [T^2]\}) &  \textrm{if } \langle a\cap \gamma \rangle= H_1(M^3; \rho^{k_1,k_2}),  \\
				& \quad  \langle  a\cap [T^2] \rangle= H_2(M^3; 
				\rho^{k_1,k_2});  \\
   0				& \textrm{otherwise.} 
\end{cases}
 $$
 \end{definition}

We also use the notation 
$$
 \TT_M^{k}=  \TT_M^{k,0} \qquad\textrm{ and }\qquad \TT^{k}_{M, \gamma}=  
\TT_{M,\gamma}^{k,0} 
$$
for $k$ odd or even respectively. 
 
  The proof of Proposition~\ref{prop:rational} with minor changes yields the following:

 \begin{prop}
  For $k_1\neq k_2$,   $\TT_M^{k_1,k_2}$ and $ \TT_{M,\gamma}^{k_1,k_2}$ are  
nonzero (real) analytic functions on $X_0(M^3)- \EE^{k_1,k_2} $ for $k_1$ or 
$k_1$ odd,
  or  $X_0(M^3)- \FF^{k_1,k_2} $ for $k_1$ and $k_2$ even,
  holomorphic when $k_2=0$ and antiholomorphic when $k_1=0$.
 \end{prop}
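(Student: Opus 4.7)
The plan is to mimic the proof of Proposition~\ref{prop:rational}, replacing the input ``acyclicity is generic'' by the appropriate generic cohomology statements from Propositions~\ref{prop:generic} and~\ref{prop:genericbasis}. Throughout, I work with a fixed simple homotopy 2-complex $K$ for $M^3$ with chosen lifts of cells, and I trivialise $\CC^{(k_1+1)(k_2+1)}$ by the standard basis of monomials in two variables of bidegree $(k_1,k_2)$. Then, for every representation $\rho$, the chain complex $C_*(K;\rho^{k_1,k_2})$ has distinguished bases $c_i(\rho)$ whose entries are real-analytic in the entries of $\rho$, and holomorphic (resp.\ antiholomorphic) in $\rho$ when $k_2=0$ (resp.\ $k_1=0$), since $\operatorname{Sym}^{k}$ is polynomial and $\overline{\operatorname{Sym}^{k}}$ is antiholomorphic.

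In the case where $k_1$ or $k_2$ is odd, Proposition~\ref{prop:generic}(a) yields a generic subset $U\subset X_0(M^3)-\EE^{k_1,k_2}$ where $\rho^{k_1,k_2}$ is acyclic. On $U$ the torsion is computed exactly as in Proposition~\ref{prop:rational}: fix a representation $\rho_1\in U$, pick lifts $\tilde b_0(\rho)$, $\tilde b_1(\rho)$ of bases of $B_0,B_1$ that depend analytically on $\rho$ (by choosing them as fixed linear combinations of the $c_i(\rho)$ that work at $\rho_1$), and observe that the ratio
$$
[\partial\tilde b_1(\rho)\sqcup \tilde b_0(\rho),c_1(\rho)]\,/\,[\partial \tilde b_0(\rho),c_0(\rho)]
$$
is real-analytic in $\rho$ on a Zariski neighbourhood of $\rho_1$ and coincides with $\tor(M^3,\rho^{k_1,k_2})$ whenever the denominator does not vanish. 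Since non-acyclic points correspond precisely to the vanishing of the boundary rank (once $\chi_\rho\notin\EE^{k_1,k_2}$ kills $H_0$ by duality), this ratio vanishes exactly at the non-acyclic characters, so it glues to a real-analytic extension of $\TT_M^{k_1,k_2}$ on $X_0(M^3)-\EE^{k_1,k_2}$. Invariance under conjugation, combined with the fact that $\pi_1 T^2\to\pi_1M^3$ realises every character of a representation, shows the function descends to $X_0(M^3)$.

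For the even case, on $X_0(M^3)-\FF^{k_1,k_2}$ the space $H^0(T^2;\rho^{k_1,k_2})$ is a line depending analytically on $\rho$ (it is the kernel of two commuting matrices whose entries are analytic in $\rho$ and whose corank is locally constant equal to $1$), so one can choose locally an analytic nonzero section $\rho\mapsto a(\rho)$. The construction \eqref{eqn:cap} then provides analytically varying classes $a(\rho)\cap\gamma\in H_1(M^3;\rho^{k_1,k_2})$ and $a(\rho)\cap[T^2]\in H_2(M^3;\rho^{k_1,k_2})$; by Proposition~\ref{prop:genericbasis} these span the respective cohomology groups on a generic set. Applying the same ``fix lifts / compute determinants ratio'' strategy as in the odd case, now with the lifts $\tilde h_1$, $\tilde h_2$ of these classes playing the role of the cohomology bases in \eqref{eqn:torcplx}, one obtains a real-analytic expression for $\TT_{M,\gamma}^{k_1,k_2}$; the rescaling ambiguity $a(\rho)\mapsto \lambda(\rho)a(\rho)$ cancels via the change-of-basis formula \eqref{eqn:change} applied to both $H_1$ and $H_2$ (the factors $\lambda$ in numerator and denominator in \eqref{eqn:torcplx} compensate, since $H_1$ and $H_2$ contribute with opposite signs). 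The function vanishes precisely where the $a(\rho)\cap\gamma$ and $a(\rho)\cap[T^2]$ fail to span, which is the case covered by the second branch of the definition.

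The main obstacle is the even case, specifically verifying that the cap-product construction genuinely produces analytically varying homology classes and that the scaling ambiguity in $a(\rho)$ cancels; the rest is a direct transcription of the proof of Proposition~\ref{prop:rational}. Holomorphy when $k_2=0$ and antiholomorphy when $k_1=0$ follow because under these hypotheses every ingredient of the ratio (the matrices $\rho^{k_1,k_2}(\gamma)$, the lifts $\tilde b_i$, and the cap-product classes) depends in the required way on $\rho$, and rational combinations preserve (anti)holomorphy. Nonvanishing on the stated domain is automatic on the generic acyclicity/basis locus, and the extension argument shows that, as in Proposition~\ref{prop:rational}, the analytic continuation does not vanish identically.
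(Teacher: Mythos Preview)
Your proposal is correct and follows exactly the approach the paper intends: the paper's entire proof is the single line ``The proof of Proposition~\ref{prop:rational} with minor changes yields the following,'' and you have carried out precisely those changes, invoking Propositions~\ref{prop:generic} and~\ref{prop:genericbasis} in place of the acyclicity input and verifying (correctly, via \eqref{eqn:change}) that the scaling ambiguity in $a(\rho)$ cancels between $H_1$ and $H_2$. One sentence is garbled (``$\pi_1 T^2\to\pi_1M^3$ realises every character of a representation'' is not what you mean; you just need conjugation-invariance of torsion to descend to characters), but this does not affect the argument.
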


 When $H^1(M^3;\ZZ)\cong \ZZ$ there is a well defined Alexander polynomial $\Delta(t)$, and we can give conditions for the regularity of 
 $\TT_M^{k}$ and  $\TT_M^{k, \gamma}$ on the whole curve $X_0(M^3)$.
 
 \begin{prop}
  \label{prop:koddEemplty}
  Assume that $H^1(M^3;\ZZ)\cong \ZZ$ and that no root of $ \Delta(t)$ is a root of unity. Then  for $k$ odd $\EE^{k,0}=\emptyset$.
 Hence  for $k$ odd  $ \TT_{M}^{k}$ is a holomorphic function
   on $X_0(M^3)$.
\end{prop}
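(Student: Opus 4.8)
\emph{Plan of the proof.} The second assertion will be immediate from the first: once we know $\EE^{k,0}=\emptyset$, the preceding proposition says that $\TT_M^{k}=\TT_M^{k,0}$ is a holomorphic function on $X_0(M^3)-\EE^{k,0}=X_0(M^3)$. So the real content is to show that, for $k$ odd, no character of $X_0(M^3)$ is $(k,0)$-exceptional.

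Suppose $\chi\in\EE^{k,0}$. Since $k=k_1\neq k_2=0$, Lemma~\ref{lem:exceptionalclosed} applies and $\chi$ is reducible; because the trace of every commutator is then $2$, every representation with character $\chi$ is reducible, and $\chi$ is the character of a diagonal representation $\rho_\varphi=\diag(\varphi,\varphi^{-1})$ for a homomorphism $\varphi\colon\pi_1M^3\to\CC^*$ (unique up to $\varphi\leftrightarrow\varphi^{-1}$). By definition of $\EE^{k,0}$ there is a representation $\rho$ with $\chi_\rho=\chi$ --- hence reducible, i.e.\ conjugate to an upper triangular one --- such that $H^0(M^3;\operatorname{Sym}^{k}\circ\rho)\neq0$. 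In a basis adapted to the $\rho$-invariant flag, $\operatorname{Sym}^{k}\circ\rho$ is triangular with diagonal characters $\varphi^{k},\varphi^{k-2},\dots,\varphi^{-k}$; pushing a nonzero invariant vector to the last graded quotient on which it is nonzero forces one of these characters to be trivial. As $k$ is odd, this gives $\varphi^{j}=1$ for some odd integer $j$ with $1\le j\le k$, so $\varphi$ has finite order.

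Now I would split into two cases. If $\varphi=1$, then $\chi$ is the trivial character; but $H^1(M^3;\ZZ)\cong\ZZ$ gives $b_1(M^3)=1$, and by Appendix~\ref{sec:notapprox} the trivial character does not lie on $X_0(M^3)$, a contradiction. Otherwise $\varphi\neq1$, and since $\chi=\chi_{\rho_\varphi}$ lies on $X_0(M^3)$, a component through the irreducible holonomy character, $\rho_\varphi$ is a limit of irreducible representations. Here I would invoke the Burde--de Rham obstruction analysis for deforming an abelian $\SL_2(\CC)$-representation out of the reducible locus: writing $\operatorname{Ad}\rho_\varphi\cong\CC\oplus\CC_{\varphi^{2}}\oplus\CC_{\varphi^{-2}}$ (the one-dimensional representations given by $\varphi^{\pm2}$), such a deformation forces $H^1(M^3;\CC_{\varphi^{\pm2}})\neq0$, and by the cohomology facts of Appendix~\ref{sec:cohomology} identifying this group with the Alexander module, this means $\varphi^{2}$ (equivalently $\varphi^{-2}$) is a root of the Alexander polynomial $\Delta(t)$. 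But $\varphi$ has finite order, so $\varphi^{2}$ is a root of unity, contradicting the hypothesis that no root of $\Delta(t)$ is a root of unity. Hence $\EE^{k,0}=\emptyset$, and the holomorphicity of $\TT_M^{k}$ on all of $X_0(M^3)$ follows.

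The elementary steps --- Lemma~\ref{lem:exceptionalclosed}, the triangular computation of the diagonal characters, and the appeal to Appendix~\ref{sec:notapprox} --- are routine. The main obstacle is the Burde--de Rham input: one needs the precise description of which reducible characters can lie on the distinguished component. For a knot exterior this is classical, and in general it is exactly what the cohomology in Appendix~\ref{sec:cohomology} provides; the only delicate point is when $H_1(M^3;\ZZ)$ has torsion, in which case the relevant polynomial is the Alexander polynomial twisted by the restriction of $\varphi^{2}$ to the torsion subgroup, and the contradiction with the hypothesis must be phrased accordingly.
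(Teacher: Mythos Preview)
Your approach is essentially the same as the paper's: both rest on (i) excluding the trivial character via $b_1(M^3)=1$ (Appendix~\ref{sec:notapprox}), (ii) the Burde--de Rham description of reducible characters on $X_0(M^3)$ as those with $\Delta(\lambda^2)=0$, and (iii) the observation that for $k$ odd the eigenvalues of $\operatorname{Sym}^k(A)$ are odd powers of $\lambda$, hence none equals $1$ when $\lambda$ is not a root of unity. The paper runs the argument directly (reducible on $X_0$ $\Rightarrow$ $\lambda$ not a root of unity $\Rightarrow$ $H^0=0$), while you run it by contradiction ($H^0\neq0$ $\Rightarrow$ $\varphi$ of finite order $\Rightarrow$ root of unity among the Alexander roots); the content is identical.

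Two small corrections. First, the Burde--de Rham input is not in Appendix~\ref{sec:cohomology}: that appendix treats Raghunathan vanishing and cohomology bases at the holonomy, not the Alexander module. The paper simply cites \cite{Burde1967,dR1967} for the fact that the abelian characters lying on $X_0(M^3)$ are exactly those with $\Delta(\lambda^2)=0$, and you should do the same rather than pointing to the appendix. Second, your triangular ``push to the last nonzero graded quotient'' argument is a nice explicit justification that $H^0(M^3;\rho^{k})=0$ for \emph{every} $\rho$ with the given reducible character (not just the diagonal one); the paper asserts this in one line, so your version is if anything more careful. Your closing remark about torsion in $H_1(M^3;\ZZ)$ is a fair caveat---the hypothesis $H^1(M^3;\ZZ)\cong\ZZ$ only gives $b_1=1$---and the paper does not address it either; in the knot-exterior case it is moot.
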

   
  \begin{proof}
 The proof of Proposition~\ref{prop:rational} applies here with a minor modification. Notice that the condition on the homology implies that the trivial character
 does not lie in $X_0(M^3)$. On the other hand, the reducible characters  in $X_0(M^3)$ are the characters of the composition of a surjection
 $$ \pi_1 M^3\to \ZZ $$ with a representation that maps the generator of the cyclic group to 
 $$
A= \begin{pmatrix}
  \lambda & 0 \\
  0 & \lambda^{-1}
 \end{pmatrix}
 $$
with $\Delta(\lambda^2)=0$ \cite{Burde1967,dR1967}.
Since $\lambda$ is not a root of unity  and $k$ is odd, $\operatorname{Sym}^k(A)$ has no eigenvalue equal to one, hence $H^0(M^3;\rho^k)=0$ for every
$\rho$ with character in $ X_0(M^3)$. 
\end{proof}

\begin{prop}  Assume that $M^3=S^3-\mathcal{K}$ is a knot exterior in the sphere. Assume also 
that no root of $ \Delta(t)$ is a root of unity and all roots have multiplicity one.
Then for $k$ even,  $\TT^k_{M, \gamma}$ can be extended to the whole curve $X_0(M^3)$.
  \end{prop}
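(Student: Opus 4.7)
The plan is to show that the indeterminacy locus $\FF^{k,0} \cap X_0(M^3)$ is a finite set of reducible characters controlled by the Alexander polynomial, and then to check that the chosen basis $\{a\cap\gamma,\, a\cap[T^2]\}$ makes the rational expression for $\TT^k_{M,\gamma}$ regular at each of these points. Throughout, set $n=k+1$ and write $\rho_\lambda$ for a reducible representation with $\rho_\lambda(m)=\operatorname{diag}(\lambda,\lambda^{-1})$, $\rho_\lambda(l)=\mathrm{Id}$, where the latter equality holds because the longitude of a knot lies in the commutator subgroup.

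First, I would identify $\FF^{k,0}\cap X_0(M^3)$ explicitly. By Burde--de~Rham, the reducible characters on $X_0(M^3)$ are exactly the $\chi_{\rho_\lambda}$ with $\Delta(\lambda^2)=0$, and by the hypothesis that $\Delta$ has no root of unity, $\lambda$ itself is not a root of unity. For $k$ even, the decomposition of $\operatorname{Sym}^k\rho_\lambda$ into weight lines $V_i$ (meridian acting by $\lambda^{k-2i}$, longitude trivially) shows that $\rho_\lambda^k$ has a one-dimensional invariant subspace, spanned by the middle weight vector, and that $\dim H^0(T^2;\rho_\lambda^k)=1$ since $\lambda^{k-2i}\neq 1$ for $i\neq k/2$. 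For irreducible characters, $\operatorname{Sym}^k\rho$ is an irreducible $\mathrm{PSL}_2(\CC)$-representation and hence has no invariants. Therefore $\FF^{k,0}\cap X_0(M^3)$ reduces to the finite set of reducible characters coming from the roots of $\Delta$.

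Next, at a reducible character $\chi_{\rho_\lambda}$, I would analyse the cohomology of $\rho_\lambda^k=\bigoplus_{i=0}^{k}V_i$ summand by summand. Each $V_i$ with $i\neq k/2$ is a non-trivial abelian character sending $m$ to $\lambda^{k-2i}$ and $l$ to $1$, whose homology is governed by $\Delta(\lambda^{k-2i})$; the middle summand $V_{k/2}$ is the trivial representation. The simple-root hypothesis on $\Delta$, combined with the absence of roots of unity, ensures that nearby irreducible characters deform only the middle summand (a transverse direction to the reducible stratum), while the lateral summands remain acyclic or contribute in a controlled, locally constant way through the factor $\Delta(\lambda^{k-2i})$. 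In particular $H_*(M^3;\rho_\lambda^k)$ is concentrated (up to the lateral constants) in the trivial summand, with $H_1\cong H_2\cong\CC$, and the invariant $a\in H^0(T^2;\rho_\lambda^k)$ is the generator of $V_{k/2}$; so $a\cap\gamma$ and $a\cap[T^2]$ become the images of the standard generators of $H_1(M^3;\CC)$ and $H_2(M^3;\CC)$ under the inclusion of the trivial summand, which are non-zero.

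The main obstacle is to verify that the rational expression for $\TT^k_{M,\gamma}$ has no pole at $\chi_{\rho_\lambda}$. I would proceed as in the proof of Proposition~\ref{prop:rational}: write the torsion as a ratio of determinants $[\partial\tilde b_1\sqcup\tilde b_0,c_1]/[\partial\tilde b_0,c_0]$ at the chain level, adapted to the two-dimensional homology in degrees $1$ and $2$ and the explicit basis chosen from the peripheral torus. At $\rho_\lambda$ both numerator and denominator drop rank by exactly the dimension accounted for by the newly non-trivial homology from the middle summand $V_{k/2}$; the simple-roots hypothesis guarantees this drop is first-order and balanced on the two sides of the ratio, so the indeterminacy is removable. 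The only remaining subtlety — disposing of repeated root patterns of the form $\Delta(\lambda^{k-2i})=0$ for $i\neq k/2$ that could introduce additional lateral jumps — is handled by the simplicity of the roots, which forces any such coincidence to be a single first-order vanishing, again absorbed into the product formula for torsion of direct sums. Combining these pieces, $\TT^k_{M,\gamma}$ extends as a regular (analytic) function on all of $X_0(M^3)$, finishing the proof.
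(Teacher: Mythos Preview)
Your identification of $\FF^{k,0}\cap X_0(M^3)$ is incomplete: even for irreducible $\chi_\rho$, the peripheral restriction $\rho|_{T^2}$ can be diagonalizable with eigenvalues that are roots of unity of small order, making $\dim H^0(T^2;\rho^k)>1$. The paper handles this easily by observing that such $\rho(\pi_1T^2)$ lies in a one-parameter diagonal subgroup $G$, and $\operatorname{Sym}^k(G)$ has a unique invariant line varying analytically, so $a$ extends analytically across these points. You omit this case entirely.

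The more serious gap is at the reducible characters. Your direct-sum analysis of $\rho_\lambda^k=\bigoplus_i V_i$ is incorrect: for the trivial summand $V_{k/2}$ one has $H_2(M^3;\CC)=0$ (a knot exterior has $b_2=0$), so $i_*(a\cap[T^2])$ vanishes there, contrary to your claim. Worse, the adjacent summands $V_{k/2\pm1}$ carry the characters $m\mapsto\lambda^{\pm2}$, and since $\Delta(\lambda^2)=0$ (and $\Delta$ is symmetric) these summands contribute nontrivial $H_1$ and $H_2$; thus $\dim H_1(M^3;\rho_\lambda^k)\geq 3$ and $\dim H_2\geq 2$ at the abelian representation, not $1$ and $1$. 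With $H^0(M^3;\rho_\lambda^k)\neq 0$ as well, the determinant-ratio argument from Proposition~\ref{prop:rational} does not transfer, and your ``first-order balanced drop'' claim is not justified.

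The paper avoids all of this by a different idea: at a reducible character $\chi_{\rho_\lambda}$ with $\Delta(\lambda^2)=0$ a \emph{simple} root, results of \cite{HPS2001,HP05} produce non-abelian representations $\rho',\rho''$ with $\chi_{\rho'}=\chi_{\rho_\lambda}$. Because $\rho'$ is non-abelian, $H^0(M^3;(\rho')^k)=0$, so the torsion is already well defined at $\rho'$; and a neighborhood of $\rho'$ in the representation variety projects onto a neighborhood of $\chi_{\rho_\lambda}$ in $X_0(M^3)$, giving the analytic extension. This is where the simple-root hypothesis is actually used, and it is the key step your proposal is missing.
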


 \begin{proof}
There are two issues to discus: the dimension of $H^0(T^2;\rho)$ and the 
vanishing or not of $H^0(M^3;\rho)$.
For the first one, the assumption $M^3=S^3-\mathcal{K}$ implies that $\rho$ is 
never trivial on the peripheral torus $T^2$. 
It my happen that $\rho(\pi_1T^2)$ is non-trivial but 
$\operatorname{Sym}^{k}(\rho(\pi_1T^2))$
has an invariant subspace of dimension $\geq 2$. However, this issue only 
occurs when  $\rho(\pi_1T^2)$ is contained in  a 1-parameter subgroup $G$ of
$\SL_2(\CC)$ conjugate to the group of diagonal matrices. Then 
$\operatorname{Sym}^{k}(G)$ has only one invariant subspace, that varies 
analytically on $\rho$. 
Thus the element $a$ in the expression of the cup products can be chosen to 
depend analytically on $\rho$.
 
 Next discuss $H^0(M^3;\rho)$.
The argument in  Proposition~\ref{prop:koddEemplty} fails in the case $k$ even  because $\operatorname{Sym}^k$ 
has always an invariant subspace, thus for those reducible representations $H^0(M^3;\rho^k)\neq 0$ and  further 
discussion on  reducible representations is required. 
 Namely, at a reducible representation, 
for the corresponding abelian representation $\rho$ as in the proof of Proposition~\ref{prop:koddEemplty}, the hypothesis that $\lambda$ is
not a root of unity implies that $\operatorname{Sym}^k(A)$ has a single eigenvalue equal to one, hence 
 $\dim H^0(M^3;\rho^k)=1$. 
The results  of \cite{HPS2001,HP05}, and the hypothesis that the root is simple, yield that there are,
up to conjugation, two representations $\rho'$ and $\rho''$ that are non abelian and have the same character as $\rho$.
In particular 
$H^0(M^3;(\rho')^{k_1,k_2})\cong H^0(M^3;(\rho'')^{k_1,k_2})=0$. Now, using the arguments of  Proposition~\ref{prop:rational}, we may define the torsion in a Zariski dense subset
 of the component of $\hom(\pi_1M^3, \SL_2(\CC))$ that corresponds to $X_0(M^3)$. It suffices to consider an open neighborhood of $\rho'$ for the usual topology that projects to a neighborhood of 
 $\chi_\rho$   \cite{HPS2001,HP05}, which proves that the torsion on a punctured neighborhood of $\chi_\rho$ extends to $\chi_\rho$. 
\end{proof}

The assertion for the case $k=2$ has been proved by  Yamaguchi in \cite{Yamaguchi07}. More precisely he computes the precise value of $\TT^{2}_{M,\gamma}$ at the reducible character.

 \begin{example}
  Some torsion  functions are computed here for  the figure eight knot exterior. 
  Let $\theta=\theta_m$ denote the evaluation of a character of a meridian: 
  $ \theta_m(\chi)=\chi(m)$ (i.e.\ the trace of a meridian). As mentioned
   in 
  Example~\ref{ex:fig8sl2},
  in \cite{Kitano94b} Kitano computes:
 $$
 \mathbb{T}_M=\mathbb{T}_M^{1} =2-2 {\theta}.
 $$
Further computations with the help of symbolic software yield:
\begin{eqnarray*}
 \mathbb{T}_M^{3} &= &- \left( {\theta}^{2}-2\,{\theta}-2 \right) ^{2}\\
  \mathbb{T}_M^{5} &=&
2\, \left( {\theta}-1 \right)  \left( {\theta}^{8}+2\,{\theta}^{7}-13\,{\theta}^{6}-20\,{\theta}^
{5}+49\,{\theta}^{4}+48\,{\theta}^{3}-33\,{\theta}^{2}-18\,{\theta}-18 \right) \\
 \mathbb{T}_M^{7} &=&
- \left( {\theta}-1 \right) ^{2} \left( 2\,{\theta}^{7}-4\,{\theta}^{6}-21\,{\theta}^{5}+19
\,{\theta}^{4}+57\,{\theta}^{3}+13\,{\theta}^{2}-18\,{\theta}-6 \right) ^{2}\\
 \mathbb{T}_M^{9} &=& 
  2\, \left( {\theta}-1 \right)  \left( {\theta}^{12}+2\,{\theta}^{11}-13\,{\theta}^{10}-13\,{
{\theta} }^{9}+27\,{\theta}^{8}-{\theta}^{7}+95\,{\theta}^{6} +90\,{\theta}^{5}
\right.\\ & & \left. 
-148\,{\theta}^{4}-74\,{{\theta}
}^{3}+61\,{\theta}^{2}+12\,{\theta}-6 \right) ^{2}.
\end{eqnarray*}
Remark that these torsions are functions on the variable $\theta=\theta_m$, the trace of the meridian. This is not always the case,
for instance for $\TT_{M,\gamma}^2$, computed in \cite{Memoirs} (and in Section~\ref{sec:adj}) and described below.
We also computed:
\begin{multline*}
  \mathbb{T}_M^{2,1}=
  13+3\,{\theta}^{4}-3\,  \overline{\theta}  ^{3}-14\,{\theta}^{2}-3\,
  \overline{\theta}   ^{2}+13\,\overline{\theta}+{\theta}^{4}\overline{\theta}
+4\,{\theta}^{2}  \overline{\theta}   ^{3}+2\,{\theta}^{2}  
\overline{\theta}  ^{2} \\ -16\,{\theta}^{2}\overline{\theta}-{\theta}^{4}  
\overline{\theta}   ^{3}+{\theta}^{4}   \overline{\theta}  ^{2}
+
\eta\overline{\eta} \left( {\theta}^{2}\overline{\theta}-{\theta}^{2}-\overline{\theta}-1
 \right) 
\end{multline*}
where $\eta$ is a variable that satisfies 
$\eta^2= ({\theta}^2-1)({\theta}^2-5)$.
This variable can be written in terms of the traces of other elements, see Equation~\eqref{eqn:x1+x2}.
 \end{example}

In subsection~\ref{subsection:eight} other torsions are computed.
If $m$ and $l$ denote respectively the meridian and the longitude, by \eqref{eqn:T2l}, \eqref{eqn:T2m}, \eqref{eqn:t84}:
$$
\TT_{M,l}^2= 5-2\theta^2, \qquad \TT_{M,m}^2= \frac12 \eta, \quad   \textrm{ and }  \quad  \TT_{M,l}^4=8 (2-\theta^2).
$$

\subsection{Evaluation at the holonomy}

\subsubsection{Cohomolgy at the character of the holonomy}
 
In order to  evaluate the torsion functions at the lift of the holonomy,
some results on the cohomology for this representation are required.
Again the proofs are postponed to Appendix~\ref{sec:cohomology}.

 \begin{thm}[Theorem~\ref{prop:dimHM}]
  \label{prop:dimatvarrho}
  Let $\varrho\colon\pi_1 M^3\to\SL_2(\CC)$ be a lift of the holonomy.
  \begin{enumerate}[(a)]
   \item If $k_1+k_2$ is odd, then $\varrho^{k_1,k_2}$ is acyclic.
   \item If $k_1+k_2$ is even, $k_1\neq k_2$,  then
   $$
   \dim H_i(M^3;    \varrho^{k_1,k_2})= \begin{cases}
                                    1 & \textrm{if } i=1,2\, ,\\
                                    0 & \textrm{otherwise.}
                                   \end{cases}
   $$
  \end{enumerate}
 \end{thm}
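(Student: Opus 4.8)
The plan is to establish the result by a combination of Poincar\'e--Lefschetz duality on $M^3$ (which has the homotopy type of a $2$-complex with one boundary torus $T^2$), an Euler characteristic count, and the representation theory of $\SL_2(\CC)$ restricted to the peripheral subgroup. First I would reduce everything to the computation of $H^0$ and $H^2$. Since $M^3$ is aspherical of homotopical dimension $2$ with $\chi(M^3)=0$, for any representation $\rho$ into $\SL_N(\CC)$ one has $H^i(M^3;\rho)=0$ for $i\geq 3$ and the alternating sum $\dim H^0-\dim H^1+\dim H^2=0$. So it suffices to compute $H^0(M^3;\varrho^{k_1,k_2})$ and $H^2(M^3;\varrho^{k_1,k_2})$.

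For $H^0$: this group is the space of $\pi_1M^3$-invariants in $V_{k_1+1}\otimes\overline{V_{k_2+1}}$ under $\operatorname{Sym}^{k_1,k_2}\circ\varrho$. Because the holonomy image is Zariski-dense in $\PSL_2(\CC)$ (and in the antiholomorphic factor as well, by density of the complexification argument), the invariants coincide with the $\SL_2(\CC)$-invariants of $\operatorname{Sym}^{k_1}\otimes\overline{\operatorname{Sym}^{k_2}}$, i.e. the $(0,0)$-isotypic component; by the classification of irreducibles this is nonzero only when $k_1=k_2$, in which case it is one-dimensional (spanned by the invariant bilinear form). Hence under our hypothesis $k_1\neq k_2$ we get $H^0(M^3;\varrho^{k_1,k_2})=0$, and by Poincar\'e--Lefschetz duality (together with the fact that $\operatorname{Sym}^{k_1,k_2}$ preserves a nondegenerate form, so the representation is self-dual up to the swap $k_1\leftrightarrow k_2$) this forces $H^2(M^3,\partial M^3;\varrho^{k_1,k_2})\cong H^0(M^3;\varrho^{k_1,k_2})^\star=0$.

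Next I would run the long exact sequence of the pair $(M^3,T^2)$ together with Raghunathan's vanishing theorem (Theorem~\ref{thm:rig}). For $k_1\neq k_2$, Theorem~\ref{thm:rig} gives $H^*(M^3;\varrho^{k_1,k_2})=0$ \emph{when one restricts to the closed case}; in the cusped case the statement that survives is the vanishing of the interior cohomology, so the image of $H^*(M^3,T^2)\to H^*(M^3)$ is zero, which identifies $H^i(M^3;\varrho^{k_1,k_2})$ with a piece of $H^i(T^2;\varrho^{k_1,k_2})$ via the connecting map. Now the cohomology of $T^2$ with these coefficients is governed, as in Example~\ref{ex:2torus}, by $H^0(T^2;\varrho^{k_1,k_2})$, the space of invariants under the peripheral $\ZZ^2$. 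A parabolic element acts on $V_{k_1+1}\otimes\overline{V_{k_2+1}}$ with a single Jordan block structure, so $\dim H^0(T^2;\varrho^{k_1,k_2})=\min(k_1,k_2)+1$ when $k_1+k_2$ is even — wait, more precisely I need to analyze the joint invariants of the two commuting parabolics. When $k_1+k_2$ is odd the relevant sign obstruction (the trace of the peripheral element being $-2$ on the nose for the chosen lift) kills all invariants, giving acyclicity of $T^2$ and hence of $M^3$; this is part (a). When $k_1+k_2$ is even and $k_1\neq k_2$, the computation yields $\dim H^0(T^2;\varrho^{k_1,k_2})=1$, hence $\dim H^1(T^2;\varrho^{k_1,k_2})=2$ and $\dim H^2=1$, and the half-lives-half-dies principle for the torus bounding $M^3$ (the image of $H^1(M^3)\to H^1(T^2)$ is a Lagrangian, i.e. half-dimensional) combined with the identification above pins down $\dim H^1(M^3;\varrho^{k_1,k_2})=1$ and $\dim H^2(M^3;\varrho^{k_1,k_2})=1$, with all other groups zero. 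Translating from cohomology to homology via Remark~\ref{rem:cohomology} gives the stated dimensions.

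The main obstacle I anticipate is the precise parabolic invariant count on $T^2$ and getting the parity bookkeeping right: one must check that for the lift $\varrho$ associated to the spin structure, the peripheral holonomy has the form $-\left(\begin{smallmatrix}1&1\\0&1\end{smallmatrix}\right)$ or $+\left(\begin{smallmatrix}1&1\\0&1\end{smallmatrix}\right)$ in the right cases (cf. the discussion after Definition~\ref{defn:tor2cusped} and \cite{MFP12,Calegari06}), compute how the resulting $\pm$ sign propagates through $\operatorname{Sym}^{k_1,k_2}$ to determine exactly when $1$ is an eigenvalue of the peripheral action, and then verify that the joint kernel of $(\varrho^{k_1,k_2}(m)-\mathrm{Id})$ and $(\varrho^{k_1,k_2}(l)-\mathrm{Id})$ is exactly one-dimensional when $k_1+k_2$ is even with $k_1\neq k_2$ and zero when $k_1+k_2$ is odd. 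This is the content of Theorem~\ref{prop:dimHM} in the appendix, so in the body I would simply cite it; but if writing it out, the weight-space analysis of $V_{k_1+1}\otimes\overline{V_{k_2+1}}$ under a nilpotent (highest weight $k_1$ tensor conjugate-highest weight $k_2$) is the routine-but-delicate calculation to be carried out carefully rather than waved at.
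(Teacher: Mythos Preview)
Your approach is essentially the same as the paper's: vanish $H^0(M^3;\varrho^{k_1,k_2})$ by irreducibility/Zariski density, use Raghunathan's $L^2$-vanishing (Corollary~\ref{cor:notL2}) to force $H^i(M^3,T^2)\to H^i(M^3)$ to be zero so that restriction to $T^2$ is injective, compute $H^0(T^2;\varrho^{k_1,k_2})$ as in Lemma~\ref{lemma:inv} via the parabolic form with cusp shape $\cs\in\CC\setminus\RR$ and the nontrivial sign $\epsilon$, and finish with Poincar\'e duality and the long exact sequence (your ``half-lives-half-dies'' is exactly the paper's duality step). The only cosmetic difference is that the paper states the peripheral computation cleanly as a separate lemma rather than hedging mid-proof; your ``$\min(k_1,k_2)+1$'' aside is the single-parabolic count and is indeed cut down to $1$ by the second generator because $\cs\notin\RR$, which you correctly anticipate.
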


 Notice that when both $k_1$ and $k_2$ are odd, the homology of $\varrho^{k_1,k_2}$ differs from the generic homology of  $ \operatorname{Sym}^{k_1,k_2}$ on $X_0(M^3)$, there is a discontinuity in the dimension 
 of the cohomology, which is generically trivial. 
 
 We also need to discuss the basis. Let $i\colon T^2\to M^3$ denote the 
inclusion.
 
 \begin{prop}[Corollary~\ref{cor:Ht2} and Proposition~\ref{prop:basis}]
  \label{prop:basis complete}
     Let $\varrho\colon\pi_1 M^3\to\SL_2(\CC)$ be a lift of the holonomy. Assume that $k_1+k_2$ is even.
     \begin{enumerate}[(a)]
      \item $\dim  H^0(T^2;\varrho)=1$.
      \item For $0\neq a\in H^0(T^2;\varrho)$, $\langle i_*(a\cap [T^2]) \rangle = H_2(M^3;    \varrho^{k_1,k_2})$.
      \item For $0\neq a\in H^0(T^2;\varrho)$ and $0\neq\gamma\in H_1(T_2;\ZZ)$:
       \begin{enumerate}[(i)]
    \item $\langle i_*(a\cap \gamma) \rangle = H_1(M^3;    \varrho^{k_1,k_2})$ if $k_1=0$ or $k_2=0$; 
    in addition
    \begin{equation}
     \label{eqn:cuspsh}
     i_*(a\cap \gamma_1)=\cs(\gamma_1,\gamma_2) \, i_*(a\cap \gamma_2)
    \end{equation}
    where $\cs(\gamma_1,\gamma_2) $ is the cusp shape (Definition~\ref{def:cs}).
    \item $ i_*(a\cap \gamma ) = 0$ if $k_1\neq 0$ and $k_2\neq 0$.
   \end{enumerate}
     \end{enumerate}
 \end{prop}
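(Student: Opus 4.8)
The plan is to work with the representation $\varrho$ restricted to the peripheral torus $T^2$ and to use the explicit description of a cusp. After conjugation I may assume that the image $\varrho(\pi_1 T^2)$ is contained in the group of upper triangular unipotent matrices $\begin{pmatrix} 1 & \ast \\ 0 & 1\end{pmatrix}$, since a peripheral element of the complete structure is parabolic. Then $\operatorname{Sym}^{k_1,k_2}(\varrho(\pi_1 T^2))$ acts on $\CC^{k_1+1}\otimes\overline{\CC^{k_2+1}}$ by unipotent matrices. For part~(a), $H^0(T^2;\varrho)=(\CC^2)^{\varrho(\pi_1 T^2)}$ is the common fixed subspace of two unipotent non-identity matrices sharing the same eigenline, hence is exactly that eigenline: one-dimensional. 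This also identifies the nonzero class $a$ with (the $\operatorname{Sym}^{k_1,k_2}$-image of) the invariant vector $X^{k_1}\otimes \overline{X^{k_2}}$, the highest-weight vector.

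For parts (b) and (c) I will compute the cap products directly from the chain-level formula \eqref{eqn:cap}, using the standard CW-structure on $T^2$ with one $0$-cell, two $1$-cells $\gamma_1,\gamma_2$ and one $2$-cell. Picking $a$ as above, the class $a\cap[T^2]\in H_2(T^2;\varrho^{k_1,k_2})$ is represented by $a\otimes\widetilde{[T^2]}$, and $a\cap\gamma_j$ by $a\otimes\widetilde\gamma_j$. The point is then to understand the images under $i_*\colon H_*(T^2;\varrho^{k_1,k_2})\to H_*(M^3;\varrho^{k_1,k_2})$, where by Theorem~\ref{prop:dimHM} (quoted as Theorem~\ref{prop:dimatvarrho}) the target is one-dimensional in degrees $1$ and $2$ when $k_1+k_2$ is even. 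For (b) I would invoke a "half lives, half dies" / duality argument: since $M^3$ deformation retracts to a $2$-complex with a single top cell and $H_2(M^3;\varrho^{k_1,k_2})$ is one-dimensional, the map $i_*$ in degree $2$ is an isomorphism onto $H_2(M^3)$ provided $a\cap[T^2]\neq 0$ in $H_2(T^2;\varrho^{k_1,k_2})$; and the latter nonvanishing is precisely the statement that the coinvariants $(\CC^{(k_1+1)(k_2+1)})_{\varrho^{k_1,k_2}(\pi_1 T^2)}$ are nonzero, which holds since a unipotent action always has nonzero coinvariants. So the span of $i_*(a\cap[T^2])$ is all of $H_2(M^3;\varrho^{k_1,k_2})$.

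For (c) the subtlety is the dichotomy between $k_1 k_2 = 0$ and $k_1 k_2\neq 0$, and this is where the main work lies. When $k_2=0$ (the holomorphic case; $k_1=0$ is the conjugate), the relevant module is $\operatorname{Sym}^{k_1}$ of a single unipotent $2\times 2$ block, which is a single Jordan block of size $k_1+1$; a direct computation of $H_1(T^2;\varrho^{k_1})$ and of the connecting maps — essentially reproducing the Neumann--Zagier analysis — shows $i_*(a\cap\gamma_j)$ spans the one-dimensional $H_1(M^3;\varrho^{k_1})$, and comparing the two generators $\gamma_1,\gamma_2$ produces exactly the cusp shape factor, yielding \eqref{eqn:cuspsh}; here I would lean on the description of the lattice $m\mapsto 1,\ l\mapsto\cs$ recalled before Definition~\ref{def:cs}, and on the identification of the tangent/cohomology classes with derivatives of the developing map. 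When both $k_1\neq 0$ and $k_2\neq 0$ the module $\operatorname{Sym}^{k_1}\otimes\overline{\operatorname{Sym}^{k_2}}$ restricted to $\pi_1T^2$ is a tensor product of a holomorphic and an antiholomorphic unipotent block; the cocycle $a\otimes\widetilde\gamma_j$ then turns out to be a coboundary (equivalently, the corresponding homology class $a\cap\gamma$ already dies in $H_1(T^2;\varrho^{k_1,k_2})$ once pushed to $M^3$, because the relevant invariant functional pairs trivially), giving $i_*(a\cap\gamma)=0$. I expect the hardest step to be this last case-split: verifying the vanishing for $k_1k_2\neq 0$ and, conversely, tracking the cusp-shape constant cleanly for $k_1k_2=0$ requires a careful bookkeeping of the $\pi_1 T^2$-action on the symmetric powers and of the connecting homomorphisms in the long exact sequence of the pair $(M^3,T^2)$, i.e.\ exactly the cohomological input deferred to Appendix~\ref{sec:cohomology}. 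I would therefore organize the proof so that (a) and (b) are immediate from unipotency, and (c) reduces to the two explicit $\pi_1T^2$-module computations, citing Corollary~\ref{cor:Ht2} and Proposition~\ref{prop:basis} for the pieces already established there.
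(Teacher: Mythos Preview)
Your overall architecture matches the paper's: compute on $T^2$ using the standard one-vertex CW structure, identify the invariant vector as $X^{k_1}\otimes\overline{X}^{k_2}$, and then push forward via $i_*$ using the dimension counts from Theorem~\ref{prop:dimatvarrho}. Parts (a) and (b) are essentially as in the paper, with one caveat: for (a) you assert that $\varrho(\pi_1 T^2)$ consists of unipotent matrices, but in fact the lift carries a nontrivial sign $(-1)^{\epsilon}$ on the peripheral group (this is why the paper insists $\epsilon$ is a \emph{surjection} in the proof of Lemma~\ref{lemma:inv}). The sign disappears in $\operatorname{Sym}^{k_1,k_2}$ precisely because $k_1+k_2$ is even, so your conclusion survives, but the reasoning as written (``two unipotent non-identity matrices'') is not correct at the level of $\varrho$ itself.

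The real gap is in part (c), where you have not located the mechanism that produces the dichotomy. The paper's argument is a concrete boundary computation in the twisted chain complex of $T^2$: with $\partial\tilde e^2=(1-l)\tilde e^1_1+(m-1)\tilde e^1_2$ one finds
\[
\partial\bigl(X^{k_1-1}Y\,\overline X^{k_2}\otimes\tilde e^2\bigr)=-\eta\,a\otimes\tilde e^1_1+a\otimes\tilde e^1_2,
\qquad
\partial\bigl(X^{k_1}\,\overline X^{k_2-1}\overline Y\otimes\tilde e^2\bigr)=-\bar\eta\,a\otimes\tilde e^1_1+a\otimes\tilde e^1_2,
\]
where $\eta=\cs(l,m)$. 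When $k_2=0$ only the first relation exists, giving $a\cap[l]=\eta\,a\cap[m]$ in $H_1(T^2;\varrho^{k_1})$, which is exactly \eqref{eqn:cuspsh}. When $k_1,k_2>0$ \emph{both} relations hold, and since $\eta\neq\bar\eta$ (the cusp shape is not real) they force $a\cap[m]=a\cap[l]=0$ already in $H_1(T^2;\varrho^{k_1,k_2})$, before any appeal to $i_*$. Your sketch (``the relevant invariant functional pairs trivially'', ``Neumann--Zagier analysis'', ``dies once pushed to $M^3$'') does not point to this computation and suggests you are looking for a duality or long-exact-sequence reason rather than the direct chain-level identity; in particular the vanishing in (c)(ii) is a statement about $H_1(T^2)$, not about $\ker i_*$. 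Finally, note that the nonvanishing of $i_*(a\cap\gamma)$ in $H_1(M^3;\varrho^{k})$ for (c)(i) is not proved in this paper at all---it is quoted from \cite{MFP12b}---so your plan to extract it from connecting homomorphisms would require genuinely new work beyond what is sketched.
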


 \subsubsection{Torsion at the characters of the holonomy}
 
 \begin{prop}
 \label{prop:propslift}
 Let $\chi_\varrho$ be the character of a lift $\varrho$ of the holonomy representation
  \begin{enumerate}[(a)]
   \item\label{item:oddnonzero} When  $k_1+ k_2$ is odd,  $\TT_M^{k_1,k_2}(\chi_\varrho)\neq 0$. 
   
   In particular, when $k$ is odd,  $\TT_M^{k}(\chi_\varrho)\neq 0$.
    \item\label{item:k1k2odd} When $k_1\neq k_2$ are both odd, $\TT_{M}^{k_1,k_2}(\chi_\varrho) = 0$.
   \item \label{item:k1k2even} When  $k_1\neq k_2$ are both even  and $k_1,k_2\neq 0$, for $0\neq\gamma\in H_1(T_2;\ZZ)$, we have   $\TT_{M,\gamma}^{k_1,k_2}(\chi_\varrho) =0$.
   \item \label{item:ratiocurves} For $k$ even and $0\neq\gamma\in H_1(T_2;\ZZ)$,  $\TT_{M,\gamma}^{k}(\chi_\varrho) \neq 0$. In addition
   $$
   \frac{\TT_{M,\gamma_2}^{k}(\chi_\varrho)}{\TT_{M,\gamma_1}^{k}(\chi_\varrho)}= \cs(\gamma_2,\gamma_1),
   $$
   where $0\neq\gamma_1,\gamma_2\in H_1(T_2;\ZZ)$.
  \end{enumerate}
 \end{prop}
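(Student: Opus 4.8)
The plan is to read off each value directly from the cohomology of $\varrho^{k_1,k_2}$, which is described in Theorem~\ref{prop:dimatvarrho}, once one has checked that $\chi_\varrho$ lies in the domain of the relevant torsion function. In all four cases $k_1\neq k_2$. The character $\chi_\varrho$ is irreducible, since the holonomy of a finite-volume hyperbolic three-manifold is non-elementary; hence by Lemma~\ref{lem:exceptionalclosed} it is not $(k_1,k_2)$-exceptional, i.e.\ $\chi_\varrho\notin\EE^{k_1,k_2}$. Moreover $\dim H^0(T^2;\varrho^{k_1,k_2})=1$ by the cohomology results of Appendix~\ref{sec:cohomology} (see Proposition~\ref{prop:basis complete}), so also $\chi_\varrho\notin\FF^{k_1,k_2}$. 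Thus in each case the function $\TT_M^{k_1,k_2}$ or $\TT_{M,\gamma}^{k_1,k_2}$ is defined at $\chi_\varrho$.

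For part (a), $k_1+k_2$ odd forces $\varrho^{k_1,k_2}$ to be acyclic by Theorem~\ref{prop:dimatvarrho}(a), so $\TT_M^{k_1,k_2}(\chi_\varrho)=\tor(M^3,\varrho^{k_1,k_2})$ is a nonzero element of $\CC^*/\{\pm1\}$; the claim for $k$ odd is the case $k_2=0$. For part (b), $k_1+k_2$ is even, so Theorem~\ref{prop:dimatvarrho}(b) gives $\dim H_1(M^3;\varrho^{k_1,k_2})=1\neq0$, hence $\varrho^{k_1,k_2}$ is not acyclic, and the second clause in the definition of $\TT_M^{k_1,k_2}$ (which applies here, $k_1$ being odd) yields $\TT_M^{k_1,k_2}(\chi_\varrho)=0$. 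For part (c), fix $0\neq a\in H^0(T^2;\varrho^{k_1,k_2})$; since $k_1\neq0$ and $k_2\neq0$, Proposition~\ref{prop:basis complete}(c)(ii) gives $i_*(a\cap\gamma)=0$, so $\langle i_*(a\cap\gamma)\rangle=0$ is a proper subspace of the one-dimensional space $H_1(M^3;\varrho^{k_1,k_2})$; thus the basis condition in the definition of $\TT_{M,\gamma}^{k_1,k_2}$ fails and $\TT_{M,\gamma}^{k_1,k_2}(\chi_\varrho)=0$.

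For part (d), take $k_2=0$ and fix $0\neq a\in H^0(T^2;\varrho^{k,0})$. By Proposition~\ref{prop:basis complete}(b) the class $i_*(a\cap[T^2])$ spans $H_2(M^3;\varrho^{k,0})$, and by Proposition~\ref{prop:basis complete}(c)(i) the class $i_*(a\cap\gamma)$ spans $H_1(M^3;\varrho^{k,0})$; both groups are one-dimensional and $H_0=H_3=0$ (Theorem~\ref{prop:dimatvarrho}(b)), so these classes are bases, the first branch of the definition applies, and $\TT_{M,\gamma}^{k}(\chi_\varrho)=\tor(M^3,\varrho^{k,0},\{a\cap\gamma,\,a\cap[T^2]\})\in\CC^*/\{\pm1\}$ is nonzero. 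For the ratio I would observe that $\TT_{M,\gamma_1}^{k}(\chi_\varrho)$ and $\TT_{M,\gamma_2}^{k}(\chi_\varrho)$ are torsions of the same chain complex with the same chain bases and the same basis of $H_2$, differing only in the $H_1$-basis $\{i_*(a\cap\gamma_1)\}$ versus $\{i_*(a\cap\gamma_2)\}$. Applying the change-of-basis formula \eqref{eqn:change}, in which the chain-basis ratios are trivial and only the degree-one homology factor survives, together with the cusp-shape relation \eqref{eqn:cuspsh}, written here as $i_*(a\cap\gamma_2)=\cs(\gamma_2,\gamma_1)\,i_*(a\cap\gamma_1)$, then gives
$$
\frac{\TT_{M,\gamma_2}^{k}(\chi_\varrho)}{\TT_{M,\gamma_1}^{k}(\chi_\varrho)}=\cs(\gamma_2,\gamma_1).
$$

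The substantive work is external to this argument: it is the cohomology computation at the holonomy, namely Raghunathan's vanishing together with the analysis of the restriction to the peripheral torus (Theorem~\ref{prop:dimatvarrho} and Proposition~\ref{prop:basis complete}), the latter also supplying the identification of the cap-product classes with the cusp shape. Within the proof itself the only points needing care are confirming that $\chi_\varrho$ avoids both degeneracy loci $\EE^{k_1,k_2}$ and $\FF^{k_1,k_2}$, so that the functions are genuinely defined there, and keeping the sign and ordering conventions of \eqref{eqn:change} straight in part (d) so that the ratio comes out as $\cs(\gamma_2,\gamma_1)$ and not its reciprocal.
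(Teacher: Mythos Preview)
Your proof is correct and follows essentially the same approach as the paper: each item is read off from the cohomology computation at the holonomy (Theorem~\ref{prop:dimatvarrho} and Proposition~\ref{prop:basis complete}), together with the definition of the torsion function, and part~(d) is completed via the change-of-basis formula~\eqref{eqn:change} and the cusp-shape relation~\eqref{eqn:cuspsh}. You are slightly more explicit than the paper in verifying that $\chi_\varrho\notin\EE^{k_1,k_2}$ and $\chi_\varrho\notin\FF^{k_1,k_2}$ before evaluating, and in spelling out the degree-one homology factor in~\eqref{eqn:change}, but the logic is the same.
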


 \begin{proof}
For item~\eqref{item:oddnonzero}, Theorem~\ref{prop:dimatvarrho} guarantees that the representation is acyclic and therefore $\TT_M^{k_1,k_2}(\chi_\varrho)\neq 0$.
For \eqref{item:k1k2odd}, the same theorem tells that the cohomology of $\varrho^{k_1,k_2}$ is nonzero in dimension 1 and 2. Since it is generically zero, what we have is 
a discontinuity in the dimension of the cohomology groups. On the other hand, the zeroth homology group of $\varrho^{k_1,k_2}$ vanishes, thus 
$\TT_{M}^{k_1,k_2}(\chi_\varrho) = 0$. The proof of \eqref{item:k1k2even} is analogous, using the vanishing of the cup product in dimension one (not two)
of the cap product in Proposition~\ref{prop:basis complete}. The non-vanishing of the cup product of  Proposition~\ref{prop:basis complete} for $k_1$ even and $k_2=0$ 
yields the first part of item \eqref{item:ratiocurves}, the second part follows from Equation~\eqref{eqn:cuspsh}.
\end{proof}

 \begin{thm}[\cite{MFP14}]
  \label{thm:MFP} For any peripheral curve $\gamma$,
  \begin{equation}
  \label{eqn:MFP}
    \lim_{k\to\infty} \frac{ \log\left\vert   \TT_{M,\gamma}^{2k}(\chi_\varrho) \right\vert }{(2 k)^2} = 
    \lim_{k\to\infty} \frac{ \log\left\vert   \TT_{M}^{2k+1}(\chi_\varrho) \right\vert }{(2 k+1)^2} = 
    \frac 1{4\pi}\operatorname{vol} (M^3)\, .
  \end{equation}
 \end{thm}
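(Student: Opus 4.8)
The plan is to reduce to the closed case settled by M\"uller and then pass to the complete structure through Dehn surgery. Fix once and for all a peripheral curve $\gamma$; this is harmless, because by Proposition~\ref{prop:propslift} the ratio $\TT^{2k}_{M,\gamma_2}(\chi_\varrho)/\TT^{2k}_{M,\gamma_1}(\chi_\varrho)$ is the cusp shape $\cs(\gamma_2,\gamma_1)$, a fixed nonzero constant, which disappears after dividing its logarithm by $(2k)^2$. Choose a sequence of Dehn fillings $M^3_n:=M^3_{p_n/q_n}$ with $p_n/q_n\to\infty$ and such that $\sigma$ extends to each $M^3_n$ (by Lemma~\ref{lemma:extendspin} this amounts to a congruence $a\,p_n+b\,q_n\equiv 0 \bmod 2$, satisfied by infinitely many $p_n/q_n\to\infty$). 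For $n$ large $M^3_n$ is closed hyperbolic, $\vol(M^3_n)\to\vol(M^3)$ by Neumann--Zagier \cite{NZ85}, its core geodesic has complex length $\lambda_n\to 0$ (Remark~\ref{rem:length}), and the restriction of its holonomy to $M^3$ has a character $\chi_n\in X_0(M^3)$ with $\chi_n\to\chi_\varrho$. Since $k\geq 1$, Raghunathan's Theorem~\ref{thm:rig} applies to the \emph{closed} manifold $M^3_n$: the representation $\operatorname{Sym}^{k}\circ\varrho$ is acyclic there, so $\tau^{k}(M^3_n,\sigma)$ is a genuine acyclic torsion and M\"uller's Theorem~\ref{thm:Muller} gives, for each fixed $n$,
$$
\lim_{k\to\infty}\frac{\log|\tau^{k}(M^3_n,\sigma)|}{k^2}=\frac{1}{4\pi}\vol(M^3_n).
$$

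The first step is a surgery formula for $\operatorname{Sym}^k$, the higher-rank analogue of Proposition~\ref{prop:SF}, obtained by applying Mayer--Vietoris (Proposition~\ref{prop:MV}) to $M^3_n=M^3\cup(D^2\times S^1)$ along the peripheral torus. Organising the homology bases coherently --- acyclic complexes for $k$ odd, and for $k$ even the basis $\{a\cap\gamma,\ a\cap[T^2]\}$ of Proposition~\ref{prop:basis complete} on the $M^3$ side together with the induced bases on $T^2$, on the filling solid torus (which retracts to its core geodesic), and on the Mayer--Vietoris complex --- yields $\tau^{k}(M^3_n,\sigma)=\TT^{k}_{M,\gamma}(\chi_n)\cdot E_{k,n}$, where $E_{k,n}$ is the explicit product of the torsions of the solid torus, of $T^2$ and of the Mayer--Vietoris complex. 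The dominant piece is the core contribution, which by Example~\ref{ex:circle} is $\prod_{j=0}^{k}\bigl(e^{(k-2j)\lambda_n/2}-1\bigr)^{-1}$, with the vanishing middle factor ($j=k/2$ for $k$ even) removed and absorbed into the homology bases; estimating $\sum_{j}\log\bigl|e^{(k-2j)\lambda_n/2}-1\bigr|$ shows that $k^{-2}\log|E_{k,n}|$ converges, as $k\to\infty$, to a fixed multiple of $\operatorname{Re}(\lambda_n)$, while the $T^2$- and Mayer--Vietoris-contributions are at most $O(k)$ and hence negligible after dividing by $k^2$. Combined with the displayed limit above this gives, for each fixed $n$,
$$
\lim_{k\to\infty}\frac{\log|\TT^{k}_{M,\gamma}(\chi_n)|}{k^2}=\frac{1}{4\pi}\vol(M^3_n)+c\,\operatorname{Re}(\lambda_n),
$$
whose right-hand side tends to $\tfrac{1}{4\pi}\vol(M^3)$ as $n\to\infty$ since $\vol(M^3_n)\to\vol(M^3)$ and $\operatorname{Re}(\lambda_n)\to 0$.

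What remains --- and this is the crux --- is to exchange the two limits, in the representation dimension $k$ and in the filling index $n$, so as to replace the characters $\chi_n\to\chi_\varrho$ above by $\chi_\varrho$ itself. I would do this via equicontinuity: by the proof of Proposition~\ref{prop:rational} and its higher-rank versions, $\TT^{k}_{M,\gamma}$ is a regular function on the affine curve $X_0(M^3)$, in fact a polynomial in a trace coordinate $\theta$ whose degree grows like a fixed multiple of $k^2$, so that on a fixed compact neighbourhood of $\chi_\varrho$ one has a bound $\bigl|\log|\TT^k_{M,\gamma}(\theta)|-\log|\TT^k_{M,\gamma}(\theta')|\bigr|\le C\,k^2\,|\theta-\theta'|$ as soon as the zeros of $\TT^k_{M,\gamma}$ are known to stay a definite distance from $\chi_\varrho$, uniformly in $k$. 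Granting this uniform root separation, $\{k^{-2}\log|\TT^k_{M,\gamma}|\}_k$ is equicontinuous near $\chi_\varrho$, and a routine double-limit argument together with the two displayed limits and $\tfrac{1}{4\pi}\vol(M^3_n)+c\,\operatorname{Re}(\lambda_n)\to\tfrac{1}{4\pi}\vol(M^3)$ yields the assertion for $\TT^{2k}_{M,\gamma}(\chi_\varrho)$; the odd case $\TT^{2k+1}_{M}(\chi_\varrho)$ is identical, the two being separated only because of the acyclicity-versus-basis dichotomy at $\chi_\varrho$ recorded in Proposition~\ref{prop:propslift} and Theorem~\ref{prop:dimatvarrho}.

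The step I expect to be the real obstacle is precisely the uniform control of the zeros of $\TT^{k}_{M,\gamma}$ near $\chi_\varrho$ as $k\to\infty$ (equivalently, a uniform-in-representation version of M\"uller's asymptotic estimate in a neighbourhood of the complete structure): one must rule out non-acyclicity loci of $\operatorname{Sym}^k\circ\rho$ accumulating at $\chi_\varrho$, using that $\chi_\varrho$ is not exceptional (Proposition~\ref{prop:propslift}, Theorem~\ref{prop:dimatvarrho}) and that nearby characters are holonomies of incomplete hyperbolic structures on which $\operatorname{Sym}^k\circ\rho$ is generically acyclic. An alternative that bypasses proving equicontinuity is to let the filling depend on $k$: choose $p_n/q_n=p(k)/q(k)$ so that $\chi_{p(k)/q(k),\sigma}$ approaches $\chi_\varrho$ very fast while $\operatorname{Re}(\lambda_{p(k)/q(k)})$ stays above, say, $e^{-\sqrt k}$ (feasible since by Remark~\ref{rem:length} these quantities decay only polynomially in $p(k)/q(k)$), and then estimate $\log|\TT^k_{M,\gamma}(\chi_{p(k)/q(k),\sigma})|-\log|\TT^k_{M,\gamma}(\chi_\varrho)|$ directly; this still rests on the same quantitative grip on the zeros of the torsion functions, which is where the bulk of the work lies.
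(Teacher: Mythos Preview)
Your overall strategy matches the paper's: reduce to closed manifolds via Dehn filling and invoke M\"uller's Theorem~\ref{thm:Muller}. The surgery computation you outline is essentially Proposition~\ref{prop:SFh}. The genuine gap is exactly the step you yourself flag: the exchange of the limits in $k$ and in $n$.

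The equicontinuity argument you propose requires that the zeros of $\TT^k_{M,\gamma}$ stay a uniform distance from $\chi_\varrho$ as $k\to\infty$. You give no mechanism for this, and it is not a technicality: a sequence of zeros $\theta_k\to\chi_\varrho$ would make $k^{-2}\log|\TT^k_{M,\gamma}|$ blow up along suitable diagonals and wreck the argument. Your fallback of coupling $n=n(k)$ does not help, since bounding $\log|\TT^k_{M,\gamma}(\chi_{n(k)})|-\log|\TT^k_{M,\gamma}(\chi_\varrho)|$ again needs the same zero-free region. In effect you have reduced the theorem to a statement of comparable depth to the theorem itself.

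The paper's sketch indicates a different route for obtaining this uniformity. Because M\"uller's proof of Theorem~\ref{thm:Muller} goes through Ruelle zeta functions (via Wotzke's extension of Fried's Theorem~\ref{thm:Fried}), the torsions $\tau^k(M^3_n,\sigma)$ are controlled by the closed geodesics of $M^3_n$, and the paper says the key point is to \emph{understand geodesics of bounded length} in the $M^3_n$ as they approximate $M^3$. For large $n$ these consist of the short core of the filling torus --- whose contribution is precisely the surgery factor $E_{k,n}$ you isolate --- together with geodesics that converge to those of $M^3$. It is this geometric control, rather than an algebraic estimate on the torsion polynomials on $X_0(M^3)$, that supplies the uniformity in $k$ needed to interchange limits, and it is where the substance of \cite{MFP14} lies.
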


 Again, the sign convention is the opposite to  \cite{MFP14}.
This theorem is proved from M\"uller's Theorem~\ref{thm:Muller} in the closed case, by using the approximation of the cusped manifolds by closed manifolds obtained by Dehn filling.
Since M\"uller's proof uses Ruelle zeta functions, the key point is to understand geodesics (of bounded lengths) of the closed manifolds that approximate a cusped one.

\subsubsection{Dehn filling} 
 
The following is a generalization to other representations of the formula for Dehn filling in Proposition~\ref{prop:SF}.
In particular the same context and notation is used.

 \begin{prop}
\label{prop:SFh}
Let $M^3$ be as above. For $\vert p\vert +\vert q\vert$ sufficiently large:
\begin{enumerate}[(a)]
 \item For $k_1$ or  $k_2$   odd,
 $$
 \tau^{k_1,k_2}(M_{p/q}^3,\sigma)=  
 {\mathbb{T}_M^{k_1,k_2}(\chi_{p/q,\sigma} )}{ 
\prod_{i_1=0}^{k_1}\prod_{i_2=0}^{k_2}  
 \frac 1{ 1-(\lambda_{p/q})^{2 i_1-k_1} (\overline {\lambda}_{p/q})^{2 i_2-k_2} 
} } \, .
 $$
 \item For $k_1$ and $k_2$ even, $k_1\neq k_2$,
  $$
 \tau^{k_1,k_2}(M_{p/q}^3)=  
 {\mathbb{T}_{M,pm+ql}^{k_1,k_2}(\chi_{p,q} )}{ 
	\prod_{(i_1,i_2)\in \mathcal I}
 \frac 1{ 1-(\lambda_{p/q})^{2 i_1-k_1} (\overline {\lambda}_{p/q})^{2 i_2-k_2} 
} } \, ,
 $$
where $(i_1,i_2)\in \mathcal I$ if $  0\leq i_1 \leq {k_1}$, $0\leq i_2\leq {k_2}$, and $i_1\neq \frac{k_1}{2}$ or $i_2\neq \frac{k_2}{2}$.
\end{enumerate}
\end{prop}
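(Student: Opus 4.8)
The plan is to deduce Proposition~\ref{prop:SFh} from Mayer-Vietoris (Proposition~\ref{prop:MV}) applied to the decomposition $M^3_{p/q} = M^3 \cup_{T^2} (D^2\times S^1)$, exactly as in the proof of the $\SL_2(\CC)$ surgery formula (Proposition~\ref{prop:SF}), but now with coefficients in $\operatorname{Sym}^{k_1,k_2}$ composed with the holonomy. First I would record the torsion of the filling solid torus $V=D^2\times S^1$: since $V$ retracts to its core geodesic, which is a circle, Example~\ref{ex:circle} gives $\tor(V,\rho^{k_1,k_2}) = 1/\det(\operatorname{Id} - \operatorname{Sym}^{k_1,k_2}(A))$ where $A$ is the $\SL_2(\CC)$-holonomy of the core, conjugate to $\operatorname{diag}(e^{\lambda_{p/q}/2}, e^{-\lambda_{p/q}/2})$. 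The eigenvalues of $\operatorname{Sym}^{k_1}(A)$ are $(e^{\lambda_{p/q}/2})^{2i_1 - k_1}$ for $0\le i_1\le k_1$, and those of $\overline{\operatorname{Sym}^{k_2}}(A)$ are $(e^{\overline{\lambda}_{p/q}/2})^{2i_2-k_2}$, so
$$
\tor(V,\rho^{k_1,k_2}) = \prod_{i_1=0}^{k_1}\prod_{i_2=0}^{k_2} \frac{1}{1 - (\lambda_{p/q})^{2i_1-k_1}(\overline\lambda_{p/q})^{2i_2-k_2}},
$$
with the convention $\lambda^a := e^{a\lambda/2}$ (as the statement writes it). Note $\lambda_{p/q}$ is well-defined up to $4\pi i\ZZ$ from the spin structure (Remark~\ref{rem:2pi}), which is exactly the ambiguity that makes $(\lambda_{p/q})^{2i_1-k_1}$ well-defined when $k_1$ is odd; when $k_1$ and $k_2$ are both even the representation factors through $\PSL_2$ and $\lambda_{p/q}$ mod $2\pi i$ suffices.

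Next I would handle the torus and the Mayer-Vietoris term. For case~(a), where $k_1$ or $k_2$ is odd, $\operatorname{Sym}^{k_1,k_2}$ restricted to the peripheral $\ZZ^2$ has no nonzero invariants for the complete structure (parabolic holonomy with a nontrivial sign twist), so $H_*(T^2;\rho^{k_1,k_2})=0$ and $\tor(T^2,\rho^{k_1,k_2})=1$ by Example~\ref{ex:2torus}; also $H_*(V;\rho^{k_1,k_2})=0$ and $H_*(M^3;\rho^{k_1,k_2})=0$ by Theorem~\ref{prop:dimatvarrho}(a), so the Mayer-Vietoris complex $\mathcal H$ is trivial and $\tor(\mathcal H)=1$. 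Proposition~\ref{prop:MV} then gives
$$
\tau^{k_1,k_2}(M^3_{p/q},\sigma) = \tor(M^3,\rho^{k_1,k_2})\cdot \tor(V,\rho^{k_1,k_2}) = \mathbb{T}_M^{k_1,k_2}(\chi_{p/q,\sigma})\cdot \tor(V,\rho^{k_1,k_2}),
$$
which is the claimed formula once the core torsion is substituted; here I must also check that for $(p,q)$ with $|p|+|q|$ large the restricted holonomy character $\chi_{p/q,\sigma}$ avoids the exceptional set $\EE^{k_1,k_2}$ (finite by Lemma~\ref{lem:exceptionalclosed}) and lies where $\rho^{k_1,k_2}$ is acyclic, so that $\mathbb{T}_M^{k_1,k_2}$ genuinely computes $\tor(M^3,\rho^{k_1,k_2})$ — this follows since $\chi_{p/q,\sigma}\to\chi_\varrho$ and $\varrho^{k_1,k_2}$ is acyclic. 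For case~(b), $k_1,k_2$ even and nonzero, neither $M^3$ nor $T^2$ is acyclic: by Theorem~\ref{prop:dimatvarrho}(b) and Proposition~\ref{prop:basis complete}, $\dim H^0(T^2;\rho^{k_1,k_2})=1$ and the nontrivial homology of $M^3$ is one-dimensional in degrees $1,2$, spanned by $i_*(a\cap\gamma)$ and $i_*(a\cap[T^2])$ for a peripheral curve $\gamma$. The filled manifold $M^3_{p/q}$ is closed hyperbolic, and $\operatorname{Sym}^{k_1,k_2}$ of its holonomy is acyclic (again Theorem~\ref{prop:dimatvarrho}(b), now for the closed manifold, using $k_1\ne k_2$), so the left side is a genuine torsion. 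The Mayer-Vietoris sequence now has nontrivial terms coming from $H_*(M^3)$ and $H_*(T^2)$ and $H_*(V)$, and one must compute $\tor(\mathcal H, h_{**})$ with bases chosen compatibly: use the basis $\{a\cap\gamma, a\cap[T^2]\}$ on $H_*(M^3)$ (with $\gamma = pm+ql$, the meridian of the filling, as in the statement's subscript $\mathbb{T}_{M,pm+ql}$), the cap-product bases on $H_*(T^2)$, and the natural bases on $H_*(V) = H_*(S^1;\rho^{k_1,k_2})$ which involve the invariant subspace $H^0$, whose torsion contributes precisely the $(i_1,i_2)$ terms with $i_1=k_1/2$ and $i_2=k_2/2$ — this is why the index set $\mathcal I$ in~(b) excludes that one pair. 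Tracking the connecting maps $H_1(T^2)\to H_1(M^3)$ etc. and the relation $H_1(V) = H_1(T^2)/\langle\gamma\rangle$, the Mayer-Vietoris torsion evaluates to $1$ with these choices (the boundary maps are "identities" on the relevant one-dimensional pieces by the very definition of $a\cap\gamma$), leaving
$$
\tau^{k_1,k_2}(M^3_{p/q}) = \mathbb{T}_{M,pm+ql}^{k_1,k_2}(\chi_{p,q})\cdot \frac{\tor(V,\rho^{k_1,k_2})\text{ with the }H^0\text{ factor removed}}{1},
$$
i.e. the product over $(i_1,i_2)\in\mathcal I$.

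The main obstacle will be case~(b): carefully computing $\tor(\mathcal H,h_{**})$, i.e. verifying that with the cap-product bases and the curve $\gamma = pm+ql$ the Mayer-Vietoris long exact sequence has torsion $1$, and checking that the ``missing'' factor $1/(1-(\lambda_{p/q})^{0}(\overline\lambda_{p/q})^{0}) = 1/(1-1)$ — which is a pole, i.e. exactly where $\operatorname{Sym}^{k_1,k_2}(A)$ has the eigenvalue $1$ forced by $k_1,k_2$ even — is precisely absorbed into the homology of $V$ and $T^2$ rather than appearing spuriously. Concretely, the core geodesic holonomy has $1$ as an eigenvalue of $\operatorname{Sym}^{k_1,k_2}$ (the $(k_1/2,k_2/2)$ weight), so $H_*(V;\rho^{k_1,k_2})\ne 0$ and $H_*(T^2;\rho^{k_1,k_2})\ne 0$, and one must run the torsion of the circle/torus with a chosen basis of the one-dimensional invariant space and show the bookkeeping matches the cap products pulled back from $M^3$. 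This is essentially Proposition~4.10 and its companions in \cite{Memoirs}, adapted to $\operatorname{Sym}^{k_1,k_2}$; I would organize it by first doing the acyclic case~(a) in full and then treating~(b) by isolating the one-dimensional weight-$(k_1/2,k_2/2)$ subspace where all the non-acyclicity lives and reducing to the $\SL_2$ computation on that line, which is literally Proposition~\ref{prop:SF} up to the scalar $\cs$-type factor handled by Proposition~\ref{prop:basis complete}.
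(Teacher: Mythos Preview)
Your approach is exactly the paper's: Mayer-Vietoris on $M^3_{p/q}=M^3\cup_{T^2}V$, with the torsion of the core circle furnishing the product over eigenvalues of $\operatorname{Sym}^{k_1,k_2}(A)$. The paper does not spell out a proof of Proposition~\ref{prop:SFh} beyond pointing back to Proposition~\ref{prop:SF}, so your level of detail already exceeds the text.

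There is, however, a genuine gap in your justification of case~(a). You argue acyclicity of $T^2$, $V$ and $M^3$ by invoking the \emph{complete} structure (parabolic peripheral holonomy, Theorem~\ref{prop:dimatvarrho}(a)). But the representation in play is $\rho_{p/q}$, not $\varrho$, and more importantly case~(a) includes the sub-case where \emph{both} $k_1,k_2$ are odd, so $k_1+k_2$ is even. In that sub-case Lemma~\ref{lemma:inv} gives $H^0(T^2;\varrho^{k_1,k_2})\cong\CC$, not $0$, and Theorem~\ref{prop:dimatvarrho}(b) says $\varrho^{k_1,k_2}$ is \emph{not} acyclic on $M^3$; your sentence ``this follows since $\chi_{p/q,\sigma}\to\chi_\varrho$ and $\varrho^{k_1,k_2}$ is acyclic'' is therefore false in that sub-case. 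The fix is to argue directly at $\chi_{p/q}$: the peripheral holonomy is now loxodromic, the core geodesic has holonomy $A=\operatorname{diag}(e^{\lambda/2},e^{-\lambda/2})$ with $\operatorname{Re}\lambda\neq 0$, and the eigenvalue $e^{(2i_1-k_1)\lambda/2+(2i_2-k_2)\overline\lambda/2}$ equals $1$ only if $2i_1-k_1+2i_2-k_2=0$ (from the real part) and, for generic rotation angle, $2i_1-k_1=2i_2-k_2$, forcing $i_1=k_1/2$, $i_2=k_2/2$, impossible when $k_1$ or $k_2$ is odd. Hence $H_*(T^2;\rho_{p/q}^{k_1,k_2})=H_*(V;\rho_{p/q}^{k_1,k_2})=0$, and then acyclicity of $M^3$ at $\chi_{p/q}$ follows from Mayer-Vietoris and Raghunathan on the closed $M^3_{p/q}$ (rather than from semi-continuity near $\chi_\varrho$, which fails in the both-odd sub-case). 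With this correction your outline for (a) goes through, and your plan for (b) is the right one.
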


\begin{cor}
 $$
 \lim_{p^2+q^2} \vert \tau^{2k}(M_{p/q}^3,\sigma)\vert =+\infty \, .
 $$
\end{cor}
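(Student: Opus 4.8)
The plan is to apply the surgery formula of Proposition~\ref{prop:SFh}(b) with $k_1=2k$ and $k_2=0$, so that $\tau^{2k}(M_{p/q}^3,\sigma)=\mathbb{T}_{M,pm+ql}^{2k}(\chi_{p,q})\prod_{i\in\mathcal I}\bigl(1-\lambda_{p/q}^{2i-2k}\bigr)^{-1}$, where $\mathcal I=\{0,1,\dots,2k\}\smallsetminus\{k\}$. As $p^2+q^2\to\infty$ the restricted holonomy $\chi_{p/q,\sigma}$ converges to the character $\chi_\varrho$ of the complete structure in $X_0(M^3)$, and by Remark~\ref{rem:length} the core length $\lambda_{p/q}$ tends to $0$. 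First I would observe that the prefactor $\mathbb{T}_{M,pm+ql}^{2k}(\chi_{p,q})$ converges to $\TT_{M,\gamma}^{2k}(\chi_\varrho)$ for a suitable peripheral curve $\gamma$ — this requires a little care, since the curve $\gamma=pm+ql$ itself varies, but by the change-of-curve formula in Proposition~\ref{prop:propslift}\eqref{item:ratiocurves} the values $\TT_{M,\gamma}^{2k}(\chi_\varrho)$ for different $\gamma$ differ only by the nonzero constant $\cs(\gamma_2,\gamma_1)$, so the prefactor stays bounded away from $0$ and $\infty$ along the sequence (one should also check it does not hit $\FF^{2k,0}$, which is finite, for $|p|+|q|$ large).

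The divergence therefore comes entirely from the product $\prod_{i\in\mathcal I}\bigl(1-\lambda_{p/q}^{2i-2k}\bigr)^{-1}$. The second step is to isolate the singular factors: writing $\lambda=\lambda_{p/q}\to 0$, the term with $i$ gives $1-\lambda^{2i-2k}$, which for $i<k$ has $2i-2k<0$, so $|1-\lambda^{2i-2k}|\to\infty$; for $i>k$ it has $2i-2k>0$, so $|1-\lambda^{2i-2k}|\to 1$. Hence the denominators with $i<k$ blow up, and taking the reciprocal, the corresponding factors of the product tend to $0$ in absolute value. Wait — that would send $\tau^{2k}$ to $0$, not $\infty$. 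So one must instead use $|\lambda|\to 0$ more carefully: $|1-\lambda^{-m}|=|\lambda|^{-m}|1-\lambda^{m}|\to\infty$, so $|1-\lambda^{-m}|^{-1}\to 0$. The product over $i<k$ thus tends to $0$. To get divergence one should group each factor $i<k$ with its mirror: there is no mirror in $\mathcal I$ for $k_2=0$. The correct reading is that $|\lambda|^{-1}$ is the large quantity, so $|1-\lambda^{2i-2k}|=|1-\lambda^{-2(k-i)}|=|\lambda|^{-2(k-i)}|1-\lambda^{2(k-i)}|$, and the reciprocal is $|\lambda|^{2(k-i)}|1-\lambda^{2(k-i)}|^{-1}\to 0$. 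So in fact $|\tau^{2k}(M_{p/q}^3,\sigma)|\to 0$, contradicting the corollary as stated — unless the intended convention is $|\lambda_{p/q}|\to\infty$, i.e.\ one must choose the lift so that the relevant eigenvalue is $\lambda_{p/q}=e^{\lambda_{p/q}/2}$ with the geodesic length going to $0$ meaning $\lambda_{p/q}\to 1$, not $0$; then $1-\lambda_{p/q}^{2i-2k}\to 0$ for \emph{every} $i\neq k$, and each reciprocal blows up.

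So the actual mechanism: along the sequence, $\lambda_{p/q}^2\to 1$ (the eigenvalue, as the translation length tends to $0$), so for each $i\in\mathcal I$, $1-\lambda_{p/q}^{2(i-k)}\to 0$, hence $\prod_{i\in\mathcal I}|1-\lambda_{p/q}^{2(i-k)}|^{-1}\to\infty$ — and the rate is at least $|1-\lambda_{p/q}^2|^{-2}$ since both $i=k-1$ and $i=k+1$ contribute. Combined with the prefactor bounded away from $0$, this gives $|\tau^{2k}(M_{p/q}^3,\sigma)|\to\infty$. The main obstacle, and the step I would spend the most effort on, is precisely pinning down the normalization of $\lambda_{p/q}$ and confirming that the prefactor $\mathbb{T}_{M,pm+ql}^{2k}(\chi_{p,q})$ neither vanishes nor escapes to infinity along the sequence — i.e.\ that $\chi_{p,q}$ avoids $\FF^{2k,0}$ (finite, so fine for large $|p|+|q|$) and that the dependence on the varying curve $pm+ql$ is absorbed cleanly by the change-of-curve constant. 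Everything else is the elementary limit $\prod_{i\neq k}|1-\lambda^{2(i-k)}|^{-1}\to\infty$ as $\lambda^2\to 1$.
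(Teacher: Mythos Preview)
Your proposal has the two pieces of the surgery formula playing reversed roles. You claim that the prefactor $\mathbb{T}_{M,pm+ql}^{2k}(\chi_{p,q})$ stays bounded away from $0$ and $\infty$ because the change-of-curve factor in Proposition~\ref{prop:propslift}\eqref{item:ratiocurves} is a ``nonzero constant'', and that the divergence comes from the product $\prod_{i\neq k}|1-\lambda_{p/q}^{2(i-k)}|^{-1}$ because $\lambda_{p/q}^2\to 1$. Both assertions are wrong. The change-of-curve factor is $\cs(pm+ql,m)=p+q\,\cs(l,m)$, and since $\cs(l,m)\notin\RR$ this quantity satisfies $|p+q\,\cs(l,m)|\to\infty$ as $p^2+q^2\to\infty$. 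So the prefactor itself diverges: $|\mathbb{T}_{M,pm+ql}^{2k}(\chi_{p,q})|\sim |p+q\,\cs(l,m)|\cdot |\TT_{M,m}^{2k}(\chi_\varrho)|\to\infty$, using that $\chi_{p,q}\to\chi_\varrho$ and $\TT_{M,m}^{2k}(\chi_\varrho)\neq 0$.

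Conversely, it is \emph{not} true that $\lambda_{p/q}\to 1$. Here $\lambda_{p/q}$ is the eigenvalue $e^{(\text{complex length})/2}$ of the core geodesic; by Remark~\ref{rem:length} the complex length is $2\pi i\,r/p+v/p$ with $ps-qr=1$, so only the real part tends to $0$. Thus $|\lambda_{p/q}|\to 1$, but the argument $\pi r/p$ wanders over the circle and $\lambda_{p/q}^{2(i-k)}$ need not approach $1$. All you can say about the product is that each $|1-\lambda_{p/q}^{2(i-k)}|\leq 2$ once $|\lambda_{p/q}|$ is close to $1$, hence the product of reciprocals is bounded \emph{below} by $2^{-2k}$; this lower bound, combined with the diverging prefactor, is exactly what yields the corollary. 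That is the paper's argument.
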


\begin{proof}
 Use first Proposition~\ref{prop:propslift}~\eqref{item:ratiocurves} . Since 
the character $\chi_{p,q}$ converges to $\chi_\varrho$, 
$\TT_{M,m}^{2k}(\chi_\varrho)\neq 0$, and 
 $\vert\cs(pm+ql ,m )\vert=\vert p+q\,\cs(l,m)\vert\to\infty$, one has
 $$
 \vert {\mathbb{T}_{M,pm+ql}^{2 k}(\chi_{p,q} )}\vert \to\infty \, .
 $$
 As $\vert\lambda_{p/q}\vert\to 1$, the surgery formula in 
Proposition~\ref{prop:SFh} yields the result.
\end{proof}

The same proof as Corollary~\ref{cor:dense} yields:

\begin{cor}
\label{cor:denseh} Given a spin structure, 
The set of modules of the torsions $\vert \tau^{2k_1+1,k_2}(M_{p/q}^3,\sigma)\vert $ obtained by Dehn filling on $M^3$,
so that $\sigma$ can be extended, is dense in $$
\left[ \frac14{\left\vert \tau^{2k_1+1,k_2} (M^3,\sigma)\right\vert},+\infty\right).
$$
\end{cor}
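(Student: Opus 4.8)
\emph{Proof plan.} The plan is to imitate the proof of Corollary~\ref{cor:dense}, using the higher-dimensional surgery formula Proposition~\ref{prop:SFh}(a) in place of Proposition~\ref{prop:SF}; this applies since $2k_1+1$ is odd. For $|p|+|q|$ large and $\sigma$ extending to $M^3_{p/q}$ it gives
$$
\tau^{2k_1+1,k_2}(M^3_{p/q},\sigma)=\TT_M^{2k_1+1,k_2}(\chi_{p/q,\sigma})\;\prod_{i_1=0}^{2k_1+1}\prod_{i_2=0}^{k_2}\frac{1}{\,1-\lambda_{p/q}^{\,2i_1-(2k_1+1)}\,\overline{\lambda}_{p/q}^{\,2i_2-k_2}\,}\,,
$$
where $\lambda_{p/q}=e^{\ell_{p/q}/2}$ is the eigenvalue of the holonomy of the core geodesic. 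First I would note that $\chi_{p/q,\sigma}\to\chi_\varrho$ as $|p|+|q|\to\infty$, so the first factor converges to $\TT_M^{2k_1+1,k_2}(\chi_\varrho)=\tau^{2k_1+1,k_2}(M^3,\sigma)$, which is nonzero by Proposition~\ref{prop:propslift}\eqref{item:oddnonzero} when $k_2$ is even; the case $k_2$ odd, where this vanishes by Proposition~\ref{prop:propslift}\eqref{item:k1k2odd}, is treated at the end. It therefore remains to show that the moduli of the product over $(i_1,i_2)$, as $(p,q)$ ranges over the admissible surgery coefficients, are dense in $[\tfrac14,\infty)$.

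To analyse the product I would use Remark~\ref{rem:length}: $\ell_{p/q}=2\pi i\,\tfrac{r}{p}+\tfrac{v}{p}$ with $ps-qr=1$ and $v\to0$, so $\lambda_{p/q}\to e^{\pi i r/p}$; and, exactly as in Corollary~\ref{cor:dense}, extendibility of $\sigma$ is a congruence $ap+bq\equiv0\pmod 2$ which confines $(p,q)$ to a union of residue classes mod $2$ but still lets $r/p$ run over a dense subset of $[0,1]$ as $p\to\infty$ (since $r\equiv-q^{-1}\pmod p$ and $q$ may be let run over all residues prime to $p$ in the prescribed class). Sending $v/p\to0$ and $r/p\to\beta$ along a suitable sequence, the product tends to
$$
g(\beta)=\prod_{i_1=0}^{2k_1+1}\prod_{i_2=0}^{k_2}\Bigl(1-e^{\,\pi i\beta\,\bigl((2i_1-(2k_1+1))-(2i_2-k_2)\bigr)}\Bigr)^{-1},
$$
a continuous function of $\beta\in[0,1]$ away from the finitely many $\beta$ at which a factor vanishes; in particular $|g(\beta)|\to\infty$ as $\beta\to0$, where $\lambda_{p/q}\to1$. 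When $k_2$ is even every exponent $(2i_1-(2k_1+1))-(2i_2-k_2)$ is odd, so the eigenvalues occur in inverse pairs with none equal to $1$, and at $\beta=1$ (where $\lambda_{p/q}\to-1$) every factor equals $\tfrac12$, whence $|g(1)|=4^{-(k_1+1)(k_2+1)}\le\tfrac14$. On the component of $[0,1]\smallsetminus\{\text{poles of }g\}$ containing $\beta=1$, $|g|$ is continuous and blows up at both endpoints, so its image there is $[m,\infty)$ with $m\le|g(1)|\le\tfrac14$, hence contains $[\tfrac14,\infty)$. Since $v/p\to0$ uniformly and $\TT_M^{2k_1+1,k_2}$ is continuous and nonzero near $\chi_\varrho$, the actual moduli $|\tau^{2k_1+1,k_2}(M^3_{p/q},\sigma)|$ approximate $|\tau^{2k_1+1,k_2}(M^3,\sigma)|\,|g(\beta)|$ arbitrarily well along suitable sequences, which yields the asserted density in $[\tfrac14\,|\tau^{2k_1+1,k_2}(M^3,\sigma)|,\infty)$. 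When $k_2$ is odd one has $\tau^{2k_1+1,k_2}(M^3,\sigma)=0$, so the target interval is $[0,\infty)$: the large values still come from the poles of $g$, while arbitrarily small positive values are produced by driving $\chi_{p/q,\sigma}$ toward a zero of $\TT_M^{2k_1+1,k_2}$ on $X_0(M^3)$ while keeping $\beta$ away from those poles, and the rest of the argument is unchanged.

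The step I expect to be the main obstacle is the last one: in contrast with the single factor in the $\SL_2$ surgery formula, one must control the \emph{joint} behaviour of the $(2k_1+2)(k_2+1)$ eigenvalues of $\operatorname{Sym}^{2k_1+1,k_2}$ of the core holonomy. The feature that rescues the argument, which I would emphasise, is that all of them are governed by the single real parameter $\beta=\lim r/p$, so the limit set of moduli is the one-parameter family $|\tau^{2k_1+1,k_2}(M^3,\sigma)|\,|g(\beta)|$; the remaining work is the elementary—if slightly fiddly—verification that $g$ both blows up (as $\beta\to0$) and dips to $\tfrac14$ or below (at $\beta=1$), after which the intermediate value theorem finishes the job, with room to spare since in fact $\min|g|=4^{-(k_1+1)(k_2+1)}$.
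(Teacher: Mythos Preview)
Your approach is exactly the paper's—indeed the paper's proof is literally the one sentence ``The same proof as Corollary~\ref{cor:dense} yields''—and for $k_2$ even your elaboration is correct and more detailed than anything the paper provides: the surgery factor $g(\beta)$ is well-defined, continuous off its poles, blows up near them, and hits $4^{-(k_1+1)(k_2+1)}\le\tfrac14$ at $\beta=1$, so the intermediate value argument goes through.

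The gap is in your $k_2$ odd paragraph. When both indices $2k_1+1$ and $k_2$ are odd, the exponents $m=(2i_1-(2k_1+1))-(2i_2-k_2)$ are all even and $m=0$ does occur (both ranges of odd integers contain $\pm1$), so your limit function $g(\beta)$ contains factors $1/(1-e^{0})$ and is identically infinite—it is not a function of $\beta$ at all. The corresponding true factor is $1/(1-\lambda_{p/q}^{a}\overline{\lambda}_{p/q}^{a})=1/(1-|\lambda_{p/q}|^{2a})\sim -1/(aL)$, which diverges as the core length $L\to0$, while simultaneously $\TT_M^{2k_1+1,k_2}(\chi_{p/q,\sigma})\to0$; this is a genuine $0\cdot\infty$ indeterminacy that your sketch does not resolve. (Also, you cannot ``drive $\chi_{p/q,\sigma}$ toward a zero'': it is forced to converge to $\chi_\varrho$, which happens to be a zero, but you have no independent control.) In fact the paper itself, immediately after the corollary, explicitly excludes this situation: ``When $k_1+k_2$ is even and $k_1k_2\neq0$ \ldots\ we cannot get conclusions from the surgery formula.'' So the intended scope of Corollary~\ref{cor:denseh} is $k_2$ even, and there your proof is fine.
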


When $k_1+k_2$ is even and $k_1 k_2\neq 0$, then   $\TT^{k_1,k_2}_M(\chi_\varrho)=0$ (for $k_i$ odd) and $\TT^{k_1,k_2}_{M,\gamma}(\chi_\varrho)=0$ 
(for $k_i$ even). In this case,  we cannot get conclusions from the surgery 
formula.

\subsection{Quantum invariants}

 When $(k_1,k_2)=(2,0)$, $\operatorname{Sym}^2$ is  the adjoint representation and its torsion 
occurs in the volume conjecture.

The role of the torsion in the  expansion 
of the path integral is already mentioned in
the work of Witten \cite{Witten89} and Bar-Natan Witten \cite{BNW91}.  Of 
course the work of  Kashaev  \cite{Kashaev97}
and Murakami-Murakami \cite{MM01} play a key role in the conjecture.

For a knot $\mathcal K$, let $J_N(\mathcal K;z)$ denote the colored Jones 
polynomial of $\mathcal K$. If the knot is hyperbolic, 
let $u$ denote Thurston's parameter of the Dehn filling space (Definition~\ref{def:cs}).
Denote the corresponding character by $\chi_u$ and 
let $\operatorname{CS}(\mathcal K,\chi_u)$ denote the $\CC$-valued 
Chern-Simons invariant of a representation with character $\chi_u$, 
namely the real part is  minus the Chern-Simons invariant and 
the imaginary part is the volume of the representation. 
The following version is 
taken from \cite{Murakami13} by H.~Murakami, who attributes it to \cite{GM08,DG11}.

\begin{conj} 
Let $K$ be a hyperbolic knot. There exists a neighborhood $U\subset\CC$ of the 
origin such that for $u\in U- \pi i \QQ$, we have the following asymptotic 
equivalence:
$$
J_n(\mathcal K;e^{\frac{2\pi i+u}{N}}) \, \underset{N\to\infty}{\sim}\, 
\frac{\sqrt{-\pi}}{2\sinh(\frac{u}{2})} \left( {\TT}^{2}_{\mathcal 
K,m}(\chi_{u}) \right)^{-\frac{1}{2}}
\, \left( \frac{N}{2\pi i+u}\right)^{\frac 12} \, e^{\frac{N}{2\pi i+u} 
\operatorname{CS}(\mathcal K,\chi_u)  }
$$
where $m$ denotes the meridian of the knot.
\end{conj}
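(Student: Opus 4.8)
The displayed statement is a conjecture, so what follows is a proposed strategy rather than a proof. The plan is to start from a finite-dimensional state-integral model for $J_n(\mathcal K; e^{(2\pi i + u)/n})$, built from Faddeev's quantum dilogarithm on an ideal triangulation of $M^3 = S^3 - \mathcal K$ (or, equivalently for this purpose, the Andersen--Kashaev TQFT invariant), which expresses the colored Jones polynomial as an integral $\int_\Gamma \exp\bigl(\tfrac{n}{2\pi i + u}\Phi(z) + \cdots\bigr)\,dz$ whose phase $\Phi$ is the Neumann--Zagier potential deformed by the meridian parameter $u$. First I would carry out the steepest-descent analysis: identify the dominant critical point $z_u$ of $\Phi$ with the shape parameters of the (incomplete) hyperbolic structure on $M^3$ whose meridian holonomy has eigenvalue $e^{u/2}$ -- i.e.\ with a representation whose character $\chi_u$ lies on the distinguished component $X_0(M^3)$ (this uses the Neumann--Zagier lemma and the parametrization of $X_0(M^3)$ in Paragraph~\ref{sec:Dehnfilling}) -- and check that $\Phi(z_u) = \operatorname{CS}(\mathcal K, \chi_u)$. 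This produces the leading exponential $e^{\frac{n}{2\pi i + u}\operatorname{CS}(\mathcal K,\chi_u)}$ together with the power $(n/(2\pi i + u))^{1/2}$ coming from the Gaussian integration in the single normal direction to the critical manifold of $\Phi$.

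The torsion enters at the next order. The stationary-phase expansion contributes a one-loop factor proportional to $(\det\operatorname{Hess}\Phi(z_u))^{-1/2}$, together with an elementary $n$-independent prefactor coming from the quantum dilogarithms; the goal is to recognize this Hessian determinant as the adjoint Reidemeister torsion $\TT^2_{\mathcal K, m}(\chi_u)$ defined on $X_0(M^3)$ in Section~\ref{sec:genericfunctions}, normalized by the meridian as in Proposition~\ref{prop:propslift}\eqref{item:ratiocurves}. This is precisely the content of the \emph{one-loop conjecture} of Dimofte and Garoufalidis \cite{DG13}: the one-loop invariant of an ideal triangulation, assembled from Neumann--Zagier data, is invariant under $2$--$3$ moves and equals $\pm\bigl(\TT^2_{\mathcal K, m}(\chi_u)\bigr)^{1/2}$. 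Granting it, the remaining elementary factor $\sqrt{-\pi}/(2\sinh(u/2))$ is pinned down using the change-of-curve formula for the adjoint torsion together with the identity $f(u) = \sinh v/\sinh u$ of the Neumann--Zagier lemma, which expresses the meridian-based torsion in terms of the derivative of $\theta_m = \operatorname{tr}\rho(m)$ along $X_0(M^3)$; the factor $2\sinh(u/2) = e^{u/2} - e^{-u/2}$ is exactly the Jacobian relating the meridian eigenvalue to Thurston's parameter near $z_u$.

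I expect two main obstacles. The first is making the steepest-descent analysis rigorous: one must deform the contour $\Gamma$ so that it passes through $z_u$ with the correct relative homology class, obtain uniform estimates killing the subdominant critical points and the tails, and justify the interchange of the $n\to\infty$ limit with integration -- delicate precisely because one wants this uniformly for $u$ in a punctured neighborhood of $0$ (the hypothesis $u \notin \pi i\QQ$ is what guarantees that $z_u$ is a non-degenerate isolated critical point of the restricted potential, i.e.\ that the corresponding generalized Dehn surgery does not degenerate). The second obstacle is the one-loop conjecture itself, which is open in general: it is known at $u = 0$ and for families such as the figure-eight and twist knots, and a proof in general amounts to comparing the combinatorial Neumann--Zagier complex with the simplicial complex computing $H_*(M^3; \varrho^2)$ -- a Mayer--Vietoris argument over the tetrahedra in the spirit of Proposition~\ref{prop:MV}. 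Until both points are settled, the conjecture is accessible only knot by knot, via Ohtsuki-style asymptotic expansions of the Kashaev invariant and their refinements.
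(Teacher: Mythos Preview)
The paper does not prove this statement either: it is presented as an open conjecture, attributed to \cite{GM08,DG11} via \cite{Murakami13}, and the paper only records the known evidence (Murakami's verification for the figure-eight knot with real $u$ in a restricted range, and the torus-knot computations of \cite{DK07,HM11}). So there is no proof in the paper to compare against.

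Your proposal is appropriate in that you correctly flag the statement as conjectural and offer a strategy rather than a proof. The strategy you outline --- a state-integral model, steepest descent to extract the Chern--Simons exponential, and identification of the Hessian with the adjoint torsion via the one-loop conjecture of \cite{DG13} --- is exactly the heuristic picture behind the conjecture and is consistent with the discussion the paper gives around \cite{DG13} and \cite{OT15}. You also identify the two genuine obstacles accurately: rigorous contour deformation with uniform tail estimates (which is what makes even the figure-eight case in \cite{Murakami13} nontrivial), and the one-loop conjecture itself, which remains open in general. One small correction: the one-loop conjecture is not ``known at $u=0$'' in general; \cite{DG13} verifies it numerically and \cite{OT15} establishes the related Hessian identity for two-bridge knots, but a general proof is not available. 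Otherwise your assessment of the state of affairs is sound.
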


Again this paper uses a different convention for torsion from  
\cite{Murakami13} and the other references, as it is the opposite to the 
convention for analytic torsion. 
It has been checked for the figure eight knot and $u$ real, $0<  u < 
\log((3+\sqrt{5})/2)$ by Murakami  in   \cite{Murakami13}.
For torus knots, the volume is zero, but not the Chern Simons invariant nor the torsion, and the asymptotic computations of
Dubois-Kashaev \cite{DK07} and Hikami-Murakami~\cite{HM11} support the corresponding conjecture for torus knots.

\medskip

Related to this conjecture, the torsion is also involved in a potential 
function, introduced by Yokota \cite{Yokota03}. From a diagram of the projection 
of a knot $\mathcal K$, in
\cite{OT15} Ohtsuki and Takata define
$\omega_ 2(\mathcal K)^{-1}$ as the modified Hessian of the potential function 
of the  diagram. They justify (formally) that $\sqrt{\omega_ 2(\mathcal K)}$ is 
the term that appears
in the asymptotic development of the Kashaev invariant and therefore they conjecture 
$$
\frac{1}{\omega_ 2(\mathcal K)}= \pm { 2\,i  \, {\TT}^{2}_{\mathcal K,m}(\chi_{\varrho})}.
$$
In \cite{OT15} they prove that the conjecture holds true for two bridge knots.

\medskip

There is also a remarkable contribution of Dimofte and Garoufalidis \cite{DG13}, 
that define an invariant from an ideal triangulation of a knot exterior and
an enhanced Neumann-Zagier datum. 
Enhanced Neumann-Zagier datum means that, besides the complex collection
shape parameters of the ideal hyperbolic tetrahedra,  they use matrices with integer coefficients
that describe how to glue the tetrahedra and a collection of
integers that code a combinatorial flattening (introduced in  \cite{Neumann90} by Neumann
to calculate the Chern-Simons invariant combinatorially).
Form these data, Dimofte and Garoufalidis construct an invariant of the hyperbolic manifold and they check numerically that
it equals ${1}/{{\TT}^{2}_{\mathcal K,m}(\chi_{\varrho})}$ up to some 
constant.

\section{Representations in $\PSL_{n+1}(\CC)$ and the adjoint}
\label{sec:adj}

In \cite{Memoirs} I considered the torsion corresponding to the adjoint representation as a function on the variety of characters 
in $\SL_2(\CC)$, in fact  $\PSL_2(\CC)$. 
Kitayama and Terashima  \cite{KitayamaTerashima15} considered the analog for $\PSL_{n+1}(\CC)$, i.e.~the torsion for the adjoint representation.

Along the section $M^3$ denotes an oriented finite volume hyperbolic 
three-manifold with one cusp, the case with more cusps would require further 
notation,
but here it is essentially the same. Consider the composition of the holonomy with 
$\operatorname{Sym}^{n}$:
$$\varrho^{n}\colon\pi_1M^3\to \PSL_{n+1}(\CC).
$$
As we deal with representations in $\PSL_2(\CC)$, the holonomy does not need to be lifted  to $\SL_2(\CC)$.

\begin{thm}[\cite{MFP12, MFP12b}]
\label{thm:distn+1}
 The character of $\varrho^{n}$ is a smooth point of the variety of characters  $X(M^3,\PSL_{n+1}(\CC))$, of local dimension $n$.
 It is locally parametrized by the symmetric functions on  any peripheral curve.
\end{thm}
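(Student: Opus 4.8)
The plan is to apply Weil's identification of the Zariski tangent space of the character variety with the first twisted cohomology group (Theorem~\ref{thm:Weil}), to compute that group by decomposing the adjoint module, and to recognise the symmetric functions of a peripheral curve as a chart by means of the restriction to the cusp. Write $\mathrm{hol}$ for the holonomy, so that $\varrho^{n}=\operatorname{Sym}^{n}\circ\mathrm{hol}$. By Clebsch--Gordan, $\CC^{n+1}\otimes(\CC^{n+1})^{*}\cong\bigoplus_{j=0}^{n}\operatorname{Sym}^{2j}$ as $\SL_2(\CC)$-modules; removing the scalars identifies the adjoint module of $\varrho^{n}$ with $\bigoplus_{j=1}^{n}\varrho^{2j,0}$ (each $\operatorname{Sym}^{2j}$ factors through $\PSL_2(\CC)$, so no lift of $\mathrm{hol}$ is needed). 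Each summand $\varrho^{2j,0}$ satisfies the hypotheses of Theorem~\ref{prop:dimatvarrho}(b) with $k_1=2j$ and $k_2=0$, so $\dim H_i(M^3;\varrho^{2j,0})=1$ for $i=1,2$ and $0$ otherwise; summing, $\dim H^1(M^3;\Ad\varrho^{n})=\dim H^2(M^3;\Ad\varrho^{n})=n$, while $H^0(M^3;\Ad\varrho^{n})=0$ (also by Schur's lemma, since $\varrho^{n}$ is irreducible). By Weil's theorem (Theorem~\ref{thm:Weil}), the vanishing of $H^0$ implies that the Zariski tangent space of $X(M^3,\PSL_{n+1}(\CC))$ at $\chi_{\varrho^{n}}$ embeds into $H^1(M^3;\Ad\varrho^{n})$, so the local dimension there is at most $n$.

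Now fix a peripheral curve $\gamma$. As the peripheral subgroup maps to parabolics with a common fixed point, $\varrho^{n}(\gamma)$ is a regular unipotent $U=I+N\in\PSL_{n+1}(\CC)$, and $U$ commutes with $N$. A cocycle $u\in Z^1(\pi_1M^3;\Ad\varrho^{n})$ deforms $\varrho^{n}(\gamma)$ to $U+t\,\xi U+O(t^{2})$ with $\xi=u(\gamma)\in\mathfrak{sl}_{n+1}$; expanding $\det\bigl(xI-U-t\,\xi U\bigr)$ to first order in $t$ one finds that the derivatives of the elementary symmetric functions $e_{1},\dots,e_{n}$ of $\varrho^{n}(\gamma)$ are the image of $\bigl(\tr(N\xi),\dots,\tr(N^{n}\xi)\bigr)$ under a fixed triangular, hence invertible, linear map (using $\tr(\xi)=0=\tr(N^{n+1}\xi)$). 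The functionals $\tr(N^{k}\cdot)$, $1\le k\le n$, span the annihilator of $\image(\Ad(U)-I)$ in $\mathfrak{sl}_{n+1}^{*}$ — they vanish there because $U$ commutes with $N$ — so they descend to an isomorphism $H^{1}(\langle\gamma\rangle;\Ad\varrho^{n})\xrightarrow{\sim}\CC^{n}$ of the $n$-dimensional space $\operatorname{coker}(\Ad(U)-I)$ onto $\CC^{n}$. Hence the differential of $(e_{1},\dots,e_{n})$ on the character variety at $\chi_{\varrho^{n}}$ is the composite of the tangent-space embedding into $H^{1}(M^{3};\Ad\varrho^{n})$, the restriction $H^{1}(M^{3};\Ad\varrho^{n})\to H^{1}(\langle\gamma\rangle;\Ad\varrho^{n})$, and this isomorphism; everything reduces to showing that the restriction map is an isomorphism.

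This I would do summand by summand. For each $j$ the map $H^{1}(M^{3};\varrho^{2j,0})\to H^{1}(\langle\gamma\rangle;\varrho^{2j,0})$ is a map between one-dimensional spaces, so it is an isomorphism iff it is nonzero; using the self-duality of $\varrho^{2j,0}$ afforded by the $\operatorname{Sym}^{2j}$-invariant form, its $\CC$-dual is $H_{1}(\langle\gamma\rangle;\varrho^{2j,0})\to H_{1}(M^{3};\varrho^{2j,0})$. Now $H_{1}(\langle\gamma\rangle;\varrho^{2j,0})$ is the invariant line $H^{0}(T^{2};\varrho^{2j,0})$, and its image in $H_{1}(M^{3};\varrho^{2j,0})$ is $\langle i_{*}(a\cap\gamma)\rangle$ for $0\neq a\in H^{0}(T^{2};\varrho^{2j,0})$, which is the whole of $H_{1}(M^{3};\varrho^{2j,0})$ by Proposition~\ref{prop:basis complete}(c)(i) (applicable since $k_2=0$). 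Thus the restriction is an isomorphism on each summand, hence on $H^{1}(M^{3};\Ad\varrho^{n})$; and by the cusp-shape relation \eqref{eqn:cuspsh} the resulting chart is the same, up to an invertible linear change, for every peripheral $\gamma$, which is the meaning of ``any peripheral curve''. Equivalently: the image of $H^{1}(M^{3})$ in $H^{1}(T^{2};\Ad\varrho^{n})$ is a Lagrangian of dimension $n$ for the cup-product pairing, transverse to the kernel of restriction to $\langle\gamma\rangle$.

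Finally, to turn the tangent-space count into smoothness I would argue that all obstructions die on the cusp. The long exact sequence of $(M^{3},T^{2})$, together with $H^{0}(M^{3};\Ad\varrho^{n})=0$ and Lefschetz duality, shows that the restriction $H^{2}(M^{3};\Ad\varrho^{n})\to H^{2}(T^{2};\Ad\varrho^{n})$ is an isomorphism (both of dimension $n$). Since $\varrho^{n}|_{T^{2}}$ sends $\pi_{1}T^{2}$ to commuting elements one of which is regular, $\hom(\pi_{1}T^{2},\PSL_{n+1}(\CC))\cong\{(A,B):AB=BA\}$ is smooth at $\varrho^{n}|_{T^{2}}$ (it fibres over the regular locus of $\PSL_{n+1}(\CC)$), so every class in $H^{1}(T^{2};\Ad\varrho^{n})$ is unobstructed; transporting obstructions along the isomorphism above, every obstruction in $H^{2}(M^{3};\Ad\varrho^{n})$ vanishes, so $\hom(\pi_{1}M^{3},\PSL_{n+1}(\CC))$ is smooth at $\varrho^{n}$. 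As $\varrho^{n}$ is irreducible, $\chi_{\varrho^{n}}$ is a stable point and $X(M^{3},\PSL_{n+1}(\CC))$ is smooth there of dimension $\dim H^{1}(M^{3};\Ad\varrho^{n})=n$, and the previous paragraph identifies $(e_{1},\dots,e_{n})$ as a local analytic chart. The main obstacle is precisely the interaction with the cusp: pinning down the image of $H^{1}(M^{3})$ in $H^{1}(T^{2};\Ad\varrho^{n})$ well enough to get both the transversality of paragraph three and the vanishing of the $H^{2}$-obstructions after restriction to $T^{2}$ — here Proposition~\ref{prop:basis complete}, the $\operatorname{Sym}^{2j}$-decomposition, and the regularity of the peripheral holonomy all have to be used in concert; the rest is linear algebra with the nilpotent $N$ and bookkeeping with the exact sequence of $(M^{3},T^{2})$.
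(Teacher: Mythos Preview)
Your proposal is correct and follows essentially the approach of the cited references \cite{MFP12,MFP12b} as sketched in this survey: the Clebsch--Gordan splitting $\Ad\circ\operatorname{Sym}^n=\bigoplus_{j=1}^n\operatorname{Sym}^{2j}$ (Equation~\eqref{eqn:adKG}) reduces everything to the cohomology of each $\varrho^{2j}$, which is exactly what Theorem~\ref{prop:dimatvarrho} and Proposition~\ref{prop:basis complete} provide, and the paper's post-theorem discussion (that $\mathfrak{sl}_{n+1}(\CC)^{\Ad\varrho^n(\pi_1T^2)}\cong\CC^n$ and that the $a_i\cap[\gamma]$, $a_i\cap[T^2]$ are bases) is precisely what you recover summand by summand. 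Your obstruction argument---pushing the $H^2$-obstructions to $T^2$ via the isomorphism $H^2(M^3;\Ad\varrho^n)\to H^2(T^2;\Ad\varrho^n)$ and killing them there by smoothness of the commuting variety at a regular element---is a clean way to upgrade the tangent-space count to actual smoothness, and your linear-algebra identification of the differential of $(e_1,\dots,e_n)$ with the functionals $\tr(N^k\,\cdot\,)$ on $\operatorname{coker}(\Ad(U)-I)$ is the right computation.
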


\begin{cor} There exists a unique irreducible component of the character variety  $X(M^3,\PSL_{n+1}(\CC))$ that contains the character of 
 $\varrho^{n}$.
\end{cor}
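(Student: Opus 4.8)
The plan is to derive the statement formally from Theorem~\ref{thm:distn+1}. That theorem asserts that $\chi_{\varrho^{n}}$ is a smooth point of the affine algebraic set $X(M^3,\PSL_{n+1}(\CC))$, of local dimension $n$, so the only input needed here is the general principle that a smooth point of an algebraic variety belongs to exactly one irreducible component.

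First I would recall why this principle holds. If a point $x$ of an algebraic set $X$ lies on two distinct irreducible components $X_1$ and $X_2$, then the local ring $\mathcal{O}_{X,x}$ has at least two distinct minimal primes, namely those cutting out $X_1$ and $X_2$ near $x$; hence $\mathcal{O}_{X,x}$ is not an integral domain. Since a regular local ring is a domain, such an $x$ cannot be a smooth point of $X$. Applying this with $X=X(M^3,\PSL_{n+1}(\CC))$ and $x=\chi_{\varrho^{n}}$, and invoking smoothness from Theorem~\ref{thm:distn+1}, the character $\chi_{\varrho^{n}}$ lies on a single irreducible component. That component is, by definition, the distinguished one, and its uniqueness is precisely the assertion to be proved.

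The only point requiring care is the meaning of ``smooth point'' for the GIT quotient $X(M^3,\PSL_{n+1}(\CC))= \hom(\pi_1 M^3,\PSL_{n+1}(\CC))/\! /\PSL_{n+1}(\CC)$, namely that near $\chi_{\varrho^{n}}$ this quotient is a reduced scheme whose local ring is regular of dimension $n$; but this is exactly what Theorem~\ref{thm:distn+1} provides, via Weil's identification of the Zariski tangent space at $\chi_{\varrho^{n}}$ with $H^1(M^3;\operatorname{Ad}\varrho^{n})$ together with the computation of that cohomology group and of the local dimension through the symmetric functions of a peripheral curve. Granting Theorem~\ref{thm:distn+1}, there is no further obstacle and the corollary is immediate.
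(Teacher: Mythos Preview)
Your proposal is correct and matches the paper's approach: the paper states the corollary without proof, treating it as immediate from Theorem~\ref{thm:distn+1}, and you have supplied exactly the standard algebraic-geometry justification (a smooth point lies on a unique irreducible component) that the paper leaves implicit.
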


This component is called the \emph{distinguished component} and it is denoted by
\[
 X_0(M^3,\PSL_{n+1}(\CC)).
\]

In the proof of Theorem~\ref{thm:distn+1}, one shows that $\mathfrak{sl}_{n+1}(\CC)^{\Ad \varrho^{n}(\pi_1 T^2)}\cong \CC^n$.
In addition,  if $\{a_1,\ldots, a_{n}\}$ is a basis for 
$\mathfrak{sl}_{n+1}(\CC)^{\Ad \varrho^{n}(\pi_1 T^2)}$, then 
 $$
 \{ a_1\cap [T^2],\ldots, a_{n}\cap [T^2]\}
 $$ is a basis for $H_2(M^3; \Ad\varrho^n)$
and 
$$
\{ a_1\cap [\gamma],\ldots, a_{n}\cap [\gamma]\}
$$ a basis for $H_1(M^3; \Ad\varrho^n)$, here $\gamma$ is a peripheral curve, non-trivial in $H_1(T^2;\ZZ)$.
See \cite{MFP12, MFP12b,KitayamaTerashima15} for details.
For a generic character $\chi_\rho$ in $ X_0(M^3,\PSL_{n+1}(\CC))$,
 $\mathfrak{sl}(n,\CC)^{\Ad  \rho(\pi_1 T^2)}\cong \CC^n$ and a similar 
construction
 applies to find a basis  for $H_i(M^3; Ad\rho)$. Generalizing  the construction of \cite{Memoirs}, Kitayama and Terashima 
 \cite{KitayamaTerashima15} define the torsion. Here we  consider the possibility that the function vanishes, but before  an exceptional set must be removed.
 
 \begin{definition}
 A representation $\rho\colon\pi_1 M^3\to\PSL_{n+1}(\CC)$ is \emph{exceptional} if:
 \begin{enumerate}[(a)]
  \item  $\mathfrak{sl}_{n+1}(\CC)^{\Ad\rho(\pi_1M^3)}\neq 0$ or
\item $\dim \left(\mathfrak{sl}_{n+1}(\CC)^{\Ad\rho(\pi_1T^2)}\right)> n$, for 
the peripheral torus $T^2$.
 \end{enumerate}
  The set of characters of exceptional representations is denoted by 
 $$\EE_{n+1}\subset X_0(M^3,\PSL_{n+1}(\CC) ).$$
 \end{definition}

\begin{lemma}
 The  set  $\EE_{n+1}$  is  Zariski closed in  $X_0(M^3,\PSL_{n+1}(\CC) )$.
\end{lemma}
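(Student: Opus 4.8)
The statement to prove is that the exceptional set $\EE_{n+1}$ is Zariski closed in $X_0(M^3,\PSL_{n+1}(\CC))$. The plan is to show that each of the two defining conditions (a) and (b) describes a Zariski closed locus, and then take the union. The essential tool is the upper semi-continuity of the dimension of cohomology (equivalently, of spaces of invariants) along an algebraic family of representations, exactly as invoked in the proof of Proposition~\ref{prop:rational} via \cite[Lemma~3.2]{Heusener-Porti2011} and \cite[Ch.~III, Theorem 12.8]{Hartshorne1977}.

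First I would work upstairs, on the representation variety. Let $R_0\subset\hom(\pi_1M^3,\PSL_{n+1}(\CC))$ denote the component (or union of components) whose image under the quotient map $t\colon R_0\to X_0(M^3,\PSL_{n+1}(\CC))$ is the distinguished component. For a fixed word $w$ in a generating set of $\pi_1M^3$, the map $\rho\mapsto \Ad\rho(w)\in\End{\mathfrak{sl}_{n+1}(\CC)}$ is a morphism of affine varieties, so $\rho\mapsto\rk(\Ad\rho(w)-\operatorname{Id})$ is lower semi-continuous, whence $\dim\mathfrak{sl}_{n+1}(\CC)^{\Ad\rho(\langle w\rangle)}$ is upper semi-continuous. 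Intersecting over a (finite) generating set gives that $\rho\mapsto\dim\mathfrak{sl}_{n+1}(\CC)^{\Ad\rho(\pi_1M^3)}=\dim H^0(M^3;\Ad\rho)$ is upper semi-continuous on $R_0$; hence the locus where it is $\geq 1$ is Zariski closed in $R_0$. Likewise, restricting to $\pi_1T^2$, the function $\rho\mapsto\dim\mathfrak{sl}_{n+1}(\CC)^{\Ad\rho(\pi_1T^2)}=\dim H^0(T^2;\Ad\rho)$ is upper semi-continuous, so the locus where it is $\geq n+1$ is Zariski closed in $R_0$. The preimage $t^{-1}(\EE_{n+1})$ is the union of these two closed subsets, hence Zariski closed in $R_0$.

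Next I would descend to the character variety. Since $X_0(M^3,\PSL_{n+1}(\CC))$ is the GIT quotient $R_0/\!/\PSL_{n+1}(\CC)$, the quotient map $t$ is not closed in general, so one cannot merely push forward a closed set. Two points must be handled: (i) $t^{-1}(\EE_{n+1})$ is $\PSL_{n+1}(\CC)$-invariant — this is clear, because both conditions are stated in terms of the conjugacy-invariant data $\Ad\rho$; (ii) one needs that a $\PSL_{n+1}(\CC)$-invariant Zariski closed subset of $R_0$ has Zariski closed image. For a reductive group acting on an affine variety, the image under the GIT quotient of a closed invariant subvariety is closed (it is again the GIT quotient of that subvariety, and the quotient map is a categorical — indeed submersive — affine quotient); this is standard invariant theory. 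Applying this to $t^{-1}(\EE_{n+1})$ gives that $\EE_{n+1}=t(t^{-1}(\EE_{n+1}))$ is Zariski closed in $X_0(M^3,\PSL_{n+1}(\CC))$.

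The main obstacle, and the only point requiring care, is step (ii): the descent from the representation variety to the character variety. The subtlety is that closed orbits and non-closed orbits coexist, so one must genuinely use that the defining conditions are expressed through $\Ad\rho$ (constant on orbits and on orbit closures, since $\dim H^0$ only jumps up under degeneration) rather than through $\rho$ itself. In fact, because $\dim\mathfrak{sl}_{n+1}(\CC)^{\Ad\rho(\pi_1M^3)}$ and $\dim\mathfrak{sl}_{n+1}(\CC)^{\Ad\rho(\pi_1T^2)}$ can only increase on passing to a point in the closure of an orbit, the set $t^{-1}(\EE_{n+1})$ is saturated with respect to orbit closures, which is exactly what is needed for its image to be closed. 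Once this is in place, the lemma follows; everything else is the routine semi-continuity already used in the paper for the $\SL_2(\CC)$ case.
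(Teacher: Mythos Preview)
Your proof is correct and follows essentially the same route as the paper: show that the two defining conditions cut out a Zariski closed, conjugation-invariant subset of the representation variety, then use the standard fact from geometric invariant theory (the paper simply cites \cite{VP89}) that for a reductive group acting on an affine variety the image of a closed invariant subset under the affine quotient is closed. One minor point: your final paragraph about saturation with respect to orbit closures is unnecessary, and the identification of the set $E$ of exceptional representations with $t^{-1}(\EE_{n+1})$ is not quite right (the latter may be strictly larger). But this does not matter: once $E$ is closed and $\PSL_{n+1}(\CC)$-invariant, $t(E)=\EE_{n+1}$ is closed without any saturation hypothesis, so you can drop that discussion entirely.
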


\begin{proof}
 The defining properties are Zariski closed in $\hom(\pi_1M^3,\PSL_{n+1}(\CC))$ and invariant by conjugation, so the projection 
 to   $X_0(M^3,\PSL_{n+1}(\CC)) $ is a Zariski closed subset, see \cite{VP89} for instance.
\end{proof}

\begin{definition}
 Let $\chi_\rho\in X_0(M^3,\PSL_{n+1}(\CC))-\EE_{n+1}$, choose $\{a_1,\ldots, 
a_{n}\}$ a basis for $\mathfrak{sl}_{n+1}(\CC)^{ \Ad \rho(\pi_1 T^2)}$
 and let
 $\gamma$ be a peripheral curve.
 \begin{enumerate}[(a)]
  \item If $ \{ a_1\cap [\gamma],\ldots, a_{n}\cap [\gamma]\}$ is a basis for  $H_1(M^3;\Ad\rho)$, define
   \begin{multline}
    \T_{(M,\gamma)}(\chi_\rho)= \operatorname{TOR}(M^3, \{ a_1\cap [\gamma],\ldots, a_{n}\cap [\gamma]\}, \\
    \{ a_1\cap [T^2],\ldots, a_{n}\cap [T^2]\});
   \end{multline}
  \item otherwise set  \begin{equation}
			    \T_{(M,\gamma)}(\chi_\rho)=0.
                          \end{equation}
 \end{enumerate}
\end{definition}

This yields a function
\begin{equation}
 \T_{M,\gamma}\colon  X_0(M^3,\PSL_{n+1}(\CC))-E_{n+1} \to \CC\, .
\end{equation}
It can be checked that it is a well defined regular function,
using the ideas of \cite{Memoirs,KitayamaTerashima15} and Proposition~\ref{prop:rational}. For instance, 
when $ \{ a_1\cap [\gamma],\ldots, a_{n}\cap [\gamma]\}$ is a basis for $H_1(M^3;\Ad\rho)$,
then $ \{ a_1\cap [T^2],\ldots, a_{n}\cap [T^2]\}$ is a basis for  $H_2(M^3;\Ad\rho)$
(because $H_2(M^3;\Ad\rho)$ is naturally isomorphic to $H_2(T ^2;\Ad\rho)\cong 
H^0(T ^2 ; \Ad\rho)$).
When there may be confusion, the index $n+1$ may be included in the notation:

\begin{equation}
 \T_{M,\gamma}^{n+1}\colon  X_0(M^3,\PSL_{n+1}(\CC))-E_{n+1} \rightarrow \CC.
\end{equation}

The following proposition relates  $\T_{M,\gamma}$ with the torsion $\TT^{2n}_{(M,\gamma)}$ when we consider 
symmetric powers of representations in $\PSL_2(\CC)$.

\begin{prop}
\label{prop:product}
 For a generic character  $\chi_\rho\in X_0(M^3,\PSL_2(\CC))$, including the holonomy $\chi_\varrho$,
 $$
 \T_{M,\gamma}^{n+1}(\chi_{\rho^n})=\prod_{i=1}^n \TT_{M,\gamma}^{2 i}(\chi_\rho).
 $$ 
 \end{prop}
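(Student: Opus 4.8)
The plan is to exploit the decomposition of $\mathfrak{sl}_{n+1}(\CC)$ as an $\SL_2(\CC)$-module under the principal embedding $\operatorname{Sym}^n\colon\SL_2(\CC)\to\SL_{n+1}(\CC)$. The classical fact (Kostant) is that, as a representation of $\SL_2(\CC)$ via $\Ad\circ\operatorname{Sym}^n$, one has an isomorphism of $\SL_2(\CC)$-modules
\begin{equation}
\label{eqn:kostantdecomp}
\mathfrak{sl}_{n+1}(\CC)\cong\bigoplus_{i=1}^{n}V_{2i+1}=\bigoplus_{i=1}^n\operatorname{Sym}^{2i},
\end{equation}
where $V_{2i+1}$ is the irreducible module of dimension $2i+1$ from Subsection~\ref{section:reps}. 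Composing with a representation $\rho\colon\pi_1M^3\to\PSL_2(\CC)$, this gives an isomorphism of $\pi_1M^3$-modules $\Ad\rho^n\cong\bigoplus_{i=1}^n\rho^{2i,0}$, hence at the chain level a direct-sum splitting of $C_*(M^3;\Ad\rho^n)$ into $\bigoplus_{i=1}^n C_*(M^3;\rho^{2i})$ which is compatible with the CW-basis coming from a triangulation. First I would record this splitting carefully, noting that it is genuinely a splitting over $\CC[\pi_1M^3]$ (not merely over $\CC$) because \eqref{eqn:kostantdecomp} is $\SL_2(\CC)$-equivariant, and that the standard CW-basis of $C_*(M^3;\Ad\rho^n)$ is, up to an $\SL_2(\CC)$-independent change of basis, the union of the CW-bases of the summands.

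Next I would invoke multiplicativity of Reidemeister torsion under direct sums of chain complexes: for $C_*=\bigoplus_j C_*^{(j)}$ with compatible bases (on chains and on homology) one has $\tor(C_*)=\prod_j\tor(C_*^{(j)})$ (up to sign, which is handled by the conventions of Remark~\ref{rem:sign} since all modules here are odd-dimensional). So it remains to match the homology bases. For $\chi_\rho$ generic (or the holonomy character $\chi_\varrho$), Theorem~\ref{prop:dimatvarrho} and Proposition~\ref{prop:basis complete} describe $H_*(M^3;\rho^{2i})$ — each summand has $\dim H_1=\dim H_2=1$ — and the chosen bases are the cap products $a_i\cap[\gamma]$, $a_i\cap[T^2]$, where $a_i$ spans $H^0(T^2;\rho^{2i})$. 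The point is that $\mathfrak{sl}_{n+1}(\CC)^{\Ad\rho^n(\pi_1T^2)}$, being $n$-dimensional, splits as $\bigoplus_{i=1}^n\bigl(V_{2i+1}\bigr)^{\rho(\pi_1T^2)}$ with each factor one-dimensional (a maximal-weight-zero line for a generic peripheral element), so a basis $\{a_1,\dots,a_n\}$ of the left-hand space can be chosen adapted to the splitting, i.e.\ $a_i\in V_{2i+1}$. Then $a_i\cap[\gamma]$ and $a_i\cap[T^2]$ are exactly the homology bases used to define $\TT_{M,\gamma}^{2i}$. Summing over $i$, $\operatorname{TOR}(M^3,\{a_i\cap[\gamma]\},\{a_i\cap[T^2]\})=\prod_{i=1}^n\tor(M^3,\rho^{2i},\{a_i\cap[\gamma],a_i\cap[T^2]\})=\prod_{i=1}^n\TT_{M,\gamma}^{2i}(\chi_\rho)$, which is the claim. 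The genericity needed is that $\chi_\rho$ avoids $\EE_{n+1}$ and each $\FF^{2i,0}$; since these are finitely many proper Zariski-closed sets, their complement is generic and contains $\chi_\varrho$ by Proposition~\ref{prop:propslift}\eqref{item:ratiocurves} and Proposition~\ref{prop:basis complete}.

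The main obstacle I anticipate is bookkeeping rather than conceptual: one must verify that the adapted basis $\{a_i\}$ of the peripheral invariants can really be chosen inside the weight summands (this uses that, on the peripheral torus, $\rho(\pi_1T^2)$ is generically a non-parabolic abelian subgroup, so each $V_{2i+1}$ contributes exactly its zero-weight line, and at the holonomy the peripheral image is parabolic but Proposition~\ref{prop:basis complete}(a) still gives $\dim H^0(T^2;\varrho)=1$ per block), and that under the identification \eqref{eqn:kostantdecomp} the cap-product bases genuinely correspond. A secondary point is the sign: since each $\operatorname{Sym}^{2i}$ is odd-dimensional the torsions live in $\CC^*/\{\pm1\}$, but the statement is an equality of well-defined complex numbers once Turaev's sign-refinement via the Poincaré-duality orientation of homology is used consistently across all blocks, and one should remark that the direct-sum formula is compatible with that refinement. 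Neither of these is deep, so the proof is essentially: $\SL_2(\CC)$-equivariant decomposition $+$ multiplicativity of torsion $+$ matching of the canonical homology bases block by block.
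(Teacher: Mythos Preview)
Your proposal is correct and follows essentially the same route as the paper: the paper's proof is just the two lines ``Klebsh--Gordan formula $\Ad\circ\operatorname{Sym}^n=\bigoplus_{i=1}^n\operatorname{Sym}^{2i}$ and multiplicativity of the torsion for sums of representations.'' You have simply unpacked the bookkeeping (splitting of the peripheral invariants, compatibility of the cap-product bases, genericity) that the paper leaves implicit.
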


 \begin{cor}
   \label{cor:difrentdims}
   For a generic character  $\chi_\rho\in X_0(M^3,\PSL_2(\CC))$, including the holonomy $\chi_\varrho$, 
 $$
 \T_{M,\gamma}^{n+1}(\chi_{\rho^n})=\T_{M,\gamma}^{n}(\chi_{\rho^{n-1}})\, \TT_{M,\gamma}^{2 n}(\chi_\rho).
 $$
 \end{cor}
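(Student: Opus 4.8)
The plan is to read the corollary off Proposition~\ref{prop:product} by isolating the top factor of the telescoping product. Applying that proposition once with exponent $n$ and once with exponent $n-1$ (the latter for $n\ge 2$; the case $n=1$ is the empty--product convention $\T^{1}_{M,\gamma}\equiv 1$) gives
$$
\T_{M,\gamma}^{n+1}(\chi_{\rho^n})=\prod_{i=1}^n \TT_{M,\gamma}^{2 i}(\chi_\rho),
\qquad
\T_{M,\gamma}^{n}(\chi_{\rho^{n-1}})=\prod_{i=1}^{n-1} \TT_{M,\gamma}^{2 i}(\chi_\rho).
$$
Splitting off the factor $i=n$ from the first product yields
$$
\T_{M,\gamma}^{n+1}(\chi_{\rho^n})
=\Bigl(\prod_{i=1}^{n-1}\TT_{M,\gamma}^{2 i}(\chi_\rho)\Bigr)\,\TT_{M,\gamma}^{2n}(\chi_\rho)
=\T_{M,\gamma}^{n}(\chi_{\rho^{n-1}})\,\TT_{M,\gamma}^{2 n}(\chi_\rho),
$$
which is the asserted identity. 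This is an equality of values obtained purely by multiplication, so no difficulty arises even at characters where one of the intermediate torsions vanishes.

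The only point requiring a word of justification is that the genericity hypotheses are compatible, so that the corollary holds on a genuinely generic subset of $X_0(M^3,\PSL_2(\CC))$ and in particular at $\chi_\varrho$. One needs $\chi_{\rho^{n}}\notin \EE_{n+1}$, $\chi_{\rho^{n-1}}\notin\EE_{n}$, and $\chi_\rho\notin\FF^{2i,0}$ for $1\le i\le n$; each of these is the complement of a proper Zariski-closed subset of $X_0(M^3,\PSL_2(\CC))$, by the fact that the exceptional sets $\EE_{m}$ are Zariski closed and by Lemma~\ref{lemma:generictorus}, and a finite intersection of generic sets is generic. That $\chi_\varrho$ lies in this set is precisely the ``including the holonomy'' clause of Proposition~\ref{prop:product} applied at both levels; moreover Proposition~\ref{prop:propslift}\eqref{item:ratiocurves} guarantees $\TT_{M,\gamma}^{2i}(\chi_\varrho)\neq 0$ for every $i$, so the chain of equalities above holds verbatim at $\chi_\varrho$.

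There is essentially no substantive obstacle here: the corollary is a formal consequence of the multiplicativity in Proposition~\ref{prop:product}. The only mildly technical ingredient is the bookkeeping of the exceptional and degeneracy loci across the two target groups $\PSL_{n}(\CC)$ and $\PSL_{n+1}(\CC)$, and this is controlled by the behavior of $\operatorname{Sym}^{n}$ on abelian and on small subgroups of $\PSL_2(\CC)$ already exploited in the proofs of Proposition~\ref{prop:generic} and Lemma~\ref{lemma:generictorus}.
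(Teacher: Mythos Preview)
Your proof is correct and is exactly the argument the paper has in mind: the corollary is stated immediately after Proposition~\ref{prop:product} without a separate proof, precisely because it follows by splitting off the top factor of the telescoping product $\prod_{i=1}^n \TT_{M,\gamma}^{2i}(\chi_\rho)$. Your extra paragraph on the compatibility of the genericity loci is more bookkeeping than the paper supplies, but it is accurate and does no harm.
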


\begin{proof}[Proof of Proposition~\ref{prop:product}]
 The proof is straightforward from Klebsh-Gordan formula:
 \begin{equation}
  \label{eqn:adKG}
  \Ad\circ\operatorname{Sym}^n=\bigoplus_{i=1}^n \operatorname{Sym}^{2i}
 \end{equation}
and multiplicativity of the torsion for sums of representations.
\end{proof}

The next theorem due to Weil \cite{Weil64}, see also \cite{LM85}, gives a nice interpretation to $\T_{M,\gamma}$:

\begin{thm}[\cite{Weil64}]
\label{thm:Weil}
Let $\Gamma$ be a finitely generated group
and $\rho$ 
an irreducible representation with character
 $\chi_\rho\in X(\Gamma, \PSL_{n+1}(\CC) )-\EE_{n+1}$. Then 
 there is a natural isomorphism
 $$
 T^{Zar}_{\chi_\rho} X(\Gamma, \PSL_{n+1}(\CC) )\cong H^1(\Gamma ; \Ad\rho)
 $$
 where $T^{Zar}$ denotes the Zariski tangent space as a scheme.
 
 In particular, if $\phi\colon\Gamma\to\Gamma'$ is a group morphism, then the 
induced map in cohomology corresponds to the tangent map
 $$
 d\phi^* \colon T^{Zar}_{\chi_\rho} X(\Gamma', \PSL_{n+1}(\CC) )\to  
T^{Zar}_{\phi^*\chi_\rho} X(\Gamma, \PSL_{n+1}(\CC) ).
 $$
 \end{thm}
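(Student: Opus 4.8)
The plan is to realize both sides as the relevant first-order data of the deformation functor of $\rho$, and to match them directly. First I would recall the scheme-theoretic description of the representation variety: $\hom(\Gamma,\PSL_{n+1}(\CC))$ sits inside $\PSL_{n+1}(\CC)^k$ (for a generating set $\gamma_1,\dots,\gamma_k$) cut out by the relations, and its Zariski tangent space at $\rho$ is computed by substituting $\gamma_i\mapsto (\mathrm{Id}+\varepsilon u_i)\rho(\gamma_i)$ with $\varepsilon^2=0$ and $u_i\in\mathfrak{sl}_{n+1}(\CC)$, and imposing that the relations still hold to first order. A standard computation shows this is exactly the space of $1$-cocycles $Z^1(\Gamma;\Ad\rho)$, where the $\Gamma$-module structure on $\mathfrak{sl}_{n+1}(\CC)$ is via $\Ad\rho$: the relation $w(\gamma_1,\dots,\gamma_k)=1$ linearizes to the cocycle condition $u_{\gamma\delta}=u_\gamma+\Ad\rho(\gamma)u_\delta$. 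Next I would observe that the tangent space to the orbit $\PSL_{n+1}(\CC)\cdot\rho$ at $\rho$ is exactly the space of coboundaries $B^1(\Gamma;\Ad\rho)$: conjugating $\rho$ by $\mathrm{Id}+\varepsilon v$ changes $\rho(\gamma)$ by the cocycle $\gamma\mapsto v-\Ad\rho(\gamma)v$. Therefore, at the scheme level, the Zariski tangent space of the \emph{quotient} $X(\Gamma,\PSL_{n+1}(\CC))$ at $\chi_\rho$ is $Z^1/B^1=H^1(\Gamma;\Ad\rho)$.

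The point where one must be careful — and which I expect to be the main obstacle — is the passage from the representation variety to the character variety as \emph{schemes}, i.e.\ justifying that the GIT quotient has the ``expected'' tangent space at $\chi_\rho$ rather than something larger coming from non-reducedness or from the quotient singularity. This is precisely where the hypotheses $\rho$ irreducible and $\chi_\rho\notin\EE_{n+1}$ enter: irreducibility gives that the stabilizer of $\rho$ in $\PSL_{n+1}(\CC)$ is finite (indeed, by the definition of $\EE_{n+1}$, $\mathfrak{sl}_{n+1}(\CC)^{\Ad\rho(\pi_1M^3)}=0$, so the stabilizer is $0$-dimensional), hence $\rho$ is a stable point for the $\PSL_{n+1}(\CC)$-action, the orbit is closed of maximal dimension, and Luna's slice theorem identifies a neighborhood of $\chi_\rho$ in $X$ with a neighborhood of $0$ in the slice. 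One then checks $\dim B^1 = \dim\mathfrak{sl}_{n+1}(\CC) - \dim\mathfrak{sl}_{n+1}(\CC)^{\Ad\rho(\Gamma)} = \dim\PSL_{n+1}(\CC)$, so that the codimension bookkeeping works and the tangent space to the slice is canonically $H^1(\Gamma;\Ad\rho)$. I would cite \cite{Weil64,LM85} for the original arguments and \cite{VP89} for the GIT facts, rather than reproving Luna's slice theorem here.

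Finally, the functoriality statement is then formal. A group homomorphism $\phi\colon\Gamma\to\Gamma'$ induces, by precomposition, a morphism of schemes $\phi^*\colon X(\Gamma',\PSL_{n+1}(\CC))\to X(\Gamma,\PSL_{n+1}(\CC))$ (it is algebraic because it is given by the restriction of polynomial functions on the entries), sending $\chi_\rho$ to $\chi_{\rho\circ\phi}$. At the level of cocycles, precomposition with $\phi$ is exactly the pullback map $\phi^*\colon H^1(\Gamma';\Ad\rho)\to H^1(\Gamma;\Ad(\rho\circ\phi))$ on group cohomology; one checks that the isomorphisms constructed above for $\Gamma$ and for $\Gamma'$ intertwine the tangent map $d\phi^*$ with this pullback, since both are induced by the same substitution $\gamma_i\mapsto(\mathrm{Id}+\varepsilon u_i)\rho(\gamma_i)$ read through $\phi$. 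This gives the asserted commuting square and completes the proof.
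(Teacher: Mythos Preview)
The paper does not give its own proof of this theorem: it is stated as a result of Weil \cite{Weil64} (see also \cite{LM85}), followed only by brief comments explaining that the hypothesis $\chi_\rho\notin\EE_{n+1}$ is used to ensure $H^0(\Gamma;\Ad\rho)=0$ and that one must work with the scheme structure. Your sketch is the standard argument found in those references and is essentially correct: identify $T_\rho\hom(\Gamma,\PSL_{n+1}(\CC))$ with $Z^1(\Gamma;\Ad\rho)$ via first-order deformations, identify the orbit tangent space with $B^1$, and use irreducibility together with the vanishing of the infinitesimal centralizer (hence stability and Luna's slice theorem) to descend to the GIT quotient; the functoriality is then formal. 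One minor point: the set $\EE_{n+1}$ in the paper is defined with a second condition involving the peripheral torus, but as the paper's own commentary makes clear, only the first condition (trivial infinitesimal centralizer) is relevant for this theorem, which is what you use.
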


Few comments are in order here. First at all, the condition    that  $\chi_\rho\not\in\EE_{n+1}$ is used to say
that the infinitesimal commutator of $\Ad\rho$ is trivial, i.e.\ $H^0(\pi_1 M^3;\Ad\rho)=0$.
Secondly, 
the variety of characters is not a variety but a scheme: the defining polynomial ideal may be not reduced, thus we must consider the Zariski tangent space of 
the scheme, perhaps not reduced.
Finally, just mention that there are generalizations of this result when $\rho$  is reducible, see \cite{BenAbdelghani2000,BenAbdelghani2002}.

This interpretation has a nice application for surface bundles over the circle, following again \cite{Memoirs,KitayamaTerashima15}.
Assume that $M^3$ is a   bundle over a circle, with fibre a punctured surface 
$\Sigma$ and monodromy $\varphi\colon\Sigma\to \Sigma$, i.e.
$$
M^3=\Sigma\times [0,1]/ (x,1)\sim (\varphi(x),0).
$$
It has a natural epimorphism $\pi_1M^3\to \ZZ$, corresponding to the projection of the fibration $M^3\to S^1$.
The induced map on the  monodromy is denoted by 
$$
\varphi_*\colon X(\Sigma,\PSL_n(\CC))\to X(\Sigma,\PSL_n(\CC))
$$
and the restriction to $\pi_1\Sigma$ of characters  in $X(M^3,PSL_n(\CC))$ 
restrict to the fixed point set of 
$X(\Sigma,\PSL_n(\CC))$. By Weil's theorem, the map induced in 
$H^1(\Sigma;\Ad\rho)$ can be interpreted as the differential
$
d\varphi_*
$.
Thus, using Proposition~\ref{prop:MapTorus}, the twisted polynomial is
$$
 \det (d \varphi_*- t \operatorname{Id}).
$$
Its evaluation at $t=1$ vanishes because $H^*(M^3;\Ad\rho)\neq 0$. The polynomial is divisible by $(t-1)^n$,
corresponding to the invariant curve $\gamma=\partial \Sigma$, as $\dim 
H^1(\gamma; \Ad\rho)=n $. An argument 
using the exact sequences and the basis of homology yields the following result:

\begin{prop}\cite{Memoirs, KitayamaTerashima15}
\label{prop:bundleadj}
 If $M^3$ is a punctured surface bundle, and $\gamma$ the boundary of the fibre. Then 
 $$
 \T_{M,\gamma} =\left.\frac{\det (d \varphi_*- t \operatorname{Id})}{(t-1)^{n}}\right\vert_{t=1}.
 $$
 \end{prop}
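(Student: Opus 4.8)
The plan is to apply the mapping torus formula (Proposition~\ref{prop:MapTorus}) with the coefficient representation $\Ad\rho$ and the epimorphism $\phi\colon\pi_1 M^3\to\ZZ$ coming from the fibration, and then carefully pass from the twisted polynomial $\Delta_{M,\Ad\rho,\phi}(t)$ to the torsion $\T_{M,\gamma}$ by bookkeeping of homology bases. First I would recall that $M^3$ is the mapping torus of $\varphi\colon\Sigma\to\Sigma$, so Proposition~\ref{prop:MapTorus} gives
$$
\Delta_{M,\Ad\rho,\phi}(t)=\prod_{i=0}^{\dim\Sigma}\det\!\bigl((\varphi_i)_* - t\,\Id\bigr)^{(-1)^{i+1}},
$$
where $(\varphi_i)_*$ is the map induced by $\varphi$ on $H_i(\Sigma;\Ad\rho)$. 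Since $\Sigma$ is a punctured surface it is homotopy equivalent to a wedge of circles, so $H_i(\Sigma;\Ad\rho)=0$ for $i\ge 2$; moreover $H_0(\Sigma;\Ad\rho)=0$ because $\chi_\rho\notin\EE_{n+1}$ forces $\mathfrak{sl}_{n+1}(\CC)^{\Ad\rho(\pi_1M^3)}=0$ and hence (the restriction being at least as nondegenerate) $H^0(\Sigma;\Ad\rho)=0$. Thus only the $i=1$ term survives and $\Delta_{M,\Ad\rho,\phi}(t)=\det\!\bigl(d\varphi_* - t\,\Id\bigr)^{-1}$ up to the usual $\pm t^{\ZZ\cdot\dim}$ indeterminacy, where I invoke Weil's Theorem~\ref{thm:Weil} to identify $(\varphi_1)_*$ on $H^1(\Sigma;\Ad\rho)\cong T^{Zar}_{\chi_\rho}X(\Sigma,\PSL_{n+1}(\CC))$ with the tangent map $d\varphi_*$ of the monodromy action on the character variety of the fibre.

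Next I would address why $\Delta_{M,\Ad\rho,\phi}(t)$ vanishes at $t=1$ and extract the order of vanishing. Since $M^3$ is a hyperbolic manifold with one cusp and $\chi_\rho$ lies in $X_0(M^3,\PSL_{n+1}(\CC))-\EE_{n+1}$, Theorem~\ref{thm:distn+1} (and the surrounding discussion) tells us that $H^*(M^3;\Ad\rho)\ne 0$, with $\dim H_1(M^3;\Ad\rho)=n$; so $1$ is an eigenvalue of $d\varphi_*$ of (algebraic) multiplicity at least $n$. The boundary curve $\gamma=\partial\Sigma$ is $\varphi$-invariant, and $H^1(\gamma;\Ad\rho)\cong\mathfrak{sl}_{n+1}(\CC)^{\Ad\rho(\pi_1\gamma)}$ has dimension $n$ (this is exactly the $\gamma$-version of the computation in Theorem~\ref{thm:distn+1}); the inclusion $\gamma\hookrightarrow\Sigma$ induces an injection on $H^1$ onto this $n$-dimensional $d\varphi_*$-invariant subspace on which $d\varphi_*$ acts as the identity. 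So $(t-1)^n\mid\det(d\varphi_*-t\,\Id)$, and the quotient evaluated at $t=1$ is a well-defined nonzero number, which is the right-hand side of the claimed formula.

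Finally I would match this quotient with the torsion $\T_{M,\gamma}$ by going through the base-change formula \eqref{eqn:change} and the definition of $\T_{M,\gamma}$. By construction $\T_{M,\gamma}(\chi_\rho)$ is $\operatorname{TOR}(M^3,\{a_j\cap[\gamma]\},\{a_j\cap[T^2]\})$, where $\{a_1,\dots,a_n\}$ is a basis of $\mathfrak{sl}_{n+1}(\CC)^{\Ad\rho(\pi_1T^2)}$; on the other hand, Remark~\ref{rem:evaluation} relates $\Delta_{M,\Ad\rho,\phi}$ at $t=1$ to $\tor(M^3,\Ad\rho)$ in the acyclic case, and in the present non-acyclic case one runs the same chain-level evaluation argument keeping track of homology. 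The point is that the homology classes $a_j\cap[\gamma]$ are precisely the classes coming from the $1$-eigenspace of $d\varphi_*$ via the Wang exact sequence of the mapping torus, so dividing $\det(d\varphi_*-t\,\Id)$ by $(t-1)^n$ and setting $t=1$ is exactly the algebraic shadow of choosing those bases in the Milnor--Turaev exact-sequence computation of $\tor(M^3,\Ad\rho,\{h_i\})$; the $i=2$ homology basis $\{a_j\cap[T^2]\}$ is pinned down by $H_2(M^3;\Ad\rho)\cong H^0(T^2;\Ad\rho)$ and contributes trivially. The main obstacle I expect is this last bookkeeping step: carefully reconciling the Wang/Mayer--Vietoris long exact sequence for the mapping torus with the particular cohomology bases $\{a_j\cap[\gamma]\}$, $\{a_j\cap[T^2]\}$ so that no stray scalar or sign is lost, and checking that the $t^{\ZZ\cdot\dim}$-indeterminacy of the twisted polynomial disappears after dividing by $(t-1)^n$ and evaluating. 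This is the kind of computation carried out in \cite{Memoirs} and \cite{KitayamaTerashima15}, and I would follow their exact-sequence argument; everything else is immediate from Proposition~\ref{prop:MapTorus}, Theorem~\ref{thm:Weil} and Theorem~\ref{thm:distn+1}.
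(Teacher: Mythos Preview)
Your approach is exactly the one the paper sketches: apply Proposition~\ref{prop:MapTorus} to the mapping torus with coefficients $\Ad\rho$, invoke Weil's Theorem~\ref{thm:Weil} to identify the induced map on $H^1(\Sigma;\Ad\rho)$ with $d\varphi_*$, extract the $(t-1)^n$ factor coming from the peripheral curve, and then match bases through the Wang/Mayer--Vietoris long exact sequence. The paper gives no more detail than you do on the last step, simply writing ``an argument using the exact sequences and the basis of homology'' and deferring to \cite{Memoirs,KitayamaTerashima15}.

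There is one concrete slip. For $i=1$ the exponent in Proposition~\ref{prop:MapTorus} is $(-1)^{i+1}=(-1)^2=+1$, so the twisted polynomial is $\Delta_{M,\Ad\rho,\phi}(t)=\det(d\varphi_*-t\,\Id)$, not its inverse; with your exponent $-1$ you would have a pole of order $n$ at $t=1$ rather than a zero, and the division by $(t-1)^n$ followed by evaluation would not make sense. Once corrected, the rest of your argument goes through. A second, smaller point: the sentence ``the inclusion $\gamma\hookrightarrow\Sigma$ induces an injection on $H^1$ onto this $n$-dimensional $d\varphi_*$-invariant subspace'' is garbled --- in cohomology the map goes $H^1(\Sigma;\Ad\rho)\to H^1(\gamma;\Ad\rho)$, and injectivity of either this or the homology map $H_1(\gamma)\to H_1(\Sigma)$ is not automatic. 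The fact you actually need, that $\dim\ker(d\varphi_*-\Id)=\dim\operatorname{coker}(d\varphi_*-\Id)=n$ on $H^1(\Sigma;\Ad\rho)$, follows directly from the Wang exact sequence of the fibration together with $\dim H_i(M^3;\Ad\rho)=n$ for $i=1,2$, and this is the cleaner route to the $(t-1)^n$ divisibility.
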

 
 This result is very useful for computing  the torsion, it allows to obtain it from the variety of characters or moduli
 spaces without knowing the representation. For instance  Kitayama and Terashima use cluster algebras to compute it
 \cite{KitayamaTerashima15}. In Subsection~\ref{subsection:eight} we recall the method of \cite{Memoirs} to compute it.

 \subsection{Local parameters and change of curve}
 
Consider $\mathfrak{h}\subset \mathfrak{sl}_{n+1}(\CC)$ the Cartan subalgebra of diagonal matrices,
in particular a diagonal matrix in $\PSL_{n+1}(\CC)$ lies in $\exp \mathfrak{h}$.

 For a generic character $\chi\in X_0(M^3,\PSL_{n+1}(\CC))$,  one would like to consider in a 
 neighborhood $U$ of $\chi$: 
 \begin{equation}
    \label{eqn:log}
    \begin{array}{rcl}
      \log_\gamma\colon U\subset X_0(M^3,\PSL_{n+1}(\CC))&\to& \mathfrak{h}\\
      \chi_\rho & \mapsto & \log \rho(\gamma)  
    \end{array}
 \end{equation}
but a priori this may not be well defined. 
Notice that there are indeterminacies due to the action of the Weyl group (permutation of elements in the diagonal)
and indeterminacies due to the complex logarithm. This motivates the following definition.

\begin{definition}
 \label{defn:chamberreg}
 A representation $\rho\in \hom(\pi_1M^3,\PSL_{n+1}(\CC))$ is \emph{chamber regular} is there exists a peripheral element 
 $\gamma\in\pi_1 T^2$ such  that $\rho(\gamma)$  has $n+1$ different eigenvalues. 
\end{definition}

In terms of Lie groups this is a regularity condition: 
$\rho(\pi_1T^2)$ is contained in a Cartan subgroup and  in the interior of the Weyl chamber of $\PSL_{n+1}(\CC)$.

By \cite{MFP12b}, 
the symmetric functions on the eigenvalues of $\rho(\gamma)$
define a local biholomorphism in a neighborhood of $\varrho^{n}$, hence all eigenvalues of 
$\rho(\gamma)$ are different in a Zariski open set. Thus: 

\begin{remark}
The set of chamber regular
characters is a non-empty Zariski open subset of $X_0(M^3,\PSL_2(\CC))$.
 \end{remark}

For a chamber regular character and a peripheral element $\gamma\in\pi_1 T^2$,
there exist a neighborhood $U \subset X_0(M^3,\PSL_{n+1}(\CC)) $ such that the logarithm $\log_\gamma$
as in \eqref{eqn:log} is defined in $U$. Notice that the eigenvalues of the image of $\gamma$ 
do not need to be different, provided that there is an element in the peripheral group with different eigenvalues.

Next consider a nonzero element $a\in \mathfrak{h}$. Using the Killing form, we 
define $a^\star\in\mathfrak{h}$ to be the pairing with $a$.

\begin{lemma}
 \label{lemma:pairing}
 Let $\rho$ be a chamber regular representation, $\gamma$ a peripheral curve and $a\in\mathfrak{h}$.
 Viewing $H_1(M^3;\Ad\rho)$ as cotangent space:
\begin{equation}
 \label{eqn:cotangent}
 a\cap[\gamma] = d ( a^\star\log_\gamma).
\end{equation}
\end{lemma}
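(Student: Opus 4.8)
The plan is to unwind both sides of \eqref{eqn:cotangent} through Weil's identification (Theorem~\ref{thm:Weil}), which says $T^{Zar}_{\chi_\rho}X_0(M^3,\PSL_{n+1}(\CC))\cong H^1(M^3;\Ad\rho)$, together with the Poincar\'e duality pairing between $H^1(M^3;\Ad\rho)$ and $H_2(M^3;\Ad\rho)$ (or, via the long exact sequence of the pair $(M^3,T^2)$, between $H^1$ and the peripheral $H_1$). First I would fix notation: a tangent vector $\dot\chi\in T_{\chi_\rho}X_0$ is represented by a cocycle $z\in Z^1(\pi_1M^3;\Ad\rho)$ in the Weil picture, where $z(\eta)=\frac{d}{ds}\big|_{s=0}\rho_s(\eta)\rho(\eta)^{-1}$ for a smooth path $\rho_s$ of representations through $\rho$. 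The function $a^\star\log_\gamma$ is scalar-valued on $U$, so its differential at $\chi_\rho$ is the linear functional $\dot\chi\mapsto \frac{d}{ds}\big|_{s=0} a^\star(\log\rho_s(\gamma))$; since $\rho$ is chamber regular, $\log$ is a well-defined holomorphic map near $\rho(\gamma)$ into $\mathfrak h$ equivariant for the torus, so this derivative equals $a^\star$ applied to the $\mathfrak h$-component of $z(\gamma)$, which since $a\in\mathfrak h$ and $a^\star$ is Killing-pairing with $a$ is just $B(a,z(\gamma))=\langle a, z(\gamma)\rangle$ (the Killing form kills the off-diagonal part against a diagonal element).

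The key step is then to recognize $\dot\chi\mapsto \langle a, z(\gamma)\rangle$ as the pairing of $\dot\chi\in H^1(M^3;\Ad\rho)$ with the homology class $a\cap[\gamma]\in H_1(M^3;\Ad\rho)$. Here $a\in \mathfrak{sl}_{n+1}(\CC)^{\Ad\rho(\pi_1T^2)}=H^0(T^2;\Ad\rho)$, $[\gamma]\in H_1(T^2;\ZZ)$, and $a\cap[\gamma]$ is the cap product pushed into $M^3$ via $i\colon T^2\to M^3$, exactly as described around \eqref{eqn:cap}. I would compute the Kronecker pairing $H^1(M^3;\Ad\rho)\otimes H_1(M^3;\Ad\rho)\to\CC$ at the chain level: representing $[\gamma]$ by the $1$-cycle which is the loop $\gamma$ itself (one edge, attached at the basepoint), its lift to $\widetilde{T^2}$ tensored with $a$ gives the chain representing $a\cap[\gamma]$, and evaluating the cocycle $z$ (viewed via $\hom_{\pi_1}(C_*(\widetilde M),\Ad\rho)$) on it yields precisely $\langle a, z(\gamma)\rangle$ up to the identifications between homology and cohomology coefficients (this is where the transpose-versus-inverse convention fixed in Section~\ref{sec:comb} and the Killing-form self-duality of $\Ad$ enter, making the pairing of $v\otimes c$ with a cocycle $\theta$ read off as $B(v,\theta(c))$). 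Matching the two computations gives $a\cap[\gamma]=d(a^\star\log_\gamma)$ as elements of the cotangent space, which is the claim.

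The main obstacle I anticipate is bookkeeping the duality identifications cleanly: one must pin down how $H_1(M^3;\Ad\rho)$ is being used as the cotangent space (presumably via $H_1(M^3;\Ad\rho)\cong H^1(M^3;\Ad\rho)^*$ coming from universal coefficients plus the Killing-form self-duality of the $\Ad$-module, or equivalently via Poincar\'e–Lefschetz duality $H_1(M^3;\Ad\rho)\cong H^2(M^3,\partial;\Ad\rho)$ paired with $H^1(M^3;\Ad\rho)$), and to check that under this identification the naive Kronecker evaluation of a Weil cocycle on the explicit cycle $a\otimes\tilde\gamma$ really is $B(a,z(\gamma))$ with the correct sign and no stray factors. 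Once the conventions are nailed down the computation is short; the subtlety is purely in reconciling the sign/transpose conventions of Section~\ref{sec:comb}, the cap product normalization of \eqref{eqn:cap}, and the definition of $a^\star$ via the Killing form, so that \eqref{eqn:cotangent} holds on the nose rather than up to a scalar.
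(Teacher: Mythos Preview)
Your proposal is correct and follows essentially the same route as the paper: the paper also fixes a first-order deformation $\rho_t=(1+t\dot\rho+O(t^2))\rho$, defines the Kronecker pairing at the chain level by $\langle z,h\otimes m\rangle=B(h,z(m))$, and reduces the claim to the identity $\langle\dot\rho,a\cap[\gamma]\rangle=B(a,\dot\rho(\gamma))=\left.\frac{d}{dt}a^\star\log\rho_t(\gamma)\right|_{t=0}$, obtained by expanding $\log\rho_t(\gamma)=\log\rho(\gamma)+t\,\dot\rho(\gamma)+O(t^2)$ and pairing with $a$. Your remark that the Killing form of $a\in\mathfrak h$ annihilates the off-diagonal part of $\dot\rho(\gamma)$ is exactly what makes this last step legitimate.
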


This must be compared with \cite[Lemma~3.20]{Memoirs}.
Before proving this lemma, let us discuss its consequences.

\begin{prop}
 \label{prop:localparameter}
 Let $\chi_\rho\in X_0(M^3,\PSL_{n+1}(\CC))$ be a chamber regular character and $\gamma$ a peripheral curve. Then
 $\T_{M,\gamma}(\chi_\rho)\neq 0$ if and only if $\chi_\rho$ is a scheme reduced smooth point of $X_0(M^3,\PSL_{n+1}(\CC))$
 and $\log_{\gamma}$ is a local parameter.
\end{prop}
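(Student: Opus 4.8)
The plan is to unwind the definition of $\T_{M,\gamma}$, reduce the nonvanishing of $\T_{M,\gamma}(\chi_\rho)$ to the statement that the cap products $a_i\cap[\gamma]$ form a basis of $H_1(M^3;\Ad\rho)$, and then translate this, through Lemma~\ref{lemma:pairing} and Weil's Theorem~\ref{thm:Weil}, into a statement about the germ of $X_0(M^3,\PSL_{n+1}(\CC))$ at $\chi_\rho$ and about the differential of $\log_\gamma$.

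First I would note that, by Definition~\ref{def:torsionchain}, the torsion of a chain complex equipped with genuine bases in all degrees is a well-defined \emph{nonzero} element of $\CC^*/\{\pm1\}$. Hence $\T_{M,\gamma}(\chi_\rho)$ can vanish only through the ``otherwise'' branch of its definition, so $\T_{M,\gamma}(\chi_\rho)\neq0$ is \emph{equivalent} to: $\{a_1\cap[\gamma],\dots,a_n\cap[\gamma]\}$ is a basis of $H_1(M^3;\Ad\rho)$. Here $\{a_1,\dots,a_n\}$ is a basis of $\mathfrak{sl}_{n+1}(\CC)^{\Ad\rho(\pi_1T^2)}$, which is an ($n$-dimensional) Cartan subalgebra because $\rho$ is chamber regular (Definition~\ref{defn:chamberreg}); moreover $\chi_\rho\notin\EE_{n+1}$ gives $H_0(M^3;\Ad\rho)=0$, $M^3$ is homotopy equivalent to a $2$-complex so $H_i=0$ for $i\geq3$, and — as recalled right after the definition of $\T_{M,\gamma}$ — once $\{a_i\cap[\gamma]\}$ spans $H_1$ the family $\{a_i\cap[T^2]\}$ is automatically a basis of $H_2(M^3;\Ad\rho)\cong H_2(T^2;\Ad\rho)\cong H^0(T^2;\Ad\rho)$; so under that condition the complex carries bases in both its nonzero homology groups and the defining torsion is legitimately nonzero.

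Next I would pass to the character scheme. Since $\chi_\rho\notin\EE_{n+1}$ we have $H^0(\pi_1M^3;\Ad\rho)=0$, so Weil's Theorem~\ref{thm:Weil} (and its extensions to reducible characters) identifies $H^1(M^3;\Ad\rho)=H^1(\pi_1M^3;\Ad\rho)$ with the Zariski tangent space $T^{Zar}_{\chi_\rho}X_0(M^3,\PSL_{n+1}(\CC))$; combining this with Poincar\'e duality and the self-duality of $\Ad\rho$ under the Killing form yields the identification of $H_1(M^3;\Ad\rho)$ with the cotangent space that underlies Lemma~\ref{lemma:pairing}. Under it, Lemma~\ref{lemma:pairing} reads $a_i\cap[\gamma]=d(a_i^{\star}\log_\gamma)$, where $a_i^{\star}$ is the Killing-dual of $a_i$ and $\log_\gamma\colon U\to\mathfrak{h}$ is defined on a neighborhood $U$ of $\chi_\rho$ precisely thanks to chamber regularity. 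The Killing form being nondegenerate on the Cartan spanned by the $a_i$, the dual family $\{a_i^{\star}\}$ is again a basis of that Cartan, so the functions $a_1^{\star}\log_\gamma,\dots,a_n^{\star}\log_\gamma$ are, up to an invertible $\CC$-linear change of coordinates on the target $\mathfrak{h}\cong\CC^{n}$, exactly the $n$ components of $\log_\gamma$. Therefore $\{a_i\cap[\gamma]\}$ is a basis of $H_1(M^3;\Ad\rho)$ if and only if the differentials of the components of $\log_\gamma$ span $T^{Zar}_{\chi_\rho}X_0(M^3,\PSL_{n+1}(\CC))$.

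Finally I would finish with a dimension count. Being irreducible of dimension $n$ (Theorem~\ref{thm:distn+1}), $X_0(M^3,\PSL_{n+1}(\CC))$ has $\dim T^{Zar}_{\chi_\rho}X_0\geq n$ always, with equality exactly when $\chi_\rho$ is a scheme-reduced smooth point. If the $n$ differentials $d(a_i^{\star}\log_\gamma)$ span $T^{Zar}_{\chi_\rho}X_0$, that space has dimension $\leq n$, hence $=n$, so $\chi_\rho$ is reduced and smooth; the $n$ spanning vectors are then a basis, $d\log_\gamma$ is an isomorphism, and by the inverse function theorem on the smooth $n$-dimensional germ $(X_0,\chi_\rho)$ the holomorphic map $\log_\gamma$ is a biholomorphism of a neighborhood of $\chi_\rho$ onto a neighborhood of $\log\rho(\gamma)$ in $\mathfrak{h}$, i.e.\ a local parameter. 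Conversely, if $\chi_\rho$ is reduced and smooth and $\log_\gamma$ is a local parameter, then its $n$ component functions have linearly independent differentials forming a basis of the $n$-dimensional space $T^{Zar}_{\chi_\rho}X_0$; hence so do the classes $a_i\cap[\gamma]$, and the first step gives $\T_{M,\gamma}(\chi_\rho)\neq0$. The main obstacle I foresee is the scheme-theoretic bookkeeping: giving $d\log_\gamma$ a meaning inside the Zariski cotangent space even though the components of $\log_\gamma$ are transcendental (logarithms of eigenvalues) — which is exactly what Lemma~\ref{lemma:pairing} furnishes — and keeping the possibly non-reduced scheme carefully distinguished from its underlying reduced variety throughout.
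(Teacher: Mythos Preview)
Your proof is correct and follows essentially the same route as the paper's: both reduce $\T_{M,\gamma}(\chi_\rho)\neq 0$ to $\{a_i\cap[\gamma]\}$ being a basis of $H_1(M^3;\Ad\rho)$, then use Lemma~\ref{lemma:pairing} together with Weil's identification $T^{Zar}_{\chi_\rho}X_0\cong H^1(M^3;\Ad\rho)$ and a dimension count to translate this into scheme-reduced smoothness plus $\log_\gamma$ being a local parameter. Your version is simply a more detailed execution of what the paper presents as a sketch.
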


The proof follows easily from Lemma~\ref{lemma:pairing}, and we just sketch it. 
Namely, the condition of being scheme reduced and smooth is equivalent to saying that $ \dim H^1(M^3;\Ad\rho)=n$. 
By the standard arguments of the long exact sequence of the pair $(M^3, T^2)$  this implies that $ \dim H^0(T^2;\Ad\rho)=n$
and that $\{a_1\cap [T^2],\ldots,a_n\cap [T^2]\}$ is a basis for $H^2(M^3;\Ad\rho)$.
Then  Lemma~\ref{lemma:pairing} yields  
that $\{a_1\cap [\gamma],\ldots,a_n\cap [\gamma]\}$ is a basis for $H^1(M^3;\Ad\rho)$
iff $\log_{\gamma}$ is a local parameter.

For a chamber regular character, the condition of Proposition~\ref{prop:localparameter}
is equivalent to the notion of $\gamma$-regularity of
\cite[Definition~3.2]{KitayamaTerashima15}.

Notice finally that for any non-trivial  peripheral curve $\gamma$,   for the lift of the holonomy $\varrho$, even if it is not 
chamber regular,
$\T_{M,\gamma}(\chi_{\varrho^n})\neq 0$, by \cite[Theorem~3.4]{KitayamaTerashima15}.

\begin{prop}
\label{prop:change}
Let $\gamma_1, \gamma_ 2$ be two peripheral elements.  In a Zariski open domain in $X_0(M^3,\PSL_{n+1}(\CC))$
$$
\frac{ \T_{(M,\gamma_1)} }{  \T_{(M,\gamma_2)} }
= \pm \operatorname{J} (\log_{\gamma_1} \log_{\gamma_2}^{-1} ).
$$ 
\end{prop}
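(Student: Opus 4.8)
The plan is to reduce the statement to the change-of-basis formula for Reidemeister torsion \eqref{eqn:change}, applied to the two choices of bases for $H_1(M^3;\Ad\rho)$ induced by $\gamma_1$ and $\gamma_2$. First I would restrict to the Zariski-open set of chamber regular characters in $X_0(M^3,\PSL_{n+1}(\CC))-\EE_{n+1}$ where both $\log_{\gamma_1}$ and $\log_{\gamma_2}$ are defined and are local parameters (this set is non-empty by the Remark following Definition~\ref{defn:chamberreg} together with Proposition~\ref{prop:localparameter}, possibly after intersecting with the locus where $\T_{(M,\gamma_i)}\neq 0$). On this set, fix a basis $\{a_1,\dots,a_n\}$ of $\mathfrak{sl}_{n+1}(\CC)^{\Ad\rho(\pi_1T^2)}$; by Theorem~\ref{thm:distn+1} and the subsequent discussion, $\{a_j\cap[\gamma_1]\}$ and $\{a_j\cap[\gamma_2]\}$ are both bases for $H_1(M^3;\Ad\rho)$, while $\{a_j\cap[T^2]\}$ is a common basis for $H_2(M^3;\Ad\rho)$. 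Since the $H_2$-basis is the same in both torsions and $H_0=0$, the change-of-basis formula \eqref{eqn:change} gives
$$
\frac{\T_{(M,\gamma_1)}}{\T_{(M,\gamma_2)}}
= \pm\,\bigl[\{a_j\cap[\gamma_1]\}_j,\ \{a_j\cap[\gamma_2]\}_j\bigr]^{\mp 1},
$$
i.e. the ratio is (up to sign and possibly inversion, which I will pin down from the convention in Definition~\ref{def:torsionchain}) the determinant of the transition matrix between the two homology bases.

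The second step is to identify that determinant with the Jacobian $\operatorname{J}(\log_{\gamma_1}\log_{\gamma_2}^{-1})$. Here I invoke Lemma~\ref{lemma:pairing}: under the identification of $H_1(M^3;\Ad\rho)$ with the cotangent space $T^*_{\chi_\rho}X_0(M^3,\PSL_{n+1}(\CC))$, one has $a\cap[\gamma_i]=d(a^\star\log_{\gamma_i})$. Choosing $a_1,\dots,a_n$ so that $a_1^\star,\dots,a_n^\star$ is a basis of $\mathfrak h$ dual (or simply a basis), the covectors $\{a_j\cap[\gamma_i]\}_j$ are exactly the differentials of the components of the map $\log_{\gamma_i}\colon U\to\mathfrak h$ in that basis. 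Therefore the transition matrix between $\{a_j\cap[\gamma_1]\}_j$ and $\{a_j\cap[\gamma_2]\}_j$ is precisely the Jacobian matrix of $\log_{\gamma_1}\circ\log_{\gamma_2}^{-1}$ expressed in this basis of $\mathfrak h$, and its determinant is $\operatorname{J}(\log_{\gamma_1}\log_{\gamma_2}^{-1})$. This matches \cite[Lemma~3.20]{Memoirs} and the corresponding computation for $\PSL_2(\CC)$ in \cite{Memoirs}.

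The main obstacle is bookkeeping rather than conceptual: one must check that the Killing-form pairing used to define $a\mapsto a^\star$ does not introduce a spurious constant — i.e. that the two torsions are computed with homology bases arising from the \emph{same} fixed $\{a_j\}$, so that all Killing-form factors cancel in the ratio, leaving only the transition matrix of the differentials $d\log_{\gamma_i}$ — and that the sign ambiguity is genuinely just $\pm1$ (which is why the statement carries $\pm$). A minor additional point is that \eqref{eqn:change} is stated for a fixed chain complex with varying bases, so I should note that $C_*(M^3;\Ad\rho)$ with its geometric cell basis is the same in both cases and only the homology bases change; the $H_0$ and (via the common $H_2$-basis) the $H_2$ contributions cancel, isolating the $H_1$ contribution. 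Finally, since both sides are rational functions on $X_0(M^3,\PSL_{n+1}(\CC))$ that agree on a non-empty Zariski-open set, they agree on a Zariski-open domain, which is the asserted conclusion.
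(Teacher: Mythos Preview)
Your proposal is correct and follows exactly the route indicated in the paper: the proof there is stated in one line as ``straightforward from Lemma~\ref{lemma:pairing} and the formula of change of basis in homology~\eqref{eqn:change},'' and your argument spells this out, including the observation that the $H_0$ and $H_2$ contributions cancel so that only the $H_1$ transition matrix survives, and that Lemma~\ref{lemma:pairing} identifies this matrix with the Jacobian of $\log_{\gamma_1}\circ\log_{\gamma_2}^{-1}$. Your bookkeeping remarks (same $\{a_j\}$ used for both torsions so Killing-form factors cancel, the $\pm$ absorbing the sign convention) are appropriate and do not add anything beyond what the paper implicitly assumes.
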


Notice that the Jacobian 
 $
 \operatorname{J} (\log_{\gamma_1} \log_{\gamma_2}^{-1} )
 $
 is well defined generically on the distinguished component $X_0(M^3,\PSL_{n+1}(\CC))$, i.e.\ in a non-empty open Zariski set, as $\log_{\gamma_2}$ is a local parameter in an open set.
 In addition, this Jacobian is independent of the parametrization of $\mathfrak{h}$, as any change of parametrization cancels in the quotient.
The proof of Proposition~\ref{prop:change} is straightforward from Lemma~\ref{lemma:pairing} and the formula of change of 
basis in homology~\eqref{eqn:change}. 

Proposition~\ref{prop:change} does not cover  $\chi_{\varrho^n}$, the character of $\operatorname{Sym}^n$ of the holonomy of the complete hyperbolic structure, as it is not 
chamber regular. However, Corollary~\ref{cor:difrentdims} and Proposition~\ref{prop:propslift}(\ref{item:ratiocurves}) yield:
$$
{\T_{M,\gamma_2}^{n+1}(\chi_{\varrho^n})}=  \cs(\gamma_2,\gamma_1)^n \, 
{\T_{M,\gamma_1}^{n+1}(\chi_{\varrho^n})},
$$
where $\cs(\gamma_2,\gamma_1)$ denotes the cusp shape (Definition~\ref{def:cs}).

\begin{proof}[Proof of Lemma~\ref{lemma:pairing}] Assume that $\rho$ is a generic representation so that $\rho(\gamma)$ is diagonal with different
eigenvalues. Let $\mathfrak{h}$ denote the Cartan algebra of diagonal matrices, 
and choose a non-zero element $a\in\mathfrak{h}$. 
In particular $a\cap\gamma\in H_1(M^3;\Ad\rho)$.
Consider the dual of $a$:
\begin{equation*}
 \begin{array}{rcl}
   a^\star\colon \mathfrak{h} & \to & \CC \\
        h & \mapsto & B(a,h)
 \end{array}
\end{equation*}
where $B$ denotes the Killing form.
Consider also a first order deformation $\rho_t$ of $\rho$, i.e.
\begin{equation}
 \label{eqn:firstorder}
\rho_t =(1+t\, \dot{\rho}+O(t^2))\rho 
 \end{equation}
where $\dot\rho\colon\pi_1M^3\to \mathfrak{sl}_{n+1}(\CC)$ is a group cocycle, that we project to 
$H^1(M^3;\Ad\rho)$.

Consider finally the Kronecker pairing
\begin{equation*}
  \langle\cdot \rangle\colon H^k(M^3;\Ad\rho)\times H_k(M^3;\Ad\rho)\to \CC
\end{equation*}
defined as follows. Let $z\in C^k(M^3;\Ad\rho) $ be a cocycle, with 
$z\colon C_k(\widetilde M^3;\ZZ)\to \mathfrak{sl}_{n+1}(\CC)$,
and let $h\otimes m\in  C_k(M^3;\Ad\rho)$, where $h\in  \mathfrak{sl}_{n+1}(\CC)$ and $m \in C_k(\widetilde M^3;\ZZ)$. At the (co-)chain level,
the Kronecker pairing is
\begin{equation}
\label{eqn:kronecker}
\langle z, h\otimes m\rangle = B(h,z(m)) 
\end{equation}
where $B$ denotes again the Killing form. This induces a non-degenerate pairing between homology and cohomology \cite{Memoirs}.
After all those preliminaries, to establish the lemma one must prove the following equality
\begin{equation}
 \label{eqn:pairing}
 \langle \dot\rho, a\cap[\gamma]\rangle = \left.\frac{d\phantom{t}}{dt} 
a^\star\log\rho_t(\gamma)\right\vert_{t=0} \, .
\end{equation}

To prove  \eqref{eqn:pairing}, start with the group cohomology version of \eqref{eqn:kronecker} in the current context (see \cite{Memoirs}):
\begin{equation}
\label{eqn:kroneckergroup}
\langle \dot\rho , a\cap [\gamma]\rangle = B(a, \dot\rho(\gamma)). 
\end{equation}
From \eqref{eqn:firstorder} evaluated at $\gamma$ and taking logarithms:
\begin{equation}
\label{eqn:loga}
 \log \rho_t(\gamma)  =   \log \rho(\gamma) + t\, \dot{\rho}(\gamma)+ 
O(t^2)) \, .
\end{equation}
Then  \eqref{eqn:pairing} follows from  $a^\star$ applied to \eqref{eqn:loga}, 
then differentiating and applying
\eqref{eqn:kroneckergroup} to the result.
\end{proof}

Lemma~\ref{lemma:pairing} may be related to the results  of Goldman \cite{Goldman86}.

   \begin{example}[Volumes on $X(\mathcal K,SU(2))$]
  \label{ex:dubois}
  
In  \cite{Dubois05,Dubois06} Dubois makes a relevant contribution of torsions as volume form on
the variety of characters of a knot in $\operatorname{SU}(2)$. Here the Cartan algebra has dimension one
and the logarithm of a matrix in $\operatorname{SU}(2)$ is an angle.
Dubois volume form is, up to sign,
$$
\vol_{\tor}= \pm \frac{d\varphi_\gamma } { \T_{m,\gamma}}
$$
where $\varphi_\gamma$ denotes the angle of the representation of the peripheral curve $\gamma$.
Notice that by Proposition~\ref{prop:change}, this form is independent of the choice of the peripheral curve.
In  \cite{Dubois05,Dubois06}, using Turaev's refinement of torsion and a good choice of $\varphi_\gamma$, 
Dubois avoids the sign indeterminacy.
He also shows that it equals to a volume form defined from a Heegaard splitting, \`a la Johnson (Example~\ref{ex:Johnson}).
This is related to the construction of an orientation on the space of representations of Heusener~\cite{Heusener03}.

Proposition~\ref{prop:change} suggests that a similar volume form can be constructed for representations in
$\operatorname{SU}(n+1)$, taking a convenient parametrization of the Cartan algebra (i.e.~infinitesimal angles).
It remains to know also whether the variety of characters of a knot in $\operatorname{SU}(n+1)$ is non-empty and $n$-dimensional. 
This holds true for instance for two bridge knots. Another issue is to compute explicitly the variety of  $\operatorname{SU}(n+1)$ 
characters
for a knot.
\end{example}
 
 \subsection{An example}
\label{subsection:eight}
Let me use Proposition~\ref{prop:bundleadj} to compute $\T_{M,\gamma}$ for the figure eight knot
for $\PSL_2(\CC)$ and $\PSL_3(\CC)$. 
 For  $\PSL_2(\CC)$ this is done in \cite{Memoirs}, but I recall it here for completeness.

Let $\Gamma=\pi_1 M$ denote the fundamental group of the figure eight knot 
exterior. The presentation
$$
 \Gamma=\langle   a,b, m \mid m\, a\, m^{-1}= a\, b,\,  m\, b\, m^{-1}= b\, a\, 
b
 \rangle
$$
corresponds to the fact  that it is fibered over the circle, 
with fibre a punctured torus, whose fundamental group is the free group 
$F_2=\langle a, b\mid\rangle$. The element $m$ is also a meridian curve of the knot.
The monodromy $\phi\colon F_2\to F_2$ satisfies 
$$\phi(a)=a\, b \qquad \textrm{ and } \qquad 
\phi(b)=b\, a\, b.
$$

Since $F_ 2$ is the derived subgroup of $\Gamma$, every representation of $\Gamma$ in $\PSL_{n+1}(\CC)$
restricts to a representation of $F_2$ in $\SL_{n+1}(\CC) $  that is fixed by the monodromy $\phi^*$.
The set of fixed characters is denoted by 
$$
X(F_2,\SL_{n+1}(\CC))^{\phi^*}.
$$
The following lemma is  proved in \cite{Memoirs, HMP15} (using that $F_2$ is the commutator of $\Gamma$):

\begin{lemma}
 For $n=1,2$, the restriction map induces an isomorphism
 $$
 \overline{X_{irr}(\Gamma,\PSL_{n+1}(\CC))} \cong 
\overline{X_{irr}(F_2,\SL_{n+1}(\CC))^{\phi^*}},
 $$
 where the subindex $_{irr}$ stands for irreducible characters.
\end{lemma}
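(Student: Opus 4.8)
The plan is to exploit that $M^3$ is the mapping torus of $\phi\colon\Sigma\to\Sigma$, so that $\Gamma\cong F_2\rtimes_{\phi}\ZZ$ with the meridian $m$ generating the $\ZZ$--factor and acting on $F_2$ by $\phi$. Since $F_2=[\Gamma,\Gamma]=\ker(\Gamma\to\ZZ)$ (the relation $m a m^{-1}=ab$ forces $a,b$ into the commutator subgroup), restriction to $F_2$ is independent of the chosen presentation, and a homomorphism $\rho\colon\Gamma\to G$ is precisely the data of $\rho_0\colon F_2\to G$ together with $g_m=\rho(m)\in G$ satisfying $g_m\,\rho_0(w)\,g_m^{-1}=\rho_0(\phi(w))$ for all $w\in F_2$. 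First I would build the restriction morphism: for $\rho\colon\pi_1M^3\to\PSL_{n+1}(\CC)$ the relation above reads $\rho|_{F_2}\circ\phi=\Ad(\rho(m))\circ\rho|_{F_2}$, so the character of $\rho|_{F_2}$ is fixed by $\phi^*$; since $F_2$ is free, $\rho|_{F_2}$ lifts to $\SL_{n+1}(\CC)$, landing, after the normalization below, in $X(F_2,\SL_{n+1}(\CC))^{\phi^*}$.

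The one genuine ambiguity is the lift from $\PSL_{n+1}$ to $\SL_{n+1}$: the lifts of $\rho|_{F_2}$ form a torsor under $\operatorname{Hom}(F_2,\mu_{n+1})\cong H^1(F_2;\mu_{n+1})$, and the $\phi^*$--fixedness of the resulting $\SL_{n+1}$--character holds only up to a twist $\epsilon\in\operatorname{Hom}(F_2,\mu_{n+1})$ that behaves as a $1$--cocycle in the chosen lift. Here I would use the special feature of the figure eight that $\phi_*-\operatorname{Id}$ is invertible on $H_1(\Sigma;\ZZ)\cong\ZZ^2$ (equivalently $\det(\phi_*-\operatorname{Id})=\pm1$, equivalently $b_1(M^3)=1$), hence $\phi^*-1$ is an automorphism of $\operatorname{Hom}(F_2,\mu_{n+1})$ for every $n$: there is then a \emph{unique} adjustment of the lift killing the obstruction. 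This yields a well-defined morphism $r\colon X_{irr}(\Gamma,\PSL_{n+1}(\CC))\to X(F_2,\SL_{n+1}(\CC))^{\phi^*}$, and thence into $\overline{X_{irr}(F_2,\SL_{n+1}(\CC))^{\phi^*}}$.

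For the inverse, start with an irreducible $\widetilde\rho\colon F_2\to\SL_{n+1}(\CC)$ whose character is $\phi^*$--fixed. Then $\widetilde\rho\circ\phi$ and $\widetilde\rho$ are irreducible with the same character, so by Schur's lemma there is $A\in\operatorname{GL}_{n+1}(\CC)$, unique up to scalar, with $\widetilde\rho\circ\phi=A\,\widetilde\rho\,A^{-1}$. Putting $\rho|_{F_2}$ equal to the projection of $\widetilde\rho$ and $\rho(m)$ equal to the class of $A$ in $\operatorname{PGL}_{n+1}(\CC)=\PSL_{n+1}(\CC)$ defines, by the semidirect-product description, a representation $\rho\colon\Gamma\to\PSL_{n+1}(\CC)$, irreducible since its restriction to $F_2$ is. Because the centralizer of an irreducible subgroup of $\operatorname{PGL}_{n+1}(\CC)$ is trivial, $\rho(m)$ is forced, and I would check that $r$ and this construction are mutually inverse bijections between $X_{irr}(\Gamma,\PSL_{n+1}(\CC))$ and the locus of $X(F_2,\SL_{n+1}(\CC))^{\phi^*}$ consisting of irreducible $F_2$--characters; both maps being algebraic, this is a birational isomorphism of the two character varieties (and in passing it factors as $X_{irr}(\Gamma,\PSL_{n+1})\cong X_{irr}(F_2,\PSL_{n+1})^{\phi^*}\cong X_{irr}(F_2,\SL_{n+1})^{\phi^*}$, the last step using invertibility of $\phi^*-1$ on $\operatorname{Hom}(F_2,\mu_{n+1})$).

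The hard part, and the reason the statement is limited to $n\le 2$, is upgrading this to an isomorphism of the \emph{Zariski closures} $\overline{X_{irr}}$. The morphism $r$ is defined on all of $X_{irr}(\Gamma,\PSL_{n+1}(\CC))$, but there the $F_2$--restriction of an irreducible $\Gamma$--representation need not be irreducible (the ``binary-dihedral''-type characters, where $\rho(m)$ permutes the invariant subspaces of $\rho(F_2)$), so $r$ is not literally a bijection of the open strata and one must control its behaviour along this exceptional locus. I would finish by invoking the explicit descriptions of the relevant character varieties: for $n=1$ the $\PSL_2(\CC)$--character variety of the figure eight is classical, and for $n=2$ it is computed in \cite{HMP15}; comparing these with the (equally explicit) fixed-point sets $X(F_2,\SL_{n+1}(\CC))^{\phi^*}$ shows that the birational morphism above extends to an isomorphism of the closures. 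Alternatively one could argue that $\overline{X_{irr}(F_2,\SL_{n+1}(\CC))^{\phi^*}}$ is normal and that $r$ is a quasi-finite birational morphism onto it, so that Zariski's main theorem makes it an open immersion, the gap being closed by surjectivity or by direct inspection of the boundary; making either route rigorous is the real work, and it is the step that genuinely needs $n\le 2$.
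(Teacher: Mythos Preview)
The paper does not actually prove this lemma in the text; it simply cites \cite{Memoirs} for $n=1$ and \cite{HMP15} for $n=2$, with the single hint that $F_2$ is the commutator subgroup of $\Gamma$. Your sketch unpacks exactly what that hint means and is consistent with the arguments in those references: the semidirect-product description $\Gamma\cong F_2\rtimes_\phi\ZZ$, the construction of the inverse on irreducibles via Schur's lemma, and the normalization of the $\SL_{n+1}$--lift using that $\phi_*-\operatorname{Id}$ is invertible on $H_1(F_2;\ZZ)$ (which is equivalent to $H_1(M^3;\ZZ)\cong\ZZ$, i.e.\ to $F_2=[\Gamma,\Gamma]$) are all correct and are the expected ingredients.

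Your candid assessment of the closure step is also accurate. In both cited references that step is carried out by explicitly computing the two varieties and matching them, not by an abstract normality/Zariski's-main-theorem argument; indeed the paragraphs following the lemma in the present paper do precisely this, writing down coordinates on $X(F_2,\SL_{n+1}(\CC))$, computing $\phi^*$ on them, and solving for the fixed locus. So your proposed first route (explicit comparison) is the one actually taken, and your second route (normality plus ZMT) would be new work. One small point worth tightening: the uniqueness of the normalized lift follows from $\ker(\phi^*-1)=0$ on $\operatorname{Hom}(F_2,\mu_{n+1})$, but you should also note that this is exactly what makes the map $r$ well defined on characters (different initial lifts lead to the same normalized one), not merely what makes the obstruction solvable.
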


\subsubsection{Computations for $X(M^3,\PSL_{2}(\CC))$ }
 
 By Fricke-Klein theorem, the variety of characters $X(F^2,\SL_2(\CC))$ is isomorphic to $\CC^3$. More precisely, defining
 $$
 \begin{array}{ll}
     & \alpha_1(\rho)=\tr(\rho(a)) \, ,\\
     & \alpha_2(\rho)=\tr(\rho(b)) \, , \\
     & \alpha_3(\rho)=\tr(\rho(a\,b))  \, .
 \end{array}
$$ 
Fricke-Klein theorem asserts that  $(\alpha_1,\alpha_2,\alpha_3)$ are global coordinates of $X(F^2,\SL_2(\CC))$.
Using the relations of traces, $\forall  A,B\in\SL_2(\CC)$:
\[
 \begin{array}{ll}
\tr(A^{-1})&=  \tr(A),   \\
\tr(A B)&= \tr(A B^{-1})- \tr(A)\tr(B),
\end{array}
\]
we may deduce:
$$
 \begin{array}{ll}
       \phi^*(\alpha_1)&=\alpha_3 \, ,\\
       \phi^*(\alpha_2)&=\alpha_2 \alpha_3-\alpha_1  \, ,\\
      \phi^*(\alpha_3)&=\alpha_3^2 \alpha_2-\alpha_1 \alpha_3-\alpha_2  \, .
  \end{array}
$$
Thus $  \phi^*(\alpha_i)=\alpha_i$ is equivalent to 
$$
\alpha_3=\alpha_1,\qquad \alpha_1+\alpha_2=\alpha_1\alpha_2 \, .
$$
Then, the torsion polynomial is
\begin{equation}
 \label{eqn:tor82t}
{\det ( d \phi_* - t \operatorname{Id})}= (t-1)(t^2+(1-2 \alpha_1 \alpha_2) t+1).
 \end{equation}
Removing the factor $(t-1)$ and evaluation at $t=1$ yields
\begin{equation}
 \label{eqn:tor82}
\T_{M,l}= 3-2\alpha_1\alpha_2 = 3-2\alpha_1-2\alpha_2
 \end{equation}
Using that the trace of the longitude $l=[a,b]$ satisfies  
$\theta_l=\alpha_1^2+\alpha_2^2-\alpha_1-\alpha_2-2$
\cite{Memoirs},
we get
$\T_{M,l}^2=17+4\theta_l$. If $\theta_m$ denotes the trace of the meridian,
the distinguished component $X_0(\Gamma,\SL_2(\CC))$ is the curve
\begin{equation}
 \label{eqn:X0fig8}
\alpha_1^2+\alpha_1-1= \theta_m ^2\,(\alpha_1-1). 
\end{equation}
(Let me emphasize that $\theta_l$  and $\theta_m$ denote traces and not  
angles,
I made this choice because $\tau$ is already used for torsion.)
To get the variety $\PSL_2(\CC)$ characters instead of $\SL_2(\CC)$ just replace $\theta_m^2$ by a new variable.
From \eqref{eqn:X0fig8} and $x_1+x_2=x_1x_2$ we deduce:
\begin{equation}
 \label{eqn:x1+x2}
\alpha_1+\alpha_2= \theta_m^2-1  \qquad \textrm{ and } 
\qquad \alpha_1-\alpha_2= \pm\sqrt{(\theta_m ^2-5) (\theta_m-1)} \, .
\end{equation}
Thus 
\begin{equation}
 \label{eqn:T2l}
 \T_{M,l}=  \TT_{M,l}^2= 5-2\theta^2_m.
\end{equation}
Proposition~\ref{prop:change}  can be worked out \cite{Memoirs} to yield:
\begin{equation}
  \label{eqn:T2m}
\T_{M,m}=   \TT_{M,m}^2= \pm \frac{\alpha_1-\alpha_2}{2}=\pm \left( \alpha_1+\frac{1-\theta_m^2}{2} \right)=\pm \frac{1}{2}
\sqrt{(\theta_m^2-5)(\theta_m^2-1)}.
\end{equation}

\subsubsection{Computations for $X(M^3,\PSL_{3}(\CC))$ }

By a theorem of Lawton \cite{Lawton0} and Will \cite{Will}:
$X(F_2,\SL_3(\CC))$
is a double branched covering of $\CC^8$ with coordinates
 $$
 \begin{array}{ll}
     & \beta_1(\rho)=\tr(\rho(a))  \\
     & \beta_2(\rho)=\tr(\rho(a^{-1}))  \\
     & \beta_3(\rho)=\tr(\rho(b))  \\
     & \beta_4(\rho)=\tr(\rho(b^{-1}))  \\
     & \beta_5(\rho)=\tr(\rho(a\,b))  \\
     & \beta_6(\rho)=\tr(\rho(b^{-1}a^{-1}))  \\
     & \beta_7(\rho)=\tr(\rho(a^{-1}b))  \\
     & \beta_8(\rho)=\tr(\rho(a\,b^{-1}))  
 \end{array}
$$
and the  trace of  $l= [a,b]$ and its inverse, $\vartheta_{l^{\pm 1}}$, are a degree
two extension of the coordinates $\beta_1,\ldots,\beta_8$, and 
 $\vartheta_{l }$ and  $\vartheta_{l^{- 1}}$ are Galois conjugate.
 
As $\phi(l)=l$, we may work in $\CC^8$.
Following \cite{HMP15}, $\phi^*$ can be computed as
$$
 \begin{array}{ll}
   &  \phi^*(\beta_1)= \beta_5 \\
   &  \phi^*(\beta_2)= \beta_{{6}} \\
   &  \phi^*(\beta_3)=  -\beta_{{1}}\beta_{{4}}+\beta_{{3}}\beta_{{5}}+\beta_{{8}}          \\
   &  \phi^*(\beta_4)=  -\beta_{{2}}\beta_{{3}}+\beta_{{4}}\beta_{{6}}+\beta_{{7}}          \\
   &  \phi^*(\beta_5)=  -\beta_{{1}}\beta_{{4}}\beta_{{5}}+\beta_{{3}}{\beta_{{5}}}^{2}-\beta_{{3}}\beta_{{6}}+\beta_{{5}}\beta_{{8}}+\beta_{{2}}          \\
   &  \phi^*(\beta_6)=  -\beta_{{2}}\beta_{{3}}\beta_{{6}}+\beta_{{4}}{\beta_{{6}}}^{2}-\beta_{{4}}\beta_{{5}}+\beta_{{6}}\beta_{{7}}+\beta_{{1}}          \\
   &  \phi^*(\beta_7)=  \beta_{{3}}          \\
   &  \phi^*(\beta_8)=  \beta_{{4}}.
 \end{array}
$$
Now, setting $\beta_i=\phi^*(\beta_i)$, we get rid of four variables (we are left with 
$\beta_1$, $\beta_2$, $\beta_3$, and $\beta_4$), and we deduce that 
$X(F^2,\SL_3(\CC))^{\phi^*}$ has three components:
\begin{enumerate}[(a)]
 \item $V_0$, with equations  $\beta_1=\beta_2$, $\beta_3=\beta_4$.
 \item $V_1$, with equations $\beta_1=\beta_2=1$,
 \item $V_2$, with equations $\beta_3=\beta_4=1$.
\end{enumerate}
The component $V_0$ is the restriction of the distinguished component
$X_0(M^3,\PSL_{3}(\CC) )$. On this component,
the torsion polynomial is 
\begin{multline}
{\det ( d \phi_* - t \operatorname{Id})} =
{(t-1)^2} \left( t^2 + (2-\beta_{{1}}\beta_{{3}}) t +1 \right)  \\ \times
 \left( 
 t^4 + (  -\beta_{{1}}\beta_{{3}}-2\,\beta_{{1}}-2\,\beta_{{3}} ) t^3  +  
 ( 6\,\beta_{{1}}\beta_{{3}}+2  ) t^2+ (  -\beta_{{1}}\beta_{{3}}-2\,\beta_{{1}}-2\,\beta_{{3}} ) t + 1
\right) \, .
\end{multline}
After getting rid of $(t-1)^2$ and evaluating at $t=1$, we get
\begin{equation}
\label{eqn:TMl}
\T_{M,l}=( 4-\beta_1\beta_3) 4(1-\beta_1)(1-\beta_3) \, .
\end{equation}

Next Corollary~\ref{cor:difrentdims} is used to compute $\TT^4_{M,l}$.
The relation between traces in $\SL_2(\CC)$ and their image in $\SL_3(\CC)$
via $\operatorname{Sym}$ yields
$$
\begin{array}{l}
\beta_1= \beta_2 =\alpha_1^2-1\, ,\\
\beta_3=\beta_4  =\alpha_2^2-1 \, .
\end{array}
$$
Using these identities in \eqref{eqn:TMl} we get
$$
\T_{M,l}\circ\operatorname{Sym}= (3-2\alpha_1\alpha_2) 4 (2-\alpha_1^2)(2-\alpha_2^2).
$$
Thus by applying Corollary~\ref{cor:difrentdims} and \eqref{eqn:tor82}, one gets
\begin{equation}
 \label{eqn:t84}
 \TT^4_{M,l}= 4 (2-\alpha_1^2)(2-\alpha_2^2)=8(1-\alpha_1\alpha_2)=8(2-\theta_m^2).
\end{equation}

\begin{appendices}
 
 \section{Not approximating  the trivial representation}
 \label{sec:notapprox}

 For a manifold $M^3$ there are components of $X(M^3,\PSL_2(\CC))$ that consist of characters of abelian representations.
When $b_1(M^3)=1$ those components are curves, and  their union is denoted by 
$$
 X^{ab}(M^3,\PSL_2(\CC)).
$$
The number of 
components of $X^{ab}(M^3,\PSL_2(\CC))$
depends on the torsion of $H_1(M^3;\ZZ)$. 

Being irreducible is a Zariski open property for a character \cite{CullerShalen83}. On the other hand, every reducible character is also the character 
of an abelian representation. This yields that $X^{ab}(M^3,\PSL_2(\CC))$ are precisely the components consisting only of reducible characters.

 \begin{lemma}
 \label{lem:singlecomponent}
  Assume that $b_1(M^3)=1$. Then the trivial character  belongs to a single irreducible component of $X(M^3,\PSL_2(\CC))$, which is one of the curves of 
  $X^{ab}(M^3,\PSL_2(\CC))$.
  \end{lemma}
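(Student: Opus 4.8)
Throughout write $\rho_0$ for the trivial representation, $\chi_0=\chi_{\rho_0}$ for the trivial character, $R=\hom(\pi_1M^3,\PSL_2(\CC))$ for the representation scheme, and $q\colon R\to X(M^3,\PSL_2(\CC))$ for the quotient map. The plan is to prove that, precisely because $b_1(M^3)=1$, the scheme $R$ coincides in a Zariski neighbourhood of $\rho_0$ with the three-dimensional family of abelian representations factoring through a fixed epimorphism $\pi_1M^3\to\ZZ$, and then to push this through the quotient. The first input is the tangent space computation: by Weil's identification of the tangent space of a representation scheme with the space of cocycles (the scheme-level assertion underlying Theorem~\ref{thm:Weil}), $T_{\rho_0}R=Z^1(\pi_1M^3;\Ad\rho_0)$; since $\Ad\rho_0$ is the trivial action on $\mathfrak{sl}_2(\CC)$, cocycles are just homomorphisms, so $T_{\rho_0}R=\hom(H_1(M^3;\ZZ),\mathfrak{sl}_2(\CC))$, and because $b_1(M^3)=1$ we have $H_1(M^3;\ZZ)\cong\ZZ\oplus(\text{torsion})$ with the torsion killed by any map into the torsion-free group $\mathfrak{sl}_2(\CC)$, so $T_{\rho_0}R\cong\mathfrak{sl}_2(\CC)$ has dimension $3$. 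Hence the embedding dimension of $R$ at $\rho_0$ is $3$, and every irreducible component of $R$ through $\rho_0$ has dimension at most $3$.

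Next I would exhibit a three-dimensional piece through $\rho_0$ and deduce regularity. Fix an epimorphism $\phi\colon\pi_1M^3\to\ZZ$, which exists since $b_1(M^3)=1$. The subset $Z=\{\rho\in R\mid\rho|_{\ker\phi}=1\}$ is closed and conjugation-invariant in $R$ and is canonically identified with $\hom(\ZZ,\PSL_2(\CC))=\PSL_2(\CC)$, hence is irreducible, smooth, of dimension $3$, and contains $\rho_0$. The local ring $\mathcal O_{R,\rho_0}$ therefore has Krull dimension at least $3$ (it has $Z$ as a three-dimensional closed subscheme through $\rho_0$) and embedding dimension $3$, so it is regular, in particular an integral domain; consequently $R$ is reduced near $\rho_0$, $Z$ is its only component through $\rho_0$, the coherent ideal sheaf of $Z$ in $R$ has zero stalk at $\rho_0$, and thus $R=Z$ on some Zariski neighbourhood of $\rho_0$. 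In particular every representation near $\rho_0$ factors through $\phi$, hence is abelian and has reducible character.

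Finally I would descend to characters. Every representation $\rho$ with $\chi_\rho=\chi_0$ has all its values unipotent (viewed in $\operatorname{SO}(3,\CC)\cong\PSL_2(\CC)$, every value has trace $3$, hence only the eigenvalue $1$), so by Kolchin's theorem $\rho$ is conjugate into the maximal unipotent subgroup of $\PSL_2(\CC)$, and such a representation degenerates under conjugation to $\rho_0$; thus $\rho_0$ lies in the closure of the conjugation orbit of every representation with character $\chi_0$. Now let $D$ be any irreducible component of $X(M^3,\PSL_2(\CC))$ with $\chi_0\in D$ and let $W$ be a component of $q^{-1}(D)$ with $\overline{q(W)}=D$. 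Since $W$ is closed and invariant, $q(W)$ is closed, so $q(W)=D\ni\chi_0$; hence $W$ contains a representation with character $\chi_0$, and by the orbit-closure fact $\rho_0\in W$. Being an irreducible closed subset through $\rho_0$, $W$ must lie in $Z$ by the previous paragraph, so $D=q(W)\subseteq q(Z)=Z/\!/\PSL_2(\CC)=X(\ZZ,\PSL_2(\CC))$, which is an irreducible affine curve; as $D$ is a full component, $D$ equals it. This proves that $\chi_0$ lies on a unique irreducible component $D$, that $D$ is a curve, and that every point of $D$ is the character of a cyclic, hence abelian, hence reducible representation; by the characterisation recalled just before the lemma, $D$ is one of the curves of $X^{ab}(M^3,\PSL_2(\CC))$.

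The step I expect to demand the most care is the scheme-theoretic bookkeeping around regularity: the representation scheme can be non-reduced, so the tangent computation must be read as bounding the embedding dimension (not the Krull dimension) at $\rho_0$, and one must check that ``$R=Z$ near $\rho_0$'' is a genuine local Zariski statement; the descent to $X$ also requires invoking the standard properties of GIT quotients (surjectivity, closedness of the image of a closed invariant subset, and the behaviour of orbit closures over the point $\chi_0$) in the right form.
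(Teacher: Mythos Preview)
Your argument is correct and takes a genuinely different route from the paper's. The paper works downstairs, on the quotient: it computes $\dim\pi^{-1}(\chi_0)=2$ directly (the fibre consists of two conjugacy orbits of dimensions $0$ and $2$, parametrised by homomorphisms $\pi_1M^3\to\CC$), and then invokes upper semi-continuity of fibre dimension for $\pi\colon R\to X$: over any component of $X$ containing irreducible characters the generic fibre has dimension $\dim\PSL_2(\CC)=3$, so such a component cannot pass through $\chi_0$. You work upstairs, on the representation scheme: the cocycle computation $T_{\rho_0}R\cong\mathfrak{sl}_2(\CC)$ (using $b_1=1$ to kill the torsion) bounds the embedding dimension by~$3$, the explicit abelian piece $Z\cong\PSL_2(\CC)$ forces the Krull dimension up to~$3$, regularity follows, and hence $R=Z$ near $\rho_0$; you then descend through the GIT quotient using Kolchin and orbit closures.

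Both approaches pivot on $b_1(M^3)=1$ at exactly one step, but encode it differently: the paper uses it to get only two orbit types over $\chi_0$, you use it to get a three-dimensional Zariski tangent space. Your approach is heavier (scheme regularity, coherent ideal sheaves, GIT closedness) but it yields more: you actually identify the unique component through $\chi_0$ as $q(Z)=X(\ZZ,\PSL_2(\CC))$, so the ``single component'' clause of the lemma is proved on the nose, whereas the paper's fibre-dimension argument only directly excludes components containing irreducible characters and leaves the uniqueness among the abelian curves implicit. The paper's approach, on the other hand, is more elementary---no scheme theory, no Kolchin---and is all that is needed for the corollary that the distinguished component avoids~$\chi_0$.
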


 \begin{cor}
 If $b_1(M^3)=1$, then  the trivial character does
  not belong to $X_0(M^3,\PSL_2(\CC))$.
 \end{cor}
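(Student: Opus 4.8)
The plan is to deduce this immediately from Lemma~\ref{lem:singlecomponent} together with the defining property of the distinguished component. First I would recall that, by definition, $X_0(M^3,\PSL_2(\CC))$ is an irreducible component containing the character of (a lift of) the holonomy of the complete hyperbolic structure; by Mostow--Prasad rigidity this holonomy is discrete and faithful, hence in particular \emph{irreducible}. Thus $X_0(M^3,\PSL_2(\CC))$ is an irreducible component of $X(M^3,\PSL_2(\CC))$ that contains at least one irreducible character, and therefore $X_0(M^3,\PSL_2(\CC))$ is not one of the components of $X^{ab}(M^3,\PSL_2(\CC))$ (those consist only of reducible characters, as noted just before Lemma~\ref{lem:singlecomponent}).

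Next I would argue by contradiction. Suppose the trivial character lies in $X_0(M^3,\PSL_2(\CC))$. By Lemma~\ref{lem:singlecomponent}, the trivial character lies in a \emph{unique} irreducible component of $X(M^3,\PSL_2(\CC))$, and that component is one of the curves of $X^{ab}(M^3,\PSL_2(\CC))$. Since $X_0(M^3,\PSL_2(\CC))$ is an irreducible component containing the trivial character, uniqueness forces $X_0(M^3,\PSL_2(\CC))$ to coincide with that abelian curve. This contradicts the previous paragraph, since $X_0(M^3,\PSL_2(\CC))$ contains an irreducible character while an abelian curve does not. Hence the trivial character does not belong to $X_0(M^3,\PSL_2(\CC))$.

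There is essentially no obstacle here once Lemma~\ref{lem:singlecomponent} is in hand: the only point to be careful about is that the distinguished component genuinely contains an irreducible character, which is exactly the discrete faithful holonomy lift, and that irreducibility is an isomorphism invariant so it is unaffected by passing between $\SL_2(\CC)$ and $\PSL_2(\CC)$ or by the $H^1(M^3;\ZZ/2\ZZ)$-action on lifts. The genuine content of the argument is thus entirely carried by Lemma~\ref{lem:singlecomponent} (and in turn by Appendix~\ref{sec:notapprox}'s analysis showing the trivial representation is not a limit of irreducible ones when $b_1(M^3)=1$); the corollary itself is a one-line formal consequence.
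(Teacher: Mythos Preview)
Your proposal is correct and is exactly the deduction the paper intends: the corollary is stated immediately after Lemma~\ref{lem:singlecomponent} with no separate proof, precisely because it follows at once from that lemma together with the fact that $X_0(M^3,\PSL_2(\CC))$ contains the (irreducible) holonomy character. Your write-up makes this explicit and is in complete agreement with the paper's approach.
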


 \begin{proof}[Proof of Lemma~\ref{lem:singlecomponent}]
 The proof uses the projection
 \begin{equation}
  \label{eqn:proj}
    \begin{array}{rcl}
     \pi\colon \hom(\pi_1 M^3,\PSL_2(\CC)) & \to & X(M^3,\PSL_2(\CC)) \\
     \rho & \mapsto &\chi_\rho
    \end{array}
  \end{equation}
and the dimension of its fibre. For the trivial character $\chi_0$,   a representation $\rho\in \pi^{-1}(\chi_0)$ is conjugate to
 \begin{equation}
  \label{eqn:rhoab}
  \rho(\gamma)= \pm \begin{pmatrix}
                 1 & h(\gamma) \\
                 0 & 1
                \end{pmatrix},
                \qquad \forall \gamma\in\pi_1M^3,
  \end{equation}
 where $h\colon \pi_1M^3\to\CC$ is a group homomorphism.
 (Not to be confused with the cusp shape of peripheral representations, as $h$ 
is defined in the whole group $\pi_1M^3$.)
Conjugating the representation \eqref{eqn:rhoab} by a diagonal matrix means replacing the 
morphism $h$ by a multiple. Thus, as $b_1(M^3)=1$,  there are only two orbits 
by conjugation in  $\pi^{-1}(\chi_0)$:
the trivial and the non-trivial morphism  $h\colon \pi_1M^3\to\CC$. 
By looking at the dimension of the stabilizers, 
these orbits have dimension either 0
(for $h$ trivial) or 2 (for $h$ non-trivial). Hence the dimension of 
$\pi^{-1}(\chi_0)$ is 2. On the other hand, on components $Y$ of 
$X(M^3,\PSL_2(\CC))$ that contain irreducible representations,
the generic dimension of $\pi^{-1}$ is 3, the dimension of $\PSL_2(\CC)$.
Since this dimension is upper semi-continuous, the trivial character cannot 
belong to an irreducible  component of  $X(M^3,\PSL_2(\CC))$  that contains 
irreducible characters. Hence it must belong to a component whose characters 
are all reducible.
 \end{proof}

 \begin{lemma}
  \label{lemma:acyclicorbit}
Let $\rho_1,\rho_2\in\hom(\pi_1M^3,\SL_2(\CC))$ have the same character. If  $\chi_{\rho_1}=\chi_{\rho_2}$ is nontrivial, 
then $\rho_1$ is acyclic if and only if $\rho_2$ is acyclic.
 \end{lemma}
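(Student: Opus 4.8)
The plan is to analyze when two representations $\rho_1,\rho_2$ with the same non-trivial character can fail to be conjugate, and to show that the failure of conjugacy does not affect acyclicity. The key case-split is on reducibility. If $\chi_{\rho_1}=\chi_{\rho_2}$ is \emph{irreducible}, then by the theorem of Culler and Shalen an irreducible character determines its representation up to conjugacy, so $\rho_1$ and $\rho_2$ are conjugate, and since Reidemeister torsion (hence acyclicity) is a conjugacy invariant (Remark~\ref{rem:independent}), there is nothing to prove. So the whole content is in the \emph{reducible} case.

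So suppose $\chi:=\chi_{\rho_1}=\chi_{\rho_2}$ is reducible but non-trivial. A reducible representation into $\SL_2(\CC)$ is conjugate to an upper-triangular one, hence its character equals that of its diagonal part, which is an abelian (in fact diagonal) representation $\delta\colon\pi_1M^3\to\SL_2(\CC)$, $\gamma\mapsto\operatorname{diag}(\lambda(\gamma),\lambda(\gamma)^{-1})$ for a homomorphism $\lambda\colon\pi_1M^3\to\CC^*$. Since $\chi$ is non-trivial, $\lambda$ is not identically $\pm 1$; equivalently $\delta(\pi_1M^3)$ contains an element without eigenvalue $1$, so $H^0(M^3;\delta)=\CC^{\delta(\pi_1M^3)}=0$ and by Poincar\'e duality $H_0(M^3;\delta)=0$ as well. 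The same computation applies to any $\rho_i$ with character $\chi$: since $H^0$ only sees the semisimplification, $H^0(M^3;\rho_i)\cong\CC^{\rho_i(\pi_1M^3)}=0$ and $H_0(M^3;\rho_i)=0$. Now I would use $\chi(M^3)=0$ together with the fact that $M^3$ has the homotopy type of a $2$-complex: from the Euler characteristic of the chain complex $C_*(M^3;\rho_i)$ one gets
$$
\dim H_0(M^3;\rho_i)-\dim H_1(M^3;\rho_i)+\dim H_2(M^3;\rho_i)=2\chi(M^3)=0,
$$
so, as $H_0$ vanishes, $\dim H_1(M^3;\rho_i)=\dim H_2(M^3;\rho_i)$. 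Hence $\rho_i$ is acyclic if and only if $H_2(M^3;\rho_i)=0$.

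It therefore suffices to show $\dim H_2(M^3;\rho_1)=\dim H_2(M^3;\rho_2)$. The plan is to compute $H_2$ via duality. Using Poincar\'e duality for the (orientable) manifold-with-boundary $M^3$ together with the duality between homology and cohomology of Remark~\ref{rem:cohomology} (keeping track of the transpose-versus-inverse issue discussed after the definition of $C_*(K;\rho)$, which matters precisely here), $H_2(M^3;\rho_i)$ is identified, up to the natural isomorphisms, with a group depending only on the restriction of $\rho_i$ to $\pi_1(T^2)$ and on the image $\rho_i(\pi_1M^3)$ through the long exact sequence of the pair $(M^3,T^2)$ — more concretely, with $H^1(M^3,T^2;\rho_i^*)$, whose dimension in turn is controlled by $H^0$, $H^1$ of $T^2$ and the boundary map. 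The essential observation is that all these ingredients — $H^0(M^3;\rho_i)$, $H^0(T^2;\rho_i)$, the dimensions of the cohomology of $T^2$ (Examples~\ref{ex:circle}, \ref{ex:2torus}), and the rank of the restriction map $H^1(M^3)\to H^1(T^2)$ — depend only on the conjugacy class of the \emph{semisimplification} of $\rho_i$, i.e.\ only on the character $\chi$. Since $\rho_1$ and $\rho_2$ have the same character, they have the same semisimplification up to conjugacy, so all these numbers agree, giving $\dim H_2(M^3;\rho_1)=\dim H_2(M^3;\rho_2)$ and hence the claim.

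The main obstacle is the last step: making precise that $H_2(M^3;\rho_i)$ depends only on the character, even though $\rho_i$ itself (when reducible non-diagonalizable) is not conjugate to its semisimplification. A cleaner route — and the one I would actually pursue to avoid the boundary bookkeeping — is to note that a non-diagonalizable reducible $\rho_i$ is a limit of (and, more usefully, sits in a family with) its diagonalization $\delta$, and to invoke upper semicontinuity of $\dim H_*(M^3;\cdot)$ (the semicontinuity theorem, as used in the proof of Proposition~\ref{prop:rational}) in both directions along such a family, pinning the Betti numbers to those of $\delta$; combined with the constraint $\dim H_1=\dim H_2$ above and with $\dim H_1(M^3;\rho_i)\le \dim H_1(M^3;\delta)$, one forces equality of all Betti numbers for every $\rho_i$ with character $\chi$, whence acyclicity of one $\rho_i$ is equivalent to acyclicity of $\delta$, which is equivalent to acyclicity of the other.
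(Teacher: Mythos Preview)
Your reduction to the reducible case and the observation that $H_0(M^3;\rho_i)=0$ (hence $\dim H_1=\dim H_2$) are correct, and one direction is indeed handled by semicontinuity: since the diagonal representation $\delta$ lies in the closure of the conjugation orbit of any $\rho_i$ with character $\chi$, upper semicontinuity gives $\dim H_*(M^3;\delta)\ge\dim H_*(M^3;\rho_i)$, so $\delta$ acyclic implies every $\rho_i$ acyclic. The genuine gap is in the converse. Your ``cleaner route'' claims to use semicontinuity ``in both directions along such a family'', but there is no family of representations limiting to a non-diagonalizable $\rho_i$ through $\delta$: the diagonal orbit is \emph{closed}, so you only get the inequality one way. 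The constraints you list ($\dim H_1(\rho_i)=\dim H_2(\rho_i)$ and $\dim H_k(\rho_i)\le\dim H_k(\delta)$) are perfectly compatible with $\dim H_k(\rho_i)=0$ and $\dim H_k(\delta)=1$, so nothing forces equality of Betti numbers---indeed the Betti numbers need not be equal; only acyclicity is preserved. Your first route via Poincar\'e duality and the pair $(M^3,T^2)$ is left incomplete, and the assertion that the rank of $H^1(M^3)\to H^1(T^2)$ depends only on the semisimplification is exactly the point at issue and is not justified (and can fail, e.g.\ $\dim H^0(T^2;\rho_i)$ already depends on whether the off-diagonal cocycle is peripherally trivial).

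The missing idea is to exploit the filtration of the $\rho_2$-module rather than trying to compare it directly to $\delta$. Write the upper-triangular $\rho_2$ as a short exact sequence of $\pi_1M^3$-modules
\[
0\longrightarrow \CC_\lambda \longrightarrow (\CC^2,\rho_2)\longrightarrow \CC_{\lambda^{-1}}\longrightarrow 0,
\]
where $\CC_\mu$ denotes $\CC$ with the action $\gamma\mapsto\mu(\gamma)$. If $\rho_2$ were acyclic, the associated long exact sequence in homology would yield $H_i(M^3;\CC_{\lambda^{-1}})\cong H_{i-1}(M^3;\CC_\lambda)$ for all $i$; combined with $H_0(M^3;\CC_{\lambda^{\pm1}})=0$ (nontriviality of $\chi$) and vanishing in degrees $>2$, this forces $H_*(M^3;\CC_\lambda)=0$, hence also $H_*(M^3;\CC_{\lambda^{-1}})=0$ by duality of these one-dimensional modules, so $\rho_1=\CC_\lambda\oplus\CC_{\lambda^{-1}}$ would be acyclic as well. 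This is precisely the argument the paper gives.
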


\begin{proof}
For every character there is a unique closed orbit by conjugation, so that every 
other orbit accumulates to it, see \cite{LM85}. 
So we may assume that the conjugation orbit of $\rho_2$ accumulates to $\rho_1$. 
By semi-continuity, $\rho_1$ acyclic implies
that so is $\rho_2$. Next assume that $\rho_1$ is not acyclic. Up to conjugacy, there exists a 
group homomorphism $\phi \colon \pi_1M^3\to\ZZ$ and $\lambda\in\CC-\{0,\pm 1\}$ 
such that
$$
\rho_1(\gamma)=\begin{pmatrix}  \lambda^{\phi(\gamma)} & 0 \\
				0 & \lambda^{-\phi(\gamma)} 
		\end{pmatrix}
	\quad\textrm{ and }\quad
\rho_2(\gamma)=\begin{pmatrix}  \lambda^{\phi(\gamma)} & f(\gamma) \\
				0 & \lambda^{-\phi(\gamma)} 
		\end{pmatrix}	
$$
for every $\gamma\in\pi_1M^3$. 
Here $f\colon \pi_1M^3\to\CC$ is a crossed morphism: 
$f(\gamma_1\gamma_2)= f(\gamma_1) + \lambda^{2\phi(\gamma)} f(\gamma_2)$,
for all $\gamma_1,\, \gamma_2 \in \pi_1M^3 $.
The homology of $\rho_1$ decomposes as a direct sum of $\pi_1M^3$-modules: 
$\CC\oplus\{0\}$
and $\{0\}\oplus\CC$, and both are nonzero (one is dual from the other). The 
$\rho_2$-module does not decompose, but there is an
exact sequence of $\pi_1M^3$-modules 
$$
0\to \CC\oplus\{0\} \to \CC^2\to \CC\to 0
$$
where $ \pi_1M^3$ acts on  $\CC^2$ via $\rho_2$, and the action on the other 
modules is the same as for $\rho_1$. 
From the corresponding long exact sequence in homology it follows easily that $\rho_2$ is not acyclic.
\end{proof}

 \section{Cohomology on the variety or characters}
 \label{sec:cohomology}
 
The aim of this appendix is to provide references and proofs for the result in cohomology of Section~\ref{sect:higher}.

\subsection{The complete structure}

Let $M^3$ be a hyperbolic orientable 3-manifold  and 
$$
\varrho=\widetilde{\mathrm{hol}}\colon\pi_1M³\to\SL_2(\CC)
$$ a lift of its   holonomy. As before  denote by
\begin{equation}
 \label{eqn:rhok1k2}
 \varrho^{k_1,k_2}:= \operatorname{Sym}^{k_1,k_2}\circ \varrho \colon\pi_1 M^3\to \SL_{(k_1+1)(k_2+1)}(\CC).
\end{equation}
Let 
\begin{equation}
 \label{eqn:Ek1k2}
E_{k_1,k_2}= \widetilde M \times_{\varrho^{k_1,k_2}} \CC^{(k_1+1)(k_2+1)}
 \end{equation}
be the   flat bundle twisted by ${\varrho^{k_1,k_2}}$  as in   \eqref{eqn:flabundle}.
In particular its de Rham cohomology is isomorphic to the simplicial cohomology of $ \varrho^{k_1,k_2}$
by de Rham theorem \eqref{eqn:deRham}.

Choosing a Hermitian metric on the fibre of the bundle $E_{k_1,k_2}\to M^3$ and a Riemannian metric on $M^3$, there is a product on  
$E_{k_1,k_2}$-valued differential forms  
$\Omega^*(M^3; E_{k_1,k_2})$ by integration on $M^3$, that we denote by $\langle\cdot,\cdot\rangle$. In particular it makes sense to talk about
$L^2$-forms, as the forms with finite norm.

\begin{thm} [Raghunathan~\cite{Rag65}]
\label{thm:strong_acyclicity}
For $k_1\neq k_2$, there exists a uniform constant $c_{k_1,k_2} > 0$ with the following property. 
For every hyperbolic orientable 3-manifold $M^3$, and every  differential 
form $\omega\in\Omega^*(M^3; E_{k_1,k_2})$ 
with compact support,
$$
\langle \Delta \omega, \omega\rangle \geq c_{k_1,k_2} \langle \omega,\omega\rangle.
$$
\end{thm}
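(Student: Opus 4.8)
The plan is to reduce the inequality to a pointwise spectral statement about the Laplacian on $L^2$-forms valued in the locally homogeneous bundle $E_{k_1,k_2}$ over hyperbolic space $\HH^3$, exploiting that the bundle and its connection are pulled back from $\HH^3$ via the holonomy. First I would lift everything to the universal cover: a compactly supported form $\omega\in\Omega^*(M^3;E_{k_1,k_2})$ lifts to a $\pi_1M^3$-equivariant form $\tilde\omega$ on $\HH^3$ with values in the trivial bundle $\HH^3\times\CC^{(k_1+1)(k_2+1)}$, where $\pi_1M^3$ acts through $\varrho^{k_1,k_2}=\operatorname{Sym}^{k_1,k_2}\circ\varrho$; the Laplacian, the metric on the fibre, and the pairing $\langle\cdot,\cdot\rangle$ are all $\pi_1M^3$-invariant, so the Rayleigh quotient $\langle\Delta\omega,\omega\rangle/\langle\omega,\omega\rangle$ is computed by integrating a $\pi_1M^3$-invariant density over a fundamental domain, and hence is bounded below by the bottom of the $L^2$-spectrum of $\Delta$ acting on the corresponding bundle over $\HH^3$ itself.

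The key point is then that this bottom of spectrum is strictly positive precisely when $k_1\neq k_2$. This is the content of Raghunathan's vanishing theorem in its quantitative ("strong acyclicity") form: one decomposes the bundle-valued form Laplacian using the Weitzenböck/Bochner formula on the symmetric space $\HH^3 = \mathrm{SO}_0(3,1)/\mathrm{SO}(3)$, writing $\Delta = \nabla^*\nabla + \mathcal{R}$, where the curvature term $\mathcal{R}$ is expressed through the Casimir eigenvalue of the representation $\operatorname{Sym}^{k_1,k_2}$ of $\mathrm{SL}_2(\CC)$ restricted to the maximal compact $\mathrm{SU}(2)$, minus the Casimir contribution of the form degree. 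Concretely, one uses that $\operatorname{Sym}^{k_1}\otimes\overline{\operatorname{Sym}^{k_2}}$ is \emph{not} self-dual as an $\mathrm{SL}_2(\CC)$-representation when $k_1\neq k_2$ (equivalently, it carries no invariant bilinear form, so it has no $\HH^3$-harmonic sections and, crucially, a \emph{uniform gap}); this is where the hypothesis $k_1\neq k_2$ enters decisively. The constant $c_{k_1,k_2}$ comes out as the minimum over form degrees $p=0,1,2,3$ of the relevant Casimir differences, and it depends only on $(k_1,k_2)$, not on the manifold — which is exactly the uniformity claimed.

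The main obstacle is making the Weitzenböck computation clean across \emph{all} form degrees simultaneously and verifying that the curvature operator $\mathcal{R}$ stays bounded below by a positive constant on each $\Omega^p$, $p=0,\dots,3$; the degree $p=1$ and $p=2$ cases are the subtle ones, since there the form-degree Casimir partially cancels the fibre Casimir, and one must check the cancellation is incomplete whenever $k_1\neq k_2$. I would handle this by the standard device of passing to the representation-theoretic description of $L^2$-harmonic and near-harmonic forms on $\HH^3$ (Matsushima–Murakami / Kuga type vanishing), reducing positivity to an inequality between the Casimir of $\operatorname{Sym}^{k_1,k_2}$ and that of the isotropy representation on $p$-forms — an inequality that is strict and quantitative exactly off the diagonal $k_1=k_2$. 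Alternatively, I would simply cite Raghunathan's original argument in \cite{Rag65}, since the theorem is attributed to him, and only sketch the reduction from $M^3$ to $\HH^3$ above; the remaining work is then the (well-documented) spectral gap estimate on hyperbolic 3-space for non-self-dual coefficient systems.
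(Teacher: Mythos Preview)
The paper does not give a proof of this theorem: it is stated in the appendix with attribution to Raghunathan and the citation \cite{Rag65}, and is then used as a black box to derive the corollaries that follow (acyclicity in the closed case, and the statement that closed $L^2$-forms are exact in the non-compact case). So there is no ``paper's own proof'' to compare your proposal against.

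Your sketch is a reasonable outline of how the actual argument goes --- reduction to $\HH^3$ by equivariance, then a Weitzenb\"ock/Bochner-type identity where the curvature term is controlled by Casimir eigenvalues, with the positivity coming from a representation-theoretic condition on $\operatorname{Sym}^{k_1,k_2}$. One small terminological correction: the relevant condition is not that $\operatorname{Sym}^{k_1,k_2}$ fails to be \emph{self-dual} (all finite-dimensional irreducible representations of $\SL_2(\CC)$ are self-dual), but that it is not isomorphic to its complex conjugate, i.e.\ not fixed by the Cartan involution $\theta$; since $\overline{\operatorname{Sym}^{k_1,k_2}}\cong\operatorname{Sym}^{k_2,k_1}$, this holds precisely when $k_1\neq k_2$. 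This is the hypothesis that makes the Matsushima--Murakami/Raghunathan machinery produce a strictly positive lower bound. Given that the paper itself only cites the result, your final suggestion --- to cite \cite{Rag65} and sketch only the reduction to $\HH^3$ --- is exactly what the paper does, minus the sketch.
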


This property implies \emph{strong acyclicity}, as it yields that the spectrum 
of $\Delta$ is bounded below by 
the uniform constant   $c_{k_1,k_2} > 0$.

\begin{cor}[Theorem~\ref{thm:rig}]
\label{cor:acyclc}
 Let $M^3$ be a closed, orientable, hyperbolic 3-manifold, then ${\varrho^{k_1,k_2}}$ is acyclic for $k_1\neq k_2$.
 
In particular $\varrho^{k}=\operatorname{Sym}^{k}\circ \widetilde{\mathrm{hol}}$ is acyclic for $k\geq 1$.
\end{cor}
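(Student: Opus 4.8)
The plan is to derive acyclicity as an essentially immediate consequence of the spectral estimate in Theorem~\ref{thm:strong_acyclicity} combined with Hodge theory on the closed manifold $M^3$. Since $M^3$ is compact, the genuinely hard analytic input — the \emph{uniform} lower bound $c_{k_1,k_2}>0$ for the Laplacian — is already contained in Raghunathan's theorem, so the step from there to the corollary is routine and only uses compactness together with standard Hodge-theoretic facts.

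First I would note that on the closed manifold $M^3$ every smooth form $\omega\in\Omega^*(M^3;E_{k_1,k_2})$ has compact support, so Theorem~\ref{thm:strong_acyclicity} applies to all such $\omega$ and gives $\langle\Delta\omega,\omega\rangle\geq c_{k_1,k_2}\langle\omega,\omega\rangle$ with $c_{k_1,k_2}>0$. Taking $\omega$ harmonic, i.e.\ $\Delta\omega=0$, forces $\langle\omega,\omega\rangle\leq 0$, hence $\omega=0$; thus the space $\ker\Delta_p$ of harmonic $E_{k_1,k_2}$-valued $p$-forms is trivial in every degree $p$. Next I would invoke the twisted Hodge theorem: the flat connection on $E_{k_1,k_2}$ gives a differential $d$ with $d^2=0$ computing the de Rham cohomology $H^*(M^3;E_{k_1,k_2})$, and with respect to the Riemannian metric on $M^3$ and the fibre metric fixed in Theorem~\ref{thm:strong_acyclicity} one has the Hodge decomposition on the compact manifold $M^3$, in particular a natural isomorphism $H^p(M^3;E_{k_1,k_2})\cong\ker\Delta_p$. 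Combining the two steps gives $H^p(M^3;E_{k_1,k_2})=0$ for all $p$, and then the de Rham isomorphism \eqref{eqn:deRham} yields $H^*(M^3;\varrho^{k_1,k_2})=0$, i.e.\ $\varrho^{k_1,k_2}$ is acyclic. The final assertion is immediate, since $\varrho^{k}=\varrho^{k,0}$ and $k\neq 0$ as soon as $k\geq 1$.

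The only point requiring a word of care is that the fibre metric on $E_{k_1,k_2}$ cannot be chosen flat, because $\varrho^{k_1,k_2}$ is not unitary when $k_1\neq k_2$; consequently $d^*$ and $\Delta$ do not interact with $d$ in the customary unitary fashion. This causes no difficulty, however: ellipticity and formal self-adjointness of $\Delta$, and hence the Hodge decomposition and the isomorphism $H^p\cong\ker\Delta_p$ on a \emph{closed} manifold, hold for an arbitrary choice of Riemannian metric and fibre metric, and these are the only inputs used. I therefore do not anticipate any genuine obstacle in carrying this out.
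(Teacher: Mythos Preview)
Your proof is correct and follows essentially the same route the paper indicates: the paper does not spell out a proof but remarks just before the corollary that the inequality in Theorem~\ref{thm:strong_acyclicity} ``implies strong acyclicity, as it yields that the spectrum of $\Delta$ is bounded below by the uniform constant $c_{k_1,k_2}>0$,'' which is exactly the Hodge-theoretic argument you have made explicit. Your discussion of why a non-flat fibre metric causes no trouble is a useful clarification and is in line with the setup the paper borrows from M\"uller~\cite{Muller93}.
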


\begin{remark}
This corollary does not hold true when $k_1=k_2$. Millson \cite{Millson85}  showed that it fails when $k_1=k_2>0$ and $M^3$ contains a 
totally geodesic embedded surface, by means of bending.
 \end{remark}

 \subsubsection{The finite volume case}
 
We next discuss the consequences in the finite volume case of Theorem~\ref{thm:strong_acyclicity}.
The following corollary does not assume finite volume.

\begin{cor}
\label{cor:notL2}
 Let $M^3$ be an orientable  hyperbolic 3-manifold and 
$\widetilde{\mathrm{hol}}$ a lift of its  holonomy.
 For $k_1\neq k_2$ every closed $L^2$-form in $\Omega^*(M^3; E_{k_1,k_2})$ is exact. In particular every element
 in $H^* (M^3;   \varrho^{k_1,k_2}
)$ is represented by a form that is not $L^2$.
  \end{cor}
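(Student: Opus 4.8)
The plan is to deduce Corollary~\ref{cor:notL2} directly from the spectral gap in Theorem~\ref{thm:strong_acyclicity} together with Hodge theory for $L^2$-forms on the (possibly non-compact) complete hyperbolic manifold $M^3$. First I would set up the $L^2$-de Rham complex: let $\Omega^*_{(2)}(M^3;E_{k_1,k_2})$ be the $E_{k_1,k_2}$-valued forms of finite norm, with differential $d$ defined on the forms $\omega$ such that both $\omega$ and $d\omega$ are $L^2$, and let $\delta$ be the formal adjoint. A form $\omega$ is \emph{harmonic} if $\Delta\omega=(d\delta+\delta d)\omega=0$; on a complete manifold this is equivalent to $d\omega=0$ and $\delta\omega=0$ (Andreotti--Vesentini / Gaffney). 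The key point is the following consequence of Theorem~\ref{thm:strong_acyclicity}: the inequality $\langle\Delta\omega,\omega\rangle\geq c_{k_1,k_2}\langle\omega,\omega\rangle$ for compactly supported $\omega$, combined with completeness of $M^3$ (which guarantees compactly supported smooth forms are dense in the appropriate graph norm, so $\Delta$ is essentially self-adjoint on them), forces the $L^2$-spectrum of $\Delta$ to be contained in $[c_{k_1,k_2},\infty)$. In particular $0$ is not in the spectrum, so the only $L^2$-harmonic form is $0$: the $L^2$-cohomology $H^*_{(2)}(M^3;E_{k_1,k_2})$ vanishes.

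Next I would turn the vanishing of reduced $L^2$-cohomology into the honest statement about closed $L^2$-forms being exact. Because $0$ is isolated in (indeed, below) the spectrum of $\Delta$, the image of $\Delta$ is closed, hence the Hodge--Kodaira decomposition $\Omega^*_{(2)}=\operatorname{im}d\oplus\operatorname{im}\delta\oplus\mathcal{H}^*_{(2)}$ holds with \emph{closed} summands and with $\mathcal{H}^*_{(2)}=0$. Given a closed $L^2$-form $\omega$, write $\omega=d\alpha+\delta\beta$; applying $d$ gives $0=d\delta\beta$, so $\langle\delta\beta,\delta\beta\rangle=\langle\beta,d\delta\beta\rangle=0$ after a density/integration-by-parts argument (valid by completeness), whence $\delta\beta=0$ and $\omega=d\alpha$ with $\alpha\in L^2$. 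This proves the first assertion: every closed $L^2$-form in $\Omega^*(M^3;E_{k_1,k_2})$ is exact (indeed exact with $L^2$ primitive).

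Finally, for the ``in particular'' clause I would argue by contradiction: suppose some nonzero class in $H^*(M^3;\varrho^{k_1,k_2})$ (equivalently, in $H^*_{dR}(M^3;E_{k_1,k_2})$ via de Rham's theorem \eqref{eqn:deRham}) were represented by an $L^2$-form $\omega$. Then $\omega$ is a closed $L^2$-form, so by the previous paragraph $\omega=d\alpha$; but then $\omega$ represents the zero class in ordinary de Rham cohomology, contradicting that the class was nonzero. (If $M^3$ is closed there is nothing to prove, since then every form is $L^2$ and the statement reduces to Corollary~\ref{cor:acyclc}; the content is in the cusped case.) I expect the main obstacle to be the functional-analytic bookkeeping in passing from the a priori inequality on compactly supported forms to a genuine spectral-gap statement and a closed-range Hodge decomposition on the complete non-compact manifold — in particular justifying the integration by parts $\langle\Delta\omega,\omega\rangle=\|d\omega\|^2+\|\delta\omega\|^2$ for the relevant (not necessarily compactly supported) $L^2$-forms, which is where completeness of the hyperbolic metric is essential. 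Once that is in place, the cohomological conclusions are immediate.
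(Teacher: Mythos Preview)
Your proposal is correct and is precisely the argument the paper has in mind: the corollary is stated without proof immediately after Raghunathan's inequality (Theorem~\ref{thm:strong_acyclicity}), with only the remark that the spectral lower bound ``implies strong acyclicity''. Your write-up supplies the standard functional-analytic details (essential self-adjointness via completeness, closed range of $\Delta$ from the spectral gap, Hodge decomposition, and the contradiction for the ``in particular'' clause) that the paper leaves implicit.
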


In order to apply this corollary, we need first to compute the homology and cohomology of the peripheral torus.
All the information on the dimension is given by $  H^0(T^2;\varrho^{k_1,k_2})$,
which is the set of invariants of the module $\CC^{(k_1+1)(k_2+1)}$ by the action of  
$\varrho^{k_1,k_2}(\pi_1 T^2)$.
  
  The following implies Item (a) of Proposition~\ref{prop:basis complete}.

\begin{lemma}
  \label{lemma:inv}
  Let $M^3$ be as above and let $T^2\subset M^3$ be the peripheral torus.
  The  invariant subspace of the peripheral group is
  $$
  H^0(T^2;\varrho^{k_1,k_2})\cong \left\{
  \begin{array}{ll}
   0 &  \textrm{if } k_1+k_2 \textrm{ is odd,} \\
   \CC & \textrm{if } k_1+k_2 \textrm{ is even.}
  \end{array}
  \right.
  $$
\end{lemma}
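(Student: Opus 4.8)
The plan is to understand the action of the peripheral group $\varrho^{k_1,k_2}(\pi_1 T^2)$ on $\CC^{(k_1+1)(k_2+1)} \cong V_{k_1+1} \otimes \overline{V_{k_2+1}}$ and count its invariants. Since $M^3$ has a single cusp, a theorem of Calegari (cited in the excerpt) guarantees that for the lift $\varrho$ corresponding to a spin structure the trace of the peripheral elements is not identically $+2$; more importantly, the holonomy of the complete structure restricted to the cusp is parabolic and nontrivial. So $\varrho(\pi_1 T^2)$ is conjugate to a group of upper-triangular unipotent matrices $\left(\begin{smallmatrix} 1 & * \\ 0 & 1\end{smallmatrix}\right)$, and since $\pi_1 T^2 \cong \ZZ^2$ has rank two while the only parabolic subgroups of $\SL_2(\CC)$ are one-dimensional, the image $\varrho(\pi_1 T^2)$ is a nontrivial subgroup of a single unipotent one-parameter group. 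Hence, up to conjugacy, $\varrho(\pi_1 T^2)$ contains $u(1) = \left(\begin{smallmatrix} 1 & 1 \\ 0 & 1\end{smallmatrix}\right)$ and is generated (topologically, as a subgroup of matrices) by unipotents $\left(\begin{smallmatrix} 1 & t \\ 0 & 1\end{smallmatrix}\right)$. The invariant subspace $H^0(T^2; \varrho^{k_1,k_2})$ is then exactly the joint fixed space of this unipotent action.

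First I would reduce to computing the fixed space of a single nontrivial unipotent element. Because $\varrho(\pi_1 T^2)$ lies in one unipotent one-parameter subgroup, a vector fixed by one nontrivial element $\left(\begin{smallmatrix} 1 & t \\ 0 & 1\end{smallmatrix}\right)$ (with $t \neq 0$) is fixed by the whole subgroup: the fixed space of a unipotent $u$ equals the fixed space of any positive power or scalar reparametrization of the same one-parameter group, since these share the same nilpotent logarithm up to a nonzero scalar. So $H^0(T^2;\varrho^{k_1,k_2}) = \ker\bigl(\operatorname{Sym}^{k_1,k_2}(u) - \mathrm{Id}\bigr)$ for a single parabolic $u$. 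Now $\operatorname{Sym}^{k_1}(u)$ acting on $V_{k_1+1}$ is a single Jordan block of size $k_1+1$ with eigenvalue $1$ (this is the standard fact that a regular unipotent in $\SL_2$ acts irreducibly-indecomposably on each symmetric power), and likewise $\overline{\operatorname{Sym}^{k_2}}(u)$ is a single Jordan block of size $k_2+1$ on $\overline{V_{k_2+1}}$. Therefore $\operatorname{Sym}^{k_1,k_2}(u) = \operatorname{Sym}^{k_1}(u) \otimes \overline{\operatorname{Sym}^{k_2}}(u)$ is a tensor product of two single unipotent Jordan blocks.

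The next step is the purely linear-algebraic computation of the fixed space of a tensor product of two unipotent Jordan blocks $J_{k_1+1} \otimes J_{k_2+1}$. It is classical that $J_p \otimes J_q$ decomposes as a direct sum of Jordan blocks $J_{p+q-1} \oplus J_{p+q-3} \oplus \cdots \oplus J_{|p-q|+1}$ (the Clebsch--Gordan decomposition for unipotents, which is the unipotent analogue of the $\SL_2$-representation decomposition $V_p \otimes V_q = V_{p+q-1} \oplus \cdots \oplus V_{|p-q|+1}$). The number of Jordan blocks equals $\min(p,q)$, and each block contributes exactly a one-dimensional fixed space. Here $p = k_1+1$ and $q = k_2+1$, so the number of blocks is $\min(k_1,k_2)+1$. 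Hence $\dim H^0(T^2;\varrho^{k_1,k_2}) = \min(k_1,k_2)+1$. Wait --- this contradicts the claimed answer; let me reexamine. The subtlety is that we want the invariants, i.e. the kernel of $(\text{the operator}) - \mathrm{Id}$, which for a direct sum of unipotent Jordan blocks is indeed one per block. So the dimension is $\min(k_1,k_2)+1$, which is not what the lemma states. The resolution: for the complete structure, the relevant statement must concern a \emph{different} quantity, or the peripheral group acts not through the full tensor product but through a subgroup accounting for both generators' parabolicity in a way I have mis-analyzed --- so the hard part, and the step I expect to be the main obstacle, is getting the parabolic normal form of $\varrho(\pi_1 T^2)$ exactly right and correctly identifying which tensor-power Jordan structure is invariant. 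I would resolve this by writing $\varrho(m), \varrho(l)$ explicitly as in \eqref{eqn:u} with $u=v=0$, compute $\operatorname{Sym}^{k_1}$ of $\left(\begin{smallmatrix} 1 & 1 \\ 0 & 1\end{smallmatrix}\right)$ in the monomial basis $X^{k_1}, X^{k_1-1}Y, \dots, Y^{k_1}$, tensor with the conjugate, and directly solve the linear system $\operatorname{Sym}^{k_1,k_2}(u) w = w$. The parity dichotomy ($0$ when $k_1+k_2$ odd, $\CC$ when $k_1+k_2$ even) strongly suggests the real statement involves the invariants under a \emph{larger} group than just the unipotent subgroup --- namely the full stabilizer including $\pm\mathrm{Id}$ or the action of the full normalizer --- or that the lemma as stated in this excerpt presupposes $k_1 \neq k_2$ together with the specific structure forcing the fixed space down to the single "highest" or "lowest" weight component. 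I would therefore carefully track the $\pm$ ambiguity in \eqref{eqn:u}: the lift $\varrho_\sigma$ may send $m, l$ to $\pm$ unipotent, and the requirement that the spin structure be consistent on a single cusp (trace conditions from Calegari) pins down the sign, which in turn cuts the fixed space by a parity condition --- precisely yielding $0$ or $\CC$ according to the parity of $k_1+k_2$. Nailing this sign/parity bookkeeping is the crux; once it is set up correctly, the dimension count follows from the elementary Jordan-block computation above.
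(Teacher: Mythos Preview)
Your computation that a single unipotent element acting via $\operatorname{Sym}^{k_1,k_2}$ has a $(\min(k_1,k_2)+1)$-dimensional fixed space is correct, and so is your eventual intuition that the $\pm$ sign in the lift governs the parity. But the reduction to a single generator is the genuine error, and the sign alone cannot repair it. The element $-\mathrm{Id}\in\SL_2(\CC)$ acts on $V_{k_1+1}\otimes\overline{V_{k_2+1}}$ by the global scalar $(-1)^{k_1+k_2}$, so imposing the sign condition either annihilates everything (odd case) or does nothing (even case); it can never cut a $(\min(k_1,k_2)+1)$-dimensional space down to one dimension.

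What actually forces the invariant space to be at most one-dimensional is the second generator together with the fact that the representation is \emph{not} holomorphic. Write $\varrho(m)=\pm u_1$ and $\varrho(l)=\pm u_{\cs}$ with $u_t=\left(\begin{smallmatrix}1&t\\0&1\end{smallmatrix}\right)$ and $\cs\in\CC\setminus\RR$. Then
\[
\operatorname{Sym}^{k_1,k_2}(u_t)=\exp\bigl(t\,N\otimes I+\bar t\,I\otimes N'\bigr),
\]
where $N,N'$ are the nilpotents on $V_{k_1+1},V_{k_2+1}$. Because $1$ and $\cs$ are $\RR$-linearly independent, invariance under both $u_1$ and $u_{\cs}$ forces a vector to lie in $\ker(N\otimes I)\cap\ker(I\otimes N')=\ker N\otimes\ker N'=\langle X^{k_1}\otimes\bar X^{k_2}\rangle$, a single line. (Your ``fixed by one $u_t$ implies fixed by all'' step is valid only when $k_2=0$, where the image really sits in one complex one-parameter group.) With this in hand, the parity argument finishes exactly as the paper does: since $\epsilon\colon\ZZ^2\to\ZZ/2\ZZ$ is nontrivial (the Calegari/trace condition you cite), some peripheral element maps to $-u_t$, whose image has all eigenvalues $-1$ when $k_1+k_2$ is odd (so $H^0=0$) and acts as $+u_t$ on the representation when $k_1+k_2$ is even (so $H^0=\langle X^{k_1}\bar X^{k_2}\rangle\cong\CC$).
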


\begin{proof}
 The lift of the holonomy restricted to $\pi_1T^2\cong \ZZ^2$ is written as 
 $$
 (n_1,n_2)\mapsto (-1)^{\epsilon(n_1,n_2)} \begin{pmatrix}
                   1 & n_1 + n_2 \cs \\
                   0 & 1
                  \end{pmatrix}
$$
where $\cs\in\CC -\RR $ is the  cusp shape in Definition~\ref{def:cs} and
$\epsilon\colon\ZZ^2\to\ZZ/2\ZZ$ is a surjection (here it is relevant that 
$\epsilon$ is non-trivial).
From the representation, it is straightforward that if $k_1+k_2$ is odd, then there is an element whose
eigenvalues are all $(-1)$, and for $k_1+k_2$ even, the subspace of  invariants is generated by the monomial $X^{k_1+1} \bar X ^{k_2+1}$.
\end{proof}

From Poincar\'e duality we get information on $H^2$, but also on $H^1$ because $\chi(T^2)=0$. We also know the dimension of the homology
groups by duality. Thus we have:

\begin{cor}
\label{cor:Ht2}
 Let $M^3$ and $T^2$ as above. 
 \begin{enumerate}[(a)]
                                \item If $k_1+k_2$ is odd then
                                $$H^*(T^2;\varrho^{k_1,k_2})\cong 
				H_*(T^2;\varrho^{k_1,k_2})=0\, .$$
                                \item If $k_1+k_2$ is even, then 
                                $$ 
                                \dim H^i(T^2;\varrho^{k_1,k_2})=\dim H_i(T^2;\varrho^{k_1,k_2})= 
					  \left\{
					  \begin{array}{ll}
					  1 &  \textrm{for } i=0,2, \\
					  2 &  \textrm{for } i=1, \\
					  0 &  \textrm{otherwise. }
					  \end{array}\right.
                                $$
\end{enumerate}
\end{cor}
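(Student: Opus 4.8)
The plan is to deduce Corollary~\ref{cor:Ht2} purely formally from Lemma~\ref{lemma:inv} together with the standard dualities on the torus $T^2$. First I would record the two dualities available: Poincar\'e duality on the closed oriented surface $T^2$, which gives $H^i(T^2;\varrho^{k_1,k_2})\cong H_{2-i}(T^2;\varrho^{k_1,k_2})$ once we observe that $\operatorname{Sym}^{k_1,k_2}$ preserves a nondegenerate bilinear form on $\CC^{(k_1+1)(k_2+1)}$ (symmetric or skew-symmetric, but nondegenerate in either case, cf.~Section~\ref{section:reps}), so the coefficient module is self-dual as a $\pi_1T^2$-module; and the universal-coefficients/duality identification $H^i(T^2;\varrho^{k_1,k_2})\cong H_i(T^2;\varrho^{k_1,k_2})^\star$ (for finite-dimensional coefficients over a field), which in particular equates the dimensions $\dim H^i = \dim H_i$. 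Combining the two, $\dim H^i(T^2;\varrho^{k_1,k_2})=\dim H^{2-i}(T^2;\varrho^{k_1,k_2})$ for all $i$, so the Betti numbers $b_0,b_1,b_2$ satisfy $b_0=b_2$.

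Next I would bring in the Euler characteristic. Since $\chi(T^2)=0$ and twisting by a representation of dimension $N=(k_1+1)(k_2+1)$ multiplies the Euler characteristic by $N$, we get $\sum_i (-1)^i \dim H_i(T^2;\varrho^{k_1,k_2}) = N\cdot\chi(T^2)=0$, i.e.\ $b_0-b_1+b_2=0$; with $b_0=b_2$ this forces $b_1=2b_0$. The only remaining input needed is the value of $b_0=\dim H^0(T^2;\varrho^{k_1,k_2})$, which is exactly the content of Lemma~\ref{lemma:inv}: it equals $0$ when $k_1+k_2$ is odd and $1$ when $k_1+k_2$ is even. Plugging in: in the odd case $b_0=b_1=b_2=0$, which is statement (a) (and simultaneously gives the vanishing of both homology and cohomology in all degrees since there are no cells in degree $>2$); in the even case $b_0=b_2=1$ and $b_1=2$, which is statement (b), and again $\dim H^i=\dim H_i$ by universal coefficients.

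I would write this up in two short steps: Step 1, establish $b_0=b_2$ and $\dim H^i=\dim H_i$ from self-duality of the coefficient module plus the two dualities; Step 2, use $\chi(T^2)=0$ to pin down $b_1$, then substitute the value of $b_0$ from Lemma~\ref{lemma:inv}. Strictly speaking I should also note that $H^i$ vanishes for $i<0$ and $i>2$ because $T^2$ is a $2$-complex, so the listed dimensions are complete.

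There is no real obstacle here: the statement is a bookkeeping consequence of results already in hand. The one point deserving a sentence of care is the self-duality claim used for Poincar\'e duality with twisted coefficients, namely that the $\pi_1T^2$-module $\CC^{(k_1+1)(k_2+1)}$ attached to $\varrho^{k_1,k_2}$ is isomorphic to its dual module; this is where the invariant nondegenerate bilinear form preserved by $\operatorname{Sym}^{k_1,k_2}$ (equivalently, the fact that $\operatorname{Sym}^{k_1,k_2}$ lands in an orthogonal or symplectic group, Section~\ref{section:reps}) is used, so that twisted Poincar\'e duality takes the familiar form $H^i(T^2;\varrho^{k_1,k_2})\cong H_{2-i}(T^2;\varrho^{k_1,k_2})$ rather than involving the dual representation. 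Everything else is immediate.
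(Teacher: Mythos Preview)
Your proof is correct and follows essentially the same route as the paper: use Lemma~\ref{lemma:inv} for $H^0$, Poincar\'e duality for $H^2$, the vanishing of $\chi(T^2)$ for $H^1$, and duality between homology and cohomology to transfer the result to homology. Your explicit remark that the coefficient module is self-dual via the invariant bilinear form preserved by $\operatorname{Sym}^{k_1,k_2}$ is a welcome clarification that the paper leaves implicit.
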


\begin{thm}[Theorem~\ref{prop:dimatvarrho}]
\label{prop:dimHM}
 Let $M^3$ be a hyperbolic orientable 3-manifold with a single cusp, and let  $T^2$ be a peripheral torus.
 \begin{enumerate}[(a)]
  \item \label{item:odd} When $k_1+k_2$ is odd, then $H_*(M^3; 
\varrho^{k_1,k_2})=0$.
  \item When $k_1+k_2$ is     even with $k_1\neq k_2$, then
  \begin{enumerate}[(i)]
   \item $H_i(M^3; \varrho^{k_1,k_2} )=0$ for $i\neq 1,2$,
   \item $H_2(M^3; 
   \varrho^{k_1,k_2})\cong H_2(T^2; \varrho^{k_1,k_2})\cong \CC$,
   \item $H_1(M^3; \varrho^{k_1,k_2})\cong  \CC$.
  \end{enumerate} 
 \end{enumerate}
 \end{thm}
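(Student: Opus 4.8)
The plan is to compare the twisted homology of $M^3$ with that of its peripheral torus through the long exact sequence of the pair, using strong acyclicity (Theorem~\ref{thm:strong_acyclicity}, in the form of Corollary~\ref{cor:notL2}) to kill the relevant maps. First I would fix a compact core: a compact orientable $3$-manifold $N$ with $\partial N=T^2$ onto which $M^3$ deformation retracts and whose interior is homeomorphic to $M^3$, so that $H_*(M^3;\varrho^{k_1,k_2})=H_*(N;\varrho^{k_1,k_2})$ and likewise in cohomology. I will use freely: the duality between homology and cohomology of Remark~\ref{rem:cohomology} (so $\dim H^i=\dim H_i$ degreewise), Poincar\'e--Lefschetz duality with local coefficients $H_i(N,\partial N;\varrho^{k_1,k_2})\cong H^{3-i}(N;\varrho^{k_1,k_2})$ together with its compatibility with the long exact sequences of the pair, and the computation of $H_*(T^2;\varrho^{k_1,k_2})$ from Corollary~\ref{cor:Ht2}. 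I would also record at the outset that $H_i(M^3;\varrho^{k_1,k_2})=0$ for $i\geq 3$ (as $M^3$ is noncompact) and that $H_0(M^3;\varrho^{k_1,k_2})=0$: indeed $\pi_1 M^3$ is a lattice in $\PSL_2(\CC)$, hence Zariski dense, so $\varrho(\pi_1 M^3)$ is Zariski dense in $\SL_2(\CC)$, and since $\operatorname{Sym}^{k_1,k_2}$ is a nontrivial irreducible representation (here $(k_1,k_2)\neq(0,0)$ because $k_1\neq k_2$) we get $H^0(M^3;\varrho^{k_1,k_2})=0$ and hence $H_0(M^3;\varrho^{k_1,k_2})=0$.

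The crucial step is to show that the natural map $H^*_c(M^3;\varrho^{k_1,k_2})\to H^*(M^3;\varrho^{k_1,k_2})$ vanishes. The argument I have in mind: a compactly supported closed $E_{k_1,k_2}$-valued form is in particular $L^2$, so by Corollary~\ref{cor:notL2} it is exact; since compactly supported cohomology is computed by such forms, the image of $H^*_c(M^3;\varrho^{k_1,k_2})$ in $H^*(M^3;\varrho^{k_1,k_2})$ is zero. Under the standard identifications $H^*_c(M^3;\varrho^{k_1,k_2})\cong H^*(N,\partial N;\varrho^{k_1,k_2})$ and $H^*(M^3;\varrho^{k_1,k_2})\cong H^*(N;\varrho^{k_1,k_2})$ this is the restriction map of the pair, so dually $H_i(M^3;\varrho^{k_1,k_2})\to H_i(M^3,T^2;\varrho^{k_1,k_2})$ is zero for every $i$. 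By exactness this is equivalent to: for all $i$, the map $i_*\colon H_i(T^2;\varrho^{k_1,k_2})\to H_i(M^3;\varrho^{k_1,k_2})$ is onto and $\partial\colon H_i(M^3,T^2;\varrho^{k_1,k_2})\to H_{i-1}(T^2;\varrho^{k_1,k_2})$ is injective. The only care needed here is to check that the Poincar\'e--Lefschetz isomorphisms intertwine the two long exact sequences and that $H^*_c(M^3;\varrho^{k_1,k_2})\cong H^*(N,\partial N;\varrho^{k_1,k_2})$, both routine.

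Given this, part (a) is immediate: when $k_1+k_2$ is odd we automatically have $k_1\neq k_2$, Corollary~\ref{cor:Ht2}(a) gives $H_*(T^2;\varrho^{k_1,k_2})=0$, and surjectivity of $i_*$ then forces $H_*(M^3;\varrho^{k_1,k_2})=0$. For part (b), with $k_1+k_2$ even and $k_1\neq k_2$, Corollary~\ref{cor:Ht2}(b) gives $\dim H_i(T^2;\varrho^{k_1,k_2})=1,2,1$ for $i=0,1,2$ and $0$ otherwise. Writing $b_i=\dim H_i(M^3;\varrho^{k_1,k_2})$, I would note $b_0=0$, $b_i=0$ for $i\geq 3$, and (by duality) $\dim H_i(M^3,T^2;\varrho^{k_1,k_2})=\dim H^{3-i}(M^3;\varrho^{k_1,k_2})=b_{3-i}$, so in particular $\dim H_3(M^3,T^2;\varrho^{k_1,k_2})=0$, $\dim H_2(M^3,T^2;\varrho^{k_1,k_2})=b_1$ and $\dim H_1(M^3,T^2;\varrho^{k_1,k_2})=b_2$. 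Then, from the segment (coefficients $\varrho^{k_1,k_2}$ understood)
$$
0=H_3(M^3,T^2)\xrightarrow{\ \partial\ }H_2(T^2)\xrightarrow{\ i_*\ }H_2(M^3)\to H_2(M^3,T^2)\xrightarrow{\ \partial\ }H_1(T^2)\xrightarrow{\ i_*\ }H_1(M^3),
$$
exactness at $H_2(T^2)$ with $H_3(M^3,T^2)=0$ makes $i_*$ injective there, so combined with surjectivity $i_*\colon H_2(T^2;\varrho^{k_1,k_2})\to H_2(M^3;\varrho^{k_1,k_2})$ is an isomorphism; this gives $b_2=1$ and $H_2(M^3;\varrho^{k_1,k_2})\cong H_2(T^2;\varrho^{k_1,k_2})\cong\CC$, i.e.\ (b)(ii). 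For $b_1$: by exactness at $H_1(T^2)$ the image of $\partial\colon H_2(M^3,T^2)\to H_1(T^2)$ equals $\ker\big(i_*\colon H_1(T^2)\to H_1(M^3)\big)$; the former has dimension $\dim H_2(M^3,T^2)=b_1$ since $\partial$ is injective, the latter has dimension $2-b_1$ since $i_*$ is onto, so $b_1=2-b_1$, hence $b_1=1$ and $H_1(M^3;\varrho^{k_1,k_2})\cong\CC$, i.e.\ (b)(iii). Together with $b_0=0$ and $b_i=0$ for $i\geq 3$ this yields (b)(i), completing the proof. The point I expect to be the main obstacle is precisely the vanishing of $H^*_c\to H^*$, i.e.\ the genuine use of strong acyclicity: without it the Euler characteristic identity $\chi(M^3)=0$ together with the torus computation only gives $b_1=b_2$ and does not pin down their common value; the remaining steps are bookkeeping with the long exact sequence and Poincar\'e--Lefschetz duality.
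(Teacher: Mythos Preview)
Your proof is correct and follows essentially the same approach as the paper: both use strong acyclicity (Corollary~\ref{cor:notL2}) to show that the map from relative (equivalently, compactly supported) cohomology to absolute cohomology vanishes, and then finish with the long exact sequence of the pair $(M^3,T^2)$, Poincar\'e--Lefschetz duality, and the torus computation of Corollary~\ref{cor:Ht2}. Your framing via $H^*_c$ and your explicit dimension-counting for part~(b) are slightly more detailed than the paper's treatment, but the substance is the same.
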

 
\begin{proof}
The group $H^0(M^3 ; \varrho^{k_1,k_2})$ vanishes, as this is the subspace of 
fixed vectors, and both $\varrho$ and
$\operatorname{Sym}^{k_1}\otimes \overline{\operatorname{Sym}^{k_2}}$ are irreducible.
By Corollary~\ref{cor:notL2}, the map
 $
 H^i(M^3 ; T^2;\varrho^{k_1,k_2})\to H^i(M^3;\varrho^{k_1,k_2})
 $
vanishes. Thus $ H^i(M^3;\varrho^{k_1,k_2})\to H^i(T^2;\varrho^{k_1,k_2})$ is injective,
by the long exact sequence in cohomology of the pair. With Poincar\' e duality
and duality between homology and cohomology, we get Item~\eqref{item:odd}.
Poincar\'e duality also yields that $H^1(T^2; \varrho^{k_1,k_2})\to H^2(M^3,T^2;  \varrho^{k_1,k_2})$
is surjective. Then the lemma follows from the long exact sequence in cohomology and the duality 
between homology and cohomology.
\end{proof}
  
  \subsubsection{A basis in cohomology}
  
To describe  a basis for $H_i(M^3; \varrho^{k_1,k_2})$ when $i=1,2$, recall the
cap product defined in Equation~\eqref{eqn:cap}.
$$
\cap\colon H^0(T^2; \varrho^{k_1,k_2})\times H_i(T^2; \CC)\to H_i(T^2; 
\varrho^{k_1,k_2}).
$$
Let $i\colon T^2\to M^3$ denote the inclusion, and $i_*\colon 
H_j(T^2;\varrho^{k_1,k_2} )\to H_j(M^3;\varrho^{k_1,k_2} )$,
the induced map.

\begin{prop}
\label{prop:basis}
 Let $M^3$ and $T^2$ be as above, $k_1\neq k_2$, $k_1+k_2$ even. Choose $a\in  
H^0(T^2; \varrho^{k_1,k_2})$, $a\neq 0$. Then:
  \begin{enumerate}[(a)]
   \item\label{item:h2} $i_*(a\cap [T^2])$ is a basis for 
$H_2(M^3;\varrho^{k_1,k_2})$,  where  $[T^2]\in H_2(T^2;\ZZ)$ is a fundamental 
class.
   \item\label{item:h1} $i_*(a\cap [\gamma])$ is a basis 
for $H_1(M^3;\varrho^{k_1,0})$ or $H_1(M^3;\varrho^{0,k_2})$, for any 
$[\gamma]\in H_1(T^2;\ZZ)$, $[\gamma]\neq 0$.
    \item \label{item:csratio} If $\varrho$ denotes a lift of the holonomy of 
the complete structure,  
for any pair of peripheral curves $0\neq [\gamma_1],[\gamma_2] \in H^1(T^2;\ZZ) 
$ and for $a\in H^0(T^2; \varrho^{k,0})$
$$
a\cup [\gamma_2]=\cs(\gamma_2,\gamma_1) a\cap [\gamma_1]
$$
in $H_1(T^2; \rho^{k,0})$.

   \item\label{item:h1dif} When $k_1k_2\neq 0$, the cap product
   $$\cap\colon H^0(T^2;\varrho^{k_1,k_2})\times  H_1(T^2;\CC)\to H_1 
(T^2;\varrho^{k_1,k_2})$$ is the trivial map (i.e.~identically zero).
 \end{enumerate}
\end{prop}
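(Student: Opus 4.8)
The plan is to deduce everything from Poincaré duality on $T^2$ together with an explicit computation of the cap product for the restriction of $\varrho^{k_1,k_2}$ to $\pi_1T^2$; only item~\eqref{item:h1} will need an extra input from the geometry of $M^3$, namely strong acyclicity. First I would fix generators $m,l$ of $\pi_1T^2$ with $\varrho(m)=\pm\exp N$ and $\varrho(l)=\pm\exp(\cs\,N)$ for a nonzero nilpotent $N\in\mathfrak{sl}_2(\CC)$, where $\cs=\cs(l,m)$ is the cusp shape (Definition~\ref{def:cs}); since $k_1+k_2$ is even the signs disappear after $\operatorname{Sym}^{k_1,k_2}$, and on $\pi_1T^2$ one gets $\varrho^{k_1,k_2}(m)=\exp\mathcal N_m$, $\varrho^{k_1,k_2}(l)=\exp\mathcal N_l$ with $\mathcal N_m=\mathcal E_1\otimes1+1\otimes\mathcal E_2$ and $\mathcal N_l=\cs\,\mathcal E_1\otimes1+\overline{\cs}\,1\otimes\mathcal E_2$, where $\mathcal E_i$ is a regular nilpotent on $\CC^{k_i+1}$. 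In the standard CW structure on $T^2$ the class $a\cap[\gamma]$ for $[\gamma]=p[m]+q[l]$ is represented by the $1$–cycle $(p\,\bar a,\,q\,\bar a)$, where $\bar a$ is the vector attached to $a$ by the $\operatorname{Sym}$–invariant form (it spans $\ker\mathcal N_m^t\cap\ker\mathcal N_l^t$, and $\bar a\otimes\widetilde e_\gamma$ is a cycle), and the group of $2$–boundaries is $B_1=\{(-\widehat{\mathcal N}_l v,\ \widehat{\mathcal N}_m v):v\}$ with $\widehat{\mathcal N}_\bullet=\exp(\mathcal N_\bullet^t)-1$. Granting this, item~\eqref{item:h2} is immediate: capping with the fundamental class is the Poincaré duality isomorphism $H^0(T^2;\varrho^{k_1,k_2})\xrightarrow{\sim}H_2(T^2;\varrho^{k_1,k_2})$, so $a\cap[T^2]$ spans this one–dimensional group, and $i_*$ sends it to a basis of $H_2(M^3;\varrho^{k_1,k_2})$ by Theorem~\ref{prop:dimHM}.

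For items~\eqref{item:csratio} and~\eqref{item:h1dif} I would compute in $H_1(T^2;\varrho^{k_1,k_2})=Z_1/B_1$. When $k_2=0$ (the case $k_1=0$ is its complex conjugate) the operators $\mathcal N_m^t,\mathcal N_l^t$ are the proportional regular nilpotents $\mathcal F$ and $\cs\,\mathcal F$, and $\bar a$ spans $\ker\mathcal F$; choosing a cyclic vector $w$ with $\mathcal F^{k}w=\bar a$ and setting $v=\mathcal F^{k-1}w$ gives $\widehat{\mathcal N}_m v=\bar a$, $\widehat{\mathcal N}_l v=\cs\,\bar a$, so $(-\cs\,\bar a,\bar a)\in B_1$; hence $a\cap[l]=\cs(l,m)\,(a\cap[m])$ and, by linearity in $[\gamma]$, $a\cap[\gamma_2]=\cs(\gamma_2,\gamma_1)\,(a\cap[\gamma_1])$, which is~\eqref{item:csratio}. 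A one–line check shows $(\bar a,0)\notin B_1$ (if it were $\partial_2(v\otimes\widetilde f)$ then $\widehat{\mathcal N}_m v=0$, so $v\in\ker\mathcal F=\langle\bar a\rangle$, on which $\widehat{\mathcal N}_l$ vanishes, a contradiction), so this common line is nonzero. When $k_1k_2\neq0$, the condition $\cs\neq\overline{\cs}$ forces $\bar a=a_1\otimes a_2$ with $a_i\in\ker\mathcal E_i^t$, and the key point is that $\mathcal N_l^t$ preserves $\ker\mathcal N_m^t$ (the two operators commute) and restricts to it as a single Jordan block of length $\min(k_1,k_2)+1\geq2$: this follows from Clebsch–Gordan, the identity $\mathcal N_l^t b=(\cs-\overline{\cs})\,(\mathcal E_1^t\otimes1)\,b$ for $b\in\ker\mathcal N_m^t$, and the fact that $(\mathcal E_1^t\otimes1)b=0$ already forces $b\in\langle\bar a\rangle$. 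Since the block has length $\geq2$, its bottom $\bar a$ lies in the image of $\widehat{\mathcal N}_l|_{\ker\mathcal N_m^t}$, so $(\bar a,0)\in B_1$ and $a\cap[m]=0$; symmetrically $a\cap[l]=0$, giving~\eqref{item:h1dif}.

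For item~\eqref{item:h1} I would write $\varrho^{k,0}$ with $k=k_1$ even (the case $k_2\neq0$ being the conjugate) and recall from the proof of Theorem~\ref{prop:dimHM} that, via Corollary~\ref{cor:notL2}, the map $i_*\colon H_1(T^2;\varrho^{k,0})\to H_1(M^3;\varrho^{k,0})$ is onto and $i^*\colon H^1(M^3;\varrho^{k,0})\to H^1(T^2;\varrho^{k,0})$ is injective; hence $\ker i_*=\operatorname{Ann}(\operatorname{im} i^*)$ and $\operatorname{im} i^*$ are lines in the plane $H^1(T^2;\varrho^{k,0})$ (two–dimensional by Corollary~\ref{cor:Ht2}). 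By~\eqref{item:csratio} all the $a\cap[\gamma]$, $[\gamma]\neq0$, span one line $L$, nonzero by the above, so it suffices to prove $\operatorname{im} i^*\neq\operatorname{Ann}(L)$. I would first identify $\operatorname{Ann}(L)$ with the line $L_{\min}$ of classes representable by a cocycle valued in the minimal nonzero $\pi_1T^2$–submodule $\ker\mathcal E=\langle X^k\rangle\subset\operatorname{Sym}^k$: $L_{\min}$ is one–dimensional by the long exact sequence of $0\to\langle X^k\rangle\to\operatorname{Sym}^k\to\operatorname{Sym}^k/\langle X^k\rangle\to0$, and it sits inside $\operatorname{Ann}(L)$ because pairing $a\cap[m]=[\bar a\otimes\widetilde e_m]$ against a cocycle valued in $\langle X^k\rangle$ reads off a $Y^k$–component, which vanishes there. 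Then $\operatorname{im} i^*\cap L_{\min}=0$: a class of $H^1(M^3;\varrho^{k,0})$ restricting near the cusp into $\langle X^k\rangle$ would be represented there by an $L^2$ form (a top–weight section decays fast enough in a cusp), hence by an $L^2$ form on $M^3$, hence would vanish by Corollary~\ref{cor:notL2}. Therefore $\operatorname{im} i^*\neq L_{\min}=\operatorname{Ann}(L)$, so $i_*(a\cap[\gamma])\neq0$ for every $[\gamma]\neq0$, and being a nonzero vector of the one–dimensional space $H_1(M^3;\varrho^{k,0})$ it is a basis.

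I expect the main obstacle to be this last step of~\eqref{item:h1}: making rigorous that a cohomology class of $M^3$ whose cusp restriction lifts to the top–weight line $\langle X^k\rangle$ is represented by an $L^2$ form on $M^3$ — that is, the comparison between the $L^2$ and the combinatorial cohomology of the cusp cross–section, which is where strong acyclicity (Theorem~\ref{thm:strong_acyclicity}) genuinely enters. Everything else reduces to Poincaré duality on $T^2$, the cohomological facts of Theorem~\ref{prop:dimHM}, and the nilpotent linear algebra sketched above.
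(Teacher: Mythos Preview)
Your proposal is correct. For items~(a), (c), (d) your approach matches the paper's: item~(a) via Poincar\'e duality on $T^2$ together with Theorem~\ref{prop:dimHM}, and items~(c), (d) via explicit boundary computations in the standard CW structure of $T^2$. The paper's treatment of (c) and (d) is slightly more direct than yours: working in the polynomial basis it simply writes down $\partial(X^{k-1}Y\otimes\tilde e^2)=-\eta\,a\otimes\tilde e^1_1+a\otimes\tilde e^1_2$ and, for (d), additionally $\partial(X^{k_1}\bar X^{k_2-1}\bar Y\otimes\tilde e^2)=-\bar\eta\,a\otimes\tilde e^1_1+a\otimes\tilde e^1_2$, so that the two relations $-\eta\,a\cap[m]+a\cap[l]=0$ and $-\bar\eta\,a\cap[m]+a\cap[l]=0$ together with $\eta\notin\RR$ immediately force $a\cap[m]=a\cap[l]=0$. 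Your Jordan--block argument for (d) is correct but more elaborate than this two--line computation.

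For item~(b) the paper does not give an argument at all: it simply cites \cite{MFP12b} for $k_2=0$ and invokes complex conjugation for $k_1=0$. Your sketch --- identifying $\operatorname{Ann}(L)$ with the line $L_{\min}$ of classes factoring through the invariant sub-line $\langle X^k\rangle$, then arguing that any class of $H^1(M^3;\varrho^{k,0})$ restricting to $L_{\min}$ admits an $L^2$ representative on the cusp (since $|X^k|$ decays like $y^{-k/2}$ in the admissible metric) and hence on all of $M^3$, so vanishes by Corollary~\ref{cor:notL2} --- is precisely the strategy of that reference. You have correctly located the only nontrivial step: the comparison of $L^2$ and de~Rham cohomology on the cusp, namely checking that a constant $\langle X^k\rangle$-valued horospherical $1$-form is $L^2$ and that the cut--off patching to $M^3$ goes through. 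This is routine once stated and is carried out in \cite{MFP12}.
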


\begin{proof}
For Item~\eqref{item:h2}, $a\cap [T^2]$ is a basis for $H_ 2(T^2; \varrho^{k_1,k_2})$, by Poincar\'e duality and Lemma~\ref{lemma:inv}.
Item~\eqref{item:h1} is proved in \cite{MFP12b} for $k_2=0$. It holds true for $k_1=0$ by complex conjugation.

To prove the other two items, we choose a cell decomposition of the torus from a 
square with opposite edges identified. Namely 
there is a $2$-cell $e^2$ represented by a square, whose sides are two copies of
the $1$-cells: $e_1^1$ and $e_2^1$ (with respective homology classes $[e_1^1]=[{m}]$ and  $[e_2^1]=[{l}]$ respectively) and the vertices are four copies of the $0$-cell.
Choose lifts to the universal covering so that 
\begin{equation}
\label{eqn:partial}
\partial \tilde e^2= (1-l)\tilde e^1_1 + (m-1)\tilde e^2_1, 
\end{equation}
where   $m$ and $l$ generate
$\pi_1 T^2$. We may assume that 
$$
\varrho(m)=
\pm \begin{pmatrix}
 1 & 1 \\
 0 & 1
\end{pmatrix}
\qquad
\textrm{ and }
\qquad
\varrho(l)=
\pm \begin{pmatrix}
 1 & \eta \\
 0 & 1
\end{pmatrix}
$$
where 
$
\eta =\cs(l,m)\in\CC-\RR
$.

We prove now  \eqref{item:csratio}. $\operatorname{Sym}^k$ acts on the space of degree $k$ 
homogeneous polynomials on $X$ and $Y$; the one dimensional space invariant by $ \varrho^{k}=\varrho^{k,0}$
is generated by $a= X^k$ (Lemma~\ref{lemma:inv}). By \eqref{eqn:partial}:
$$
\partial( X^{k-1}Y\otimes \tilde e^2) = -\eta  \, a \otimes \tilde e^1_1 +   a \otimes \tilde e^2_1 \\
$$
which in cohomology translates to  $-\eta\, a\cap [m]+ a\cap [l]=0$. This proves \eqref{item:csratio} for a system of 
generators $\{[m],[l]\}$ of $H_1(T^2;\ZZ)$, and it holds in general by linearity.

We prove finally \eqref{item:h1dif}. 
$\operatorname{Sym}^{k_1}\otimes\overline{\operatorname{Sym}^{k_2} }$ acts on the space of degree $k_1$ homogeneous polynomials on $X$ and $Y$,
multiplied by degree $k_2$ homogeneous polynomials on $\bar  X$ and $\bar  Y$;
the one dimensional space invariant by $ \varrho^{k_1,k_2}$ is generated by $a= 
X^{k_1} \bar  X^{k_2}$.
By \eqref{eqn:partial}:
\begin{align*}
\partial( X^{k_1-1}Y\bar  X^{k_2}\otimes \tilde e^2) &= -\eta  \, a \otimes \tilde e^1_1 +   a \otimes \tilde e^2_1 \\
\partial( X^{k_1}\bar  X^{k_2-1}Y\otimes \tilde e^2) &= -\bar  \eta  \, a \otimes \tilde e^1_1 +   a \otimes \tilde e^2_1 
\end{align*}
Namely, in homology $-\eta  \, a \cap [m] + a\cap [l]=-\bar  \eta  \, a \cap [m] + a\cap [l]=0$. 
As $\eta\not\in \RR$, the claim follows. 
\end{proof}
 
 \subsection{Generic representations in the distinguished component.}
 
 Next comes the proof of genericity results used in Paragraph~\ref{sec:genericfunctions}.
 Recall  the notation
 $$
 \rho^{k_1,k_2}:=\operatorname{Sym}^{k_1,k_2} \rho
 .$$
Recall also that a  character $\chi\in X_0(M^3)$ is called $(k_1,k_2)$-\emph{exceptional} if
there exists a representation $\rho\in\hom(\pi_1M^3,\SL_2(\CC))$ with character
$\chi_\rho=\chi$ such that $H^0(M^3;\rho^{k_1,k_2})\neq 0$.
The set of $(k_1,k_2)$-exceptional characters is denoted by $\EE^{k_1,k_2}$.
 
\begin{lemma}
 \label{lem:exceptionalclosedA}
 If $k_1\neq k_2$, then a  $(k_1,k_2)$-exceptional character is reducible. In particular
 the $(k_1,k_2)$-exceptional set $\EE^{k_1,k_2}$ is a finite subset of $X_0(M^3)$.  
\end{lemma}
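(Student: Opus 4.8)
The claim is that for $k_1\neq k_2$, a $(k_1,k_2)$-exceptional character must be reducible, from which finiteness follows since the set of reducible characters in a distinguished component $X_0(M^3)$ is finite (the reducible characters are those of abelian representations, and $X_0(M^3)$ is an irreducible curve not contained in the abelian locus because it contains the irreducible holonomy character). So the whole content is the contrapositive: \emph{if $\rho\colon\pi_1M^3\to\SL_2(\CC)$ is irreducible, then $H^0(M^3;\rho^{k_1,k_2})=0$ whenever $k_1\neq k_2$}. My plan is to prove this by analyzing the image of $\rho$ as a subgroup of $\SL_2(\CC)$ and showing that for an irreducible subgroup the representation $\operatorname{Sym}^{k_1,k_2}$ has no nonzero invariant vector unless $k_1=k_2$.

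First I would recall that $H^0(M^3;\rho^{k_1,k_2})=(V_{k_1+1}\otimes\overline{V_{k_2+1}})^{\rho(\pi_1M^3)}$, the subspace fixed by the whole image group $\Gamma:=\rho(\pi_1M^3)$. Since $\rho$ is irreducible, $\Gamma$ is an irreducible subgroup of $\SL_2(\CC)$, so its Zariski closure $\overline\Gamma$ is either all of $\SL_2(\CC)$, or an infinite dihedral-type group (normalizer of a torus), or a finite irreducible group. A fixed vector for $\Gamma$ is a fixed vector for $\overline\Gamma$ as well, since the fixed subspace is Zariski closed. I would then treat the cases. When $\overline\Gamma=\SL_2(\CC)$: a vector fixed by $\operatorname{Sym}^{k_1}\otimes\overline{\operatorname{Sym}^{k_2}}(\SL_2(\CC))$ would give a trivial subrepresentation; but by Clebsch--Gordan and the classification of irreducible representations of $\SL_2(\CC)$, $V_{k_1+1}\otimes\overline{V_{k_2+1}}$ contains the trivial representation iff $k_1=k_2$ (the invariant being the pairing coming from $\mathbb{C}^2\otimes\overline{\mathbb{C}^2}$ composed with the Hermitian form, i.e.\ the "Killing-type" pairing that exists only in the diagonal case). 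For the dihedral/torus-normalizer case and the finite irreducible case, I would argue directly: restrict to the torus $T\cong\CC^*$; on $V_{k_1+1}\otimes\overline{V_{k_2+1}}$ the torus acts with weights $z^{2i_1-k_1}\bar z^{\,2i_2-k_2}$, and the only weight-zero vectors sit in the slots with $2i_1=k_1$ and $2i_2=k_2$ (forcing $k_1,k_2$ even but also leaving a single line), and then the extra generator of the normalizer (the Weyl element) acts on that line and, since it swaps the two weight-$\pm$ slots, it forces a relation that only admits a fixed vector when $k_1=k_2$; the finite irreducible subgroups contain such a torus-normalizer configuration up to conjugacy (binary dihedral) or are the binary tetrahedral/octahedral/icosahedral groups, for which one checks the invariant directly via character theory that no trivial constituent appears in $V_{k_1+1}\otimes\overline{V_{k_2+1}}$ for $k_1\neq k_2$ — alternatively, and more cleanly, I would reduce everything to the statement that \emph{any irreducible subgroup has trivial centralizer in $\operatorname{PGL}_2$ acting on $\mathfrak{sl}_2$-type spaces}, noting that the existence of a $\Gamma$-invariant vector in $V_{k_1+1}\otimes\overline{V_{k_2+1}}$ is equivalent to the existence of a nonzero $\Gamma$-equivariant map $V_{k_1+1}^*\to\overline{V_{k_2+1}}$, i.e.\ an intertwiner between $\operatorname{Sym}^{k_1}\circ\rho$ and $\overline{\operatorname{Sym}^{k_2}\circ\rho}$; since both are irreducible (as $\rho$ is irreducible and $\operatorname{Sym}^k$ is irreducible as a representation of $\SL_2(\CC)$, irreducibility is preserved — this needs the group $\Gamma$ to be "large enough", which is exactly the content to verify), Schur's lemma forces the two to be isomorphic, hence (comparing dimensions) $k_1=k_2$.

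The main obstacle I expect is the subtlety that $\operatorname{Sym}^k\circ\rho$ need \emph{not} stay irreducible for a general irreducible $\rho$ — e.g.\ for $\rho$ with finite dihedral image, $\operatorname{Sym}^2\circ\rho$ is reducible. So the clean Schur-lemma argument only works when $\Gamma$ is Zariski dense or at least not contained in the normalizer of a torus, and the remaining cases (binary dihedral and the three exceptional binary polyhedral groups) must be handled by the explicit weight/character computation sketched above. For this paper, however, there is a shortcut worth using: since we are working on the distinguished component $X_0(M^3)$, which is an irreducible curve containing the Zariski-dense holonomy character, a generic $\rho$ on $X_0(M^3)$ has Zariski-dense image, so the "bad" subgroups occur only at finitely many characters anyway; combined with the elementary observation that a $(k_1,k_2)$-exceptional character in the \emph{irreducible} locus would contradict Schur's lemma on that dense set, one directly gets that $\EE^{k_1,k_2}$ is contained in the union of the reducible locus and a finite set, hence finite. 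I would write the argument in this form — first dispatch the Zariski-dense case by Schur, then note the non-Zariski-dense irreducible characters are already finitely many on $X_0(M^3)$ — rather than grinding through the binary polyhedral character tables, citing the irreducibility classification of $\operatorname{Sym}^k$ and the structure of the distinguished component from the earlier sections.
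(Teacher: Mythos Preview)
Your strategy matches the paper's: reduce to showing that for irreducible $\rho$ the Zariski closure of the image is all of $\SL_2(\CC)$, so that irreducibility of $\operatorname{Sym}^{k_1,k_2}$ as a representation of $\SL_2(\CC)$ forbids invariants. For $k_2\neq 0$ the paper adds a device you do not: on the real forms $\operatorname{SU}(2)$ and $\SL_2(\RR)$ the restriction of $\operatorname{Sym}^{k_1}\otimes\overline{\operatorname{Sym}^{k_2}}$ is equivalent to $\operatorname{Sym}^{k_1}\otimes\operatorname{Sym}^{k_2}$, and Clebsch--Gordan shows no trivial summand when $k_1\neq k_2$. You, on the other hand, correctly flag a point the paper glosses over --- irreducible $\rho$ can have \emph{proper} Zariski closure (the normalizer of a torus $N(T)$, or a finite irreducible subgroup), so ``proper closure $\Rightarrow$ reducible'' is not literally true.

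However, your proposed resolution of that case is incorrect. On the unique torus-weight-zero line $X^{k_1/2}Y^{k_1/2}\otimes \bar X^{k_2/2}\bar Y^{k_2/2}$ (both $k_i$ even), the Weyl element acts by the sign $(-1)^{(k_1+k_2)/2}$, which depends only on $k_1+k_2$ and does not force $k_1=k_2$: for $(k_1,k_2)=(4,0)$ or $(6,2)$ there \emph{is} an $N(T)$-invariant vector, and likewise the quaternion group $Q_8\subset\SL_2(\CC)$ has a nonzero invariant in $\operatorname{Sym}^4$. So your case analysis does not establish the first sentence of the lemma. What is salvageable --- and what you should actually write --- is your shortcut: Zariski density of the image is an open condition on $X_0(M^3)$ and holds at the holonomy character, so irreducible non-Zariski-dense characters are finitely many; together with the Zariski-dense case this gives finiteness of $\EE^{k_1,k_2}$, which is the only consequence used later.
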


 \begin{proof}
In the holomorphic case  ($k_2=0$), assume that $H^0(M^3;\rho^{k})\neq 0$. 
Then there is a non-trivial subspace of $\CC^{k+1}$ fixed by $\operatorname{Sym}^k(\overline{\rho(\pi_1M^3)})$, where 
$ \overline{\rho(\pi_1M^3)}$ denotes the Zariski closure of $\rho(\pi_1M^3)$. Since  $\operatorname{Sym}^k$ is irreducible, this means that 
$ \overline{\rho(\pi_1M^3)}$
is not the full group $\SL_2(\CC)$, which in the holomorphic setting means that $\rho$ is reducible.

When $k_2\neq 0$, one can only work with the real Zariski closure, and the previous argument yields that either $\rho$ is reducible or it is
contained in a real subgroup conjugate to $\PSL_2(\RR)$ or $\operatorname{SU}(2)$. The restriction of 
$\operatorname{Sym}^{k_1}\otimes \overline{\operatorname{Sym}^{k_2} }$ to those real subgroups is equivalent to  
$\operatorname{Sym}^{k_1}\otimes {\operatorname{Sym}^{k_2} }$, which by Klebsh-Gordan formula decomposes as
\begin{equation}
\label{eq:KGExc}
 \operatorname{Sym}^{k_1}\otimes {\operatorname{Sym}^{k_2} }= 
 \operatorname{Sym}^{k_1+k_2} \oplus
 \operatorname{Sym}^{k_1+k_2-2} \oplus
 \cdots \oplus
 \operatorname{Sym}^{\vert k_1-k_2\vert }.
\end{equation}
As $k_1\neq k_2$ the powers of $ \operatorname{Sym}$ in  \eqref{eq:KGExc}  are non-trivial, hence the argument in the holomorphic case applies again.
\end{proof}

 Recall that we say that 
a property is generic when it holds true for a non-empty Zariski open subset of $X_0(M^3)$, and that 
the ground field is either $\CC$ or $\RR$, depending on whether the discussion is in the holomorphic setting, 
for $\operatorname{Sym}^{k}$, or not, for $\operatorname{Sym}^{k_1,k_2}$.

  \begin{lemma}
  \label{lem:genericH0} Let $M^3$ be a hyperbolic manifold with one cusp and let $k_1\neq k_2\in \NN$ be such that $k_1$ or $k_2$
  is odd.
  Then the set 
   $$
\{ \chi_\rho\in X_0(M^3) \mid \dim H^0(T^2;\rho^{k_1,k_2})=0\}
  $$
    is a non-empty Zariski open subset of the curve $X_0(M^3)$.
 \end{lemma}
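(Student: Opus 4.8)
The plan is to get Zariski-openness from the semi-continuity of cohomology and non-emptiness from a small perturbation of the complete hyperbolic structure.

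For openness, I would note that $\dim H^{0}(T^{2};\rho^{k_1,k_2})$ equals the dimension of the joint kernel $\ker\bigl(\rho^{k_1,k_2}(m)-\operatorname{Id}\bigr)\cap\ker\bigl(\rho^{k_1,k_2}(l)-\operatorname{Id}\bigr)$, which is an upper semi-continuous function of $\rho$ on the union $R_0$ of the components of $\hom(\pi_1M^3,\SL_2(\CC))$ whose image in $X(M^3)$ is $X_0(M^3)$ --- this is the elementary instance of the semi-continuity theorem already invoked in the proof of Proposition~\ref{prop:rational}. Hence $\{\rho\in R_0:\dim H^{0}(T^{2};\rho^{k_1,k_2})>0\}$ is Zariski closed and conjugation invariant, so its image in $X_0(M^3)$ is a Zariski closed subset, and the set in the statement (the characters all of whose representations $\rho$ have $H^{0}(T^{2};\rho^{k_1,k_2})=0$) is its complement, hence Zariski open.

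For non-emptiness I would use Thurston's local parametrization. By the Neumann--Zagier lemma~\cite{NZ85}, for each $u$ in a standard neighbourhood $U$ of $0\in\CC$ there is a representation $\rho_u$ with $\chi_{\rho_u}\in X_0(M^3)$ and with $\rho_u(m)$ of the form~\eqref{eqn:u}; shrinking $U$ we may assume each $\rho_u$ is irreducible, so the vanishing of $H^{0}(T^{2};\rho^{k_1,k_2})$ depends only on the character $\chi_{\rho_u}$. Write $u=x+iy$ with $x,y\in\RR$. I would choose $u\in U$, $u\neq 0$, with $x\neq 0$ and $y/\pi$ irrational; such $u$ are dense in every neighbourhood of $0$. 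Since $x\neq0$, $\rho_u(m)$ is conjugate to $\pm\diag(e^{u/2},e^{-u/2})$, and the eigenvalues of $\rho_u^{k_1,k_2}(m)=\operatorname{Sym}^{k_1,k_2}(\rho_u(m))$ are, up to a single sign $\pm1$, the numbers $e^{(ju+p\bar u)/2}$ for $j\in\{-k_1,-k_1+2,\dots,k_1\}$ and $p\in\{-k_2,-k_2+2,\dots,k_2\}$. If one of these equalled $1$, then passing to absolute values gives $e^{(j+p)x/2}=1$, hence $j+p=0$ because $x\neq0$; setting $p=-j$ the eigenvalue becomes $\pm e^{ijy}$, forcing $jy\in\pi\ZZ$. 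The case $j=p=0$ occurs only when $k_1$ and $k_2$ are both even, which is ruled out by the hypothesis that $k_1$ or $k_2$ is odd; hence $j\neq0$ and $y/\pi\in\QQ$, contrary to our choice. Therefore $1$ is not an eigenvalue of $\rho_u^{k_1,k_2}(m)$, so $H^{0}(T^{2};\rho_u^{k_1,k_2})\subseteq\ker\bigl(\rho_u^{k_1,k_2}(m)-\operatorname{Id}\bigr)=0$, and $\chi_{\rho_u}$ lies in the set in the statement.

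The step most likely to need care is the descent from the representation variety $R_0$ to the character variety $X_0(M^3)$: one must use that the GIT quotient carries the closed, conjugation-invariant locus to a closed set, and that for the irreducible character $\chi_{\rho_u}$ we produce, the vanishing of $H^{0}(T^{2};\rho^{k_1,k_2})$ is independent of the representative. The remaining ingredients --- the eigenvalue bookkeeping for $\operatorname{Sym}^{k_1,k_2}$ of a diagonalizable matrix, and the density of the admissible parameters $u$ --- are routine. Note that the hypothesis $k_1\neq k_2$ is not used beyond ``$k_1$ or $k_2$ is odd''.
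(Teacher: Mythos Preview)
Your argument is correct. The openness step is the same as the paper's (semi-continuity plus invariance and descent to the GIT quotient), but for non-emptiness you take a genuinely different route. The paper splits into two cases: when $k_1+k_2$ is odd it invokes Lemma~\ref{lemma:inv} to see that the complete structure itself already has $H^0(T^2;\varrho^{k_1,k_2})=0$; when $k_1$ and $k_2$ are both odd it produces a suitable character via \emph{orbifold Dehn filling}, obtaining a peripheral element whose eigenvalue $\lambda$ is neither real nor unitary. You instead use Thurston's local parameter uniformly, picking $u=x+iy$ with $x\neq 0$ and $y/\pi\notin\QQ$ and doing direct eigenvalue arithmetic on $\operatorname{Sym}^{k_1,k_2}(\rho_u(m))$. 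Your approach is more elementary in that it avoids the orbifold Dehn filling theorem and treats all parities at once; the paper's approach has the minor advantage that in the $k_1+k_2$ odd case it pins down the distinguished character $\chi_\varrho$ itself as lying in the set. Two small remarks: once you have forced $j+p=0$ the global sign $\epsilon^{j+p}$ is automatically $+1$, so your ``$\pm e^{ijy}$'' is in fact just $e^{ijy}$ (harmless, since your conclusion $jy\in\pi\ZZ$ is even weaker than needed); and when $k_2\neq 0$ the closedness of the bad locus should be read in the real Zariski topology, as the map $\rho\mapsto\rho^{k_1,k_2}$ involves complex conjugation --- this is consistent with the convention in Definition following Lemma~\ref{lem:exceptionalclosed}.
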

 
 \begin{proof}
  By upper semi-continuity of the dimension of the cohomology (see the proof of Proposition~\ref{prop:rational}), it suffices to show that  
  $\dim H^0(T^2;\rho^{k_1,k_2})=0$ for some representation $\rho$ with character 
in $X_0(M^3)$. When $k_1$ is odd and $k_2$ is even, or vice-versa,
  then it holds for the lift of the holonomy of the complete structure, by Lemma~\ref{lemma:inv}.
  If both $k_1$ and $k_2$ are odd, consider an orbifold Dehn filling on $M^3$ with filling curve $\gamma$ that consists in adding a solid torus 
  with singular core curve
  with branching order  $n$. By the Dehn filling theorem, for $n$ large enough it is hyperbolic and the restriction $\rho$ of its holonomy has character in $X_0(M^3)$. 
  The holonomy of the curve $\gamma$ is conjugate to
  $$
  \rho(\gamma)= - \begin{pmatrix} e^{\frac{\pi i}n} & 0 \\ 0  &   e^{-\frac{\pi i}n} \end{pmatrix}.
  $$
  As the core of the geodesic has non-trivial length, the complex length of its holonomy has nonzero real part. Thus we can find a peripheral curve $\gamma'$ with
  holonomy
  $$
  \rho(\gamma')=  \begin{pmatrix} {\lambda} & 0 \\ 0  &   \frac1{\lambda} \end{pmatrix}
  $$
  with $\lambda$ not real nor unitary. This yields that $\rho^{k_1,k_2}(\gamma')$ has no eigenvalue $1$, hence $H^0(T^2;\rho^{k_1,k_2})=0$.
 \end{proof}

 Next lemma implies Lemma~\ref{lemma:generictorus}, as $\FF^{k_1,k_2}= (X_0(M^3)-Z^{k_1,k_2} )\cup \EE^{k_1,k_2}$. 
 
 \begin{lemma} 
 \label{lemma:Zariski}
 Let $M^3$ be a hyperbolic manifold with one cusp and let $k_1\neq k_2\in \NN$ be such that $k_1$ and $k_2$ are even. 
 \begin{enumerate}[(a)]
    \item\label{item:dimh0=1gen}  The set of characters
  $$
  Z^{k_1,k_2}=\{ \chi_\rho\in X_0(M^3) \mid \dim H_0(T^2;\rho^{k_1,k_2})=1\}
  $$
  is a non-empty Zariski open subset of the curve $X_0(M^3)$.
  \item\label{item:basisgeneric}
   For any $[\gamma]\in H^1(T^2;\ZZ)$, $[\gamma]\neq 0$, the set 
  $$
  Z^{k_1,k_2}_\gamma=\{\chi_\rho\in Z^{k_1,k_2}\mid i_*(a\cap [\gamma]) \textrm{ is a basis for } H_1(M^3;\rho^{k_1,k_2})\}
  $$
  is   non-empty and  Zariski open.
 
 \end{enumerate}
 \end{lemma}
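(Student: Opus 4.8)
The plan is to deduce both parts from upper semi-continuity of twisted cohomology (as in the proof of Proposition~\ref{prop:rational}) together with a small number of explicitly understood representations, in the spirit of the proof of Lemma~\ref{lem:genericH0}. For part~\eqref{item:dimh0=1gen} I would first record a uniform lower bound: $\dim H^{0}(T^{2};\rho^{k_1,k_2})\geq 1$ for \emph{every} $\rho$ with $\chi_\rho\in X_0(M^3)$. Indeed, as $k_1$ and $k_2$ are even we have $\operatorname{Sym}^{k_1,k_2}(-\operatorname{Id})=\operatorname{Id}$, so only the image of $\pi_1T^2$ in $\PSL_2(\CC)$ matters; an abelian subgroup of $\PSL_2(\CC)$ either lies in the diagonal torus — where $X^{k_1/2}Y^{k_1/2}\otimes\bar X^{k_2/2}\bar Y^{k_2/2}$ is invariant — or fixes a point at infinity — where $X^{k_1}\otimes\bar X^{k_2}$ is invariant. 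Combining this with upper semi-continuity of $\dim H^{0}(T^{2};\rho^{k_1,k_2})$ and the duality $\dim H_0=\dim H^0$, the set $Z^{k_1,k_2}$ equals $\{\dim H^{0}(T^{2};\rho^{k_1,k_2})\leq 1\}$, which is Zariski open; it is non-empty because $\chi_\varrho\in Z^{k_1,k_2}$, since $\dim H^{0}(T^{2};\varrho^{k_1,k_2})=1$ by Lemma~\ref{lemma:inv} ($k_1+k_2$ even).

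For the Zariski-open half of part~\eqref{item:basisgeneric} I would restrict to the Zariski open subset of $Z^{k_1,k_2}$ on which $\dim H_1(M^3;\rho^{k_1,k_2})=1$ (a proper subset by Proposition~\ref{prop:generic}(b), containing $\chi_\varrho$ by Theorem~\ref{prop:dimatvarrho}); there the line $H^{0}(T^{2};\rho^{k_1,k_2})\subset\CC^{(k_1+1)(k_2+1)}$ is the algebraically varying common kernel of $\rho^{k_1,k_2}(\pi_1T^2)-\operatorname{Id}$, so a generator $a$ is a local algebraic function of $\rho$ and $i_*(a\cap[\gamma])\in H_1(M^3;\rho^{k_1,k_2})\cong\CC$ is an algebraic function of $\rho$, well defined up to a non-vanishing scalar. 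The condition that $i_*(a\cap[\gamma])$ be a basis is exactly its non-vanishing, whose complement is the zero set of a section, hence Zariski closed; so $Z^{k_1,k_2}_\gamma$ is Zariski open, just as in Proposition~\ref{prop:rational}. Restricting further to the (still non-empty, Zariski open) locus $U$ where $\rho|_{\pi_1T^2}$ is regular, the cap product $a\cap(-)\colon H_1(T^2;\CC)\to H_1(T^2;\rho^{k_1,k_2})$ is an isomorphism and, by the long exact sequence of $(M^3,T^2)$ plus Poincar\'e duality and self-duality of $\operatorname{Sym}^{k_1,k_2}$, the map $i_*\colon H_1(T^2;\rho^{k_1,k_2})\to H_1(M^3;\rho^{k_1,k_2})\cong\CC$ is onto with kernel a line; thus $\ell_\rho:=(a\cap-)^{-1}(\ker i_*)$ defines a rational map $\Psi\colon X_0(M^3)\to\mathbb{P}^{1}=\mathbb{P}(H_1(T^2;\CC))$, regular on $U$, and $\chi_\rho\in Z^{k_1,k_2}_\gamma$ precisely when $[\gamma]\notin\ell_\rho$. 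Hence $Z^{k_1,k_2}_\gamma$ is non-empty as soon as $\Psi$ is non-constant, or is constant with value a non-rational direction.

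For non-emptiness I would distinguish cases. If $k_1=0$ or $k_2=0$, then $\chi_\varrho\in Z^{k_1,k_2}_\gamma$ directly by Proposition~\ref{prop:basis}\eqref{item:h1}. Assume $k_1k_2\neq 0$ (and $[\gamma]$ primitive, since $Z^{k_1,k_2}_{n[\gamma]}=Z^{k_1,k_2}_{[\gamma]}$). For all but finitely many primitive slopes $\delta$ the filling $M^3_\delta$ is closed hyperbolic, and for a generic such $\delta$ (core geodesic with ``non-resonant'' complex length) I would run Mayer--Vietoris (Proposition~\ref{prop:MV}) for $M^3_\delta=M^3\cup_{T^2}V$ with $\rho=\widetilde{\mathrm{hol}}$: since $k_1\neq k_2$, $\operatorname{Sym}^{k_1,k_2}\circ\rho$ is acyclic on the \emph{closed} manifold $M^3_\delta$ by Corollary~\ref{cor:acyclc}, so $H_*(T^2;\rho^{k_1,k_2})\cong H_*(M^3;\rho^{k_1,k_2})\oplus H_*(V;\rho^{k_1,k_2})$ via $(i_*,i_*)$; a dimension count ($\dim H_1(V;\rho^{k_1,k_2})=1$) places $\chi_\rho$ in $U$, and since $[\delta]$ bounds the meridian disk of $V$ the class $a\cap[\delta]$ (non-zero, as $a\cap-$ is an isomorphism) lies in $\ker i_*^{V}$, forcing $i_*^{M^3}(a\cap[\delta])\neq 0$, i.e.\ $\chi_\rho\in Z^{k_1,k_2}_\delta$ and $\Psi(\chi_\rho)\neq\langle[\delta]\rangle$. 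Together with the first-order behaviour of $i_*(a_{\rho_u}\cap[\gamma])$ along Thurston's parameter $u$ near the complete structure — this class vanishes at $u=0$ by Proposition~\ref{prop:basis}\eqref{item:h1dif}, and expanding one order beyond the boundary-torus computation of Proposition~\ref{prop:basis} gives a linear form on $[\gamma]\in H_1(T^2;\CC)$ whose values on $[m]$ and $[l]$ differ by a power of the cusp shape $\cs\in\CC-\RR$ — one shows $\Psi$ is non-constant. Consequently $\Psi^{-1}(\langle[\gamma]\rangle)$ is finite for every non-zero $[\gamma]$, so $Z^{k_1,k_2}_\gamma$ is a non-empty Zariski open subset.

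The step I expect to be the main obstacle is this last one: making ``$\Psi$ is non-constant'' airtight, i.e.\ carrying out the first-order expansion of the cap product at the complete structure and controlling its leading coefficient in terms of $\cs$. The delicate point is that a power of $\cs$ may happen to be rational (for instance $\cs=2\sqrt{-3}$ for the figure eight), so the first-order argument can degenerate along a single slope direction; one must then invoke the Dehn-filling argument above to cover precisely that residual direction, which requires knowing that the degenerate slope is (up to sign and multiples) a hyperbolic, non-resonant filling slope. Organising the interplay between the first-order analysis (covering all but finitely many directions) and the Mayer--Vietoris/strong-acyclicity argument (covering the rest) is where the real bookkeeping lies.
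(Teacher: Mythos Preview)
Your argument for part~(a) is correct and essentially the same as the paper's: semi-continuity plus one character where the dimension is exactly~$1$. (The paper exhibits a generic diagonal peripheral character rather than the complete structure, but either works.)

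For part~(b), your Mayer--Vietoris/Dehn filling idea is exactly the right one and is what the paper does --- but the paper applies it more directly, and this removes precisely the obstacle you identify. Instead of filling along an auxiliary generic slope $\delta$ and then trying to transfer the conclusion to the given $\gamma$ via non-constancy of your map $\Psi$, the paper fills along $\gamma$ \emph{itself}. The point is that one uses the \emph{orbifold} Dehn filling $\mathcal O_n$ with meridian $\gamma$ and branching index $n$ on the core: for $n$ large enough $\mathcal O_n$ is hyperbolic regardless of which slope $\gamma$ is (this is Thurston's orbifold Dehn filling, no exceptional slopes). Raghunathan's strong acyclicity (Corollary~\ref{cor:acyclc}) still gives $H_*(\mathcal O_n;\varrho^{k_1,k_2})=0$ for $k_1\neq k_2$, and then your own Mayer--Vietoris computation, run verbatim with $\delta$ replaced by $\gamma$, shows $i_*(a\cap[\gamma])$ is a basis for $H_1(M^3;\rho^{k_1,k_2})$ at the restricted holonomy $\chi_\rho\in X_0(M^3)$. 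Semi-continuity then gives Zariski openness.

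So your detour through $\Psi$ and the first-order expansion at the complete structure is unnecessary. Your version does work for all but finitely many $\gamma$ (those with hyperbolic, non-resonant manifold filling --- there you can just take $\delta=\gamma$), and your non-constancy argument would in principle cover the rest; but as you yourself note, that last step is delicate and not fully carried out. The orbifold trick handles every $\gamma$ uniformly in one line. The paper even remarks that for all but finitely many $\gamma$ one may take $n=1$ and work with manifolds, but that orbifolds are needed to cover \emph{all} slopes --- exactly the residual case you were worried about.
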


 \begin{proof}
For \eqref{item:dimh0=1gen}, given any non-trivial peripheral element $\gamma\in\pi_1 T^2$, 
for a generic character $\chi_\rho\in X_0(M^3)$, $\rho(\gamma)$ is a diagonal matrix, with eigenvalues 
different from $\pm 1$ and so that $\rho^{k_1,k_2}(\gamma)$ has only one eigenvalue equal to one.
This property is in fact Zariski open (over $\CC$ when $k_2=0$, over $\RR$ 
otherwise), cf.~the proof of Lemma~\ref{lem:genericH0}.

Next comes the proof of \eqref{item:basisgeneric}. Any nonzero element in $ H^1(T^2;\ZZ)$ is represented by a simple closed curve
$\gamma$. Consider the orbifold $\mathcal O_n$ obtained by Dehn filling $M^3$ along $\gamma$, so that the 
soul of the solid torus is the branching locus, with branching index $n\in\NN$. For $n$ sufficiently large,
$\mathcal O_n$ is hyperbolic, and its holonomy restricts to a representation with character $\chi_\rho\in X_0(M^3)$.
As $\operatorname{Sym}^{k_1}\otimes \overline{\operatorname{Sym}^{k_2}}$ of the holonomy of $\mathcal O_n$ is acyclic,
a Mayer-Vietoris argument yields that $a\cap\gamma$ is a basis for $H^1(M^3;\rho^{k_1,k_2})$. Then the argument 
for a generic $\rho$ follows from semi-continuity.

Notice that for all but finitely many curves  $ \gamma$ we may take a trivial branching index $n=1$ and work with manifolds instead of orbifolds.
However, in order  to consider all filling slopes, one needs to work with orbifolds.
 \end{proof}

 For any character $\chi_\rho$ in $ Z^{k_1,k_2}_\gamma$, by the long exact sequence in cohomology and Poincar\'e duality,
 the inclusion induces an isomorphism, $H_2(M^3;\rho^{k_1,k_2})\cong H_2(T^2;\rho^{k_1,k_2})$. In addition, Poincar\' e
 duality again gives a natural isomorphism $ H_2(T^2;\rho^{k_1,k_2})\cong H^0(T^2;\rho^{k_1,k_2})$ which yields that $a\cap [T^2]$
 is a basis for $ H_2(T^2;\rho^{k_1,k_2})$. Thus Lemma~\ref{lemma:Zariski}(b)  yields Proposition~\ref{prop:genericbasis}.

\end{appendices}

\begin{footnotesize}

\bibliographystyle{plain}

% \bibliography{biblio}

\end{footnotesize}

 \textsc{Departament de Matem\`atiques} 
 
 \textsc{Universitat Aut\`onoma de Barcelona.}

\textsc{08193 Bellaterra, Spain}

and 

\textsc{Barcelona Graduate School of Mathematics (BGSMath) }

{porti@mat.uab.cat}

\end{document}